\providecommand{\U}[1]{\protect\rule{.1in}{.1in}}
\newtheorem{theorem}{Theorem}
\newtheorem{corollary}[theorem]{Corollary}
\newtheorem{definition}[theorem]{Definition}
\newtheorem{example}[theorem]{Example}
\newtheorem{lemma}[theorem]{Lemma}
\newtheorem{notation}[theorem]{Notation}
\newtheorem{proposition}[theorem]{Proposition}
\newtheorem{remark}[theorem]{Remark}
\newenvironment{proof}[1][Proof]{\noindent\textbf{#1.} }{\ \rule{0.5em}{0.5em}}
\numberwithin{equation}{section}
\numberwithin{theorem}{section}
\begin{document}

\title{Global Sobolev regularity for nonvariational operators built with homogeneous
H\"{o}rmander vector fields}
\author{Stefano Biagi and Marco Bramanti\\Dipartimento di Matematica - Politecnico di Milano}
\maketitle

\begin{abstract}
We consider a class of nonvariational degenerate elliptic operators of the
kind%
\[
Lu=\sum_{i,j=1}^{m}a_{ij}\left(  x\right)  X_{i}X_{j}u
\]
where $\left\{  a_{ij}\left(  x\right)  \right\}  _{i,j=1}^{m}$ is a symmetric
uniformly positive matrix of bounded measurable functions defined in the whole
$\mathbb{R}^{n}$ ($n>m$), possibly discontinuos but satisfying a $VMO$
assumption, and $X_{1},...,X_{m}$ are real smooth vector fields satisfying
H\"{o}rmander rank condition in the whole $\mathbb{R}^{n}$ and $1$-homogeneous
w.r.t. a family of nonisotropic dilations. We \emph{do not }assume that the
vector fields are left invariant w.r.t. an underlying Lie group of
translations. We prove global $W_{X}^{2,p}$ a-priori estimates, for every
$p\in\left(  1,\infty\right)  $, of the kind:%
\[
\Vert u\Vert_{W_{X}^{2,p}(\mathbb{R}^{n})}\leq c\left\{  \left\Vert
Lu\right\Vert _{L^{p}\left(  \mathbb{R}^{n}\right)  }+\left\Vert u\right\Vert
_{L^{p}\left(  \mathbb{R}^{n}\right)  }\right\}
\]
for every $u\in W_{X}^{2,p}\left(  \mathbb{R}^{n}\right)  .$ We also prove
higher order estimates and corresponding regularity results: if $a_{ij}\in
W_{X}^{k,\infty}\left(  \mathbb{R}^{n}\right)  $, $u\in W_{X}^{2,p}\left(
\mathbb{R}^{n}\right)  $, $Lu\in W_{X}^{k,p}\left(  \mathbb{R}^{n}\right)  $,
then $u\in W_{X}^{k+2,p}\left(  \mathbb{R}^{n}\right)  $ and%
\[
\Vert u\Vert_{W_{X}^{k+2,p}(\mathbb{R}^{n})}\leq c\left\{  \Vert
Lu\Vert_{W_{X}^{k,p}(\mathbb{R}^{n})}+\Vert u\Vert_{L^{p}(\mathbb{R}^{n}%
)}\right\}  .
\]
\footnote{MSC2020: 35B45; 35B65 (primary); 35J70; 35H10; 42B25 (secondary).
\par
Key words: degenerate elliptic equations;\ homogeneous H\"{o}rmander vector
fields; global a-priori estimates in Sobolev spaces; sharp maximal function.}

\end{abstract}

\section{Introduction}

In this paper we consider a class of nonvariational degenerate elliptic
operators of the kind%
\begin{equation}
Lu=\sum_{i,j=1}^{m}a_{ij}\left(  x\right)  X_{i}X_{j}u\label{operators}%
\end{equation}
where $\left\{  a_{ij}\left(  x\right)  \right\}  _{i,j=1}^{m}$ is a symmetric
uniformly positive matrix of bounded measurable functions defined in the whole
$\mathbb{R}^{n}$ ($n>m$), possibly discontinuos but satisfying a $VMO$
assumption, and $X_{1},...,X_{m}$ are real smooth vector fields satisfying
H\"{o}rmander rank condition in the whole $\mathbb{R}^{n}$ and $1$-homogeneous
w.r.t. a family of nonisotropic dilations. (Precise definition will be given
later). We \emph{do not} assume that the vector fields are left invariant
w.r.t. an underlying Lie group of translations, so our context is more general
than that of H\"{o}rmander vector fields on Carnot groups. Our first main
result consists in proving global $W_{X}^{2,p}$ a-priori estimates, for any
$p\in\left(  1,\infty\right)  $, for these operators (see Theorem
\ref{Thm main} for the exact statement). We also extend our result to higher
order derivatives, proving that if $u\in W_{X}^{2,p}\left(  \mathbb{R}%
^{n}\right)  $ and, for some positive integer $k$, the coefficients $a_{ij}$
belong to $W_{X}^{k,\infty}\left(  \mathbb{R}^{n}\right)  $ and $Lu\in
W_{X}^{k,p}\left(  \mathbb{R}^{n}\right)  $, then $u\in W_{X}^{k+2,p}\left(
\mathbb{R}^{n}\right)  $ and the corresponding global a priori estimate holds:%
\[
\Vert u\Vert_{W_{X}^{k+2,p}(\mathbb{R}^{n})}\leq c\left\{  \Vert
Lu\Vert_{W_{X}^{k,p}(\mathbb{R}^{n})}+\Vert u\Vert_{L^{p}(\mathbb{R}^{n}%
)}\right\}  .
\]

For H\"{o}rmander operators, global results are usually known in the case of
Carnot groups while, in the general case, only local results are usually
known. The context of homogeneous (but not left invariant)\ H\"{o}rmander
vector fields represents an interesting intermediate case in which,
notwithstanding the lacking of a group structure, very interesting global
results can be established. The study of (stationary and evolutive) operators
built with homogeneous H\"{o}rmander vector fields has been started in recent
years by Biagi and Bonfiglioli in \cite{BBlift} and continued e.g. in
\cite{BBB1}, \cite{BBB2}, \cite{BiB1}, \cite{BiB2}.

Regularity results and a-priori estimates in the scale of Sobolev spaces have
been proved:

\begin{itemize}
\item[-] for H\"{o}rmander operators on groups, in local form, by Folland in
\cite{Fo2}; for sum of squares of general H\"{o}rmander vector fields, in
local form, by Rothschild-Stein in \cite{RS}; for sum of squares of
H\"{o}rmander vector fields on Carnot groups, in global form, by
Bramanti-Brandolini in \cite[Chap. 8]{BBbook}; for sum of squares of
homogeneous H\"{o}rmander vector fields, in global form, by
Biagi-Bonfiglioli-Bramanti in \cite{BB2};

\item[-] for nonvariational operators of kind (\ref{operators}) with $VMO$
coefficients, on Carnot groups, in local form, by Bramanti-Brandolini in
\cite{BB1}; without the Carnot group assumption, by Bramanti-Brandolini in
\cite{BB2} (and, with more complete results, in \cite[Chap. 12.]{BBbook}).
\end{itemize}

The study of nonvariational operators with $VMO$ coefficients has been
initiated in 1991-93 by Chiarenza-Frasca-Longo \cite{CFL1}, \cite{CFL2} for
uniformly elliptic operators. The classical $W^{2,p}$-theory for uniformly
elliptic operators requires the uniform continuity of the coefficients
$a_{ij}$, and it is well-known that for nonvariational uniformly elliptic
operators with general bounded measurable coefficients, in dimension $n>2$,
the $W^{2,p}$-theory breaks down. The $VMO$ requirement is a kind of
\textquotedblleft uniform continuity in integral sense\textquotedblright%
\ which actually allows for some soft kind of discontinuities, hence
Chiarenza-Frasca-Longo's results represent a significant extension of the
classical elliptic theory of the 1950-1960's. The techique firstly devised by
Chiarenza-Frasca-Longo has been later applied to uniformly parabolic equations
(see \cite{BC}), Kolmogorov-Fokker-Planck ultraparabolic equations (see
\cite{BCM}), and to nonvariational operators built on H\"{o}rmander vector
fields, as already explained. Chiarenza-Frasca-Longo's technique heavily
relies on representation formulas of $u_{x_{i}x_{j}}$ in terms of $Lu$, via a
singular kernel \textquotedblleft with variable coefficients\textquotedblright%
, obtained from the fundamental solution of the model operator with constant
coefficients. This singular kernel needs to be expanded in series of spherical
harmonics, getting singular kernels \textquotedblleft with constant
coefficients\textquotedblright. To these singular integrals \textquotedblleft
of convolution type\textquotedblright\ one needs to apply results of $L^{p}$
continuity of both the corresponding singular integral operator and the
commutator of the singular integral operator with a $BMO$ function. This
technique exploits both translation invariance and homogeneity (in suitable
senses) of the model operator with constant $a_{ij}$ or, in some cases, the
possibility of approximating locally the operator under study with another one
possessing these properties. Moreover, it relies on the knowledge of fine
properties of the fundamental solution of the constant coefficient operator,
with bounds on the derivatives of every order of this fundamental solution,
which have to be uniform as the constant matrix $\left(  a_{ij}\right)  $
ranges in a fixed ellipticity class.

A different technique to prove $W^{2,p}$ a-priori estimates for nonvariational
elliptic or parabolic operators with $VMO$ coefficients has been devised in
2007 by Krylov in \cite{K} (and exploited in a series of subsequent papers). A
key step in Krylov' technique constists in establishing a pointwise estimate
on the sharp maximal function of the second derivatives of $u$, $\left(
u_{x_{i}x_{j}}\right)  ^{\#}$, in terms of $\overline{L}u$ where $\overline
{L}$ is the model operator with constant $a_{ij}$. Once this estimate is
proved, a clever procedure allows to exploit the $VMO$ assumption on $a_{ij}$
for replacing the constant coefficient operator with $L$. In turn, in order to
establish this estimate on $\left(  u_{x_{i}x_{j}}\right)  ^{\#}$, Krylov
makes use of many deep classical results on elliptic or parabolic operators
with constant coefficients. The extension of this part of Krylov' technique to
degenerate operators of H\"{o}rmander type seems very difficult. A first
positive result in this direction has been obtained by Bramanti-Toschi in
\cite{BT} in the case of Carnot groups. More recently, Dong and Yastrzhembskiy
\cite{HY} have successfully employed this technique for (degenerate) kinetic
Kolmogorov-Fokker-Planck operators.

In this paper we present a new approach which combines some ideas of both the
strategies described above and shows some advantages. We follow Krylov in the
idea of exploiting an estimate on the sharp maximal function of the second
order derivatives (in terms of the constant coefficient operator) to prove
$L^{p}$ estimates for the operator with $VMO$ coefficients (see Theorem
\ref{Thm Krylov main step}). But, in order to prove this bound on the sharp
maximal function, we do not follow Krylov' technique but use representation
formulas for second order derivatives in terms of the \emph{constant
coefficient operator} (so that the singular kernel is not a \textquotedblleft
variable kernel\textquotedblright\ in the sense of Calder\'{o}n-Zygmund
theory) and make use of techniques of singular integrals in spaces of
homogeneous type. The link between the two ingredients of our proof (singular
integral operators \emph{and }sharp maximal function) is contained in our
Theorem \ref{Thm sharp}, an abstract real analysis result in spaces of
homogeneous type, which in our opinion can be of independent interest. This
technique, as usual, allows to prove, at first, a-priori estimates for smooth
functions with \emph{small }support. Passing from this result to global
a-priori estimates for any $W_{X}^{2,p}$-function requires the use of suitable
cutoff functions, a covering theorem, and interpolation inequalities for first
order derivatives. These properties are not trivial but, in our setting, the
presence of dilations, the validity of a global doubling property, and some
known properties of the measure of metric balls, allow to prove them in a
relatively direct way.

Our technique, as a whole, seems more flexible than both the techniques to
which it is inspired, and in some sense it is easier than those. For instance,
one could use the technique in this paper to reobtain the main result in
\cite{CFL1} with a shorter proof.

The results in this paper represent the first instance of a complete
regularity theory in Sobolev spaces for nonvariational operators built on
H\"{o}rmander vector fields, with global bounds in $\mathbb{R}^{n}$.

Let us now come to precise definitions and assumptions.

\bigskip

Let us consider a family $X=\{X_{1},\ldots,X_{m}\}$ of \emph{homogeneous
H\"{o}rmander vector fields on} $\mathbb{R}^{n}$. This means that the
following conditions (H.1), (H.2), (H.3) hold:

\begin{itemize}
\item[\textbf{(H.1)}] there exists a family of (non-isotropic) dilations
$\{\delta_{\lambda}\}_{\lambda>0}$ of the form
\begin{equation}
\delta_{\lambda}:\mathbb{R}^{n}\longrightarrow\mathbb{R}^{n}\qquad
\delta_{\lambda}(x)=\left(  \lambda^{\sigma_{1}}x_{1},\ldots,\lambda
^{\sigma_{n}}x_{n}\right)  , \label{intro.dela}%
\end{equation}
where $1=\sigma_{1}\leq\cdots\leq\sigma_{n}$, such that $X_{1},\ldots,X_{m}$
are $\delta_{\lambda}$-homogeneous of degree $1$, i.e.,
\[
X_{j}\left(  f\circ\delta_{\lambda}\right)  =\lambda\,\left(  X_{j}f\right)
\circ\delta_{\lambda},\quad\text{for every $\lambda>0$, $f\in C^{\infty
}\left(  \mathbb{R}^{n}\right)  $ and $j=1,\ldots,m$.}%
\]

\end{itemize}

\noindent In what follows, we denote by
\begin{equation}
q=\sum_{j=1}^{n}\sigma_{j} \label{eq.defq}%
\end{equation}
the so-called $\delta_{\lambda}$-homogeneous dimension of $(\mathbb{R}%
^{n},\delta_{\lambda})$.\medskip

\begin{itemize}
\item[\textbf{(H.2)}] $X_{1},\ldots,X_{m}$ are linearly independent (as vector
fields) and satisfy H\"{o}rmander's rank condition at $0$, i.e.,
\[
\dim\left\{  Y\left(  0\right)  :Y\in\mathrm{Lie}\left(  X\right)  \right\}
=n.
\]
Here $\mathrm{Lie}(X)$ stands for the smallest Lie subalgebra of
$\mathcal{X}(\mathbb{R}^{n})$ containing $X$, where $\mathcal{X}%
(\mathbb{R}^{n})$ is the Lie algebra of all the smooth vector fields on
$\mathbb{R}^{n}.$
\end{itemize}

\noindent Finally, we make the following dimensional assumptions:

\begin{itemize}
\item[\textbf{(H.3)}] we require that $q>2$ and
\begin{equation}
N:=\mathrm{dim}\left(  \mathrm{Lie}\left\{  X\right\}  \right)  >n\geq2.
\label{ipotesidimensinale}%
\end{equation}

\end{itemize}

Throughout the paper we let $p:=N-n\geq1$ and we denote the points of
$\mathbb{R}^{N}\equiv\mathbb{R}^{n}\times\mathbb{R}^{p}$ by
\[
(x,\xi),\quad\text{with }x\in\mathbb{R}^{n}\text{ and $\xi\in\mathbb{R}^{p}$.}%
\]

We stress that the vector fields $X_{i}$ are \emph{not} assumed to be
translation invariant with respect to any underlying structure of Lie group,
so that our context is different from that of Carnot groups.

\begin{remark}
Assumption (H.3) is not restrictive for the following reason. As noted in
\cite[Section 2]{BiB2}, if (H.3) does not hold, that is $N=n$, then our vector
fields $X_{1},...,X_{m}$ are the generators of a Carnot group. Under this
assumption, the main results in this paper, although not explicitly present in
the literature, could be plainly obtained combining the techniques in
\cite[Chap. 8]{BBbook} and \cite[Chap. 12.]{BBbook}. Instead, under our
assumption (H.3), the vector fields $X_{1},...,X_{m}$ \emph{cannot }be the
generators of a Carnot group (again, this is noted in \cite[Section 2]{BiB2}).
Hence the proofs written in this paper cannot be directly specialized to the
case of Carnot groups; however it will be apparent that in the case of Carnot
groups the proof of the present results would be easier, following the same line.
\end{remark}

In this paper we are going to study \emph{nonvariational second order
operators }(\ref{operators}) built with a family of vector fields
$X_{1},...,X_{m}$ satisfying (H.1)-(H.3), where:

\begin{itemize}
\item[\textbf{(H.4)}] $\left\{  a_{ij}\right\}  _{i,j=1}^{m}$ is a symmetric
uniformly positive matrix of bounded measurable functions: there exists
$\nu>0$ such that%
\begin{equation}
\nu\left\vert w\right\vert ^{2}\leq\sum_{i,j=1}^{m}a_{ij}\left(  x\right)
w_{i}w_{j}\leq\nu^{-1}\left\vert w\right\vert ^{2}\text{ for every }%
w\in\mathbb{R}^{m},\text{a.e. }x\in\mathbb{R}^{n}. \label{ellipticity}%
\end{equation}

\end{itemize}

Moreover, we will assume that the coefficients $a_{ij}$, although possibly
discontinuous, belong to $VMO\left(  \mathbb{R}^{n}\right)  $, the space of
functions with \emph{vanishing mean oscillation}, with respect to the
structure of \emph{space of homogeneous type} induced in $\mathbb{R}^{n}$ by
the vector fields $X_{1},X_{2},...,X_{m}$. This space will be defined
precisely later (see Definition \ref{Def VMO} and assumption (H.5) in
\S \ref{sec geometry}).

For these operators, we are interested in proving global $L^{p}\left(
\mathbb{R}^{n}\right)  $-estimates (for $1<p<\infty$) for $X_{i}X_{j}u$ in
terms of $Lu$ and $u$ itself, and then to extend these to higher order
estimates, under suitable regularity assumptions on the coefficients.

Let us first introduce the Sobolev spaces related to our system of
H\"{o}rmander vector fields, fixing the related notation (see \cite[Chap.2]%
{BBbook} for details).

\begin{definition}
[Multiindices, Sobolev spaces]\label{Def Sobolev}Let $X=\{X_{1},\ldots
,X_{m}\}$ be as above. For any multiindex $I=\left(  i_{1},i_{2}%
,...,i_{k}\right)  $ (with $i_{j}\in\left\{  1,2,...,m\right\}  $), we say
that $I$ has \emph{length }$k$, we write $\left\vert I\right\vert =k$, and set%
\[
X_{I}f=X_{i_{1}}X_{i_{2}}...X_{i_{k}}f
\]
for any function $f$ such that these derivatives exist in classical or weak sense.

For any $p\in\left[  1,\infty\right]  $, positive integer $k$, and domain
$\Omega\subseteq\mathbb{R}^{n}$, we define the Sobolev space%
\[
W_{X}^{k,p}\left(  \Omega\right)  =\left\{  f:\Omega\rightarrow\mathbb{R}%
:\left\Vert f\right\Vert _{W_{X}^{k,p}\left(  \Omega\right)  }<\infty\right\}
\]
where%
\[
\left\Vert f\right\Vert _{W_{X}^{k,p}\left(  \Omega\right)  }=\left\Vert
f\right\Vert _{L^{p}\left(  \Omega\right)  }+\sum_{\left\vert I\right\vert
\leq k}\left\Vert X_{I}f\right\Vert _{L^{p}\left(  \Omega\right)  }%
\]
and all the derivatives are meant in weak sense.

We also define $W_{0,X}^{k,p}\left(  \Omega\right)  $ as the closure of
$C_{0}^{\infty}\left(  \Omega\right)  $ in $W_{X}^{k,p}\left(  \Omega\right)
$.
\end{definition}

Let us recall some known approximation result by smooth functions, which holds
for Sobolev spaces defined by any system of H\"{o}rmander vector fields:

\begin{theorem}
\label{Thm approx}(See \cite[Thm. 2.9, Coroll. 2.10]{BBbook}). Let $u\in
W_{X}^{k,p}\left(  \Omega\right)  $ for some open set $\Omega\subseteq
\mathbb{R}^{n}$, some $p\in\lbrack1,\infty)$ and some positive integer $k$.

(a). If $\Omega^{\prime}$ is an open set compactly contained in $\Omega$, then
there exists a sequence $\left\{  u_{j}\right\}  \subset C_{0}^{\infty}\left(
\Omega\right)  $ such that $u_{j}\rightarrow u$ in $W_{X}^{k,p}\left(
\Omega^{\prime}\right)  $.

(b) If $\phi\in C_{0}^{\infty}\left(  \Omega\right)  $ then $u\phi\in
W_{0,X}^{k,p}\left(  \Omega\right)  $.
\end{theorem}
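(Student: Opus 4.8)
The plan is to prove both statements by Friedrichs mollification, reducing everything to a commutator estimate between the iterated vector fields $X_{I}$ and convolution with a Euclidean mollifier. Fix $\rho\in C_{0}^{\infty}(B_{1})$ with $\rho\geq0$, $\int\rho=1$, and set $\rho_{\varepsilon}(x)=\varepsilon^{-n}\rho(x/\varepsilon)$.

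\smallskip
\noindent\emph{Reduction to a statement about mollifiers.} For (a), given $\Omega'\Subset\Omega$ choose $\Omega'\Subset\Omega''\Subset\Omega$ and $\chi\in C_{0}^{\infty}(\Omega)$ with $\chi\equiv1$ on $\overline{\Omega''}$; by the Leibniz rule for the $X_{i}$ (which expresses $X_{I}(\chi u)$ as a sum of terms $(X_{I_{1}}\chi)(X_{I_{2}}u)$ with $|I_{2}|\leq|I|$), the function $\chi u$ lies in $W_{X}^{k,p}(\Omega)$ and has compact support in $\Omega$, so its extension by $0$ is a $w\in W_{X}^{k,p}(\mathbb{R}^{n})$ agreeing with $u$ on $\Omega''$, and for small $\varepsilon$ one has $w\ast\rho_{\varepsilon}\in C_{0}^{\infty}(\Omega)$; thus (a) follows once we know $w\ast\rho_{\varepsilon}\to w$ in $W_{X}^{k,p}(\Omega')$. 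For (b): if $\phi\in C_{0}^{\infty}(\Omega)$ then (Leibniz again) $u\phi\in W_{X}^{k,p}(\Omega)$ with compact support $K\Subset\Omega$, so, extending it by $0$, $v:=u\phi\in W_{X}^{k,p}(\mathbb{R}^{n})$; picking $\Omega'$ bounded open with $K\Subset\Omega'\Subset\Omega$, the mollifications $v\ast\rho_{\varepsilon}$ belong to $C_{0}^{\infty}(\Omega)$ for small $\varepsilon$ and converge to $v$ in $W_{X}^{k,p}(\Omega')=W_{X}^{k,p}(\Omega)$ (both functions vanish off $\Omega'$), so $u\phi\in W_{0,X}^{k,p}(\Omega)$. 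Everything therefore reduces to: for $v\in W_{X}^{k,p}(\mathbb{R}^{n})$ and any bounded open $\Omega'$, $v\ast\rho_{\varepsilon}\to v$ in $W_{X}^{k,p}(\Omega')$.

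\smallskip
\noindent\emph{The commutator estimate.} Since $(X_{I}v)\ast\rho_{\varepsilon}\to X_{I}v$ in $L^{p}(\Omega')$ for all $|I|\leq k$ by the elementary properties of mollifiers, it suffices to prove $\|X_{I}(v\ast\rho_{\varepsilon})-(X_{I}v)\ast\rho_{\varepsilon}\|_{L^{p}(\Omega')}\to0$, which I would do by induction on $|I|$. For $|I|=1$, writing $X_{s}=\sum_{j}b_{j}\partial_{j}$ with $b_{j}$ smooth and using the definition of the weak derivative $X_{s}v$, one obtains
\[
X_{s}(v\ast\rho_{\varepsilon})(x)-(X_{s}v)\ast\rho_{\varepsilon}(x)=-\sum_{j}\int v(y)\,\partial_{y_{j}}\!\big[(b_{j}(x)-b_{j}(y))\,\rho_{\varepsilon}(x-y)\big]\,dy .
\]
On a bounded set $|b_{j}(x)-b_{j}(y)|\leq C|x-y|\leq C\varepsilon$ on the support of $\rho_{\varepsilon}(x-y)$, so this kernel is dominated by $C\varepsilon^{-n}\mathbf{1}_{\{|x-y|\leq\varepsilon\}}$; Schur's test gives an $L^{p}$-bound uniform in $\varepsilon$, while for $v\in C^{\infty}$ one moves $\partial_{y_{j}}$ onto $v$ and lets $\varepsilon\to0$ by dominated convergence to get pointwise convergence to $0$, whence the claim follows by density in $L^{p}_{\mathrm{loc}}$. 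This is the classical Friedrichs lemma. For $|I|=k\geq2$, write $X_{I}=X_{i_{1}}\cdots X_{i_{k}}$ and telescope:
\[
X_{I}(v\ast\rho_{\varepsilon})-(X_{I}v)\ast\rho_{\varepsilon}=\sum_{l=1}^{k}X_{i_{1}}\cdots X_{i_{l-1}}\Big[X_{i_{l}}\big(v^{(l)}\ast\rho_{\varepsilon}\big)-\big(X_{i_{l}}v^{(l)}\big)\ast\rho_{\varepsilon}\Big],\qquad v^{(l)}:=X_{i_{l+1}}\cdots X_{i_{k}}v .
\]
Here $v^{(l)}\in W_{X}^{l,p}$, and for $l<k$ each summand is controlled by the inductive statement at levels $\leq k-1$ (applied to $v^{(l)}$ and to $X_{i_{l}}v^{(l)}$). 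The genuinely new contribution is the term $l=k$, namely $X_{i_{1}}\cdots X_{i_{k-1}}\big[X_{i_{k}}(v\ast\rho_{\varepsilon})-(X_{i_{k}}v)\ast\rho_{\varepsilon}\big]$: for this one must show that the first-order commutator, applied to $v\in W_{X}^{k,p}$, tends to $0$ in $W_{X}^{k-1,p}(\Omega')$ and not merely in $L^{p}(\Omega')$. This is established by differentiating the kernel above with $X_{i_{1}}\cdots X_{i_{k-1}}$ and integrating by parts in $y$, arranged so that every derivative falling on $v$ is regrouped into an iterated $X$-field (which lies in $L^{p}$ up to order $k$ precisely because $v\in W_{X}^{k,p}$), each such transfer being balanced against the factor $b_{j}(x)-b_{j}(y)=O(\varepsilon)$; the residual kernels satisfy $\varepsilon$-uniform Schur bounds and annihilate smooth functions in the limit, and density closes the argument.

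\smallskip
\noindent\emph{Main obstacle.} The difficulty is concentrated in this last point. The Euclidean derivatives $\partial_{j}$ hidden inside the $X_{i}$ are not controlled by the $W_{X}$-norms, so a crude estimate of $X_{i_{1}}\cdots X_{i_{k-1}}$ applied to the first-order commutator loses $k-1$ powers of $\varepsilon$, while the factor $b_{j}(x)-b_{j}(y)$ supplies only one; the proof must be organized so that at every step derivatives are transferred onto $v$ solely through compositions of the $X_{i}$, and the careful bookkeeping of which terms cancel and which gain a power of $\varepsilon$ is the real content. Once this is in place the induction closes, and, through the reductions above, both (a) and (b) follow.
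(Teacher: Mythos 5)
The paper does not prove this theorem; it is quoted verbatim from \cite[Thm. 2.9, Coroll. 2.10]{BBbook}, so there is no internal proof to compare against. Your overall strategy (Euclidean mollification plus a Friedrichs-type commutator lemma) is the standard route to such results, and your reductions of (a) and (b) to the single claim ``$v\ast\rho_{\varepsilon}\rightarrow v$ in $W_{X}^{k,p}$ for compactly supported $v\in W_{X}^{k,p}(\mathbb{R}^{n})$'', as well as your treatment of the case $|I|=1$ (the classical Friedrichs lemma, with the correct commutator kernel and the $\varepsilon$-uniform Schur bound), are sound.

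However, there is a genuine gap at the only point where the theorem is actually hard, namely $|I|=k\geq 2$, and you essentially say so yourself. Your telescoping identity is correct, and the terms $l\leq k-1$ are indeed handled by the inductive hypothesis; but the term $l=k$ equals the full order-$k$ commutator minus the order-$(k-1)$ commutator applied to $X_{i_{k}}v\in W_{X}^{k-1,p}$, and the latter vanishes in the limit by induction. Hence ``term $l=k$ tends to $0$'' is \emph{equivalent} to the statement you are trying to prove: the telescoping is circular precisely at the critical term and reduces the problem to itself. What remains is the assertion that $X_{i_{1}}^{x}\cdots X_{i_{k-1}}^{x}$ applied to the first-order commutator kernel can be rewritten, via the identity $X_{s}^{x}\rho_{\varepsilon}(x-y)=-X_{s}^{y}\rho_{\varepsilon}(x-y)-\sum_{i}\bigl(b_{i}^{(s)}(x)-b_{i}^{(s)}(y)\bigr)\partial_{y_{i}}\rho_{\varepsilon}(x-y)$ and repeated integration by parts, as a finite sum $\sum_{|J|\leq k}T_{\varepsilon}^{(J)}(X_{J}v)$ with kernels satisfying $\varepsilon$-uniform Schur bounds and vanishing limits on smooth data. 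This is the entire content of the higher-order statement — every unmatched $\partial_{y}$ on $\rho_{\varepsilon}$ costs $\varepsilon^{-1}$ and must either be paired with a difference $b(x)-b(y)=O(\varepsilon)$ or be converted into an $X^{y}$-derivative that can be moved onto $v$ — and you describe this requirement accurately (``the careful bookkeeping \dots is the real content'') without carrying it out or even exhibiting the inductive structure of the expanded kernel. As written, the argument does not close; to complete it you would need to formulate and prove, by induction on $k$, an explicit decomposition of $X_{i_{1}}^{x}\cdots X_{i_{k}}^{x}\rho_{\varepsilon}(x-y)$ into a reversed product of $X^{y}$-fields acting on $\rho_{\varepsilon}$ plus remainder kernels each carrying enough factors of $b(x)-b(y)$ to compensate the $y$-derivatives of $\rho_{\varepsilon}$ they contain.
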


\bigskip

We can now state the first main result of this paper:

\begin{theorem}
\label{Thm main}Under assumptions (H.1)-(H.5) (assumption (H.5) will be stated
in section 2, after Definition \ref{Def VMO}), for every $p\in\left(
1,\infty\right)  $ there exists $c>0$, depending on numbers $p,\nu$ (in
(\ref{ellipticity}))$,$ the vector fields $\left\{  X_{1},...,X_{m}\right\}  $
and the function $a_{\cdot}^{\sharp}$ (that will be defined in
(\ref{mod VMO coeff}), and encodes the $VMO$ assumption on $a_{ij}$) such that
for every $u\in W_{X}^{2,p}\left(  \mathbb{R}^{n}\right)  $ we have%
\begin{equation}
\left\Vert u\right\Vert _{W_{X}^{2,p}\left(  \mathbb{R}^{n}\right)  }\leq
c\left\{  \left\Vert Lu\right\Vert _{L^{p}\left(  \mathbb{R}^{n}\right)
}+\left\Vert u\right\Vert _{L^{p}\left(  \mathbb{R}^{n}\right)  }\right\}  .
\label{k a priori}%
\end{equation}

\end{theorem}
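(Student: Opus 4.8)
The strategy is the classical three-step passage from a local estimate for compactly supported smooth functions to a global estimate for arbitrary $W_X^{2,p}$ functions. First I would establish the a-priori estimate \eqref{k a priori} for $u\in C_0^\infty(B)$ where $B$ is a metric ball of sufficiently small radius $r_0$ (depending on $p$, $\nu$, the vector fields, and the modulus $a_\cdot^\sharp$): this is the core analytic input, and it is exactly what the paper announces will be obtained by combining the representation formula for $X_iX_ju$ in terms of the constant-coefficient model operator $\overline{L}$ (the lifting of $X$ to $\mathbb{R}^N$, Folland-type fundamental solution), the $L^p$-theory of singular integral operators and their commutators with $BMO$ functions in the relevant space of homogeneous type, and the abstract sharp-maximal-function result (Theorem~\ref{Thm sharp}) together with the key interpolation step (Theorem~\ref{Thm Krylov main step}) that absorbs the $VMO$ perturbation $a_{ij}-\overline{a}_{ij}$. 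The smallness of $r_0$ is what makes the oscillation of the coefficients small enough to absorb the perturbation term into the left-hand side.

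Second, I would globalize via a covering argument. Since $(\mathbb{R}^n,d,dx)$ is a space of homogeneous type with the global doubling property, there is a countable cover $\{B(x_k,r_0)\}$ with bounded overlap and a subordinate partition of unity $\{\varphi_k\}\subset C_0^\infty$ with $\sum_k\varphi_k=1$ and uniform bounds $|X_I\varphi_k|\le C$ for $|I|\le 2$ (here the homogeneity of the $X_i$ and the known estimates on the measure of metric balls give the uniform control on the cutoffs, independently of $k$). For $u\in W_X^{2,p}(\mathbb{R}^n)$ I would apply the local estimate to $u\varphi_k$ (which lies in $W_{0,X}^{2,p}(B(x_k,r_0))$ by Theorem~\ref{Thm approx}(b), hence is a $W_X^{2,p}$-limit of $C_0^\infty$ functions supported in the ball, so the smooth-function estimate passes to it). Expanding $L(u\varphi_k)=\varphi_k Lu + u\,L\varphi_k + 2\sum_{ij}a_{ij}X_i\varphi_k\,X_j u$, raising to the $p$-th power, summing over $k$, and using bounded overlap yields
\[
\|u\|_{W_X^{2,p}(\mathbb{R}^n)}^p \le c\Big\{\|Lu\|_{L^p}^p + \|u\|_{L^p}^p + \sum_{i}\|X_i u\|_{L^p}^p\Big\}.
\]

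Third, the first-order term on the right is removed by an interpolation inequality: for every $\varepsilon>0$ one has $\|X_i u\|_{L^p(\mathbb{R}^n)}\le \varepsilon\sum_{|I|=2}\|X_I u\|_{L^p(\mathbb{R}^n)} + C_\varepsilon\|u\|_{L^p(\mathbb{R}^n)}$. Such an inequality is available globally on $\mathbb{R}^n$ in the present setting because the dilations $\delta_\lambda$ allow a scaling argument (rescale to make the constant in front of the top-order term as small as desired) combined with the global doubling property. Choosing $\varepsilon$ small enough to absorb the $\|X_iu\|_{L^p}$ terms into the left-hand side gives \eqref{k a priori}.

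The main obstacle is the first step — proving the local estimate for $u\in C_0^\infty(B)$ with $r_0$ small. Everything there is delicate: one needs the representation formula for second-order derivatives via the fundamental solution of the lifted model operator with precise Calderón–Zygmund-type bounds on its derivatives uniform over the ellipticity class, the $L^p$-continuity of the resulting singular integrals and commutators on the space of homogeneous type, and the sharp-maximal-function machinery to convert these into the desired $L^p$ bound while exploiting the $VMO$ smallness. The globalization steps (covering, cutoffs, interpolation) are comparatively routine once the properties of metric balls, the global doubling condition, and the dilation structure are in hand, exactly as the introduction indicates.
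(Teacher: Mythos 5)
Your plan coincides with the paper's proof in all essential respects: a local a-priori estimate for $u\in C_{0}^{\infty}$ supported in a small ball, obtained from the representation formula for $X_iX_ju$ in terms of the frozen-coefficient operator, the $L^p$ singular-integral bounds, the sharp-maximal-function theorem (Theorem~\ref{Thm sharp} via Theorem~\ref{Thm Krylov main step}) and absorption of the $VMO$ oscillation for small radius, followed by a covering/cutoff globalization and the interpolation inequality of Theorem~\ref{Thm interpolation} to remove the first-order terms. One small correction of emphasis: the paper deliberately does \emph{not} use commutators of singular integrals with $BMO$ functions (that is the Chiarenza--Frasca--Longo route it sets out to avoid) --- the kernel is always that of the constant-coefficient operator $L_A$ and the $VMO$ hypothesis enters only through the choice of $\overline{a}_{ij}$ as ball averages in the mean-oscillation estimate; your use of a genuine partition of unity instead of the paper's lower-bounded cutoffs, and your direct application of the local estimate to $u\varphi_k\in W^{2,p}_{0,X}$ instead of the paper's two-stage approximation ($C_0^\infty$ first, then $C^\infty$ on dilated balls, then $W_X^{2,p}$ by density and $R\to\infty$), are immaterial variants.
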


We will also extend the basic estimate contained in the previous theorem to a
higher order regularity result:

\begin{theorem}
\label{Thm main 2}Under assumptions (H.1)-(H.4), assume that, for some
$p\in\left(  1,\infty\right)  $ and positive integer $k$, $a_{ij}\in
W_{X}^{k,\infty}\left(  \mathbb{R}^{n}\right)  $, $u\in W_{X}^{2,p}\left(
\mathbb{R}^{n}\right)  $, and $Lu\in W_{X}^{k,p}\left(  \mathbb{R}^{n}\right)
$. Then $u\in W_{X}^{k+2,p}\left(  \mathbb{R}^{n}\right)  $ and there exists a
constant $c>0$, only depending on the numbers $k,p,\nu$ \emph{(}see
(\ref{ellipticity})\emph{)}, on the vector fields $\{X_{1},...,X_{m}\}$ and on
the number
\[
\Vert a\Vert=\sum_{i,j=1}^{m}\Vert a_{ij}\Vert_{W_{X}^{k,\infty}%
(\mathbb{R}^{n})}<\infty\text{,}%
\]
such that:%
\[
\Vert u\Vert_{W_{X}^{k+2,p}(\mathbb{R}^{n})}\leq c\left\{  \Vert
Lu\Vert_{W_{X}^{k,p}(\mathbb{R}^{n})}+\Vert u\Vert_{L^{p}(\mathbb{R}^{n}%
)}\right\}  .
\]

\end{theorem}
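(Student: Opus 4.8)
The plan is to prove Theorem \ref{Thm main 2} by induction on $k$, using Theorem \ref{Thm main} as the base case (appropriately bootstrapped) and the fact that the Sobolev norms $W_X^{k,p}$ are built from iterated vector field derivatives $X_I$.

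First I would set up the inductive scheme. The case $k=0$ is essentially Theorem \ref{Thm main} (note that $W_X^{0,\infty} = L^\infty$ includes the $VMO$ coefficients as a special case, or more precisely one uses the $k=1$ estimate as a genuine starting point since $W_X^{1,\infty}\subset VMO$ in our homogeneous setting). Assume the estimate holds for $k-1$: that is, whenever $a_{ij}\in W_X^{k-1,\infty}$, $v\in W_X^{2,p}$, and $Lv\in W_X^{k-1,p}$, then $v\in W_X^{k+1,p}$ with the corresponding bound. Now take $a_{ij}\in W_X^{k,\infty}$, $u\in W_X^{2,p}$, $Lu\in W_X^{k,p}$. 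The key idea is to differentiate the equation $Lu = f$ (where $f:=Lu\in W_X^{k,p}$) along a single vector field $X_\ell$, $\ell\in\{1,\dots,m\}$. Formally,
\[
L(X_\ell u) = X_\ell(Lu) - \sum_{i,j=1}^m (X_\ell a_{ij})\, X_i X_j u - \sum_{i,j=1}^m a_{ij}\,[X_\ell, X_i X_j]u.
\]
The commutator $[X_\ell, X_i X_j]$ is a second-order differential operator in the $X$'s with smooth coefficients — but crucially, since the $X_i$ are $\delta_\lambda$-homogeneous of degree $1$, the commutator $[X_\ell,X_i]$ is homogeneous of degree $1$ and can be written (in a neighborhood, or globally by homogeneity arguments as in \cite{BB2}, \cite{BiB2}) as a combination $\sum_h c^h_{\ell i} X_h$ with controlled coefficients, so that $[X_\ell, X_iX_j]u$ is a sum of terms of the form (smooth coeff)$\cdot X_a X_b u$. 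The upshot is that the right-hand side above lies in $W_X^{k-1,p}$: indeed $X_\ell f \in W_X^{k-1,p}$ since $f\in W_X^{k,p}$; the term $(X_\ell a_{ij})X_iX_ju$ has $X_\ell a_{ij}\in W_X^{k-1,\infty}$ and $X_iX_ju\in W_X^{k,p}$ (this uses the inductive hypothesis applied to $u$ itself, giving $u\in W_X^{k+1,p}$, hence $X_iX_ju\in W_X^{k-1,p}$ — wait, I need $u\in W_X^{k+2,p}$, so one must be slightly careful: one first applies the inductive step to get $u\in W_X^{k+1,p}$ from $Lu\in W_X^{k-1,p}\subset W_X^{k,p}$... actually since $Lu\in W_X^{k,p}\subset W_X^{k-1,p}$, the inductive hypothesis directly gives $u\in W_X^{k+1,p}$ with control, so $X_iX_ju\in W_X^{k-1,p}$), and products of a $W_X^{k-1,\infty}$ function with a $W_X^{k-1,p}$ function lie in $W_X^{k-1,p}$ by a Leibniz/product rule for these Sobolev spaces.

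Having shown $L(X_\ell u)\in W_X^{k-1,p}$ with a quantitative bound by $\|Lu\|_{W_X^{k,p}} + \|u\|_{L^p}$ (using both the inductive estimate for $u$ and the elementary product estimates), I would then apply the inductive hypothesis to $v:=X_\ell u$: since $a_{ij}\in W_X^{k-1,\infty}$ (being in $W_X^{k,\infty}$), $v\in W_X^{2,p}$ (because $u\in W_X^{k+1,p}$ with $k\ge 1$), and $Lv\in W_X^{k-1,p}$, we conclude $X_\ell u = v\in W_X^{k+1,p}$ with
\[
\|X_\ell u\|_{W_X^{k+1,p}} \le c\left\{ \|L(X_\ell u)\|_{W_X^{k-1,p}} + \|X_\ell u\|_{L^p}\right\} \le c\left\{ \|Lu\|_{W_X^{k,p}} + \|u\|_{L^p}\right\}.
\]
Summing over $\ell=1,\dots,m$ and combining with the bound on $u$ already obtained gives $u\in W_X^{k+2,p}$ with the desired estimate, completing the induction. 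A technical point to handle rigorously at the start is the justification that all these manipulations — differentiating the equation, the commutator identities, the product rule — are valid at the level of weak derivatives; this is done by the usual approximation argument, using Theorem \ref{Thm approx} to approximate $u$ by smooth functions in the relevant $W_X$-norms, performing the computations classically, and passing to the limit.

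The main obstacle I expect is not any single deep estimate but rather the careful bookkeeping of regularity classes in the commutator and product terms: one must verify that $[X_\ell, X_iX_j]$, when expanded, produces only operators of order $\le 2$ in the $X$'s with coefficients that are themselves smooth and, more importantly, have all their $X$-derivatives bounded (so that multiplying by them preserves $W_X^{k-1,p}$) — this relies essentially on the homogeneity assumption (H.1), which forces the structure constants of the Lie algebra generated by the $X_i$ to be well-behaved under dilations, and is the place where leaving the Carnot group setting costs the most effort. One must also make sure the product rule $\|fg\|_{W_X^{k-1,p}}\le c\|f\|_{W_X^{k-1,\infty}}\|g\|_{W_X^{k-1,p}}$ is available in this generality (it follows from iterating the Leibniz rule $X_\ell(fg) = (X_\ell f)g + f(X_\ell g)$, which is elementary), and that the base case is genuinely covered by Theorem \ref{Thm main} — since $W_X^{k,\infty}(\mathbb{R}^n)\subset VMO(\mathbb{R}^n)$ with respect to the homogeneous-type structure (bounded functions with bounded $X$-gradient have vanishing mean oscillation on small balls), assumption (H.5) is automatically satisfied and Theorem \ref{Thm main} applies.
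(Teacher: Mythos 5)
There is a genuine gap at the heart of your induction: the claim that $[X_\ell, X_iX_j]u$ can be rewritten as a sum of terms of the form $(\text{smooth coeff})\cdot X_aX_b u$. Since each $X_i$ is $\delta_\lambda$-homogeneous of degree $1$, the bracket $[X_\ell,X_i]$ is homogeneous of degree $2$, and under the standing hypotheses ($n>m$ and the H\"ormander condition) it is generically a genuinely \emph{new} direction, not expressible as $\sum_h c^h_{\ell i}(x)X_h$ with bounded coefficients. For the first example in the paper, $X_1=\partial_{x_1}$, $X_2=x_1\partial_{x_2}$, one has $[X_1,X_2]=\partial_{x_2}$, which at $x_1=0$ does not lie in the span of $X_1(0)$ and $X_2(0)=0$; the same phenomenon already occurs on the Heisenberg group, so this is not a cost of leaving the Carnot setting but is intrinsic to the sub-Riemannian structure. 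Consequently the only way to control $[X_\ell,X_iX_j]u$ in the $W_X$ scale is as a difference of two \emph{third-order} iterated derivatives $X_\ell X_iX_ju - X_iX_jX_\ell u$, so the error term $\sum_{i,j}a_{ij}[X_\ell,X_iX_j]u$, measured in $W_X^{k-1,p}$, involves $k+2$ generator-derivatives of $u$ — exactly the quantity you are trying to estimate, with no small constant available for absorption. The induction therefore does not close. (In the classical left-invariant setting this obstruction is circumvented by differentiating along \emph{right}-invariant fields, which commute with $L_A$; here no group structure is available.)

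The paper avoids differentiating the equation altogether. It proves a higher-order representation formula $X_Iu = -\sum_{|J|=k}\int_{\mathbb{R}^n} X_JL_Au(y)\,\Gamma_{A,J}(x,y)\,dy$ by integrating by parts \emph{in the lifted Carnot group} $\mathbb{G}$ (via \cite[Prop. 8.32]{BBbook}), which transfers all $k$ extra derivatives onto $L_Au$ at the level of the kernel without ever producing uncontrolled commutators; it then reruns the singular-integral, sharp-maximal and $VMO$ machinery for the new kernels $\widetilde X_i\widetilde X_j\widetilde\Gamma_{A,J}$, and only at that stage does a Leibniz expansion of $X_J(L_A-L)u$ appear (where it is harmless, since $X_JX_hX_lu$ is the top-order term being absorbed via the $VMO$ smallness). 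The qualitative statement $u\in W_X^{k+2,p}$ is obtained separately, by a known local regularity theorem combined with the global a priori estimate and cutoffs, rather than by your approximation remark, which by itself does not produce the existence of the higher-order weak derivatives.
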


\begin{remark}
We will see in Proposition \ref{Prop VMO W1inf} that
\[
W_{X}^{1,\infty}\left(  \mathbb{R}^{n}\right)  \subset VMO\left(
\mathbb{R}^{n}\right)  ,
\]
hence the assumption on the coefficients $a_{ij}$ in Theorem \ref{Thm main 2}
is actually stronger than that in Theorem \ref{Thm main}.
\end{remark}

Theorem \ref{Thm main} will be proved throughout sections 2-4; Theorem
\ref{Thm main 2} will be proved throughout sections 5-6.

\begin{example}
(1).\thinspace\thinspace Let us consider Franchi-Lanconelli-type vector fields
in $\mathbb{R}^{2}$%
\[
X_{1}=\partial_{x_{1}};X_{2}=x_{1}^{k}\,\partial_{x_{2}}\qquad\text{(with
$k\in\mathbb{N}$)},
\]
associated with the dilations $(\lambda x_{1},\lambda^{k+1}x_{2})$. These are
not left-invariant PDO's on any Lie group on $\mathbb{R}^{2}$ (when $k\geq1$)
as $x_{1}^{k}\partial_{x_{2}}$ vanishes when $x_{1}=0$ without being the null
vector field.\medskip

(2).\thinspace\thinspace Another system of vector fields to which our theory
applies is given by
\[
X_{1}=\partial_{x_{1}};X_{2}=x_{1}\partial_{x_{2}}+x_{2}\partial_{x_{3}%
}+\ldots+x_{n-1}\partial_{x_{n}}\quad\text{in }\mathbb{R}\text{$^{n}$,}%
\]
which are $\delta_{\lambda}$-homogeneous of degree $1$ (but not left invariant
on $\mathbb{R}^{n}$, for the same reason as in (1)) with respect to the
dilations $\delta_{\lambda}(x)=(\lambda x_{1},\lambda^{2}x_{2},\cdots
,\lambda^{n}x_{n})$.\medskip

(3).\thinspace\thinspace\ A further example is the system of vector fields:
\[
X_{1}=\partial_{x_{1}};X_{2}=x_{1}\,\partial_{x_{2}}+x_{1}^{2}\,\partial
_{x_{3}}\quad\text{on }\mathbb{R}\text{$^{3}$,}%
\]
which are homogeneous of degree $1$ (but not left invariant on $\mathbb{R}%
^{3}$) with respect to
\[
\delta_{\lambda}(x)=(\lambda x_{1},\lambda^{2}x_{2},\lambda^{3}x_{3}).
\]

(4).\thinspace\thinspace Finally, the operator
\[
X_{1}=\partial_{x_{1}};X_{2}=x_{1}\,\partial_{x_{2}}+x_{1}^{2}\,\partial
_{x_{3}}+\cdots+x_{1}^{n-1}\,\partial_{x_{n}}\quad\text{on }\mathbb{R}%
\text{$^{n}$}%
\]
are homogeneous of degree $1$ with respect to the same dilations as in (2),
but not left invariant on $\mathbb{R}^{n}$.
\end{example}

\section{Definitions, known results and preliminaries}

\subsection{Geometry of vector fields and real analysis
tools\label{sec geometry}}

Let us start with a couple of abstract definitions.

\begin{definition}
[Quasidistance]Let $X$ be a set. We say that a function $d:X\times
X\rightarrow\lbrack0,+\infty)$ is a \emph{quasidistance }in $X$ if there
exists a constant $C_{d}\geq1$ such that, for every $x,y,z\in X$:

(i) $d\left(  x,y\right)  =0\Longleftrightarrow x=y$

(ii) $d\left(  x,y\right)  =d\left(  y,x\right)  $

(iii) $d\left(  x,y\right)  \leq C_{d}\left(  d\left(  x,z\right)  +d\left(
z,y\right)  \right)  $

In this case we also say that $\left(  X,d\right)  $ is a \emph{quasimetric
space}.
\end{definition}

If $\left(  X,d\right)  $ is a quasimetric space, the $d$-balls, defined, for
$x\in X$ and $r>0$ as
\[
B_{r}\left(  x\right)  =\left\{  y\in X:d\left(  x,y\right)  <r\right\}  ,
\]
induce a topology.

\begin{definition}
[Space of homogeneous type]\label{Def spazio omogeneo}Let $\left(  X,d\right)
$ be a quasimetric space, and assume that:

(a) the $d$-balls are open w.r.t. the topology they induce;

(b) there exists a positive Borel measure $\mu$ on $X$, satisfying the
doubling property: there exists a constant $C_{\mu}\geq1$ such that
\[
0<\mu\left(  B_{2r}\left(  x\right)  \right)  \leq C_{\mu}\cdot\mu\left(
B_{r}\left(  x\right)  \right)  <\infty
\]
for every $x\in X,r>0$.

Then we say that $\left(  X,d,\mu\right)  $ is a \emph{space of homogeneous
type}, in the sense of Coifman-Weiss \cite{CW}. The constants $C_{\mu}$ in (b)
and $C_{d}$ in (iii) of the previous Definition are called the constants of
$X$.
\end{definition}

\bigskip

Let $d_{X}$ be the Carnot-Carath\'{e}odory distance (CC-distance) induced by
the system of H\"{o}rmander vector fields $X=\{X_{1},\ldots,X_{m}\}$, that
is,
\begin{equation}
d_{X}(x,y):=\inf\left\{  r>0:\,\text{there exists $\gamma\in C_{xy}(r)$%
}\right\}  , \label{eq.defdCC}%
\end{equation}
where $C_{xy}(r)$ is the set of the absolutely continuous maps $\gamma
:[0,1]\rightarrow\mathbb{R}^{n}$ satisfying $\gamma(0)=x$, $\gamma(1)=y$ and
(a.e.\thinspace on $[0,1]$):%
\[
\gamma^{\prime}(t)=\sum_{j=1}^{m}a_{j}(t)\,X_{j}(\gamma(t)),\qquad\text{with
$|a_{j}(t)|\leq r$ for all $j=1,\ldots,m$}.
\]
Given any $x\in\mathbb{R}^{n}$ and $r>0$, we denote by $B_{X}(x,r)$, or simply
$B_{r}\left(  x\right)  $, the $d_{X}$-ball of center $x$ and radius $r$. For
general facts about the CC-distance, the reader is referred to \cite[Chap.1]%
{BBbook}. Without risk of confusion, $|\cdot|$ will denote the Lebesgue
measure in $\mathbb{R}^{n}$ (whatever the $n$).

The following basic facts are proved in \cite{BBB1}:

\begin{proposition}
(See \cite[Section 3)]{BBB1}) A \emph{global doubling condition }holds in this
context: there exists $C_{d}>0$ such that
\begin{equation}
\left\vert B_{X}\left(  x,2r\right)  \right\vert \leq C\left\vert B_{X}\left(
x,r\right)  \right\vert \text{ for every }r>0\text{ and }x\in\mathbb{R}^{n}
\label{doubling}%
\end{equation}
In other words, $\left(  \mathbb{R}^{n},d_{X},\left\vert \cdot\right\vert
\right)  $ is a doubling metric space; in particular, it is a space of
homogeneous type, in the sense of Definition \ref{Def spazio omogeneo}.
\end{proposition}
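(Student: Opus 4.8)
The plan is to establish the global doubling property by first reducing it to a uniform statement at the unit scale, using the non-isotropic dilations to transfer information between scales, and then to a local statement near a fixed point via a covering argument, appealing to the classical Nagel–Stein–Wainger theory for the CC-distance of a general system of Hörmander vector fields (which gives a \emph{local} doubling property with constants uniform on compact sets). Concretely, first I would record the interaction between $\delta_\lambda$ and $d_X$: since the $X_j$ are $\delta_\lambda$-homogeneous of degree $1$, one checks that $d_X(\delta_\lambda x,\delta_\lambda y)=\lambda\, d_X(x,y)$ for every $\lambda>0$, hence $\delta_\lambda\bigl(B_X(x,r)\bigr)=B_X(\delta_\lambda x,\lambda r)$; combined with the fact that the Jacobian of $\delta_\lambda$ is the constant $\lambda^{q}$ (with $q=\sum_j\sigma_j$ the homogeneous dimension from \eqref{eq.defq}), this yields the scaling identity $|B_X(\delta_\lambda x,\lambda r)|=\lambda^{q}\,|B_X(x,r)|$.

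Next I would use this scaling to reduce \eqref{doubling} to a bound that is uniform in $r$ but still possibly depends on $x$: given $x\in\mathbb R^n$ and $r>0$, pick $\lambda=1/r$ and set $y=\delta_{1/r}x$; then $|B_X(x,2r)|/|B_X(x,r)| = |B_X(y,2)|/|B_X(y,1)|$, so it suffices to prove $|B_X(y,2)|\le C\,|B_X(y,1)|$ with $C$ independent of $y\in\mathbb R^n$. To remove the dependence on $y$, I would invoke the local doubling property of Nagel–Stein–Wainger: for any compact $K\subset\mathbb R^n$ and any $R_0>0$ there is $C_{K,R_0}$ with $|B_X(z,2\rho)|\le C_{K,R_0}|B_X(z,\rho)|$ for all $z\in K$ and $\rho\le R_0$. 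The point is then to upgrade this to a statement uniform in $z\in\mathbb R^n$ at the fixed scales $\rho\in\{1,2\}$, and here I expect one can again exploit the dilations: any $y\in\mathbb R^n$ with, say, a suitably normalized $\delta_\lambda$-homogeneous norm can be moved by a dilation into a fixed compact annulus around the origin, and the ball-ratio at scale $1$ versus $2$ is dilation-invariant only up to changing the radii, so one combines finitely many applications of the local estimate together with the scaling identity of the first paragraph. Concretely one covers $\overline{B_X(y,2)}$ by boundedly many unit balls, applies the local estimate on a fixed compact set after a dilation normalizing $y$, and sums.

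The main obstacle, and the step deserving the most care, is precisely this passage from the \emph{local, compactly-uniform} doubling of Nagel–Stein–Wainger to a \emph{globally uniform} doubling constant: a priori the constants $C_{K,R_0}$ could blow up as $K$ exhausts $\mathbb R^n$, and it is exactly the homogeneity hypothesis (H.1) that prevents this, by allowing one to conjugate an arbitrary point back to a fixed compact set via $\delta_\lambda$ at the cost of a controlled, $\delta_\lambda$-equivariant change of radius. Once the uniform unit-scale estimate $|B_X(y,2)|\le C|B_X(y,1)|$ is in hand, the general inequality \eqref{doubling} follows immediately by the rescaling in the second paragraph, and the final sentence — that $(\mathbb R^n,d_X,|\cdot|)$ is a space of homogeneous type in the sense of Definition \ref{Def spazio omogeneo} — is then automatic, since the CC-balls are open, $|\cdot|$ is a positive Borel measure finite and positive on balls, and the quasitriangle inequality for $d_X$ (indeed the triangle inequality) is classical.
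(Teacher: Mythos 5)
The paper does not actually prove this proposition: it is imported verbatim from \cite[Section 3]{BBB1} (and the stronger quantitative form (\ref{bounds on balls}) from \cite[Thm.~B]{BBB1}), so there is no in-paper argument to compare yours against. That said, your outline is sound and is, in spirit, exactly how the global estimates are obtained in the cited reference: the homogeneity (H.1) gives $d_{X}(\delta_{\lambda}x,\delta_{\lambda}y)=\lambda\,d_{X}(x,y)$, hence $\delta_{\lambda}(B_{X}(x,r))=B_{X}(\delta_{\lambda}x,\lambda r)$ and $|B_{X}(\delta_{\lambda}x,\lambda r)|=\lambda^{q}|B_{X}(x,r)|$, which correctly reduces (\ref{doubling}) to a unit-scale statement uniform in the center; the dilation $\delta_{1/\rho(y)}$ (with $\rho$ a $\delta_{\lambda}$-homogeneous norm on $\mathbb{R}^{n}$) then converts "uniform in $y$" into "centers in a fixed compact annulus, radii $\leq 2/\rho(y)$", where Nagel--Stein--Wainger applies.

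Two steps in your sketch deserve to be made explicit, because as written they are where a gap could hide. First, the normalizing dilation only helps when $\rho(y)$ is \emph{large} (so the rescaled radii $1/\rho(y),\,2/\rho(y)$ fall below the NSW threshold $R_{0}$); when $\rho(y)\leq M$ you must argue directly at the fixed radii $1,2$ with $y$ in a compact set, and there the "boundedly many unit balls" covering count cannot be justified by doubling (that would be circular). It follows instead from a Vitali-type disjointness argument plus the uniform positive lower bound $\inf_{z\in K}|B_{X}(z,R_{0}/4)|>0$ (lower semicontinuity of $z\mapsto|B_{X}(z,s)|$ on a compact set) and the finiteness of $\sup_{y\in K}|B_{X}(y,2+R_{0})|$. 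Second, that last finiteness requires knowing that CC-balls of fixed radius centered in a compact set are contained in a fixed compact set; for homogeneous (polynomial-coefficient, hence unbounded) vector fields this is not automatic from the local theory and is itself one of the global facts established in \cite{BBB1} via a comparison $d_{X}(x,y)\gtrsim$ (homogeneous quasi-distance). With those two points supplied, your argument closes.
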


A stronger quantitative bound actually holds:

\begin{proposition}
(See \cite[Thm.B]{BBB1} and \cite[Remark 3.9]{BiB1}) There exist positive
constants $c_{1},c_{2}$ such that for every $x\in\mathbb{R}^{n}$ and $R>r>0$
we have:%
\begin{equation}
c_{1}\left(  \frac{R}{r}\right)  ^{n}\leq\frac{\left\vert B_{X}\left(
x,R\right)  \right\vert }{\left\vert B_{X}\left(  x,r\right)  \right\vert
}\leq c_{2}\left(  \frac{R}{r}\right)  ^{q}, \label{bounds on balls}%
\end{equation}
with $q$ as in (\ref{eq.defq}).
\end{proposition}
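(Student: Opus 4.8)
The plan is to derive the two-sided estimate \eqref{bounds on balls} from the exact scaling behavior of the CC-balls under the dilations $\delta_\lambda$ together with the global doubling property \eqref{doubling}, and from a comparison between the CC-balls at the origin and at a general point. First I would record the fundamental homogeneity identity for the metric: since each $X_j$ is $\delta_\lambda$-homogeneous of degree $1$, the CC-distance satisfies $d_X(\delta_\lambda x,\delta_\lambda y)=\lambda\, d_X(x,y)$, and consequently $B_X(0,R)=\delta_{R/r}\big(B_X(0,r)\big)$. Because the Jacobian of $\delta_\lambda$ is $\lambda^{q}$ with $q$ as in \eqref{eq.defq}, this gives the \emph{exact} equality $|B_X(0,R)|=(R/r)^{q}\,|B_X(0,r)|$ for all $R,r>0$. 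Thus at the origin the upper bound in \eqref{bounds on balls} holds with $c_2=1$ and is in fact an identity; the issue is (i) to transfer this to an arbitrary center $x$, and (ii) to produce the matching \emph{lower} bound with exponent $n$.

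For the transfer to a general center, I would invoke the known fact (this is precisely the content of \cite[Thm.~B]{BBB1}, which also underlies the global doubling constant) that $|B_X(x,r)|$ is comparable, uniformly in $x$ and $r$, to a quantity controlled by the polynomial structure of the vector fields; concretely one uses that the map $r\mapsto |B_X(x,r)|$ is doubling \emph{uniformly in} $x$ by \eqref{doubling}. Iterating \eqref{doubling} $k$ times gives $|B_X(x,2^k r)|\le C_d^{\,k}|B_X(x,r)|$, and writing $2^{k-1}\le R/r\le 2^k$ yields the upper bound $|B_X(x,R)|/|B_X(x,r)|\le c_2 (R/r)^{q}$ with $q=\log_2 C_d$; here one must check that the doubling constant $C_d$ from \cite{BBB1} is exactly $2^{q}$ (or at worst that $\log_2 C_d\le q$, which suffices for the stated inequality), and this is where I would lean on the sharp form of the result in \cite{BBB1} rather than on a soft iteration. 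Symmetrically, a \emph{reverse} doubling inequality $|B_X(x,r)|\le C'\, 2^{-n}|B_X(x,2r)|$ — which is available because the balls are not the whole space and the space is connected and unbounded, so $B_X(x,2r)\setminus B_X(x,r)$ has measure bounded below by a fixed fraction of $|B_X(x,r)|$ — gives, after the same dyadic iteration, the lower bound $c_1(R/r)^{n}\le |B_X(x,R)|/|B_X(x,r)|$, with the exponent $n$ coming from the fact that locally (at small scales, or after the lifting/approximation of \cite{BBlift}) the CC-metric is comparable to the Euclidean one, so small balls have measure $\gtrsim r^{n}$ while large balls have measure $\lesssim R^{q}$; the interpolation of these two regimes is exactly what \cite[Remark~3.9]{BiB1} makes precise.

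The main obstacle is pinning down the exact exponents $n$ (lower) and $q$ (upper) rather than merely \emph{some} exponents: a crude iteration of \eqref{doubling} only produces $|B_X(x,R)|/|B_X(x,r)|\le C(R/r)^{\log_2 C_d}$ with a non-explicit exponent. To get the sharp exponents one genuinely needs the structural results of \cite{BBB1}, \cite{BiB1}: the upper exponent $q$ is forced by the exact $\delta_\lambda$-scaling at the origin (Jacobian $\lambda^q$) combined with a uniform comparison of $|B_X(x,r)|$ with a $\delta_\lambda$-homogeneous ``weight'' $\Lambda(x,r)$ that is polynomial in $r$ with degrees between $n$ and $q$; the lower exponent $n$ is forced by the non-degeneracy of the vector fields (they span at each point, so the CC-metric is locally Euclidean-comparable, giving the Euclidean volume lower bound $r^n$ at unit scale, then propagated by scaling). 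So the proof is essentially a citation-and-assembly argument: combine the exact scaling at $0$, the uniform two-sided comparison of $|B_X(x,r)|$ with the homogeneous profile from \cite{BBB1}, and the endpoint behaviors $r^n$ (small $r$) and $r^q$ (large $r$), which is precisely the estimate recorded in \cite[Remark~3.9]{BiB1}. \rule{0.5em}{0.5em}
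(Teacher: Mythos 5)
The paper offers no proof of this proposition --- it is quoted directly from \cite[Thm.~B]{BBB1} and \cite[Remark 3.9]{BiB1} --- so the only thing to get right is the mechanism behind the cited result, and your closing paragraph does name it correctly: the global Nagel--Stein--Wainger-type comparison $|B_X(x,r)|\approx \sum_I|\lambda_I(x)|\,r^{d(I)}$, uniform in $x$ and $r$, together with the fact that every exponent $d(I)$ (a sum of $n$ homogeneity degrees, each at least $1$) lies in $[n,q]$. The two-sided bound then follows from the term-by-term inequalities $a_I\,r^{d(I)}(R/r)^{n}\le a_I\,R^{d(I)}\le a_I\,r^{d(I)}(R/r)^{q}$ applied to the numerator of the ratio of the two polynomials; your exact scaling identity $|B_X(0,R)|=(R/r)^{q}|B_X(0,r)|$ at the origin is also correct and shows the upper exponent is sharp.

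However, the two self-contained arguments you interpose would not work. For the upper bound, iterating \eqref{doubling} only yields the exponent $\log_2 C_d$, and the inequality $\log_2 C_d\le q$ presupposes the quantitative volume estimate one is trying to prove, so that route is circular (you acknowledge this but still present it as a step). For the lower bound, reverse doubling in a connected unbounded space gives $|B(x,2r)|\ge(1+\epsilon)|B(x,r)|$ for some $\epsilon>0$, hence a ratio bound $(R/r)^{\delta}$ with $\delta=\log_2(1+\epsilon)$: a positive exponent, but not $n$. Worse, the justification you give for the exponent $n$ is false under the standing hypotheses: since $m<n$, the fields $X_1,\dots,X_m$ do \emph{not} span $\mathbb{R}^n$ pointwise (only $\mathrm{Lie}(X)$ does), the CC-metric is not locally comparable to the Euclidean one (only $d_X\gtrsim|x-y|$ holds), and the claim that small balls have measure $\gtrsim r^{n}$ fails already at the origin, where $|B_X(0,r)|=r^{q}|B_X(0,1)|\ll r^{n}$ for small $r$ because $q>n$. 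The exponent $n$ comes from the minimal degree $d(I)\ge n$ in the volume polynomial, not from Euclidean comparability of the metric; so the proposition really is a corollary of the global volume formula of \cite{BBB1}, and cannot be recovered from doubling and soft connectivity arguments alone.
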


We can now make precise our $VMO$ assumptions on the coefficients.

\begin{definition}
\label{Def VMO}For any $f\in L_{loc}^{1}\left(  \mathbb{R}^{n}\right)  $ we
define the $VMO$ modulus of $f$ as the function%
\[
\eta_{f}\left(  r\right)  =\sup_{x\in\mathbb{R}^{n},\rho\leq r}\frac
{1}{\left\vert B_{X}\left(  x,\rho\right)  \right\vert }\int_{B_{X}\left(
x,\rho\right)  }\left\vert f\left(  y\right)  -f_{B_{X}\left(  x,\rho\right)
}\right\vert dy,
\]
for any $r>0,$ where, throughout the following, we let%
\[
f_{B}=\frac{1}{\left\vert B\right\vert }\int_{B}f\left(  y\right)  dy.
\]
We say that $f\in BMO_{X}\left(  \mathbb{R}^{n}\right)  $ if $\eta_{f}$ is
bounded; we say that $f\in VMO_{X}\left(  \mathbb{R}^{n}\right)  $ if,
moreover, $\eta_{f}\left(  r\right)  \rightarrow0$ as $r\rightarrow0^{+}$.
\end{definition}

Note that if $f\in L^{\infty}\left(  \mathbb{R}^{n}\right)  $ then obviously
$f\in BMO_{X}\left(  \mathbb{R}^{n}\right)  $ with $\eta_{f}\left(  r\right)
\leq2\left\Vert f\right\Vert _{L^{\infty}\left(  \mathbb{R}^{n}\right)  }$.
Our last assumption on the variable coefficients $a_{ij}$ will be the following:

\begin{itemize}
\item[\textbf{(H.5)}] We ask that the coefficients $a_{ij}$ in
(\ref{operators}) belong to $VMO_{X}\left(  \mathbb{R}^{n}\right)  $.
\end{itemize}

Letting, for any $R>0,$
\begin{equation}
a_{R}^{\sharp}=\max_{i,j=1,...,q}\eta_{a_{ij}}\left(  R\right)  ,
\label{mod VMO coeff}%
\end{equation}
our bounds will depend quantitatively on the coefficients through the function
$a_{R}^{\sharp}$ (and the number $\nu$ in (\ref{ellipticity})).

It is worthwhile noting the following fact:

\begin{proposition}
\label{Prop VMO W1inf}If $f\in W_{X}^{1,\infty}\left(  \mathbb{R}^{n}\right)
$ then $f\in VMO_{X}\left(  \mathbb{R}^{n}\right)  $, and%
\begin{equation}
\eta_{f}\left(  r\right)  \leq cr\sum_{i=1}^{m}\left\Vert X_{i}f\right\Vert
_{L^{\infty}\left(  \mathbb{R}^{n}\right)  }. \label{W1inf VMO}%
\end{equation}

\end{proposition}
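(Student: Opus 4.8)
The plan is to bound the VMO modulus $\eta_f(r)$ directly by estimating, on each ball $B = B_X(x,\rho)$ with $\rho \le r$, the mean oscillation $\frac{1}{|B|}\int_B |f(y) - f_B|\,dy$. The standard trick is to replace $f_B$ with the pointwise average: since $f_B$ is the value minimizing $c \mapsto \frac{1}{|B|}\int_B |f-c|$ only up to a factor of $2$, we have $\frac{1}{|B|}\int_B |f - f_B| \le \frac{2}{|B|^2}\int_B\int_B |f(y) - f(z)|\,dy\,dz$. So it suffices to show that $|f(y) - f(z)| \le c\,d_X(y,z)\sum_i \|X_i f\|_{L^\infty}$ for $y,z$ in a common ball, i.e. that a function in $W_X^{1,\infty}$ is Lipschitz with respect to the Carnot--Carathéodory distance, with the Lipschitz constant controlled by the sup-norms of the $X_i f$.

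The heart of the matter is thus this CC-Lipschitz estimate. First I would handle the smooth case: if $f \in C^\infty$ and $\gamma:[0,1]\to\mathbb{R}^n$ is a subunit curve of the type appearing in the definition \eqref{eq.defdCC}, with $\gamma' = \sum_j a_j(t) X_j(\gamma(t))$ and $|a_j| \le r$, then $\frac{d}{dt} f(\gamma(t)) = \sum_j a_j(t)(X_j f)(\gamma(t))$, so $|f(\gamma(1)) - f(\gamma(0))| \le r\sum_j \|X_j f\|_{L^\infty}$; taking the infimum over admissible curves gives $|f(y)-f(z)| \le d_X(y,z)\sum_j\|X_j f\|_{L^\infty}$. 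For general $f \in W_X^{1,\infty}(\mathbb{R}^n)$ one passes to this estimate by the approximation result Theorem \ref{Thm approx}: on any fixed ball, mollify or take the approximating sequence $u_j \to f$ in $W_X^{1,p}$ locally (for some finite $p$), pass to a subsequence converging a.e., and use that the sup-norms of $X_i u_j$ can be kept bounded by $\|X_i f\|_{L^\infty}$ up to $\varepsilon$ (this is where a little care is needed — one wants the smoothing to not increase the $L^\infty$ norm of the horizontal derivatives, which mollification in the Euclidean sense does not automatically guarantee for non-left-invariant fields, so one may instead argue that $f$ agrees a.e.\ with a CC-continuous representative satisfying the bound, using that $W_X^{1,\infty}$ functions are known to be CC-Lipschitz in this homogeneous Hörmander setting).

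Putting the pieces together: given $y, z \in B_X(x,\rho)$, the triangle inequality for $d_X$ gives $d_X(y,z) \le 2C_d \rho$, hence $|f(y) - f(z)| \le 2C_d\,\rho \sum_i\|X_i f\|_{L^\infty}$, and therefore
\[
\frac{1}{|B_X(x,\rho)|}\int_{B_X(x,\rho)}|f(y) - f_{B_X(x,\rho)}|\,dy \le 2C_d\,\rho \sum_{i=1}^m\|X_i f\|_{L^\infty(\mathbb{R}^n)} \le 2C_d\, r \sum_{i=1}^m\|X_i f\|_{L^\infty(\mathbb{R}^n)}.
\]
Taking the supremum over $x$ and $\rho \le r$ yields \eqref{W1inf VMO} with $c = 2C_d$; since the right-hand side tends to $0$ as $r \to 0^+$, we get $f \in VMO_X(\mathbb{R}^n)$. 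The main obstacle, as indicated, is the rigorous justification of the CC-Lipschitz bound for a mere $W_X^{1,\infty}$ function (as opposed to a smooth one) with the correct constant; everything else is the routine "oscillation $\le$ double average of differences" reduction together with the quasi-triangle inequality. I expect the authors either cite a known CC-Lipschitz characterization of $W_X^{1,\infty}$ or give the short mollification argument, being careful that the approximation controls the horizontal gradient in $L^\infty$.
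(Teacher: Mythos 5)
Your argument is correct in outline but takes a genuinely different route from the paper, and the step you yourself flag as the ``main obstacle'' is exactly the one the paper's choice of tools is designed to avoid. The paper does not go through a pointwise Lipschitz estimate at all: it invokes the global Poincar\'e inequality for homogeneous vector fields (\cite[Thm.\ 7.6]{BBB1}), which bounds $\frac{1}{|B_X(x,\rho)|}\int_{B_X(x,\rho)}|f-f_{B_X(x,\rho)}|$ by $c\rho$ times the average of $\bigl(\sum_i|X_if|^2\bigr)^{1/2}$ over $B_X(x,2\rho)$ for smooth $f$, and then passes to general $f\in W_X^{1,\infty}$ using Theorem \ref{Thm approx}(a), i.e.\ approximation in $W_X^{1,1}$ on the double ball. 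Since both sides of the Poincar\'e inequality are continuous under $W_X^{1,1}(B_X(x,2\rho))$ convergence, the limit passage is immediate and no $L^\infty$ control of the horizontal derivatives of the approximants is ever needed; the $L^\infty$ norm only enters at the very end, when the $L^1$ average of $|X_if|$ is bounded by the sup. Your route instead requires the metric characterization of $W_X^{1,\infty}$ (that such functions admit $d_X$-Lipschitz representatives with constant controlled by $\sum_i\|X_if\|_{L^\infty}$); this is true and available in the literature for Carnot--Carath\'eodory spaces (e.g.\ Garofalo--Nhieu, Franchi--Haj\l asz--Koskela), and your smooth-case computation along subunit curves is fine, but as you correctly observe the mollification argument does not directly yield it for non-left-invariant fields, so you would have to import that nontrivial result as a black box. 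In exchange your approach proves something strictly stronger (a pointwise oscillation bound rather than an integral one); the paper's approach buys self-containedness, since the global Poincar\'e inequality is already established in \cite{BBB1} for exactly this homogeneous setting. If you want to keep your structure, replace the Lipschitz lemma by the Poincar\'e inequality applied inside your ``double average of differences'' reduction, or simply cite the Lipschitz characterization explicitly.
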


\begin{proof}
If $f\in W_{X}^{1,\infty}\left(  \mathbb{R}^{n}\right)  $, in particular $f\in
L^{\infty}\left(  \mathbb{R}^{n}\right)  $ hence $f\in BMO_{X}\left(
\mathbb{R}^{n}\right)  $. To get the bound (\ref{W1inf VMO}), fix a ball
$B_{X}\left(  x,\rho\right)  $. If $f$ is smooth (for instance, $C^{1}\left(
B_{X}\left(  x,2\rho\right)  \right)  $), then by the global Poincar\'{e}
inequality for homogeneous vector fields (see \cite[Thm.7.6]{BBB1}) we can
write%
\begin{align}
&  \frac{1}{\left\vert B_{X}\left(  x,\rho\right)  \right\vert }\int%
_{B_{X}\left(  x,\rho\right)  }\left\vert f\left(  y\right)  -f_{B_{X}\left(
x,\rho\right)  }\right\vert dy\label{Poincare}\\
&  \leq c\rho\frac{1}{\left\vert B_{X}\left(  x,2\rho\right)  \right\vert
}\int_{B_{X}\left(  x,2\rho\right)  }\left(  \sum_{i=1}^{m}\left\vert
X_{i}f\left(  y\right)  \right\vert ^{2}\right)  ^{\frac{1}{2}}dy\nonumber
\end{align}
for some absolute contant $c$. For a general $f\in W_{X}^{1,\infty}\left(
\mathbb{R}^{n}\right)  $, by Theorem \ref{Thm approx} (a) there exists a
sequence $f_{k}\in C_{0}^{\infty}\left(  \mathbb{R}^{n}\right)  $ such that
\[
f_{k}\rightarrow f\text{ in }W_{X}^{1,1}\left(  B_{X}\left(  x,2\rho\right)
\right)
\]
hence for $f\in W_{X}^{1,\infty}\left(  \mathbb{R}^{n}\right)  $ we can still
write (\ref{Poincare}) and get%
\[
\frac{1}{\left\vert B_{X}\left(  x,\rho\right)  \right\vert }\int%
_{B_{X}\left(  x,\rho\right)  }\left\vert f\left(  y\right)  -f_{B_{X}\left(
x,\rho\right)  }\right\vert dy\leq c\rho\sum_{i=1}^{m}\left\Vert
X_{i}f\right\Vert _{L^{\infty}\left(  B_{X}\left(  x,2\rho\right)  \right)
},
\]
which implies (\ref{W1inf VMO}).
\end{proof}

We will use in the following two different kinds of \emph{maximal functions}.

\begin{definition}
\label{Def Maximal}For $f\in L_{loc}^{1}\left(  \mathbb{R}^{n}\right)  $ we
define the \emph{Hardy-Littlewood} (uncentered)\emph{ maximal function} as:
\begin{equation}
\mathcal{M}f\left(  x\right)  =\sup_{\substack{B_{X}\left(  \overline
{x},r\right)  \ni x\\\overline{x}\in\mathbb{R}^{n},r>0}}\frac{1}{\left\vert
B_{X}\left(  \overline{x},r\right)  \right\vert }\int_{B_{X}\left(
\overline{x},r\right)  }\left\vert f\left(  y\right)  \right\vert dy.
\label{maximal HL}%
\end{equation}

\end{definition}

Since $\left(  \mathbb{R}^{n},d_{X},\left\vert \cdot\right\vert \right)  $ is
a space of homogeneous type, by \cite[Thm.2.1]{CW}, , we have:

\begin{theorem}
\label{Thm Maximal}For every $p\in(1,\infty]$ there exists $c>0$ such that for
every $f\in L^{p}\left(  \mathbb{R}^{n}\right)  $
\begin{equation}
\left\Vert \mathcal{M}f\right\Vert _{L^{p}\left(  \mathbb{R}^{n}\right)  }\leq
c_{p}\left\Vert f\right\Vert _{L^{p}\left(  \mathbb{R}^{n}\right)  }.
\label{HL ineq}%
\end{equation}

\end{theorem}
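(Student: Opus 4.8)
The statement is the classical Hardy--Littlewood maximal inequality in a space of homogeneous type, so the plan is to reproduce the standard two-step argument of Coifman--Weiss \cite{CW}: establish the two endpoint bounds ($L^{\infty}\to L^{\infty}$ and weak $(1,1)$) and then interpolate.

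First, the case $p=\infty$ is immediate from the definition (\ref{maximal HL}): for every ball $B_{X}(\overline{x},r)$ the average of $|f|$ over it is at most $\|f\|_{L^{\infty}(\mathbb{R}^{n})}$, hence $\mathcal{M}f(x)\leq\|f\|_{L^{\infty}(\mathbb{R}^{n})}$ for every $x$, giving (\ref{HL ineq}) with $c_{\infty}=1$.

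The substantive step is the weak-type $(1,1)$ bound
\[
\bigl|\{x\in\mathbb{R}^{n}:\mathcal{M}f(x)>\lambda\}\bigr|\leq\frac{c}{\lambda}\,\|f\|_{L^{1}(\mathbb{R}^{n})}\qquad(\lambda>0)
\]
for $f\in L^{1}(\mathbb{R}^{n})$. I would fix $\lambda>0$ and a compact set $K\subset\{\mathcal{M}f>\lambda\}$ (a standard inner-regularity reduction, which also forces the radii of the balls below to be uniformly bounded). For each $x\in K$ pick, by definition of $\mathcal{M}$, a ball $B^{(x)}=B_{X}(\overline{x}_{x},r_{x})\ni x$ with $|B^{(x)}|^{-1}\int_{B^{(x)}}|f|>\lambda$, so that in particular $|B^{(x)}|<\lambda^{-1}\|f\|_{L^{1}(\mathbb{R}^{n})}$ and $\sup_{x}r_{x}<\infty$. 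Since $(\mathbb{R}^{n},d_{X})$ is a genuine metric space, the Vitali-type $5r$-covering lemma applies: there is a countable disjoint subfamily $\{B^{(x_{k})}\}_{k}$ with $K\subset\bigcup_{k}5B^{(x_{k})}$, where $5B$ denotes the ball with the same center and five times the radius. Iterating the global doubling property (\ref{doubling}) a bounded number of times gives $|5B^{(x_{k})}|\leq C_{d}^{3}\,|B^{(x_{k})}|$, whence
\[
|K|\leq\sum_{k}|5B^{(x_{k})}|\leq C_{d}^{3}\sum_{k}|B^{(x_{k})}|\leq\frac{C_{d}^{3}}{\lambda}\sum_{k}\int_{B^{(x_{k})}}|f|\leq\frac{C_{d}^{3}}{\lambda}\,\|f\|_{L^{1}(\mathbb{R}^{n})},
\]
the last inequality by disjointness. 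Taking the supremum over compact $K\subset\{\mathcal{M}f>\lambda\}$ yields the claim.

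Finally, $\mathcal{M}$ is sublinear, so the Marcinkiewicz interpolation theorem, applied between the weak $(1,1)$ bound and the strong $(\infty,\infty)$ bound just proved, gives the strong $(p,p)$ estimate (\ref{HL ineq}) for every $p\in(1,\infty)$, with a constant $c_{p}$ depending only on $p$ and on the doubling constant in (\ref{doubling}). The only genuinely non-formal ingredient is the covering lemma, and this is routine in the present setting because $d_{X}$ is an honest distance (so the classical $5r$-lemma is available) and the measure is globally doubling. Alternatively, one may simply invoke \cite[Thm.\,2.1]{CW} directly, since $(\mathbb{R}^{n},d_{X},|\cdot|)$ has already been recognized as a space of homogeneous type.
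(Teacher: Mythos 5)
Your proposal is correct and is essentially the paper's approach: the paper offers no proof at all but simply invokes \cite[Thm.\,2.1]{CW} after noting that $(\mathbb{R}^{n},d_{X},|\cdot|)$ is a space of homogeneous type, and what you have written out is precisely the standard Vitali-covering/weak-$(1,1)$/Marcinkiewicz argument underlying that citation (with the uniform bound on the radii coming, as you indicate, from $|B^{(x)}|<\lambda^{-1}\|f\|_{L^{1}}$ together with the lower volume growth in (\ref{bounds on balls})).
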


Another kind of maximal function, which can be introduced in any space of
homogeneous type, is the following:

\begin{definition}
\label{Def sharp}For $f\in L_{loc}^{1}\left(  \mathbb{R}^{n}\right)  $,
$x\in\mathbb{R}^{n}$, we define the \emph{sharp maximal function }of $f$ as:%
\begin{equation}
f^{\#}\left(  x\right)  =\sup_{\substack{B_{X}\left(  \overline{x},r\right)
\ni x\\\overline{x}\in\mathbb{R}^{n},r>0}}\frac{1}{\left\vert B_{X}\left(
\overline{x},r\right)  \right\vert }\int_{B_{X}\left(  \overline{x},r\right)
}\left\vert f\left(  y\right)  -f_{B_{X}\left(  \overline{x},r\right)
}\right\vert dy. \label{sharp maximal}%
\end{equation}

\end{definition}

Applying to our context the result proved in \cite[Prop.3.4]{PS} in the
general setting of spaces of homogeneous type of infinite measure, we have the
following result, generalizing the well-known Fefferman-Stein inequality which
holds in Euclidean spaces:

\begin{theorem}
\label{Thm Fefferman Stein}For every $p\in\left(  1,\infty\right)  $ there
exists $C_{p}$ (depending on $p$ and the doubling constant in (\ref{doubling}%
)) such that for every $f\in L^{\infty}\left(  \mathbb{R}^{n}\right)  $, $f$
with bounded support, we have%
\begin{equation}
\left\Vert f\right\Vert _{L^{p}\left(  \mathbb{R}^{n}\right)  }\leq
C_{p}\left\Vert f^{\#}\right\Vert _{L^{p}\left(  \mathbb{R}^{n}\right)  }.
\label{Fefferman Stein}%
\end{equation}

\end{theorem}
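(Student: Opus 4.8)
The final statement in the excerpt is Theorem~\ref{Thm Fefferman Stein}, the Fefferman–Stein inequality $\|f\|_{L^p}\le C_p\|f^\#\|_{L^p}$ in our space of homogeneous type. Below is how I would prove it (granting the doubling property and the $L^p$-boundedness of $\mathcal M$ already recorded above).

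\medskip

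The plan is to deduce the inequality from the cited result \cite[Prop.~3.4]{PS}, but since that reference is invoked in a general form, I would outline the self-contained argument that underlies it, via a good-$\lambda$ inequality relating the distribution function of $\mathcal M f$ (or rather a dyadic/Calderón–Zygmund maximal function) to that of $f^\#$. First I would set up a Calderón–Zygmund decomposition adapted to $(\mathbb R^n,d_X,|\cdot|)$: since the space is doubling and of infinite measure, for each height $\lambda>0$ the set $\{\mathcal M f>\lambda\}$ (using a suitable dyadic or Christ-type cube/ball structure) can be written as a countable union of pairwise disjoint ``stopping'' balls $B_k$ on which the average of $|f|$ is comparable to $\lambda$, with controlled overlap of the dilates. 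The infinite-measure hypothesis is what guarantees the decomposition exhausts $\{\mathcal M f > \lambda\}$ without a ``top cube'' obstruction. Because $f\in L^\infty$ with bounded support, $\mathcal M f(x)\to 0$ as $|x|\to\infty$ and $\mathcal M f\in L^p$, so all the distribution functions below are finite and the boundary terms vanish.

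\medskip

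The core estimate is the good-$\lambda$ inequality: there exist constants $A>1$ and, for every $\varepsilon>0$, a $\gamma=\gamma(\varepsilon)>0$ such that
\begin{equation}
\bigl|\{x:\mathcal M f(x)>A\lambda,\ f^\#(x)\le\gamma\lambda\}\bigr|\le\varepsilon\,\bigl|\{x:\mathcal M f(x)>\lambda\}\bigr|\qquad\text{for all }\lambda>0.\label{goodlambda}
\end{equation}
To prove \eqref{goodlambda} I would fix a stopping ball $B_k$ from the decomposition at height $\lambda$, pass to a fixed dilate $B_k^*=B_X(x_k,K r_k)$ (with $K$ depending only on the quasidistance constant), and compare $f$ on $B_k^*$ to its average $f_{B_k^*}$. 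On the one hand, if $x\in B_k$ satisfies $f^\#(x)\le\gamma\lambda$, then by definition the mean oscillation of $f$ over $B_k^*$ is at most a dimensional multiple of $\gamma\lambda$. On the other hand, splitting $\mathcal M f$ into the contribution of balls contained in (a dilate of) $B_k^*$ and the contribution of larger balls — the latter being bounded by $C\lambda$ thanks to the stopping condition of the parent ball and the doubling inequality \eqref{doubling} — one reduces, for $A$ large, the event $\{\mathcal M f>A\lambda\}\cap B_k$ to $\{\mathcal M(f\mathbf 1_{B_k^*})>A'\lambda\}$, hence to $\{\mathcal M((f-f_{B_k^*})\mathbf 1_{B_k^*})>A''\lambda\}$. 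Now the weak-$(1,1)$ bound for $\mathcal M$ (a consequence of the doubling property, already encoded in Theorem~\ref{Thm Maximal} via interpolation, or cited from \cite{CW}) gives
\[
\bigl|\{x\in B_k:\mathcal M f>A\lambda,\ f^\#\le\gamma\lambda\}\bigr|\le\frac{C}{A''\lambda}\int_{B_k^*}|f-f_{B_k^*}|\le\frac{C\,\gamma\lambda\,|B_k^*|}{A''\lambda}\le C'\frac{\gamma}{A''}\,|B_k|,
\]
using doubling to replace $|B_k^*|$ by $|B_k|$. Summing over the disjoint $B_k$ and choosing first $A$ (hence $A''$) large, then $\gamma$ small so that $C'\gamma/A''\le\varepsilon$, yields \eqref{goodlambda}.

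\medskip

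With \eqref{goodlambda} in hand the conclusion is the standard integration-in-$\lambda$ argument: multiply \eqref{goodlambda} by $p\lambda^{p-1}$, integrate over $\lambda\in(0,\infty)$, and use the layer-cake formula together with the change of variables $\lambda\mapsto A\lambda$ to obtain
\[
\|\mathcal M f\|_{L^p}^p\le A^p\varepsilon\,\|\mathcal M f\|_{L^p}^p+A^p\gamma^{-p}\|f^\#\|_{L^p}^p.
\]
Since $f\in L^\infty$ with bounded support we have already noted $\|\mathcal M f\|_{L^p}<\infty$, so the first term on the right can be absorbed into the left once $\varepsilon$ is chosen with $A^p\varepsilon<1/2$; this gives $\|\mathcal M f\|_{L^p}\le C_p\|f^\#\|_{L^p}$, and since $|f|\le\mathcal M f$ a.e.\ by the Lebesgue differentiation theorem (valid in doubling spaces), \eqref{Fefferman Stein} follows. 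The main obstacle, and the only place where real care is needed, is the good-$\lambda$ inequality \eqref{goodlambda} — specifically, producing a Calderón–Zygmund / dyadic-cube decomposition valid in a merely doubling quasimetric space of infinite measure and controlling the ``far'' part of $\mathcal M f$ on each stopping ball by $C\lambda$; everything else is bookkeeping. Since this is precisely the content of \cite[Prop.~3.4]{PS}, in the paper itself I would simply cite that result, as is done in the statement, and the proof reduces to the one-line verification that $(\mathbb R^n,d_X,|\cdot|)$ is a doubling space of infinite measure, which is immediate from \eqref{doubling} and \eqref{bounds on balls}.
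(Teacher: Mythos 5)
Your proposal is correct and matches the paper's approach: the paper offers no proof of this theorem, deriving it directly from the cited result \cite[Prop.~3.4]{PS} for spaces of homogeneous type of infinite measure, exactly as you conclude one should do. Your supplementary sketch of the underlying good-$\lambda$ argument (Calder\'on--Zygmund stopping balls for $\{\mathcal{M}f>\lambda\}$, control of the far part of $\mathcal{M}f$ by $C\lambda$ via the parent ball, weak $(1,1)$ applied to $(f-f_{B_k^*})\mathbf{1}_{B_k^*}$, integration in $\lambda$ and absorption using $\|\mathcal{M}f\|_{L^p}<\infty$, and finally $|f|\le\mathcal{M}f$ a.e.) is the standard proof and is sound, including the correct identification of where the hypotheses $f\in L^\infty$ with bounded support and $\mu(X)=\infty$ are used.
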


We will sometimes need the notion of (compactly supported) H\"{o}lder
continuous function (with respect to the distance $d_{X}$). The definition is
quite natural:

\begin{definition}
\label{Def Holder}We say that a continuous, compactly supported function
$f:\mathbb{R}^{n}\rightarrow\mathbb{R}$ belongs to $C_{0}^{\alpha}\left(
\mathbb{R}^{n}\right)  $ for some $\alpha\in\left(  0,1\right)  $ if%
\[
\left\vert f\right\vert _{\alpha}=\sup_{x\neq y}\frac{\left\vert f\left(
x\right)  -f\left(  y\right)  \right\vert }{d_{X}\left(  x,y\right)  ^{\alpha
}}<\infty.
\]

\end{definition}

\subsection{Fundamental solution \newline of the constant coefficient
operator}

Next, we need to introduce a key object, the fundamental solution of an
operator of kind (\ref{operators}) but with \emph{constant }$a_{ij}$. This
function has been built by Biagi-Bonfiglioli in \cite{BBlift}, and the study
of its properties has been carried out by Biagi-Bonfiglioli-Bramanti in
\cite{BBB1}. In both papers the operator under study is just the sum of
squares
\[
\sum_{i=1}^{q}X_{i}^{2},
\]
i.e. $a_{ij}=\delta_{ij}$. However, as noted in \cite[Remark 2.3 and Section
3]{BiB2} in the context of heat kernels (but the same reasoning applies to
stationary operators), the same construction works for any operator
(\ref{operators}) with constant matrix $A=\left(  a_{ij}\right)  $, and all
the constants appearing in the quantitative bounds involving this fundamental
solution can be uniformly controlled for $A$ ranging in the ellipticity class
defined by (\ref{ellipticity}). In the following, we will recall the known
results that we need, already stated in the suitable uniform form. In the
Appendix at the end of the paper we will give a more detailed justification of
the results stated here below.

The construction of the fundamental solution of $L$ performed in \cite{BBlift}
exploits a \emph{lifting procedure}: the operator $L$ is lifted to a
sublaplacian $\mathcal{L}_{\mathbb{G}}$ on a (strictly higher dimensional)
Carnot group $(\mathbb{G},\ast)$. After an appropriate change of variable
(performed in \cite{BBlift}), one can suppose that the manifold of
$\mathbb{G}$ takes the product form $\mathbb{G}=\mathbb{R}_{x}^{n}%
\times\mathbb{R}_{\xi}^{p}$, with $p=N-n$. Under assumption (H.3), this $p$ is
at least $1$. We are now going to review this result, which describes a
context which will be important throughout the paper. In what follows, we
refer to \cite[Chaps. 3, 6]{BBbook} for the notions of homogeneous Carnot
group, sublaplacian, and its global homogeneous fundamental solution, first
built by Folland in \cite{Fo2}.

\begin{theorem}
\label{Thm lifting}(See \cite[Thms 3.2 and 4.4]{BBlift}). Assume that
$X=\{X_{1},\ldots,X_{m}\}$ satisfies \emph{(H.1)}-\emph{(H.3)}. Then:

\emph{(1).} There exist a homogeneous Carnot group $\mathbb{G}=(\mathbb{R}%
^{N},\ast,D_{\lambda})$ of homogeneous dimension $Q>q$ and a system
$\{\widetilde{X}_{1},\ldots,\widetilde{X}_{m}\}$ of Lie-generators of
$\mathrm{Lie}(\mathbb{G})$ such that $\widetilde{X}_{i}$ is a lifting of
$X_{i}$ for every $i=1,\ldots,m$; by this we mean that
\begin{equation}
\widetilde{X}_{i}(x,\xi)=X_{i}(x)+R_{i}(x,\xi), \label{lifting}%
\end{equation}
where $R_{i}(x,\xi)$ is a smooth vector field operating only in the variable
$\xi\in\mathbb{R}^{p}$, with coefficients possibly depending on $(x,\xi)$. In
particular, the $\widetilde{X}_{i}$'s are $D_{\lambda}$-homogeneous of degree
$1$. \medskip The dilations in $\mathbb{G}$ have the following structure:
\begin{equation}
D_{\lambda}:\mathbb{R}^{N}=\mathbb{R}^{n}\times\mathbb{R}^{p}\longrightarrow
\mathbb{R}^{N},\qquad D_{\lambda}(x,\xi)=(\delta_{\lambda}(x),E_{\lambda}%
(\xi)), \label{Dilataz RN}%
\end{equation}
where $E_{\lambda}(\xi)=(\lambda^{\tau_{1}}\xi,\ldots,\lambda^{\tau_{p}}%
\xi_{p})$ for suitable integers $1\leq\tau_{1}\leq\cdots\leq\tau_{p}$.

\emph{(2).} If $A=\left(  a_{ij}\right)  $ is a constant matrix satisfying
(\ref{ellipticity}) and $\widetilde{\Gamma}_{A}$ is the \emph{(}unique\emph{)}
smooth fundamental solution of $\widetilde{L}_{A}=\sum_{i,j=1}^{m}%
a_{ij}\widetilde{X}_{i}\widetilde{X}_{j}$ vanishing at infinity
con\-struc\-ted in \cite{Fo2}, then
\[
L_{A}=\sum_{i,j=1}^{m}a_{ij}X_{i}X_{j}%
\]
admits a global fundamental solution $\Gamma_{A}(x;y)$ of the form
\begin{equation}
\Gamma_{A}(x;y)=\int_{\mathbb{R}^{p}}\widetilde{\Gamma}_{A}\left(
(x,0);(y,\eta)\right)  \,d\eta\ (\text{for $x\neq y$ in $\mathbb{R}^{n}$}).
\label{sec.one:mainThm_defGamma}%
\end{equation}
This means that the map $y\mapsto\Gamma_{A}(x;y)$ is locally integrable on
$\mathbb{R}^{n}$ and%
\begin{equation}
\int_{\mathbb{R}^{n}}\Gamma_{A}(x;y)\,L_{A}\varphi(y)\,dy=-\varphi
(x)\qquad\text{for every $\varphi\in C_{0}^{\infty}(\mathbb{R}^{n})$ and every
$x\in\mathbb{R}^{n}$.} \label{repr formula 0}%
\end{equation}
Furthermore, setting $\Gamma_{A,\mathbb{G}}(\cdot):=\widetilde{\Gamma}%
_{A}(0;\cdot)$, the integrand in (\ref{sec.one:mainThm_defGamma}) takes the
convolution form
\begin{equation}
\widetilde{\Gamma}_{A}\left(  (x,0);(y,\eta)\right)  =\Gamma_{A,\mathbb{G}%
}\left(  (x,0)^{-1}\ast(y,\eta)\right)  , \label{sec.one:mainThm_defGamma2}%
\end{equation}
valid for any $x\neq y$, so that (\ref{sec.one:mainThm_defGamma}) becomes
\begin{equation}
\Gamma_{A}(x;y)=\int_{\mathbb{R}^{p}}\Gamma_{A,\mathbb{G}}\left(
(x,0)^{-1}\ast(y,\eta)\right)  \,d\eta\qquad(\text{for $x\neq y$ in
$\mathbb{R}^{n}$}). \label{sec.one:mainThm_defGamma22222}%
\end{equation}

\emph{(3).} $\Gamma_{A}$ is smooth out of the diagonal; it is symmetric in
$x,y$, strictly positive, locally integrable on $\mathbb{R}^{n}\times
\mathbb{R}^{n}$, vanishes when $x$ or $y$ go to infinity, and it is jointly
homogeneous of degree $2-q<0$, i.e.,
\begin{equation}
\Gamma_{A}\left(  \delta_{\lambda}(x);\delta_{\lambda}(y)\right)
=\lambda^{2-q}\,\Gamma(x,y),\qquad x\neq y,\,\,\lambda>0.
\label{sec.one:mainThm_defGamma3}%
\end{equation}

\end{theorem}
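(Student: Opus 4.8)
The plan is to prove the three assertions in order. The first is, in essence, the Biagi--Bonfiglioli lifting theorem, which I would recall in the form convenient here. Set $\mathfrak{g}=\mathrm{Lie}(X)$. Since each $X_i$ is $\delta_\lambda$-homogeneous of degree $1$, every iterated commutator of length $j$ is $\delta_\lambda$-homogeneous of degree $j$; writing $\mathfrak{g}_j$ for the span of the vector fields in $\mathfrak{g}$ that are $\delta_\lambda$-homogeneous of degree $j$, one checks $[\mathfrak{g}_i,\mathfrak{g}_j]\subseteq\mathfrak{g}_{i+j}$, $\mathfrak{g}_1=\mathrm{span}\{X_1,\dots,X_m\}$ (using the linear independence in (H.2)), and $\mathfrak{g}=\mathfrak{g}_1\oplus\cdots\oplus\mathfrak{g}_r$ with $r=\sigma_n$; by (H.2)--(H.3) this is a stratified nilpotent Lie algebra of dimension $N$. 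The connected, simply connected group $\mathbb{G}$ with $\mathrm{Lie}(\mathbb{G})=\mathfrak{g}$, realized on $\mathbb{R}^N$ through exponential coordinates adapted to the stratification, is then a homogeneous Carnot group whose dilations have the stated form (\ref{Dilataz RN}), the original weights $\sigma_1,\dots,\sigma_n$ reappearing on the coordinates covered by $\pi\colon(x,\xi)\mapsto x$ and the remaining weights $\tau_1,\dots,\tau_p$ on the complementary ones. One checks that $\pi$ is a submersion with $\pi_\ast\widetilde{X}_i=X_i$, which is exactly (\ref{lifting}) with $R_i$ operating in $\xi$ only, and that $Q=q+\sum_k\tau_k>q$.

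For (2), diagonalizing the quadratic form one writes $\widetilde{L}_A=\sum_{k=1}^m\widetilde{Y}_k^2$ with $\widetilde{Y}_k=\sum_i c_{ki}(A)\widetilde{X}_i$, the $c_{ki}$ depending smoothly on the symmetric positive matrix $A$; the $\widetilde{Y}_k$ are again Lie-generators of $\mathrm{Lie}(\mathbb{G})$, so $\widetilde{L}_A$ is a sublaplacian and Folland's theorem \cite{Fo2} furnishes a unique fundamental solution $\widetilde{\Gamma}_A$, smooth off the diagonal, $D_\lambda$-homogeneous of degree $2-Q$, symmetric, strictly positive, vanishing at infinity, of the left-invariant form (\ref{sec.one:mainThm_defGamma2}). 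Since $2<q<Q$ and the $\xi$-slice carries homogeneous dimension $\sum_k\tau_k=Q-q$, the slice integral (\ref{sec.one:mainThm_defGamma}) converges absolutely, locally in $y$, defining a function homogeneous of degree $2-q$ (hence locally integrable, as $q>2$), with the $\eta$-tail controlled by the degree-$(2-Q)$ decay since $d_\mathbb{G}((x,0),(y,\eta))\to\infty$ as $|\eta|\to\infty$. To obtain (\ref{repr formula 0}) I would run a descent argument of Rothschild--Stein type: for $\varphi\in C_0^\infty(\mathbb{R}^n)$ one has $\widetilde{L}_A(\varphi\circ\pi)=(L_A\varphi)\circ\pi$ (the $R_i$'s annihilate functions independent of $\xi$), so testing the identity $\widetilde{L}_A\widetilde{\Gamma}_A((x,0);\cdot)=-\delta_{(x,0)}$ against $(\varphi\circ\pi)\cdot\chi_R$, with $\chi_R$ a suitable cutoff in $\xi$ equal to $1$ near $0$, and letting $R\to\infty$ kills the lower-order error terms coming from the product rule by dominated convergence (using the quantitative decay of $\widetilde{\Gamma}_A$ and of its first and second derivatives, together with $Q-q>0$), and Fubini leaves $\int_{\mathbb{R}^n}\Gamma_A(x;y)\,L_A\varphi(y)\,dy=-(\varphi\circ\pi)(x,0)=-\varphi(x)$. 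Formula (\ref{sec.one:mainThm_defGamma22222}) is just (\ref{sec.one:mainThm_defGamma}) rewritten via (\ref{sec.one:mainThm_defGamma2}).

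Assertion (3) is then a list of consequences of the representation (\ref{sec.one:mainThm_defGamma}). Smoothness off the diagonal follows by differentiating under the integral sign, legitimate by the locally uniform decay of the $\xi$-derivatives of $\widetilde{\Gamma}_A$; local integrability on $\mathbb{R}^n\times\mathbb{R}^n$ and vanishing as $x$ or $y\to\infty$ follow from the homogeneity degree $2-q<0$ and, for the latter, from the corresponding property of $\widetilde{\Gamma}_A$; strict positivity is inherited from $\widetilde{\Gamma}_A>0$. The homogeneity (\ref{sec.one:mainThm_defGamma3}) comes from the substitution $\eta=E_\lambda(\eta')$ in (\ref{sec.one:mainThm_defGamma}): then $d\eta=\lambda^{Q-q}d\eta'$, $(\delta_\lambda x,0)=D_\lambda(x,0)$, $(\delta_\lambda y,E_\lambda\eta')=D_\lambda(y,\eta')$, and $D_\lambda$-homogeneity of $\widetilde{\Gamma}_A$ of degree $2-Q$ produces the factor $\lambda^{2-Q}$, whence $\lambda^{Q-q}\cdot\lambda^{2-Q}=\lambda^{2-q}$. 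Symmetry follows from self-adjointness of $\widetilde{L}_A$ on $\mathbb{G}$ (left-invariant vector fields are skew-adjoint with respect to the bi-invariant, i.e. Lebesgue, Haar measure), which gives $\widetilde{\Gamma}_A(g;g')=\widetilde{\Gamma}_A(g';g)$, i.e. $\Gamma_{A,\mathbb{G}}(z)=\Gamma_{A,\mathbb{G}}(z^{-1})$ by (\ref{sec.one:mainThm_defGamma2}); inserting this in (\ref{sec.one:mainThm_defGamma22222}) and performing a measure-preserving change of the fibre variable (the $\xi$-coordinates lie in strictly higher layers, so fibre translations have unit Jacobian and preserve each $\mathbb{R}^n\times\{\cdot\}$) turns $\Gamma_A(x;y)$ into $\Gamma_A(y;x)$.

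The step I expect to be the main obstacle is the rigorous justification of the descent identity (\ref{repr formula 0}): passing from a genuine test function on $\mathbb{G}$ to $\varphi\circ\pi$, which is not compactly supported in the fibre, requires quantitative pointwise decay estimates on $\widetilde{\Gamma}_A$ and its derivatives up to order two, and these must be \emph{uniform} as $A$ ranges over the ellipticity class (\ref{ellipticity}). The uniformity itself has to be argued: $A\mapsto\widetilde{Y}_k(A)$ is smooth and the ellipticity class is compact, so Folland's construction and its bounds depend continuously on $A$, and a normal-families argument on the $\widetilde{\Gamma}_A$'s then upgrades this to uniform estimates on the compact class. Once these estimates and the group-theoretic set-up of (1) are in place, the remaining points (stratification, symmetry, homogeneity) are routine; the statement in the form recalled here is established in \cite{BBlift} and \cite{BBB1}, with the uniform-in-$A$ version discussed in \cite{BiB2} and in the Appendix.
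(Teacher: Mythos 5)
Your outline is correct and reaches the statement, but it routes the reduction to a general constant matrix $A$ differently from the paper's Appendix. The paper takes the symmetric square root $S$ of $A$, sets $Y_{j}=\sum_{i}s_{ij}X_{i}$ so that $L_{A}=\sum_{j}Y_{j}^{2}$ is again a sum of squares of a homogeneous H\"ormander system on $\mathbb{R}^{n}$, and then applies the lifting and saturation theorems of \cite{BBlift} verbatim to $Y$; the only genuinely new ingredient there is the observation (via \cite{BiB1}) that the lifting construction depends only on $\mathrm{Lie}(Y)=\mathrm{Lie}(X)$ and the adapted basis, so that the lifted group for $Y$ is the same $(\mathbb{G},\ast,D_{\lambda})$ with $\widetilde{Y}_{j}=\sum_{i}s_{ij}\widetilde{X}_{i}$. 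You instead diagonalize upstairs, writing $\widetilde{L}_{A}=\sum_{k}\widetilde{Y}_{k}^{2}$ on the fixed group $\mathbb{G}$ and rerunning the descent argument directly for the general operator. Both routes are valid: yours avoids the ``same lifting group'' observation at the price of re-verifying the saturation identity (\ref{repr formula 0}) for $\widetilde{L}_{A}$, which you correctly flag as the delicate step and which does go through because only the $D_{\lambda}$-homogeneity and decay of $\widetilde{\Gamma}_{A}$ enter, with $q>2$ killing the cutoff errors on the fibre. Two points in your sketch are looser than they should be, though neither is fatal given that the statement explicitly cites \cite{BBlift}: first, local integrability of $\Gamma_{A}$ on $\mathbb{R}^{n}\times\mathbb{R}^{n}$ does not follow from the joint homogeneity of degree $2-q<0$ alone (the function can still blow up along the diagonal of the homogeneous unit sphere); one needs the pointwise bound $\Gamma_{A}(x;y)\leq c\,d_{X}(x,y)^{2}/\left\vert B_{X}(x,d_{X}(x,y))\right\vert$ coming from the slice-integral estimate of \cite{BBB1}. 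Second, realizing $\mathbb{G}$ on $\mathbb{R}^{n}\times\mathbb{R}^{p}$ so that $\pi$ is literally the coordinate projection and $\pi_{\ast}\widetilde{X}_{i}=X_{i}$ is not automatic from generic exponential coordinates adapted to the stratification; it requires the specific change of variables performed in \cite{BBlift}, which you (like the paper) are entitled to cite rather than reprove.
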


\begin{notation}
Note that, throughout the paper, $q$ will always denote the homogeneous
dimension of $\mathbb{R}^{n}$, related to our original set of vector fields
$X_{i}$ (see (\ref{eq.defq})), while $Q$ will always denote the homogeneous
dimension of the Carnot group $\mathbb{G}$ obtained by the lifting procedure
described in Theorem \ref{Thm lifting}. In particular, $Q>q.$
\end{notation}

The next two theorems collects the fundamental properties and (uniform)
estimates proved in \cite{BBB1} for $\Gamma_{A}$.

\begin{theorem}
\label{th.teoremone}(See \cite[Thm.1.3]{BBB1}) Assume that $X=\{X_{1}%
,\ldots,X_{m}\}$ satisfies \emph{(H.1)}-\emph{(H.3)}, the constant matrix
$A=\left(  a_{ij}\right)  $ satisfies (\ref{ellipticity}) and let $\Gamma_{A}$
and $\Gamma_{A,\mathbb{G}}$ be as in Theorem \ref{Thm lifting}. Then:\medskip

\emph{(I).} For any $s,t\geq1$, and any choice of $i_{1},\ldots,i_{s}%
,j_{1},\ldots,j_{t}\in\{1,\ldots,m\}$, we have the following representation
formulas for the $X$-derivatives of $\Gamma_{A}$ \emph{(}holding true for
$x\neq y$ in $\mathbb{R}^{n}$\emph{)}:
\begin{align*}
X_{j_{1}}^{x}\cdots X_{j_{t}}^{x}\left(  \Gamma_{A}(\cdot;y)\right)  (x)  &
=\int_{\mathbb{R}^{p}}\left(  \widetilde{X}_{j_{1}}\cdots\widetilde{X}_{j_{t}%
}\Gamma_{A,\mathbb{G}}\right)  \left(  (y,0)^{-1}\ast(x,\eta)\right)
\,d\eta\,;\\[0.2cm]
X_{i_{1}}^{y}\cdots X_{i_{s}}^{y}\left(  \Gamma_{A}(x;\cdot)\right)  (y)  &
=\int_{\mathbb{R}^{p}}\left(  \widetilde{X}_{i_{1}}\cdots\widetilde{X}_{i_{s}%
}\Gamma_{A,\mathbb{G}}\right)  \left(  (x,0)^{-1}\ast(y,\eta)\right)
\,d\eta\,;
\end{align*}%
\begin{align*}
&  X_{j_{1}}^{x}\cdots X_{j_{t}}^{x}X_{i_{1}}^{y}\cdots X_{i_{s}}^{y}%
\Gamma_{A}(x;y)\\
&  =\int_{\mathbb{R}^{p}}\left(  \widetilde{X}_{j_{1}}\cdots\widetilde{X}%
_{j_{t}}\left(  \left(  \widetilde{X}_{i_{1}}\cdots\widetilde{X}_{i_{s}}%
\Gamma_{A,\mathbb{G}}\right)  \circ\iota\right)  \right)  \left(
(y,0)^{-1}\ast(x,\eta)\right)  \,d\eta\,.
\end{align*}
Here $\iota$ denotes the inversion map of the Lie group $\mathbb{G}$.\medskip

\emph{(II).} For any integer $r\geq1$ there exists $C_{r}>0$ such that
\[
\left\vert Z_{1}\cdots Z_{r}\Gamma_{A}(x;y)\right\vert \leq C_{r}\,\frac
{d_{X}(x,y)^{2-r}}{\left\vert B_{X}(x,d_{X}(x,y))\right\vert },
\]
for any $x,y\in\mathbb{R}^{n}$ \emph{(}with $x\neq y$\emph{)} and any choice
of
\[
Z_{1},\ldots,Z_{r}\in\left\{  X_{1}^{x},\ldots,X_{m}^{x},X_{1}^{y}%
,\ldots,X_{m}^{y}\right\}  .
\]
In particular, for every fixed $x\in\mathbb{R}^{n}$ we have
\[
\lim_{|y|\rightarrow\infty}Z_{1}\cdots Z_{r}\Gamma_{A}(x;y)=0.
\]
The constant $C_{r}$ depends on the matrix $A$ only through the number $\nu$
in (\ref{ellipticity}).
\end{theorem}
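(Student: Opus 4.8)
The plan is to reduce everything to the Carnot group $\mathbb{G}$ of Theorem \ref{Thm lifting} and then invoke Folland's theory there. In the special case $A=I$ (so $L_{A}=\sum_{i=1}^{m}X_{i}^{2}$) the statement is exactly \cite[Thm.\ 1.3]{BBB1}, so the only genuinely new task is to check that the argument goes through verbatim for an arbitrary constant $A$ in the ellipticity class (\ref{ellipticity}), with all constants controlled by $\nu$ alone. For this one observes that $\widetilde{L}_{A}=\sum_{i,j}a_{ij}\widetilde{X}_{i}\widetilde{X}_{j}$ is, for every such $A$, a left-invariant, $D_{\lambda}$-homogeneous second order operator on $\mathbb{G}$ satisfying H\"ormander's condition (the $\widetilde{X}_{i}$ Lie-generate $\mathrm{Lie}(\mathbb{G})$), so Folland's theory \cite{Fo2} (see also \cite[Chap.\ 5]{BBbook}) produces its global, $D_{\lambda}$-homogeneous (of degree $2-Q$) fundamental solution $\Gamma_{A,\mathbb{G}}$, smooth off the origin, together with bounds
\[
\bigl|\widetilde{X}_{k_{1}}\cdots\widetilde{X}_{k_{r}}\Gamma_{A,\mathbb{G}}(z)\bigr|\leq C_{r}\,\Vert z\Vert_{\mathbb{G}}^{2-Q-r}\qquad(z\neq 0),
\]
where $\Vert\cdot\Vert_{\mathbb{G}}$ is a fixed homogeneous norm; the uniformity $C_{r}=C_{r}(\nu)$ comes from the compactness of the ellipticity class and the continuous dependence of Folland's fundamental solution on the operator, the careful justification of which I would defer to the Appendix, along the lines of \cite[Remark 2.3]{BiB2}. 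Once this is granted, the identities (\ref{sec.one:mainThm_defGamma})--(\ref{sec.one:mainThm_defGamma22222}) transfer all information from $\Gamma_{A,\mathbb{G}}$ to $\Gamma_{A}$.

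For Part (I), the engine I would use is the elementary identity: if $F\in C^{\infty}(\mathbb{R}^{N})$ decays, together with its $\widetilde{X}$-derivatives, fast enough in $\xi\in\mathbb{R}^{p}$, then $X_{j}^{x}\int_{\mathbb{R}^{p}}F(x,\eta)\,d\eta=\int_{\mathbb{R}^{p}}(\widetilde{X}_{j}F)(x,\eta)\,d\eta$. This holds because $\widetilde{X}_{j}=X_{j}^{x}+R_{j}$ with $R_{j}$ acting only in $\xi$ (see (\ref{lifting})), and the triangular (stratified) structure of the lifted fields makes $R_{j}$ divergence-free in $\xi$, so $\int_{\mathbb{R}^{p}}(R_{j}F)(x,\eta)\,d\eta=0$ by integration by parts, while $X_{j}^{x}$ passes under the integral by dominated convergence (this is where the decay of $F$ is used). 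Iterating $t$ times (the required decay being preserved at each stage by the Folland bounds above) and applying the result to $F(x,\xi)=\Gamma_{A,\mathbb{G}}\bigl((y,0)^{-1}\ast(x,\xi)\bigr)$ -- using left-invariance of the $\widetilde{X}_{j}$ to turn $\widetilde{X}_{j_{1}}\cdots\widetilde{X}_{j_{t}}F$ into $\bigl(\widetilde{X}_{j_{1}}\cdots\widetilde{X}_{j_{t}}\Gamma_{A,\mathbb{G}}\bigr)((y,0)^{-1}\ast(\cdot))$, and recalling that $\Gamma_{A}(x;y)=\int_{\mathbb{R}^{p}}\Gamma_{A,\mathbb{G}}((y,0)^{-1}\ast(x,\eta))\,d\eta$ by symmetry of $\Gamma_{A}$ and (\ref{sec.one:mainThm_defGamma22222}) -- gives the first representation formula; the second follows from it by the symmetry $\Gamma_{A}(x;y)=\Gamma_{A}(y;x)$. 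For the mixed formula I would start from the second one, set $G=\widetilde{X}_{i_{1}}\cdots\widetilde{X}_{i_{s}}\Gamma_{A,\mathbb{G}}$, and free the $x$-variable from the first slot of the group product by writing $G(z^{-1}\ast w)=(G\circ\iota)(w^{-1}\ast z)$; the same differentiation-under-the-integral argument then applies -- it is legitimate precisely because, by the fibration property of the lifting, $\int_{\mathbb{R}^{p}}G\bigl((x,\xi)^{-1}\ast(y,\eta)\bigr)\,d\eta$ does not depend on $\xi$ -- and it yields the stated formula, with $\widetilde{X}_{j_{1}}\cdots\widetilde{X}_{j_{t}}$ acting on $(\widetilde{X}_{i_{1}}\cdots\widetilde{X}_{i_{s}}\Gamma_{A,\mathbb{G}})\circ\iota$.

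For Part (II) I would insert the Folland bound -- rewritten, since $\mathbb{G}$ is a Carnot group, as $\Vert z\Vert_{\mathbb{G}}^{2-Q-r}\approx\Vert z\Vert_{\mathbb{G}}^{2-r}/|B_{\mathbb{G}}(e,\Vert z\Vert_{\mathbb{G}})|$ -- into the formulas of Part (I), the presence of the $D_{\lambda}$-homogeneous diffeomorphism $\iota$ being immaterial for the order of the estimate, thereby reducing matters, for $x\neq y$, to bounding
\[
\int_{\mathbb{R}^{p}}\frac{d_{\mathbb{G}}\bigl((y,0),(x,\eta)\bigr)^{2-r}}{\bigl|B_{\mathbb{G}}\bigl((y,0),d_{\mathbb{G}}((y,0),(x,\eta))\bigr)\bigr|}\,d\eta .
\]
Since the $\widetilde{X}_{i}$ project onto the $X_{i}$, one has $d_{\mathbb{G}}((y,0),(x,\eta))\geq d_{X}(x,y)$ for all $\eta$; I would then decompose dyadically over the shells where $d_{\mathbb{G}}((y,0),(x,\eta))\approx 2^{j}d_{X}(x,y)$ ($j\geq 0$), using that in the good coordinates a $\mathbb{G}$-ball looks like a product of an $\mathbb{R}^{n}$-ball with an $\eta$-box -- so the $\eta$-section of such a shell has measure $\lesssim|B_{\mathbb{G}}((y,0),2^{j}d_{X}(x,y))|/|B_{X}(x,2^{j}d_{X}(x,y))|$ -- whence the $j$-th term is $\lesssim(2^{j}d_{X}(x,y))^{2-r}/|B_{X}(x,2^{j}d_{X}(x,y))|$; summing over $j\geq 0$ with the volume lower bound $|B_{X}(x,2^{j}\rho)|\geq c_{1}2^{jn}|B_{X}(x,\rho)|$ from (\ref{bounds on balls}) produces a geometric series of ratio $2^{2-r-n}<1$ (convergent because $n+r>2$, which holds under (H.3) for every $r\geq 1$), yielding the claimed bound $C_{r}\,d_{X}(x,y)^{2-r}/|B_{X}(x,d_{X}(x,y))|$, still with constant depending on $A$ only through $\nu$. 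The final assertion follows because $d_{X}(x,y)\to\infty$ and, by (\ref{bounds on balls}), $|B_{X}(x,d_{X}(x,y))|\to\infty$ as $|y|\to\infty$.

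The step I expect to be the main obstacle is Part (I): one must verify carefully that, after the $\eta$-integration, the remainder fields $R_{j}$ leave no trace, and that differentiation under the integral sign remains legitimate to arbitrarily high order. This is exactly where the fine structure of the lifting of \cite{BBlift} (triangularity of the $R_{j}$'s, the fibration property) and the uniform Folland decay bounds for $\Gamma_{A,\mathbb{G}}$ are needed; once Part (I) is in place, Part (II) is a routine -- though not entirely trivial -- integral estimate of the kind familiar from the passage between lifted and ground kernels.
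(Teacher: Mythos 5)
Your proposal is correct and follows essentially the same route as the paper: lift to the Carnot group $\mathbb{G}$, use Folland-type uniform bounds on the $\widetilde{X}$-derivatives of $\Gamma_{A,\mathbb{G}}$, and come back to $\mathbb{R}^{n}$ by integrating over the fiber $\mathbb{R}^{p}$. The paper actually does less work than you do on two points (it simply cites \cite[Thm.1.3]{BBB1} for all of Part (I), and \cite[Cor.4.11]{BBB1} for the fiber integral $\int_{\mathbb{R}^{p}}d_{\widetilde{X}}^{2-Q-r}(0,(y,0)^{-1}\ast(x,\eta))\,d\eta\leq C\,d_{X}^{2-r}(x,y)/|B_{X}(y,d_{X}(x,y))|$, which you re-derive dyadically from (\ref{eq.SanchezI}) and (\ref{bounds on balls})), and more work on two others. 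First, uniformity in $A$ is obtained not by compactness/continuity of $A\mapsto\Gamma_{A,\mathbb{G}}$ but constructively: writing $A=S^{2}$ with $S$ symmetric positive definite, $L_{A}=\sum_{j}Y_{j}^{2}$ with $Y_{j}=\sum_{i}s_{ij}X_{i}$, one checks that the lifting group is unchanged and then invokes the uniform estimates of \cite[Thm.6.18]{BBbook}; your compactness route would still require an argument that $\Gamma_{A,\mathbb{G}}$ depends continuously on $A$ in $C^{r}$ away from the origin, which is not free. Second, the step you call ``immaterial'' --- the mixed case, where $\widetilde{X}_{j_{1}}\cdots\widetilde{X}_{j_{t}}$ acts on $(\widetilde{X}_{i_{1}}\cdots\widetilde{X}_{i_{s}}\Gamma_{A,\mathbb{G}})\circ\iota$ --- is exactly where the paper has to work: composing with $\iota$ destroys left-invariance, and the paper expands the result (via \cite[Lem.\,5.3]{BiB1} and \cite[Lemma 3.4]{BiB2}) as a finite sum of $D_{\lambda}$-homogeneous polynomials of degree $|\alpha|-r$ times $\widetilde{X}$-monomials of degree $|\alpha|$ applied to $\Gamma_{A,\mathbb{G}}$, each term being $O(d_{\widetilde{X}}^{2-Q-r})$ with constant depending on $A$ only through $\nu$. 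Your homogeneity heuristic does give the correct order of decay, but the uniformity in $A$ of the sup on the unit sphere still has to be traced back to the $A$-independent structure of $\iota$ and of the $\widetilde{X}_{i}$'s, which is what that expansion accomplishes; this is a gap in rigor in your sketch, not a step that fails.
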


In the next section we will establish representation formulas for $X_{i}%
X_{j}u$ in terms of $L_{A}u$; these will involve the second derivatives of the
fundamental solution $\Gamma_{A}$. The \emph{singular kernel}%
\[
K\left(  x,y\right)  =X_{i}^{x}X_{j}^{x}\Gamma_{A}\left(  x;y\right)
\]
(letting $i,j=1,...,m$ and $A$ implicitly understood in the symbol $K$) will
play a crucial role in the following. The next result collects some important
properties already known.

\begin{theorem}
\label{Thm prop sing kern}(See \cite[Thm. 8.1]{BBB1}) Under the same
assumptions of Theorem \ref{th.teoremone}, for some fixed $i,j\in
\{1,2,\ldots,m\}$, let
\[
K(x,y):=X_{i}^{x}X_{j}^{x}\Gamma_{A}(x;y)\qquad\text{for $x,y\in\mathbb{R}%
^{n}$, $x\neq y$.}%
\]
There exist constants $\boldsymbol{A},\boldsymbol{B},\boldsymbol{C}>0$ such that:

\begin{enumerate}
\item[\emph{(i)}] for every $x,y\in\mathbb{R}^{n}$ \emph{(}$x\neq y$\emph{)}
one has
\[
\left\vert K(x,y)\right\vert +\left\vert K(y,x)\right\vert \leq\frac
{\boldsymbol{A}}{\left\vert B(x,d(x,y))\right\vert };
\]

\item[\emph{(ii)}] for every $x,x_{0},y\in\mathbb{R}^{n}$ such that
$d(x_{0},y)\geq2\,d(x_{0},x)>0$, it holds
\[
\left\vert K(x,y)-K(x_{0},y)\right\vert +\left\vert K(y,x)-K(y,x_{0}%
)\right\vert \leq\boldsymbol{B}\,\frac{d(x_{0},x)}{d(x_{0},y)}\cdot\frac
{1}{\left\vert B(x_{0},d(x_{0},y))\right\vert };
\]

\item[\emph{(iii)}] for every $z\in\mathbb{R}^{n}$ and $0<r<R<\infty$, one
has
\[
\left\vert \int_{\{r<d(z,y)<R\}}K(z,y)dy\right\vert +\left\vert \int%
_{\{r<d(z,x)<R\}}K(x,z)dx\right\vert \leq\boldsymbol{C}.
\]
The constants $\boldsymbol{A},\boldsymbol{B},\boldsymbol{C}$ depend on the
matrix $A$ only through the number $\nu$ in (\ref{ellipticity}).
\end{enumerate}
\end{theorem}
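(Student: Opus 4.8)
The final statement to prove is Theorem \ref{Thm prop sing kern}, which collects the standard Calderón–Zygmund-type properties (size, Hörmander-type regularity, and uniform cancellation) of the singular kernel $K(x,y)=X_i^x X_j^x \Gamma_A(x;y)$.

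\medskip

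The plan is to deduce all three properties from the already-available uniform estimates on $\Gamma_A$ in Theorem \ref{th.teoremone}(II), together with the representation formulas in part (I) and the doubling structure recorded in \eqref{doubling}--\eqref{bounds on balls}. Property (i) is immediate: applying Theorem \ref{th.teoremone}(II) with $r=2$ and $Z_1=X_i^x$, $Z_2=X_j^x$ gives $|K(x,y)|\le C_2\, d_X(x,y)^{0}/|B_X(x,d_X(x,y))|$, and the symmetric bound for $|K(y,x)|$ follows the same way (using $X_i^y X_j^y \Gamma_A$ and the symmetry $\Gamma_A(x;y)=\Gamma_A(y;x)$, which swaps roles); finally one replaces $|B_X(x,d_X(x,y))|$ by $|B_X(y,d_X(x,y))|$ up to a fixed doubling constant via the quasi-triangle inequality and \eqref{doubling}, so the two terms can be bounded by a single $\boldsymbol A/|B(x,d(x,y))|$.

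\medskip

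For property (ii), the Hörmander-type inequality, the plan is the classical mean-value argument along a horizontal path. Fix $x,x_0,y$ with $d(x_0,y)\ge 2d(x_0,x)$. Connect $x_0$ to $x$ by a subunit curve $\gamma$ of length comparable to $d(x_0,x)$ staying in $B_X(x_0, c\,d(x_0,x))$; then
\[
K(x,y)-K(x_0,y)=\int_0^1 \frac{d}{dt}\big(X_i^{\cdot}X_j^{\cdot}\Gamma_A(\cdot;y)\big)(\gamma(t))\,dt
\]
and $\tfrac{d}{dt}$ expands into a combination of third-order $X$-derivatives $X_k^x X_i^x X_j^x \Gamma_A(\gamma(t);y)$ with bounded coefficients (by subunitarity). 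Applying Theorem \ref{th.teoremone}(II) with $r=3$ at the point $\gamma(t)$ — noting that $d_X(\gamma(t),y)$ is comparable to $d(x_0,y)$ throughout, because $d(\gamma(t),x_0)\le c\,d(x_0,x)\le \tfrac{c}{2}d(x_0,y)$ — yields
\[
|K(x,y)-K(x_0,y)|\le C\, d(x_0,x)\cdot \frac{d(x_0,y)^{-1}}{|B_X(x_0,d(x_0,y))|},
\]
which is the asserted bound; again one must absorb the comparison of balls centered at $\gamma(t)$ versus $x_0$ into a fixed doubling factor. The estimate for $|K(y,x)-K(y,x_0)|$ is obtained identically by differentiating in the second entry and using the mixed representation formula in Theorem \ref{th.teoremone}(I). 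The main obstacle here is the book-keeping of the "comparability of distances and of ball measures along the curve'' — one needs the local doubling/\eqref{bounds on balls} and the fact that the subunit path stays in a controlled ball, so that all occurrences of $d_X(\gamma(t),y)$ and $|B_X(\gamma(t),\cdot)|$ can be traded for their values at $x_0$ with a uniform constant independent of $A$ (uniformity coming from the uniformity of $C_r$ in Theorem \ref{th.teoremone}(II)).

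\medskip

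For property (iii), the uniform cancellation on annuli, the natural route is to exploit that $\Gamma_A$ is a fundamental solution and $X_iX_j$ has a divergence-type primitive along the flow. Concretely, since $\Gamma_A(x;\cdot)$ is $L_A$-harmonic away from $x$ and the vector fields are smooth and $D_\lambda$-homogeneous, one integrates $K(z,y)=X_i^z X_j^z\Gamma_A(z;y)$ over the annulus $\{r<d(z,y)<R\}$ and transfers the $z$-derivatives onto the domain: using the divergence theorem for the vector field $X_j$ (whose formal adjoint differs from $-X_j$ by a smooth zero-order term, again uniformly controlled), the integral reduces to boundary terms on the two spheres $\{d(z,y)=r\}$ and $\{d(z,y)=R\}$ plus a lower-order volume term, each estimated by the size bound (i) combined with the surface/volume measure estimates implied by \eqref{bounds on balls}; the $R/r$-dependence cancels by homogeneity, leaving a constant $\boldsymbol C$ depending on $A$ only through $\nu$. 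Alternatively, and perhaps more cleanly, one invokes the already-proven representation $\Gamma_A(z;y)=\int_{\mathbb R^p}\Gamma_{A,\mathbb G}((z,0)^{-1}\ast(y,\eta))\,d\eta$ together with the corresponding cancellation property of the convolution kernel $\widetilde X_i\widetilde X_j\Gamma_{A,\mathbb G}$ on the Carnot group $\mathbb G$ (where it is a standard homogeneous Calderón–Zygmund kernel of convolution type), integrating in $\eta$ and using that $d_X$-balls in $\mathbb R^n$ lift to $d_{\mathbb G}$-balls; this is in fact how \cite[Thm.\ 8.1]{BBB1} is proved, so I would cite that argument and merely note the uniformity in $\nu$, which is inherited from the uniform constant in Folland's estimates for $\Gamma_{A,\mathbb G}$ as $A$ ranges over the ellipticity class \eqref{ellipticity}. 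In all three parts the uniformity of the constants $\boldsymbol A,\boldsymbol B,\boldsymbol C$ with respect to $A$ is not an extra difficulty but a direct consequence of the uniformity already built into Theorem \ref{th.teoremone}.
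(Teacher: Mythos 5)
The paper does not actually prove this theorem: it is quoted verbatim from \cite[Thm.\ 8.1]{BBB1}, and the only new content supplied here is the uniformity of $\boldsymbol{A},\boldsymbol{B},\boldsymbol{C}$ in the matrix $A$, which the paper obtains in the Appendix by making the derivative bounds of Theorem \ref{th.teoremone}(II) uniform over the ellipticity class. Your reconstruction of (i) and (ii) is correct and is essentially the argument of the cited reference: (i) is Theorem \ref{th.teoremone}(II) with $r=2$ plus doubling to identify the two ball measures, and (ii) is the subunit mean-value argument, which reduces to the $r=3$ bounds on $X_k^xX_i^xX_j^x\Gamma_A$ and on the mixed derivatives $X_k^yX_i^xX_j^x\Gamma_A$ — exactly the third-order estimates the paper points to when it adapts these properties to the truncated kernels $K^A_{\varepsilon,R}$. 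For (iii), however, your first route does not work: in $\int_{\{r<d(z,y)<R\}}K(z,y)\,dy$ the derivatives $X_i^zX_j^z$ fall on the variable $z$ that is \emph{not} integrated, so there is nothing to transfer by parts; and for the second integral the CC-spheres $\{d(z,x)=r\}$ are not smooth hypersurfaces, so the divergence theorem on the annulus is not available, and in any case the size bound (i) alone cannot produce cancellation. Your second route — passing through the lifted representation $K(z,y)=\int_{\mathbb{R}^p}\widetilde{X}_i\widetilde{X}_j\Gamma_{A,\mathbb{G}}((y,\eta)^{-1}\ast(z,0))\,d\eta$ and invoking the vanishing property of Proposition \ref{Prop vanishing integral} — is the correct one and is indeed how \cite[Thm.\ 8.1]{BBB1} proceeds; the one point you should not gloss over there is that the lifted integration set $\{(y,\eta):r<d_X(z,y)<R\}$ is a cylinder over the $\mathbb{R}^n$-annulus, not a shell in $\mathbb{G}$, so one must sandwich it between genuine $d_{\widetilde{X}}$-shells and estimate the discrepancy with the size bound, which is where the constant $\boldsymbol{C}$ actually comes from.
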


Inequalities (i)-(ii) are usually called the \emph{standard estimates} of
singular kernels, while (iii) is a kind of \emph{cancelation property}, and
will be crucial in order to give sense to the principal value integral with
kernel $K$. These three properties are one of the possible sets of reasonable
assumptions to prove that the singular integral operator with kernel $K$ is
continuous on $L^{p}(\mathbb{R}^{n})$ for every $p\in(1,\infty)$.

\bigskip

In the following, $d_{\widetilde{X}}$ will stand for the CC-distance
associated with the system of vector fields $\widetilde{X}=\{\widetilde{X}%
_{1},\ldots,\widetilde{X}_{m}\}$ introduced in point (1) of Theorem
\ref{Thm lifting}. Since the $\widetilde{X}_{j}$'s lift the $X_{j}$'s (and all
these vector fields are homogeneous with respect to appropriate dilations) it
is known that (here, $\pi_{n}$ is the projection of $\mathbb{R}^{N}%
=\mathbb{R}^{n}\times\mathbb{R}^{p}$ onto $\mathbb{R}^{n}$)
\[%
\begin{split}
&  d_{X}(x,y)\leq d_{\widetilde{X}}\left(  (x,\xi),(y,\eta)\right)
,\quad\text{for every $x,y\in\mathbb{R}^{n}$ and $\xi,\eta\in\mathbb{R}^{p}$%
},\\
&  \pi_{n}\left(  B_{\widetilde{X}}\left(  (x,\xi),r\right)  \right)
=B_{X}(x,r),\quad\text{for every $x\in\mathbb{R}^{n}$, $\xi\in\mathbb{R}^{p}$
and $r>0$}.
\end{split}
\]
Furthermore, since the $\widetilde{X}_{j}$'s are left-invariant on the group
$\mathbb{G}=(\mathbb{R}^{N},\ast)$, one has
\[
d_{\widetilde{X}}(z,z^{\prime})=d_{\widetilde{X}}\left(  0,z^{-1}\ast
z^{\prime}\right)  ,\quad\text{for every $z,z^{\prime}\in\mathbb{G}$}.
\]
By an abuse of notation, we shall systematically denote $d_{\widetilde{X}%
}(0,\cdot)$ by $d_{\widetilde{X}}$. Simple arguments on ho\-mo\-ge\-neous
groups also show that
\begin{equation}
\left\vert B_{\widetilde{X}}(z,r)\right\vert =\omega_{Q}\,r^{Q},\quad
\text{where $\omega_{Q}=\left\vert B_{\widetilde{X}}(0,1)\right\vert $}.
\label{eq.misurapallehom}%
\end{equation}

The function $d_{\widetilde{X}}$ has the advantage of being a distance in
$\mathbb{R}^{N}$. On the other hand, we do not have at our disposal regularity
results on the function $d_{\widetilde{X}}\left(  u\right)  $. Hence, in order
to build \emph{smooth }radial cutoff functions, we also need to use a
homogeneous norm \emph{equivalent }to $d_{\widetilde{X}}$. For $u\in
\mathbb{R}^{N}$, let
\begin{equation}
\left\Vert u\right\Vert =\left(  \sum_{i=1}^{N}\left\vert u_{i}\right\vert
^{2\alpha!/\alpha_{i}}\right)  ^{1/2\alpha!} \label{hom norm}%
\end{equation}
where%
\[
\alpha_{i}=\left\{
\begin{tabular}
[c]{ll}%
$\sigma_{i}$ & for $i=1,2,...,n$\\
$\tau_{i-n}$ & for $i=n+1,...,n+p$%
\end{tabular}
\ \ \ \ \right.  \text{and }\alpha=\max_{i}\alpha_{i}.
\]
The function $u\mapsto\left\Vert u\right\Vert $ is a \emph{homogeneous norm},
smooth in $\mathbb{R}^{N}\setminus\left\{  0\right\}  $, and it is equivalent
to $d_{\widetilde{X}}\left(  u\right)  $ (see \cite[Chap.3]{BBbook}): there
exists two constants $\gamma_{2}>\gamma_{1}>0$ such that%
\begin{equation}
\gamma_{1}\left\Vert u\right\Vert \leq d_{\widetilde{X}}\left(  u\right)
\leq\gamma_{2}\left\Vert u\right\Vert \text{ for every }u\in\mathbb{R}^{N}.
\label{equivalence}%
\end{equation}
We recall that a homogeneous norm on a Carnot group $\left(  \mathbb{R}%
^{N},\ast,D\left(  \lambda\right)  \right)  $ is a continuous function%
\[
\left\Vert \cdot\right\Vert :\mathbb{R}^{N}\rightarrow\lbrack0,+\infty)
\]
such that, for some constant $c>0$ and every $u,v\in\mathbb{R}^{N},$%
\[%
\begin{tabular}
[c]{ll}%
$\left(  i\right)  $ & $\left\Vert u\right\Vert =0\Longleftrightarrow x=0$\\
$\left(  ii\right)  $ & $\left\Vert D\left(  \lambda\right)  u\right\Vert
=\left\vert \lambda\right\vert \left\Vert u\right\Vert $ for every $\lambda
>0$\\
$\left(  iii\right)  $ & $\left\Vert u^{-1}\right\Vert \leq c\left\Vert
u\right\Vert $\\
$\left(  iv\right)  $ & $\left\Vert u\ast v\right\Vert \leq c\left(
\left\Vert u\right\Vert +\left\Vert v\right\Vert \right)  .$%
\end{tabular}
\ \
\]

From general properties of homogeneous functions in Carnot groups we also have
the following vanishing property of $\widetilde{X}_{i}\widetilde{X}_{j}%
\Gamma_{A,\mathbb{G}}$ which will be useful several times:

\begin{proposition}
\label{Prop vanishing integral}(See e.g. \cite[Corollary 6.31]{BBbook}). Let
$\left\Vert \cdot\right\Vert $ be a homogeneous norm in the Carnot group
$\mathbb{G}$ and let $\psi$ be any radial continuous function in $\mathbb{G}$,
then:%
\[
\int_{r_{1}<\left\Vert u\right\Vert <r_{2}}\left(  \widetilde{X}%
_{i}\widetilde{X}_{j}\widetilde{\Gamma}\cdot\psi\right)  \left(  u\right)
du=0
\]
for every $r_{2}>r_{1}>0$, $i,j=1,2,...,m.$
\end{proposition}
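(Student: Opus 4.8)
The plan is to reduce the statement, by an integration in ``polar coordinates'' adapted to the dilations $\{D_{\lambda}\}$, to the single scalar identity $\int_{\{\|u\|_{\circ}=1\}}\widetilde{X}_{i}\widetilde{X}_{j}\widetilde{\Gamma}\,d\sigma=0$, and then to prove the latter by the divergence theorem. Here $\|\cdot\|_{\circ}$ denotes the fixed \emph{smooth} homogeneous norm of (\ref{hom norm}) and $d\sigma$ is the surface measure on its unit sphere attached to the co-area formula
\[
\int_{\mathbb{R}^{N}}f\,du=\int_{0}^{\infty}\rho^{Q-1}\Bigl(\int_{\{\|u\|_{\circ}=1\}}f(D_{\rho}u)\,d\sigma(u)\Bigr)d\rho,
\]
a standard fact on homogeneous groups. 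The starting observation is that $F:=\widetilde{X}_{i}\widetilde{X}_{j}\widetilde{\Gamma}$ is smooth on $\mathbb{R}^{N}\setminus\{0\}$ and $D_{\lambda}$-homogeneous of degree $-Q$: indeed $\widetilde{\Gamma}$ is $D_{\lambda}$-homogeneous of degree $2-Q$ (it is Folland's fundamental solution; cf.\ Theorem \ref{Thm lifting}) and each $\widetilde{X}_{k}$ lowers the homogeneous degree by $1$. Feeding $f=F\cdot\psi\cdot\chi_{\{r_{1}<\|u\|<r_{2}\}}$ into the co-area formula — where now $\|\cdot\|$ is the (a priori only continuous) homogeneous norm of the statement and $\psi=\psi_{0}(\|\cdot\|)$ — the change of variable $t=\rho\|u\|$ decouples the inner $\rho$-integral, which becomes $\int_{r_{1}}^{r_{2}}\psi_{0}(t)\,t^{-1}\,dt$ independently of $u$; using also $F(D_{\rho}u)=\rho^{-Q}F(u)$ one is left with
\[
\int_{\{r_{1}<\|u\|<r_{2}\}}\widetilde{X}_{i}\widetilde{X}_{j}\widetilde{\Gamma}\cdot\psi\,du=\Bigl(\int_{r_{1}}^{r_{2}}\frac{\psi_{0}(t)}{t}\,dt\Bigr)\int_{\{\|u\|_{\circ}=1\}}\widetilde{X}_{i}\widetilde{X}_{j}\widetilde{\Gamma}\,d\sigma.
\]
Hence everything reduces to showing that the spherical integral on the right vanishes; equivalently, by the same formula with $\psi\equiv1$, that $\int_{\{r_{1}<\|u\|_{\circ}<r_{2}\}}\widetilde{X}_{i}\widetilde{X}_{j}\widetilde{\Gamma}\,du=0$ for all $0<r_{1}<r_{2}$. (This simultaneously disposes of the arbitrariness of the homogeneous norm $\|\cdot\|$ in the statement.)

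To prove this last identity I would use that each left-invariant vector field $\widetilde{X}_{i}$ on the (nilpotent, hence unimodular) group $\mathbb{G}$ is divergence-free with respect to Lebesgue measure — equivalently, in stratified coordinates the coefficient of $\partial_{u_{k}}$ in $\widetilde{X}_{i}$ is a polynomial in coordinates of strictly lower layers, hence independent of $u_{k}$. Therefore $F=\widetilde{X}_{i}\widetilde{X}_{j}\widetilde{\Gamma}=\operatorname{div}W$ with $W:=(\widetilde{X}_{j}\widetilde{\Gamma})\,\widetilde{X}_{i}$, a vector field smooth on $\mathbb{R}^{N}\setminus\{0\}$, in particular on the closed annulus $\{r_{1}\le\|u\|_{\circ}\le r_{2}\}$, which has smooth boundary since $\|\cdot\|_{\circ}$ is smooth away from the origin. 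The divergence theorem then yields $\int_{\{r_{1}<\|u\|_{\circ}<r_{2}\}}F\,du=\Phi(r_{2})-\Phi(r_{1})$, where $\Phi(r):=\int_{\{\|u\|_{\circ}=r\}}\langle W,\nu\rangle\,d\mathcal{H}^{N-1}$ (with $\nu$ the outward unit normal) is the outward flux of $W$ through the dilation sphere of radius $r$. Finally I would show that $\Phi$ is constant by a scaling argument: pulling the flux $(N-1)$-form $\iota_{W}(du)$ back along the orientation-preserving diffeomorphism $D_{r}\colon\{\|u\|_{\circ}=1\}\to\{\|u\|_{\circ}=r\}$, the anisotropic Jacobian of $D_{r}$ — which scales the volume form by $r^{Q}$ — combines with the homogeneities $\widetilde{\Gamma}\circ D_{r}=r^{2-Q}\widetilde{\Gamma}$ and $(D_{r})_{*}\widetilde{X}_{k}=r\,\widetilde{X}_{k}$ in such a way that all powers of $r$ cancel, giving $\Phi(r)=\Phi(1)$. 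Hence $\int_{\{r_{1}<\|u\|_{\circ}<r_{2}\}}F\,du=0$, and the proposition follows.

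The reduction via the co-area formula and the divergence-free property are routine, and no approximation of $\psi$ is needed since it enters only through the elementary integral $\int_{r_{1}}^{r_{2}}\psi_{0}(t)\,t^{-1}\,dt$. The only genuinely delicate point — where I expect to spend most of the effort — is the last one: showing that the flux $\Phi(r)$ of the divergence-free, $D_{\lambda}$-homogeneous field $W=(\widetilde{X}_{j}\widetilde{\Gamma})\widetilde{X}_{i}$ through the family $\{\|u\|_{\circ}=r\}$ does not depend on $r$. This forces one to combine \emph{correctly} three anisotropic ingredients — the homogeneity of $\widetilde{\Gamma}$, the $D_{\lambda}$-homogeneity of degree $1$ of the $\widetilde{X}_{k}$, and the Jacobian $r^{Q}$ of $D_{r}$ — and a purely formal manipulation does not suffice: already in the Euclidean model, $\int_{\{r_{1}<|u|<r_{2}\}}\partial_{i}\partial_{j}|u|^{2-N}\,du=0$ rests on the angular identity $\int_{S^{N-1}}(\delta_{ij}-N\,\omega_{i}\omega_{j})\,d\omega=0$. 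One may alternatively note, by direct change of variables and additivity over disjoint annuli, that $\Phi(r_{2})-\Phi(r_{1})$ depends only on $r_{2}/r_{1}$ and is additive, hence equals $c\,\log(r_{2}/r_{1})$ for some constant $c$; but pinning down $c=0$ still requires the flux computation above.
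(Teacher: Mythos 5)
Your argument is correct and follows essentially the same route as the source the paper cites for this proposition (\cite[Corollary 6.31]{BBbook}), and as the computation the paper itself performs for the constant $A_{1,2}$ in the proof of Theorem \ref{Thm repr formula 1}: anisotropic polar coordinates reduce everything to the vanishing of $\int_{r_{1}<\Vert u\Vert_{\circ}<r_{2}}\widetilde{X}_{i}\widetilde{X}_{j}\widetilde{\Gamma}\,du$, which then follows from the divergence form $\widetilde{X}_{i}f=\sum_{l}\partial_{u_{l}}(a_{il}f)$ of homogeneous vector fields together with the exact cancellation $r^{\alpha_{l}-Q}\cdot r^{Q-\alpha_{l}}=1$ in the pulled-back flux form, which you have set up correctly. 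The only small point to make explicit is that the level sets of the smooth norm are regular hypersurfaces, which follows from the anisotropic Euler identity $\sum_{l}\alpha_{l}u_{l}\,\partial_{u_{l}}\left\Vert u\right\Vert =\left\Vert u\right\Vert >0$ for $u\neq0$.
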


\subsection{Cutoff functions and coverings of $\mathbb{R}^{n}$}

A tool that we will need in the following is a family of cutoff functions
adapted to balls of fixed radius and centered anywhere in the space,
satisfying uniform bounds w.r.t. the center of the ball. Since we have at our
disposal dilations but not translations adapted to the vector fields, building
these functions is not a trivial task. We need to recall the following deep
known result on the volume of metric balls. It is a global version of a famous
result by Nagel-Stein-Weinger \cite{NSW}, which holds in our context thanks to
the homogeneity of vector fields.

\begin{theorem}
(See \cite[Thm. C]{BBB1}). There exist constants $\kappa\in(0,1)$ and
$c_{1},c_{2}>0$ such that, for every $x\in\mathbb{R}^{n}$, $\xi\in
\mathbb{R}^{p}$ and $r>0$ one has the following estimates:
\begin{align}
\left\vert \{\eta\in\mathbb{R}^{p}:(y,\eta)\in B_{\widetilde{X}}%
((x,\xi),r)\}\right\vert  &  \leq c_{1}\frac{\left\vert B_{\widetilde{X}%
}((x,\xi),r)\right\vert }{\left\vert B_{X}(x,r)\right\vert },\quad\text{for
all $y\in\mathbb{R}^{n}$},\label{eq.SanchezI}\\[0.3cm]
\left\vert \{\eta\in\mathbb{R}^{p}:(y,\eta)\in B_{\widetilde{X}}%
((x,\xi),r)\}\right\vert  &  \geq c_{2}\frac{\left\vert B_{\widetilde{X}%
}((x,\xi),r)\right\vert }{\left\vert B_{X}(x,r)\right\vert },\quad\text{for
all $y\in B_{X}(x,\kappa\,r)$}. \label{eq.SanchezII}%
\end{align}

\end{theorem}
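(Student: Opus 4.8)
The final statement is Theorem~C of \cite{BBB1}, a \emph{global} counterpart of the classical Nagel--Stein--Wainger volume estimate; let me outline how its proof runs. Write $F_y:=\{\eta\in\mathbb{R}^p:(y,\eta)\in B_{\widetilde X}((x,\xi),r)\}$ for the $\xi$-fiber over $y$. Recalling that $\pi_n(B_{\widetilde X}((x,\xi),r))=B_X(x,r)$ and $|B_{\widetilde X}(z,r)|=\omega_Q r^Q$ for \emph{every} center $z$, Fubini's theorem gives $\omega_Q r^Q=\int_{B_X(x,r)}|F_y|\,dy$; thus the two displayed inequalities say precisely that $|F_y|$ remains within fixed multiplicative constants of the average fiber measure over the projected ball $B_X(x,r)$, the lower bound being asked only on the smaller ball $B_X(x,\kappa r)$. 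The plan is: (i) normalize the scale; (ii) reduce both bounds, via a lifting-of-paths argument, to the corresponding estimates for the fiber of a single ball over its \emph{own} base point; and (iii) prove those last estimates by a ball--box description that is uniform in the center.

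For (i), the dilation $D_\lambda=(\delta_\lambda,E_\lambda)$ is a group automorphism of $\mathbb{G}$ with $d_{\widetilde X}(D_\lambda z,D_\lambda z')=\lambda\,d_{\widetilde X}(z,z')$, so it carries $B_{\widetilde X}((x,\xi),r)$ onto $B_{\widetilde X}(D_\lambda(x,\xi),\lambda r)$ and the fiber $F_y$ onto the fiber over $\delta_\lambda y$. Since the Jacobian of $E_\lambda$ on $\mathbb{R}^p$ is $\lambda^{Q-q}$ (using $Q=q+\sum_k\tau_k$), that of $\delta_\lambda$ on $\mathbb{R}^n$ is $\lambda^q$, $|B_{\widetilde X}(\cdot,\lambda r)|=\lambda^Q\omega_Q r^Q$ and $|B_X(\delta_\lambda x,\lambda r)|=\lambda^q|B_X(x,r)|$, a bookkeeping of exponents shows that each inequality, together with the side condition $y\in B_X(x,\kappa r)$, is invariant under $D_\lambda$; hence one may assume $r=1$.

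For (ii), take first the lower bound with $y\in B_X(x,\kappa)$. There is a horizontal curve in $\mathbb{R}^n$ joining $x$ to $y$ with controls $|a_j|<\kappa$; lifting it from $(x,\xi)$ along $\widetilde X=\{\widetilde X_1,\dots,\widetilde X_m\}$ --- and using $\widetilde X_i=X_i+R_i$ with $R_i$ acting only on $\xi$, so the lifted curve still projects onto the original one --- produces a point $(y,\eta_0)$ with $d_{\widetilde X}((x,\xi),(y,\eta_0))<\kappa$. Because $d_{\widetilde X}$ is a genuine distance, the triangle inequality shows that the $\xi$-fiber of $B_{\widetilde X}((y,\eta_0),1-\kappa)$ over its base point $y$ is contained in $F_y$; and since $d_X(x,y)<\kappa<1$, the global doubling property makes $|B_X(y,1-\kappa)|$ comparable to $|B_X(x,1)|$ with constants depending only on $\kappa$ and the doubling constant. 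So the lower bound follows once one knows that the $\xi$-fiber of $B_{\widetilde X}(z,\rho)$ over $\pi_n(z)$ has measure $\gtrsim\omega_Q\rho^Q/|B_X(\pi_n z,\rho)|$, uniformly in $z$ and $\rho$. For the upper bound, pick any $\eta^{\ast}\in F_y$; by the triangle inequality $F_y$ is contained in the $\xi$-fiber of $B_{\widetilde X}((y,\eta^{\ast}),2)$ over $y$, so it suffices to know that this fiber too has measure $\lesssim\omega_Q\rho^Q/|B_X(\pi_n z,\rho)|$ for the corresponding $z$ and $\rho=2$; and since $F_y\neq\emptyset$ forces $y\in B_X(x,1)\subseteq B_X(y,2)$, this gives $|F_y|\lesssim\omega_Q/|B_X(x,1)|$.

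The remaining two-sided estimate --- for the fiber of $B_{\widetilde X}(z,\rho)$ over $\pi_n(z)$ --- is the core of \cite[Thm.~C]{BBB1}, and is where the argument becomes genuinely delicate. Since the $X_i$ do not generate a group structure on $\mathbb{R}^n$, the projection $\pi_n$ is not a homomorphism and this fiber \emph{cannot} be normalized by a left translation in $\mathbb{G}$; one must instead invoke the Rothschild--Stein ``ball--box'' theorem, representing $B_{\widetilde X}(z,\rho)$ and $B_X(\pi_n z,\rho)$ as comparable to boxes built from iterated commutators evaluated at the center, with edge-lengths given by powers of $\rho$ dictated by the commutator lengths. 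The commutators of the $\widetilde X_i$ along the directions ``missing'' from $\mathbb{R}^n$ span exactly the $\mathbb{R}^p$ factor, so the quotient of the two box volumes is comparable to $\omega_Q\rho^Q/|B_X(\pi_n z,\rho)|$, and one reads off that a box of this size in the $\xi$-variables lies inside (respectively contains) the relevant fiber. The point I expect to be the main obstacle is that all of these comparisons must hold with constants \emph{independent of the center} $z$: this uniformity is exactly what the $\delta_\lambda$-homogeneity of the $X_i$ (hence the $D_\lambda$-homogeneity of the $\widetilde X_i$) supplies, since the scaling reduction of step~(i) upgrades the classical \emph{local} Nagel--Stein--Wainger analysis into a global statement.
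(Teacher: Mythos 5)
First, a point of reference: this paper does not prove the statement at all --- it is imported verbatim from \cite[Thm. C]{BBB1} (a global version of the Nagel--Stein--Wainger/Sanchez-Calle fiber estimates), so there is no internal proof to compare your sketch against. Judged on its own terms, your reductions (i)--(ii) are correct and are indeed the natural way to organize such a proof: the Fubini identity $\omega_Q r^Q=\int_{B_X(x,r)}|F_y|\,dy$, the scaling invariance of both inequalities under $D_\lambda$, the lifting of a subunit curve from $x$ to $y$ (using that $\widetilde X_i=X_i+R_i$ with $R_i$ vertical, so the lift projects back onto the original curve) to produce $(y,\eta_0)$ with $d_{\widetilde X}((x,\xi),(y,\eta_0))<\kappa r$, and the triangle-inequality sandwiching of $F_y$ between fibers of balls centered \emph{on} the slice $\{(y,\eta)\}$ are all sound.

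The gap is in step (iii), which is where the entire analytic content lives and which you assert rather than prove. Two concrete problems. First, the two-sided estimate for the fiber of $B_{\widetilde X}(z,\rho)$ over its own base point is not a formal consequence of a ``ball--box'' picture: the fiber $\{\eta:(y,\eta)\in B_{\widetilde X}(z,\rho)\}$ is not a coordinate box, the projection $\pi_n$ is not a homomorphism, and comparing $|\text{fiber}|$ with the quotient $\rho^Q/|B_X(\pi_n z,\rho)|$ requires the full Nagel--Stein--Wainger representation $|B_X(x,\rho)|\approx\sum_I|\lambda_I(x)|\rho^{d(I)}$ together with the specific structure of the lifting (this is exactly the hard part of \cite[Thm. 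C]{BBB1}, which in turn rests on their global version of the NSW volume formula). Second, your uniformity argument does not close: normalizing $r=1$ via $D_\lambda$ leaves the center $x$ ranging over all of $\mathbb{R}^n$, and the local NSW/ball--box constants a priori degenerate as the center escapes every compact set. The homogeneity must be used more carefully: one rescales the \emph{pair} $(x,r)$ so that, say, $\left\Vert x\right\Vert+r=1$ for a homogeneous norm, which places the data in a compact set where the local estimates hold with uniform constants, and then dilates back. As written, your final paragraph claims the scaling of step (i) ``upgrades the local analysis into a global statement,'' but step (i) does not compactify the center, so the claimed uniformity in $z$ is not actually established.
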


We then have the following:

\begin{lemma}
[Cutoff functions]\label{Lemma cutoff}There exists $H>1$ and, for every fixed
$R>0$, there exist constants $c_{1},c_{2}>0$ and a family of cutoff functions
in $\mathbb{R}^{n}$, $\left\{  \phi^{x}\left(  \cdot\right)  \right\}
_{x\in\mathbb{R}^{n}}$ such that%
\begin{align*}
\phi^{x}  &  \in C_{0}^{\infty}\left(  B_{HR}\left(  x\right)  \right) \\
\phi^{x}\left(  y\right)   &  \geq c_{1}\text{ in }B_{R}\left(  x\right)
\end{align*}%
\[
\sup_{y}\left\vert \phi^{x}\left(  y\right)  \right\vert +\sum_{i=1}^{m}%
\sup_{y}\left\vert X_{i}\phi^{x}\left(  y\right)  \right\vert +\sum
_{i,j=1}^{m}\sup_{y}\left\vert X_{i}X_{j}\phi^{x}\left(  y\right)  \right\vert
\leq c_{2}.
\]
Here the relevant fact is that the constants $c_{1},c_{2}$ are independent of
$x$ (while they may depend on $R$).
\end{lemma}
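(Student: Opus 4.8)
The plan is to build the cutoff functions in the lifted group $\mathbb{G}$, where left translations are available, and then to project down to $\mathbb{R}^{n}$ by integrating out the $\xi$-variable. First I would fix a reference smooth cutoff $\theta\in C_{0}^{\infty}(\mathbb{R}^{N})$ with $\theta\equiv 1$ on $\{\|u\|\leq 1\}$ and $\mathrm{supp}\,\theta\subset\{\|u\|<H_{0}\}$, using the fact that the homogeneous norm $\|\cdot\|$ from \eqref{hom norm} is smooth away from the origin; rescaling via the group dilations $D_{\lambda}$ gives $\theta_{R}(u)=\theta(D_{1/R}u)$ supported in $\{\|u\|<H_{0}R\}$, identically $1$ on $\{\|u\|\leq R\}$, with the bounds $\|\widetilde{X}_{i}\theta_{R}\|_{\infty}\leq c/R$ and $\|\widetilde{X}_{i}\widetilde{X}_{j}\theta_{R}\|_{\infty}\leq c/R^{2}$ uniform in $R$ (this is the standard homogeneity argument for left-invariant vector fields on a Carnot group). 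Then, for $x\in\mathbb{R}^{n}$, I would set $\Theta^{x}(y,\eta):=\theta_{R}\bigl((x,0)^{-1}\ast(y,\eta)\bigr)$, which by left-invariance of the $\widetilde{X}_{i}$ inherits exactly the same sup-bounds for $\widetilde{X}_{i}\Theta^{x}$ and $\widetilde{X}_{i}\widetilde{X}_{j}\Theta^{x}$, independently of $x$, and is supported in $B_{\widetilde{X}}((x,0),H R)$ for $H=H_{0}\gamma_{2}$ by the equivalence \eqref{equivalence}.

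Next I would define the projected function
\[
\phi^{x}(y):=\frac{1}{|B_{X}(x,R)|}\int_{\mathbb{R}^{p}}\Theta^{x}(y,\eta)\,d\eta .
\]
Because the lifting \eqref{lifting} has the form $\widetilde{X}_{i}(x,\xi)=X_{i}(x)+R_{i}(x,\xi)$ with $R_{i}$ acting only in $\xi$, differentiating under the integral sign and discarding the $R_{i}$-terms (which integrate to zero in $\eta$ since $\Theta^{x}$ has compact $\eta$-support) yields
\[
X_{I}\phi^{x}(y)=\frac{1}{|B_{X}(x,R)|}\int_{\mathbb{R}^{p}}(\widetilde{X}_{I}\Theta^{x})(y,\eta)\,d\eta
\]
for $|I|\leq 2$; this is exactly the mechanism already used for the fundamental solution in \eqref{sec.one:mainThm_defGamma} and Theorem \ref{th.teoremone}. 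To bound these, I would use $|\widetilde{X}_{I}\Theta^{x}|\leq c R^{-|I|}$ together with the Sánchez-Calle type estimate \eqref{eq.SanchezI}: the $\eta$-section of the support of $\Theta^{x}$ sits inside $B_{\widetilde{X}}((x,0),HR)$, so its $p$-dimensional measure is $\leq c_{1}|B_{\widetilde{X}}((x,0),HR)|/|B_{X}(x,HR)|$, which by the homogeneous-group formula \eqref{eq.misurapallehom} and the doubling-type bound \eqref{bounds on balls} is comparable to $|B_{X}(x,R)|$ divided by $|B_{X}(x,HR)|$ times $(HR)^{Q}$ — more simply, the quotient $|\{\eta\}|/|B_{X}(x,R)|$ is bounded above uniformly in $x$ after using that $H$ is fixed and the ratio $|B_{X}(x,HR)|/|B_{X}(x,R)|$ is controlled by $H^{q}$. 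This gives $\sup_{y}|X_{I}\phi^{x}(y)|\leq c_{2}$ with $c_{2}$ independent of $x$ (depending on $R$ through the normalization).

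For the lower bound $\phi^{x}\geq c_{1}$ on $B_{R}(x)$, I would instead start from a reference $\theta$ that is $\geq 1$ on a slightly larger set so that, after the rescaling and translation, $\Theta^{x}(y,\eta)\geq 1$ whenever $d_{\widetilde{X}}((x,0),(y,\eta))\leq \gamma_{1}R$, say; then for $y\in B_{X}(x,\kappa R')$ (with $R'$ chosen so that $\kappa R'$ absorbs $R$ up to the constant $\gamma_{1}$) the lower Sánchez-Calle estimate \eqref{eq.SanchezII} guarantees that the $\eta$-section on which $\Theta^{x}(y,\eta)\geq 1$ has measure $\geq c_{2}|B_{\widetilde{X}}((x,0),\gamma_{1}R)|/|B_{X}(x,\gamma_{1}R)|$, which is again comparable to $|B_{X}(x,R)|$, so $\phi^{x}(y)\geq c_{1}>0$ on $B_{R}(x)$ uniformly in $x$. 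The main obstacle is precisely this bookkeeping of geometric constants: matching the various radii (the support radius $HR$, the radius $\gamma_{1}R$ where the translated reference function is $\geq 1$, and the radius $\kappa r$ in \eqref{eq.SanchezII}) and checking that every appearance of $|B_{X}(x,\cdot)|$ cancels against the normalizing factor $|B_{X}(x,R)|$ up to constants that do not see the center $x$ — this is where the global doubling \eqref{doubling}, the two-sided bound \eqref{bounds on balls}, and the homogeneous-ball formula \eqref{eq.misurapallehom} all get used. The smoothness of $\phi^{x}$ is automatic since $\Theta^{x}$ is smooth and compactly supported in $\eta$, so differentiation under the integral is legitimate and the result lies in $C_{0}^{\infty}(B_{HR}(x))$.
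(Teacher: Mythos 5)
Your overall strategy --- lift to the Carnot group $\mathbb{G}$, left-translate a fixed $D_{\lambda}$-rescaled cutoff to the point $(x,0)$, and project down to $\mathbb{R}^{n}$ by integrating out $\eta$, controlling the $\eta$-sections via (\ref{eq.SanchezI})--(\ref{eq.SanchezII}) --- is exactly the paper's. But your normalization is inverted, and this destroys precisely the uniformity in $x$ that the lemma is about. The source of the error is your claim that the section measure $c\,|B_{\widetilde{X}}((x,0),r)|/|B_{X}(x,r)|$ ``is comparable to $|B_{X}(x,R)|$'': by (\ref{eq.misurapallehom}) the numerator equals $\omega_{Q}r^{Q}$, a constant, so the section measure is comparable to $r^{Q}/|B_{X}(x,r)|$, i.e.\ to the \emph{reciprocal} of the base-ball volume. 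Hence the unnormalized integral $\int_{\mathbb{R}^{p}}\Theta^{x}(y,\eta)\,d\eta$ has size about $R^{Q}/|B_{X}(x,R)|$, and to make it of unit size one must \emph{multiply} by $|B_{X}(x,R)|$ (as the paper does), not divide by it.

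Concretely, with your definition $\phi^{x}=|B_{X}(x,R)|^{-1}\int_{\mathbb{R}^{p}}\Theta^{x}\,d\eta$, the bound that (\ref{eq.SanchezII}) gives on $B_{R}(x)$ is $\phi^{x}(y)\geq c\,r^{Q}/\bigl(|B_{X}(x,R)|\,|B_{X}(x,r)|\bigr)$ with $r\approx R$, and this is \emph{not} bounded below uniformly in $x$: for fixed $R$ the volume $|B_{X}(x,R)|$ is in general unbounded in the center (for the Franchi--Lanconelli fields $X_{1}=\partial_{x_{1}}$, $X_{2}=x_{1}^{k}\partial_{x_{2}}$ one has $|B_{X}(x,R)|\approx R^{2}(|x_{1}|+R)^{k}\to\infty$ as $|x_{1}|\to\infty$), so your $c_{1}$ degenerates to $0$. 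Symmetrically, your upper bound reduces to $c(HR)^{Q}/\bigl(|B_{X}(x,HR)|\,|B_{X}(x,R)|\bigr)$, which is uniform in $x$ only after one also proves $\inf_{x}|B_{X}(x,R)|>0$ --- an extra step you do not address. The rest of your argument (the smooth homogeneous norm (\ref{hom norm}) and the equivalence (\ref{equivalence}) for the support bookkeeping, passing the $\widetilde{X}_{i}$ through the $\eta$-integral so that only $X_{i}$ survives, the choice of $H$ in terms of $\gamma_{1},\gamma_{2},\kappa$) matches the paper and is sound; replacing the factor $|B_{X}(x,R)|^{-1}$ by $|B_{X}(x,R)|$ and using (\ref{bounds on balls}) to compare $|B_{X}(x,R)|$ with $|B_{X}(x,r)|$ for the finitely many radii involved repairs the proof, since then both bounds collapse to constants of the form $cR^{Q}$ independent of $x$.
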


\begin{proof}
We pick $H=2\gamma_{2}/\left(  \gamma_{1}\kappa\right)  $, where $\kappa$ is
the constant in the previous theorem and $\gamma_{1},\gamma_{2}$ are the
constants in (\ref{equivalence}), and define%
\[
\psi\left(  u\right)  =\varphi\left(  \left\Vert u\right\Vert \right)
\]
where $\varphi:[0,\infty)\rightarrow\left[  0,1\right]  $ is a smooth
function, $\varphi\left(  t\right)  =1$ for $t\leq R/\left(  \gamma_{1}%
\kappa\right)  $, $\varphi\left(  t\right)  =0$ for $t\geq2R/\left(
\gamma_{1}\kappa\right)  $, and $\left\Vert \cdot\right\Vert $ is the
homogeneous norm defined in (\ref{hom norm}). Then $\psi\in C_{0}^{\infty
}\left(  \widetilde{B}_{HR}\left(  0\right)  \right)  $ and $\psi=1$ in
$\widetilde{B}_{R/\kappa}\left(  0\right)  .$

Let%
\[
\phi^{x}\left(  y\right)  =\left\vert B_{X}\left(  x,R\right)  \right\vert
\int_{\mathbb{R}^{p}}\psi\left(  \left(  x,0\right)  ^{-1}\circ\left(
y,\eta\right)  \right)  d\eta.
\]
Then:%
\[
X_{i}^{y}\phi^{x}\left(  y\right)  =\left\vert B_{X}\left(  x,R\right)
\right\vert \int_{\mathbb{R}^{p}}\left(  \widetilde{X}_{i}\psi\right)  \left(
\left(  x,0\right)  ^{-1}\circ\left(  y,\eta\right)  \right)  d\eta
\]
and this relation can be iterated to higher order derivatives; in particular,
$\phi^{x}\left(  \cdot\right)  \in C^{\infty}\left(  \mathbb{R}^{n}\right)  $.
Moreover, since $\widetilde{d}\left(  \left(  x,0\right)  ,\left(
y,\eta\right)  \right)  \geq d\left(  x,y\right)  $, if $d\left(  x,y\right)
\geq HR$ then $\psi\left(  \left(  x,0\right)  ^{-1}\circ\left(
y,\eta\right)  \right)  =0$ for every $\eta\in\mathbb{R}^{p},$ hence $\phi
^{x}\left(  y\right)  =0$. Therefore%
\[
\phi^{x}\in C_{0}^{\infty}\left(  B_{HR}\left(  x\right)  \right)  .
\]
Also, note that, by (\ref{eq.SanchezI})%
\begin{align*}
0  &  \leq\phi^{x}\left(  y\right)  \leq\left\vert B_{X}\left(  x,R\right)
\right\vert \int_{\left\{  \eta\in\mathbb{R}^{p}:\widetilde{d}\left(  \left(
x,0\right)  ,\left(  y,\eta\right)  \right)  <HR\right\}  }\psi\left(  \left(
x,0\right)  ^{-1}\circ\left(  y,\eta\right)  \right)  d\eta\\
&  \leq\left\vert B_{X}\left(  x,HR\right)  \right\vert \left\vert \left\{
\eta\in\mathbb{R}^{p}:\widetilde{d}\left(  \left(  x,0\right)  ,\left(
y,\eta\right)  \right)  <HR\right\}  \right\vert \\
&  \leq c\left\vert B_{\widetilde{X}}\left(  \left(  x,0\right)  ,HR\right)
\right\vert \leq c\left(  HR\right)  ^{Q}%
\end{align*}
for every $x,y\in\mathbb{R}^{n}$. Analogously%
\begin{align*}
\left\vert X_{i}^{y}\phi^{x}\left(  y\right)  \right\vert  &  \leq\left\vert
B_{X}\left(  x,R\right)  \right\vert \int_{\left\{  \eta\in\mathbb{R}%
^{p}:\widetilde{d}\left(  \left(  x,0\right)  ,\left(  y,\eta\right)  \right)
<HR\right\}  }\left\vert \left(  \widetilde{X}_{i}\psi\right)  \left(  \left(
x,0\right)  ^{-1}\circ\left(  y,\eta\right)  \right)  \right\vert d\eta\\
&  \leq c\left\vert B_{X}\left(  x,HR\right)  \right\vert \left\vert \left\{
\eta\in\mathbb{R}^{p}:\widetilde{d}\left(  \left(  x,0\right)  ,\left(
y,\eta\right)  \right)  <HR\right\}  \right\vert \leq c\left(  HR\right)  ^{Q}%
\end{align*}
and the same reasoning applies to $X_{i}^{y}X_{j}^{y}\phi^{x}$.

Let $d\left(  x,y\right)  <R$, then by (\ref{eq.SanchezII})%
\begin{align*}
\phi^{x}\left(  y\right)   &  \geq\left\vert B_{X}\left(  x,R\right)
\right\vert \int_{\left\{  \eta\in\mathbb{R}^{p}:\widetilde{d}\left(  \left(
x,0\right)  ,\left(  y,\eta\right)  \right)  <R/\kappa\right\}  }\psi\left(
\left(  x,0\right)  ^{-1}\circ\left(  y,\eta\right)  \right)  d\eta\\
&  =\left\vert B_{X}\left(  x,R\right)  \right\vert \left\vert \left\{
\eta\in\mathbb{R}^{p}:\widetilde{d}\left(  \left(  x,0\right)  ,\left(
y,\eta\right)  \right)  <R/\kappa\right\}  \right\vert \\
&  \geq c\left\vert B_{\widetilde{X}}\left(  \left(  x,0\right)  ,R\right)
\right\vert \geq cR^{Q}.
\end{align*}
The constant $cR^{Q}$ is independent of $x$, so we are done.
\end{proof}

We will need also a suitable covering of $\mathbb{R}^{n}$ by metric balls with
fixed small radii. The required property can be established in every space of
homogeneous type.

\begin{proposition}
[bounded overlapping]\label{Prop overlapping}Let $\left(  X,d,\mu\right)  $ be
a space of homogeneous type. For every $R>0$ there exists a family of balls
$\mathcal{B=}\left\{  B\left(  x_{\alpha},R\right)  \right\}  _{\alpha\in A}$
such that:

(i) $%
{\displaystyle\bigcup\limits_{_{\alpha\in A}}}
B\left(  x_{\alpha},R\right)  =X$

(ii) for every $H>1$, the family of dilated balls%
\[
\mathcal{B}^{H}=\left\{  B\left(  x_{\alpha},HR\right)  \right\}  _{\alpha\in
A}%
\]
has the bounded overlapping property, that is: there exists a constant $N$
(depending on $H$ and the constants of $X$, but independent of $R$) such that
every point of $X$ belongs to at most $N$ balls $B\left(  x_{\alpha
},HR\right)  $.
\end{proposition}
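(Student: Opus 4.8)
\textbf{Proof plan for Proposition \ref{Prop overlapping} (bounded overlapping covering).}

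The plan is to extract the covering by a standard maximal-disjoint-subfamily (Vitali-type) argument, using only the quasimetric structure and the doubling property of $\mu$. First I would fix $R>0$ and consider the collection of all balls $\{B(x,R):x\in X\}$. By Zorn's lemma pick a maximal subfamily $\{B(x_\alpha, R/\kappa_0)\}_{\alpha\in A}$ whose \emph{shrunk} versions, of radius $R/\kappa_0$ for a suitable constant $\kappa_0>1$ depending only on $C_d$, are pairwise disjoint. (More concretely one can take the centers $\{x_\alpha\}$ to be a maximal $\frac{R}{\kappa_0}$-separated set, i.e. $d(x_\alpha,x_\beta)\ge R/\kappa_0$ for $\alpha\ne\beta$; such a set exists by Zorn's lemma.) Maximality of the separated set gives (i): every $x\in X$ is within $R/\kappa_0<R$ of some $x_\alpha$, hence $X=\bigcup_\alpha B(x_\alpha,R)$.

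For (ii), fix $H>1$ and suppose $y\in X$ belongs to $B(x_\alpha, HR)$ for $\alpha$ in some index subset $J\subseteq A$; I must bound $\#J$ by a constant $N=N(H,C_d,C_\mu)$ independent of $R$. The two ingredients are: (a) for $\alpha\in J$, the small disjoint balls $B(x_\alpha, r_0)$ with $r_0:=\tfrac{R}{2\kappa_0 C_d}$ (say) are pairwise disjoint and all contained in one large ball $B(y, \Lambda R)$ with $\Lambda$ depending only on $H$ and $C_d$, by iterating the quasitriangle inequality; (b) by the doubling property applied $\lceil\log_2(\Lambda/r_0\cdot R^{-1})\rceil$-many times — a bounded number of times independent of $R$ — one has $\mu(B(y,\Lambda R))\le C_\mu^{k}\,\mu(B(x_\alpha, r_0))$ for each $\alpha\in J$, where the key point is that $\mu(B(y,\Lambda R))$ and $\mu(B(x_\alpha,r_0))$ are comparable with a constant depending only on the ratio $\Lambda R/r_0$, which is a fixed number. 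Summing the disjointness $\sum_{\alpha\in J}\mu(B(x_\alpha,r_0))\le \mu(B(y,\Lambda R))$ together with $\mu(B(y,\Lambda R))\le C\,\mu(B(x_\alpha,r_0))$ yields $\#J\le C$, and this $C$ is the desired $N$.

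I do not expect a serious obstacle here — this is the classical bounded-overlap argument in spaces of homogeneous type. The only point requiring care is the bookkeeping of the various radii and quasitriangle constants so that (i) the shrunk balls are genuinely disjoint, (ii) all the balls $B(x_\alpha,r_0)$, $\alpha\in J$, sit inside a single controlled enlargement of $B(y,HR)$, and (iii) the comparability constant between $\mu$ of the large ball and $\mu$ of each small ball depends only on a \emph{fixed} ratio (hence is independent of $R$, which is exactly the assertion of the proposition). One must also invoke that $\mu$ is a Borel measure positive and finite on balls (part (b) of Definition \ref{Def spazio omogeneo}) so that all the quantities $\mu(B(x_\alpha,r_0))$ are strictly positive and finite; doubling then propagates this comparability through the finitely many dyadic steps.
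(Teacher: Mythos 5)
Your proposal is correct and follows essentially the same route as the paper: a maximal pairwise-disjoint (equivalently, maximal separated) family obtained via Zorn's lemma gives the covering by maximality, and the bounded overlap follows by summing the measures of the disjoint shrunk balls inside a single controlled enlargement and invoking doubling, whose comparability constant depends only on the fixed ratio of radii (hence on $H$ and the constants of $X$, not on $R$). The only cosmetic difference is that the paper nests the balls as $B(x_\alpha,R/2)\subset B(x,2HR)\subset B(x_\alpha,3HR)$ and compares measures recentred at $x_\alpha$, which is exactly the recentring step you flag as necessary bookkeeping.
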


\begin{proof}
To simplify the notation, this proof is written assuming that $d$ is a
distance. If $d$ is a general quasidistance, some inessential changes have to
be made in the constants.

For a fixed number $r>0$, let $\mathcal{F}_{r}$ be the collection of all the
families of pairwise disjoint balls of radius $r$ in $X$. The set
$\mathcal{F}_{r}$ is partially ordered by the set inclusion, hence by Zorn's
lemma it possesses a maximal element. This means that there exists a family
$\mathcal{B}_{0}\mathcal{=}\left\{  B\left(  x_{\alpha},r\right)  \right\}
_{\alpha\in A}$ of pairwise disjoint balls in $X$, such that for every $x\in
X$ the ball $B\left(  x,r\right)  $ intersects at least one ball $B\left(
x_{\alpha},r\right)  $. Therefore every $x\in X$ has distance $<2r$ from some
$x_{\alpha}$, so that, taking $r=R/2$,%
\[
\mathcal{B}=\left\{  B\left(  x_{\alpha},R\right)  \right\}  _{\alpha\in A}%
\]
is a covering of $X$.

Now, fix a number $H>1$ and for any $x\in X$ let us compute an upper bound for
the number $N_{x}$ of balls $\left\{  B\left(  x_{\alpha},HR\right)  \right\}
_{\alpha\in A_{x}}$ containing $x$ (with $A_{x}\subset A$). For every ball
$B\left(  x_{\alpha},HR\right)  $ of this family we have, by the triangle
inequality:%
\[
B\left(  x_{\alpha},HR\right)  \subset B\left(  x,2HR\right)  \subset B\left(
x_{\alpha},3HR\right)  .
\]
Hence%
\[
B\left(  x,2HR\right)  \supset%
{\textstyle\bigcup\limits_{\alpha\in A_{x}}}
B\left(  x_{\alpha},HR\right)  \supset%
{\textstyle\bigcup\limits_{\alpha\in A_{x}}}
B\left(  x_{\alpha},R/2\right)
\]
and since the balls $B\left(  x_{\alpha},R/2\right)  $ are pairwise disjoint,%
\[
\mu\left(  B\left(  x,2HR\right)  \right)  \geq%
{\textstyle\sum\limits_{\alpha\in A_{x}}}
\mu\left(  B\left(  x_{\alpha},R/2\right)  \right)
\]
by the doubling condition%
\[
\geq c\left(  \mu,H\right)
{\textstyle\sum\limits_{\alpha\in A_{x}}}
\mu\left(  B\left(  x_{\alpha},3HR\right)  \right)  \geq c\left(
\mu,H\right)
{\textstyle\sum\limits_{\alpha\in A_{x}}}
\mu\left(  B\left(  x,2HR\right)  \right)  .
\]
We have proved that
\[
c\left(  \mu,H\right)
{\textstyle\sum\limits_{\alpha\in A_{x}}}
\mu\left(  B\left(  x,2HR\right)  \right)  \leq\mu\left(  B\left(
x,2HR\right)  \right)  <\infty
\]
so we conclude that $\operatorname{card}\left(  A_{x}\right)  =N_{x}$ is
finite and%
\[
N_{x}\leq\frac{1}{c\left(  \mu,H\right)  },
\]
independently from $x$ and $R.$
\end{proof}

\section{Operators with constant coefficients\label{sec constant}}

Throughout this section we keep considering operators (\ref{operators}) with
\emph{constant }coefficients $a_{ij}$, satisfying assumptions (H.1)-(H.4),
that is:%
\[
L_{A}u=\sum_{i,j=1}^{m}a_{ij}X_{i}X_{j}u.
\]

Our aim is twofold:

(1) to prove a representation formula for $X_{i}X_{j}u$ in terms of $L_{A}u$,
holding true for $u\in C_{0}^{\infty}\left(  \mathbb{R}^{n}\right)  $,
involving a suitable singular integral operator in $\mathbb{R}^{n}$;

(2) to prove $L^{p}\left(  \mathbb{R}^{n}\right)  $ continuity of this
singular integral operator, obtaining $L^{p}\left(  \mathbb{R}^{n}\right)  $
estimates for $X_{i}X_{j}u$, with constant depending on the coefficients
$a_{ij}$ only through the number $\nu$ appearing in (\ref{ellipticity}).

We wish to stress that, in view of the overall strategy of this paper, points
(1)-(2) are necessary steps. If we were interested just in the validity of
$L^{p}\left(  \mathbb{R}^{n}\right)  $ estimates for $X_{i}X_{j}u$ in terms of
$L_{A}u$, we could derive them almost immediately from the result in
\cite{BBB2}. But this would not allow to extend the result to variable
coefficient operators.

\subsection{Representation formulas for $X_{i}X_{j}u$\label{sec repr form}}

The next result contains a first form of the representation formula for second
derivatives $X_{i}X_{j}u$.

\begin{theorem}
\label{Thm repr formula 1}For $u\in C_{0}^{\infty}\left(  \mathbb{R}%
^{n}\right)  ,$ let us write%
\[
u\left(  x\right)  =-\int_{\mathbb{R}^{n}}\Gamma_{A}\left(  x,y\right)
f\left(  y\right)  dy
\]
with $f=L_{A}u$ as in (\ref{repr formula 0}). Then, for $i,j=1,2,...,m,$ and
every $x\in\mathbb{R}^{n}$,%
\begin{align}
&  X_{i}X_{j}u\left(  x\right) \label{repr form XX}\\
&  =-\lim_{\varepsilon\rightarrow0}\int_{\widetilde{d}\left(  \left(
y,\eta\right)  ,\left(  x,0\right)  \right)  \geq\varepsilon}f\left(
y\right)  \widetilde{X}_{i}\widetilde{X}_{j}\widetilde{\Gamma}_{A}\left(
\left(  y,\eta\right)  ^{-1}\ast\left(  x,0\right)  \right)  d\eta
dy+c_{ij}^{A}f\left(  x\right) \nonumber
\end{align}
where the limit is uniform and the constants $c_{ij}^{A}$ are given by
\[
c_{ij}^{A}=\int_{\left\{  u\in\mathbb{R}^{N}:\left\Vert u\right\Vert
=1\right\}  }\left(  \widetilde{X}_{j}\widetilde{\Gamma}_{A}\right)  \left(
\widetilde{X}_{i}\cdot\nu\right)  d\sigma\left(  u\right)
\]
where $\nu$ is the outer normal to the surface $\left\{  \left\Vert
u\right\Vert =1\right\}  $, and satisfy a uniform bound%
\[
\left\vert c_{ij}^{A}\right\vert \leq c
\]
for some constant $c$ depending on $A$ only through the number $\nu$.

Moreover, the limit for $\varepsilon\rightarrow0$ in the representation
formula exists uniformly in $x$, for every $f\in C_{0}^{\alpha}\left(
\mathbb{R}^{n}\right)  $ (see Definition \ref{Def Holder}).
\end{theorem}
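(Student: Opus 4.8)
The plan is to start from the representation $u(x)=-\int_{\mathbb{R}^n}\Gamma_A(x,y)f(y)\,dy$ with $f=L_Au\in C_0^\infty(\mathbb{R}^n)$, rewrite it using the lifting formula \eqref{sec.one:mainThm_defGamma22222} as
\[
u(x)=-\int_{\mathbb{R}^n}\int_{\mathbb{R}^p}\Gamma_{A,\mathbb{G}}\big((x,0)^{-1}\ast(y,\eta)\big)f(y)\,d\eta\,dy,
\]
and then differentiate twice. The first derivative is harmless: since $\Gamma_{A,\mathbb{G}}$ and its first $\widetilde X$-derivative are locally integrable with the homogeneity-type bounds from Theorem \ref{th.teoremone}(II) (degree $2-Q$ and $1-Q$ respectively, which are integrable near the pole because $Q>q>2$), one may pass $X_j^x$ under both integral signs, using that $\widetilde X_j$ is a lifting of $X_j$ acting on the $(x,\eta)$-variables together with left-invariance to write $X_j^x\Gamma_A(x;y)=\int_{\mathbb{R}^p}(\widetilde X_j\Gamma_{A,\mathbb{G}})((x,0)^{-1}\ast(y,\eta))\,d\eta$ as in Theorem \ref{th.teoremone}(I). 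The second derivative is the delicate one: $\widetilde X_i\widetilde X_j\Gamma_{A,\mathbb{G}}$ is homogeneous of degree $-Q$, hence \emph{not} locally integrable, so differentiating under the integral sign is not licit and a principal-value argument is forced.

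The core computation I would carry out is the standard "excise a small ball, differentiate, integrate by parts, let the ball shrink" argument, but performed \emph{upstairs} on the group $\mathbb{G}$. Fix $x$ and set $g(x,\eta;y)=\Gamma_{A,\mathbb{G}}((x,0)^{-1}\ast(y,\eta))$. First I would show that $X_iX_ju(x)$ equals $\lim_{\varepsilon\to0}$ of the integral of $f(y)$ against $\widetilde X_i\widetilde X_j g$ over $\{\widetilde d((y,\eta),(x,0))\ge\varepsilon\}$, plus a boundary term produced by moving one derivative across the cut-off surface. Concretely: write $X_i^xX_j^xu(x)=X_i^x\big[\int\!\!\int(\widetilde X_j g)\,f\,d\eta\,dy\big]$; since $\widetilde X_j g$ is only degree $1-Q$ (still integrable) we can split the domain into $\{\widetilde d<\varepsilon\}$ and its complement, bound the near piece by $C\varepsilon\to0$ using $|\widetilde X_j g|\lesssim \widetilde d^{\,1-Q}$ together with $|B_{\widetilde X}(z,r)|=\omega_Q r^Q$ and the slice estimate \eqref{eq.SanchezI}, and on the far piece differentiate under the integral, picking up $\int\!\!\int \widetilde X_i\widetilde X_j g\cdot f$ over $\{\widetilde d\ge\varepsilon\}$ and a surface integral over $\{\widetilde d=\varepsilon\}$ coming from the $\varepsilon$-dependence of the domain. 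In that surface integral I substitute $f(y)=f(x)+O(\widetilde d^{\,\alpha})$ (here the $C_0^\alpha$ or $C_0^\infty$ regularity of $f$ enters, and this is exactly why the weaker hypothesis $f\in C_0^\alpha$ suffices); the $O(\widetilde d^{\,\alpha})$ part contributes something $O(\varepsilon^\alpha)\to0$, while the $f(x)$ part contributes $f(x)$ times a surface integral which, after a dilation $u=\delta_\varepsilon^{-1}(\cdot)$ and the divergence theorem / homogeneity, is $\varepsilon$-independent and equals the stated constant $c_{ij}^A=\int_{\{\|u\|=1\}}(\widetilde X_j\widetilde\Gamma_A)(\widetilde X_i\cdot\nu)\,d\sigma$. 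I would also remark that, after the limit is taken, the principal value can equivalently be written with the $d_{\widetilde X}$-balls replaced by $\|\cdot\|$-balls (or vice versa) because the two are comparable by \eqref{equivalence} and the difference of the two cut-offs is supported in an annulus $c_1\varepsilon\le\widetilde d\le c_2\varepsilon$ where $\widetilde X_i\widetilde X_j\widetilde\Gamma$ has vanishing integral against radial weights by Proposition \ref{Prop vanishing integral}; this is in fact what makes the geometric shape of the excised region irrelevant.

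For the bound $|c_{ij}^A|\le c$ with $c$ depending on $A$ only through $\nu$: the surface $\{\|u\|=1\}$ is fixed (independent of $A$), and on it $|\widetilde X_j\widetilde\Gamma_A|$ and $|\widetilde X_i\widetilde\Gamma_A|$ are bounded by constants depending on $A$ only through $\nu$, again by Theorem \ref{th.teoremone}(II); integrating over the compact fixed surface gives the claim. The uniformity of the limit in $x$ comes from the fact that all the estimates above — the $C\varepsilon$ and $C\varepsilon^\alpha$ remainders — are uniform in $x$ once $f$ has compact support, because the bounds on $\widetilde X_j g$, $\widetilde X_i\widetilde X_j g$ depend only on $\widetilde d$ and the slice estimates, not on $x$, and $|f|_\alpha$, $\operatorname{supp}f$ are fixed. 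The main obstacle is the careful bookkeeping of the boundary term: one must verify that differentiating the integral over the $\varepsilon$-dependent domain produces exactly one surface term (not two, since the inner derivative $\widetilde X_j$ has already been brought inside before cutting), correctly identify which component of $\widetilde X_i$ pairs with the outward normal, and check that the change of variables $u\mapsto\delta_\varepsilon(u)$ together with the degree $2-Q$ homogeneity of $\widetilde\Gamma_A$ (equivalently degree $1-Q$ of $\widetilde X_j\widetilde\Gamma_A$) makes the surface integral genuinely independent of $\varepsilon$ and equal to the asserted $c_{ij}^A$ — i.e. that all powers of $\varepsilon$ cancel. Everything else is routine domination and dominated convergence.
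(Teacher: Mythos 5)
Your plan is, at its core, the same argument the paper uses: regularize the kernel near the pole, differentiate, isolate a commutator/boundary term, show by a dilation $u=D_{\varepsilon}(v)$ and the degree $1-Q$ homogeneity of $\widetilde{X}_{j}\widetilde{\Gamma}_{A}$ that this term is $\varepsilon$-independent, identify it via the divergence theorem as the stated surface integral over $\{\left\Vert u\right\Vert =1\}$, and use the H\"older continuity of $f$ together with the cancellation property of Proposition \ref{Prop vanishing integral} to kill all remainders uniformly in $x$. The one substantive difference is the regularization: you excise the set $\{\widetilde{d}<\varepsilon\}$ sharply and invoke a Leibniz rule for the $x$-dependent domain, producing a surface integral over $\{\widetilde{d}=\varepsilon\}$; the paper instead multiplies $\widetilde{X}_{j}\widetilde{\Gamma}_{A}$ by a \emph{smooth} cutoff $\omega_{\varepsilon}(u)=\omega(D_{1/\varepsilon}u)$ built from the smooth homogeneous norm (\ref{hom norm}), so that differentiation under the integral sign is unproblematic and the extra term appears as a \emph{volume} integral of $\widetilde{X}_{i}\omega_{\varepsilon}\cdot\widetilde{X}_{j}\widetilde{\Gamma}_{A}$ over the shell $\varepsilon/2<\left\Vert u\right\Vert<\varepsilon$, converted to the surface integral only at the very end. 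Your variant has a genuine technical obstruction as written: the level sets of the Carnot--Carath\'eodory distance $d_{\widetilde{X}}$ are not known to be smooth hypersurfaces (the paper points this out explicitly just before (\ref{hom norm})), so the outer normal to $\{\widetilde{d}=\varepsilon\}$ and the divergence theorem on it are not available; you must perform the excision with the smooth norm $\left\Vert\cdot\right\Vert$ (consistently with your own formula for $c_{ij}^{A}$) and only afterwards pass to the $\widetilde{d}$-truncation of the statement via the cancellation property, which you do mention but relegate to a remark rather than to the core computation. The smooth cutoff also spares you the delicate bookkeeping of the moving-domain Leibniz rule that you yourself flag as the main obstacle.

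A second point to repair: your formula $X_{j}^{x}\Gamma_{A}(x;y)=\int_{\mathbb{R}^{p}}(\widetilde{X}_{j}\Gamma_{A,\mathbb{G}})\left((x,0)^{-1}\ast(y,\eta)\right)d\eta$ is not literally Theorem \ref{th.teoremone}(I): the left-invariant field $\widetilde{X}_{j}$ evaluated at $(x,0)^{-1}\ast(y,\eta)$ is a derivative in $(y,\eta)$, not in $x$. To differentiate in $x$ one must first rewrite the integrand in the form $(y,0)^{-1}\ast(x,\eta)$; this is exactly the role of the measure-preserving change of variables of Lemma \ref{Lemma diffeomorfismo}, which the paper applies twice (once in each direction) and which your plan omits. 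Both points are fixable without changing the architecture of the argument.
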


For the proof we need the following know result:

\begin{lemma}
(See \cite[Lemma 4.5]{BBB1}). \label{Lemma diffeomorfismo}For any fixed
$x,y\in\mathbb{R}^{n}$, the change of variables $\eta\mapsto\zeta$ in
$\mathbb{R}^{p}$ implicitly defined by the identity%
\begin{equation}
(x,0)^{-1}\ast(y,\zeta)=(x,\eta)^{-1}\ast(y,0)
\label{eq.tousechangeofvariables}%
\end{equation}
has jacobian determinant of modulus $1.$
\end{lemma}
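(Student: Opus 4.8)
The plan is to realize the map $\eta\mapsto\zeta$ as the restriction, to a coordinate slice, of a \emph{globally} volume-preserving diffeomorphism of $\mathbb{R}^N$, and then to control the remaining ``transverse'' factor by hand using the stratified structure of $\mathbb{G}$. First I would left-multiply the defining identity by $(x,0)$ to rewrite it as $(y,\zeta)=(x,0)\ast(x,\eta)^{-1}\ast(y,0)$, and — with $x,y$ fixed — introduce $\Psi(u):=(x,0)\ast u^{-1}\ast(y,0)=\big(L_{(x,0)}\circ R_{(y,0)}\circ\iota\big)(u)$ for $u\in\mathbb{R}^N$, where $L_a,R_a$ are the left and right translations $u\mapsto a\ast u$, $u\mapsto u\ast a$ of $\mathbb{G}$. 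Then $\Psi$ is a diffeomorphism of $\mathbb{R}^N$ with $\Psi(x,\eta)=(y,\Phi(\eta))$, $\Phi$ being our change of variables; equivalently, $\Psi$ carries the slice $\{x\}\times\mathbb{R}^p$ into $\{y\}\times\mathbb{R}^p$ (which is exactly the well-definedness of $\Phi$, built into the lifting of \cite{BBlift}) and there coincides, in the $\xi$-variable, with $\Phi$. Since $\mathbb{G}$ is a homogeneous Carnot group it is nilpotent, hence unimodular, with Haar measure equal to the Lebesgue measure of $\mathbb{R}^N$; therefore left and right translations and the inversion all have Jacobian determinant of modulus $1$, whence $|\det D\Psi|\equiv 1$ on $\mathbb{R}^N$.

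Next I would write $\Psi=(\Psi_x,\Psi_\xi)$, with $\Psi_x$ valued in $\mathbb{R}^n$ and $\Psi_\xi$ in $\mathbb{R}^p$. On the slice $\{x\}\times\mathbb{R}^p$ one has $\Psi_x(x,\eta)\equiv y$, hence $\partial_\xi\Psi_x|_{(x,\eta)}=0$, so the Jacobian of $\Psi$ at $(x,\eta)$ is block lower-triangular with diagonal blocks $A:=\partial_x\Psi_x|_{(x,\eta)}$ and $D\Phi(\eta)=\partial_\xi\Psi_\xi|_{(x,\eta)}$. From $1=|\det D\Psi(x,\eta)|=|\det A|\cdot|\det D\Phi(\eta)|$ it then suffices to prove $|\det A|=1$, i.e. that $x'\mapsto\pi_n\big((x,0)\ast(x',\eta)^{-1}\ast(y,0)\big)$ has unit-modulus Jacobian at $x'=x$.

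For this last point I would use that $\mathbb{G}$ is stratified. In the $D_\lambda$-homogeneous coordinates I group the $N$ coordinates of $\mathbb{R}^N$ by weight (the $\sigma_i$ for the $x$-coordinates, the $\tau_j$ for the $\xi$-coordinates) and write $w^{(k)}$ for the weight-$k$ block of a point $w$; by the Baker--Campbell--Hausdorff formula the group law satisfies $(u\ast v)^{(k)}=u^{(k)}+v^{(k)}+P_k(\text{coordinates of weight}<k)$ and the inversion $(u^{-1})^{(k)}=-u^{(k)}+Q_k(\text{coordinates of weight}<k)$. Consequently, differentiating $\pi_n\big((x,0)\ast(x',\eta)^{-1}\ast(y,0)\big)$ with respect to a weight-$k$ coordinate of $x'$ affects only output coordinates of weight $\ge k$ and, within weight $k$, contributes $-1$ on the matching $x$-coordinate and $0$ on the remaining weight-$k$ $x$-coordinates. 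Ordering the $x$-coordinates by increasing weight, $A$ is then block lower-triangular with every diagonal block equal to $-I$, so $\det A=(-1)^n$ and $|\det D\Phi(\eta)|=1$ — the assertion. (Alternatively, this triangular structure of the lifted group can be quoted directly from \cite{BBlift}.)

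The step I expect to be the main obstacle is the bookkeeping in the last paragraph: one must keep the $x$- and $\xi$-coordinates of a given weight carefully separated and verify that the lower-weight corrections $P_k,Q_k$ never reach the diagonal of $A$. Also, if one does not wish to invoke it from \cite{BBlift}, the well-definedness of $\Phi$ used in the first paragraph deserves a short argument of its own — for instance, it follows from the fact that the fibers of $\pi_n$ are right cosets of the subgroup $\{0\}\times\mathbb{R}^p$ of $\mathbb{G}$.
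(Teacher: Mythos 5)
Your argument is correct, but note that for this lemma the paper offers no proof at all: it is quoted verbatim from \cite[Lemma 4.5]{BBB1}, so there is no in-paper argument to compare with. Your route --- realizing $\eta\mapsto\zeta$ as the restriction of the globally defined map $\Psi=L_{(x,0)}\circ R_{(y,0)}\circ\iota$, using unimodularity of the Carnot group to get $|\det D\Psi|\equiv 1$, and then killing the transverse block $A$ by the weight bookkeeping --- is sound. The one ingredient you lean on, namely that $\Psi$ maps $\{x\}\times\mathbb{R}^p$ into $\{y\}\times\mathbb{R}^p$, is correctly reduced to the fact that the fibers of $\pi_n$ are the right cosets of $H=\{0\}\times\mathbb{R}^p$; this in turn follows because the flows of the left-invariant fields $\widetilde{X}_i$ are right translations and are $\pi_n$-related to the flows of the $X_i$, so right translations permute the fibers and $H$ is the stabilizer of $0$. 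Your block computation is also fine: in the adapted homogeneous coordinates the group law has the form $(u\ast v)_j=u_j+v_j+Q_j$ and the inversion $(u^{-1})_j=-u_j+\widetilde{Q}_j$, with $Q_j,\widetilde{Q}_j$ depending only on components of weight strictly less than that of $j$ (this follows from $u\ast 0=u$, $0\ast v=v$ and $D_\lambda$-homogeneity), which gives exactly the lower-triangular structure with diagonal $-I$ that you claim. It is worth observing, though, that this last piece of bookkeeping already proves the lemma by itself: applying it directly to the $\xi$-components of $(x,0)\ast(x,\eta)^{-1}\ast(y,0)$ gives $\zeta_j=-\eta_j+(\text{polynomial in }x,y\text{ and the }\eta\text{-components of lower weight})$, so $\partial\zeta/\partial\eta$ is triangular with entries $-1$ on the diagonal and $|\det(\partial\zeta/\partial\eta)|=1$ at once. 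Thus the detour through the measure-preserving $\Psi$ and the factorization $1=|\det A|\cdot|\det D\Phi|$, while a nice structural way to see the result, can be dispensed with, and the direct computation is presumably closer in spirit to the proof in \cite{BBB1}.
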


\bigskip

\begin{proof}
[Proof of Theorem \ref{Thm repr formula 1}]This proof is an adaptation of the
computation which, for Carnot groups, is written in \cite[proof of Thm.
6.33]{BBbook}.

First, we can write%
\begin{equation}
X_{j}u\left(  x\right)  =\int_{\mathbb{R}^{n}}X_{j}\Gamma_{A}\left(
x,y\right)  f\left(  y\right)  dy \label{start repr}%
\end{equation}
since by the growth estimate on $X_{j}\Gamma\left(  x,y\right)  $ the last
integral is absolutely convergent. Next, let us fix a cutoff function
$\omega_{\varepsilon}$ in $\mathbb{R}^{N}$ such that (letting $\left\Vert
\cdot\right\Vert $ be the smooth homogeneous norm defined in (\ref{hom norm}))%
\begin{align*}
\omega_{\varepsilon}\left(  u\right)   &  =1\text{ for }\left\Vert
u\right\Vert \geq\varepsilon\\
\omega_{\varepsilon}\left(  u\right)   &  =0\text{ for }\left\Vert
u\right\Vert \leq\frac{\varepsilon}{2}\\
\left\vert \widetilde{X}_{i}\omega_{\varepsilon}\right\vert  &  \leq\frac
{c}{\varepsilon},
\end{align*}
actually, let us choose%
\begin{align*}
\omega_{\varepsilon}\left(  u\right)   &  =\omega\left(  D_{1/\varepsilon
}\left(  u\right)  \right)  \text{ with}\\
\omega\left(  u\right)   &  =1\text{ for }\left\Vert u\right\Vert \geq1\\
\omega\left(  u\right)   &  =0\text{ for }\left\Vert u\right\Vert \leq\frac
{1}{2}%
\end{align*}
To simplify notation, throughout this proof we will write $\widetilde{\Gamma}$
in place of $\widetilde{\Gamma}_{A}$.

By (\ref{start repr}) and Theorem \ref{th.teoremone}, point (i), we can write:%
\[
X_{j}u\left(  x\right)  =\int_{\mathbb{R}^{n}}f\left(  y\right)  \left(
\int_{\mathbb{R}^{p}}\widetilde{X}_{j}\widetilde{\Gamma}\left(  (y,0)^{-1}%
\ast(x,\eta)\right)  d\eta\right)  dy
\]
by the change of variables in Lemma \ref{Lemma diffeomorfismo}
\begin{align*}
&  =\int_{\mathbb{R}^{n}}f\left(  y\right)  \left(  \int_{\mathbb{R}^{p}%
}\widetilde{X}_{j}\widetilde{\Gamma}\left(  \left(  y,\eta\right)  ^{-1}%
\ast\left(  x,0\right)  \right)  d\eta\right)  dy\\
&  =\lim_{\varepsilon\rightarrow0}\int_{\mathbb{R}^{n}}f\left(  y\right)
\left(  \int_{\mathbb{R}^{p}}\left(  \omega_{\varepsilon}\widetilde{X}%
_{j}\widetilde{\Gamma}\right)  \left(  \left(  y,\eta\right)  ^{-1}\ast\left(
x,0\right)  \right)  d\eta\right)  dy\\
&  \equiv\lim_{\varepsilon\rightarrow0}A_{\varepsilon}\left(  x\right)  ,
\end{align*}
with uniform convergence. Actually,%
\begin{align*}
\left\vert X_{j}u\left(  x\right)  -A_{\varepsilon}\left(  x\right)
\right\vert  &  \leq\left\vert \int_{\mathbb{R}^{n}}f\left(  y\right)  \left(
\int_{\mathbb{R}^{p}}\left(  \left(  \omega_{\varepsilon}-1\right)
\widetilde{X}_{j}\widetilde{\Gamma}\right)  \left(  \left(  y,\eta\right)
^{-1}\ast\left(  x,0\right)  \right)  d\eta\right)  dy\right\vert \\
&  \leq\sup\left\vert f\right\vert \cdot\int_{\left\{  u\in\mathbb{R}%
^{N}:\left\Vert u\right\Vert <\varepsilon\right\}  }\frac{du}{d_{\widetilde{X}%
}\left(  u\right)  ^{Q-1}}\leq c\varepsilon\cdot\sup\left\vert f\right\vert .
\end{align*}
We will compute%
\[
\lim_{\varepsilon\rightarrow0}X_{i}A_{\varepsilon}\left(  x\right)  ,
\]
showing that the limit is uniform and coincides with the right hand side of
(\ref{repr form XX}). A distributional reasoning then implies that the
identity (\ref{repr form XX}) actually holds. To compute $X_{i}A_{\varepsilon
}\left(  x\right)  $, we exploit twice the change of variables in Lemma
\ref{Lemma diffeomorfismo}:%
\[
A_{\varepsilon}\left(  x\right)  =\int_{\mathbb{R}^{n}}f\left(  y\right)
\left(  \int_{\mathbb{R}^{p}}\left(  \omega_{\varepsilon}\widetilde{X}%
_{j}\widetilde{\Gamma}\right)  \left(  (y,0)^{-1}\ast(x,\eta)\right)
d\eta\right)  dy;
\]%
\begin{align*}
X_{i}A_{\varepsilon}\left(  x\right)   &  =\int_{\mathbb{R}^{n}}f\left(
y\right)  \left(  \int_{\mathbb{R}^{p}}\widetilde{X}_{i}\left(  \omega
_{\varepsilon}\widetilde{X}_{j}\widetilde{\Gamma}\right)  \left(
(y,0)^{-1}\ast(x,\eta)\right)  d\eta\right)  dy\\
&  =\int_{\left\Vert \left(  y,\eta\right)  ^{-1}\ast\left(  x,0\right)
\right\Vert <\varepsilon}f\left(  y\right)  \widetilde{X}_{i}\left(
\omega_{\varepsilon}\widetilde{X}_{j}\widetilde{\Gamma}\right)  \left(
\left(  y,\eta\right)  ^{-1}\ast\left(  x,0\right)  \right)  d\eta dy\\
&  +\int_{\left\Vert \left(  y,\eta\right)  ^{-1}\ast\left(  x,0\right)
\right\Vert \geq\varepsilon}f\left(  y\right)  \widetilde{X}_{i}%
\widetilde{X}_{j}\widetilde{\Gamma}\left(  \left(  y,\eta\right)  ^{-1}%
\ast\left(  x,0\right)  \right)  d\eta dy\\
&  \equiv A_{1}^{\varepsilon}\left(  x\right)  +A_{2}^{\varepsilon}\left(
x\right)  .
\end{align*}
Let us show that for $\varepsilon\rightarrow0$ the function $A_{2}%
^{\varepsilon}\left(  x\right)  $ converges uniformly to some function
$A_{2}\left(  x\right)  .$ For $0<\varepsilon_{1}<\varepsilon_{2}<1$ let us
consider%
\begin{align*}
&  A_{2}^{\varepsilon_{1}}\left(  x\right)  -A_{2}^{\varepsilon_{2}}\left(
x\right)  =\int_{\varepsilon_{1}\leq\left\Vert \left(  y,\eta\right)
^{-1}\ast\left(  x,0\right)  \right\Vert <\varepsilon_{2}}f\left(  y\right)
\widetilde{X}_{i}\widetilde{X}_{j}\widetilde{\Gamma}\left(  \left(
y,\eta\right)  ^{-1}\ast\left(  x,0\right)  \right)  d\eta dy\\
&  =\int_{\varepsilon_{1}\leq\left\Vert \left(  y,\eta\right)  ^{-1}%
\ast\left(  x,0\right)  \right\Vert <\varepsilon_{2}}\left[  f\left(
y\right)  -f\left(  x\right)  \right]  \widetilde{X}_{i}\widetilde{X}%
_{j}\widetilde{\Gamma}\left(  \left(  y,\eta\right)  ^{-1}\ast\left(
x,0\right)  \right)  d\eta dy
\end{align*}
where we have exploited the vanishing property of $\widetilde{X}%
_{i}\widetilde{X}_{j}\widetilde{\Gamma}$ on spherical shells (Proposition
\ref{Prop vanishing integral}). Then, since $f\in C_{0}^{\alpha}\left(
\mathbb{R}^{n}\right)  $ and%
\[
d_{X}\left(  x,y\right)  \leq\widetilde{d}\left(  \left(  y,\eta\right)
,\left(  x,0\right)  \right)  ,
\]
with $\widetilde{d}\left(  \left(  y,\eta\right)  ,\left(  x,0\right)
\right)  $ equivalent to $\left\Vert \left(  y,\eta\right)  ^{-1}\ast\left(
x,0\right)  \right\Vert ,$%
\begin{align*}
\left\vert A_{2}^{\varepsilon_{1}}\left(  x\right)  -A_{2}^{\varepsilon_{2}%
}\left(  x\right)  \right\vert  &  \leq c\left\vert f\right\vert _{\alpha}%
\int_{\varepsilon_{1}\leq\left\Vert \left(  y,\eta\right)  ^{-1}\ast\left(
x,0\right)  \right\Vert <\varepsilon_{2}}\frac{d\left(  x,y\right)  ^{\alpha}%
}{\widetilde{d}\left(  \left(  y,\eta\right)  ,\left(  x,0\right)  \right)
^{Q}}d\eta dy\\
&  \leq c\left\vert f\right\vert _{\alpha}\int_{\left\Vert \left(
y,\eta\right)  ^{-1}\ast\left(  x,0\right)  \right\Vert <\varepsilon_{2}}%
\frac{1}{\widetilde{d}\left(  \left(  y,\eta\right)  ,\left(  x,0\right)
\right)  ^{Q-\alpha}}d\eta dy\\
&  \leq c\left\vert f\right\vert _{\alpha}\varepsilon_{2}^{\alpha}%
\rightarrow0\text{ as }\varepsilon_{2}\rightarrow0.
\end{align*}
Then $A_{2}^{\varepsilon}\left(  x\right)  $ satisfies the Cauchy condition
for uniform convergence.

As to $A_{1}^{\varepsilon}\left(  x\right)  $,%
\begin{align*}
A_{1}^{\varepsilon}\left(  x\right)   &  =\int_{\left\Vert \left(
y,\eta\right)  ^{-1}\ast\left(  x,0\right)  \right\Vert <\varepsilon}\left[
f\left(  y\right)  -f\left(  x\right)  \right]  \widetilde{X}_{i}\left(
\omega_{\varepsilon}\widetilde{X}_{j}\widetilde{\Gamma}\right)  \left(
\left(  y,\eta\right)  ^{-1}\ast\left(  x,0\right)  \right)  d\eta dy\\
&  +f\left(  x\right)  \int_{\left\Vert \left(  y,\eta\right)  ^{-1}%
\ast\left(  x,0\right)  \right\Vert <\varepsilon}\widetilde{X}_{i}\left(
\omega_{\varepsilon}\widetilde{X}_{j}\widetilde{\Gamma}\right)  \left(
\left(  y,\eta\right)  ^{-1}\ast\left(  x,0\right)  \right)  d\eta dy\\
&  \equiv A_{1,1}^{\varepsilon}\left(  x\right)  +f\left(  x\right)
A_{1,2}^{\varepsilon}\left(  x\right)  .
\end{align*}

Next, we claim that $A_{1,1}^{\varepsilon}\left(  x\right)  \rightarrow0$
uniformly as $\varepsilon\rightarrow0$. Actually,%
\begin{align*}
A_{1,1}^{\varepsilon}\left(  x\right)   &  =\int_{\left\Vert \left(
y,\eta\right)  ^{-1}\ast\left(  x,0\right)  \right\Vert <\varepsilon}\left[
f\left(  y\right)  -f\left(  x\right)  \right]  \cdot\\
&  \cdot\left(  \widetilde{X}_{i}\omega_{\varepsilon}\cdot\widetilde{X}%
_{j}\widetilde{\Gamma}+\omega_{\varepsilon}\widetilde{X}_{i}\widetilde{X}%
_{j}\widetilde{\Gamma}\right)  \left(  \left(  y,\eta\right)  ^{-1}\ast\left(
x,0\right)  \right)  d\eta dy.
\end{align*}
Then, since%
\begin{align*}
\widetilde{X}_{i}\omega_{\varepsilon}\left(  u\right)   &  =\frac
{1}{\varepsilon}\left(  \widetilde{X}_{i}\omega\right)  \left(
D_{1/\varepsilon}u\right)  \neq0\text{ only if }\frac{\varepsilon}%
{2}<\left\Vert u\right\Vert <\varepsilon\text{ and}\\
\omega_{\varepsilon}\left(  u\right)   &  \neq0\text{ only if }\frac
{\varepsilon}{2}<\left\Vert u\right\Vert ,
\end{align*}%
\begin{align*}
\left\vert A_{1,1}^{\varepsilon}\left(  x\right)  \right\vert  &  \leq
c\left\vert f\right\vert _{\alpha}\int_{\frac{\varepsilon}{2}<\left\Vert
\left(  y,\eta\right)  ^{-1}\ast\left(  x,0\right)  \right\Vert <\varepsilon
}d_{X}\left(  x,y\right)  ^{\alpha}\cdot\\
&  \cdot\left(  \frac{c}{\varepsilon\widetilde{d}\left(  \left(
y,\eta\right)  ,\left(  x,0\right)  \right)  ^{Q-1}}+\frac{c}{\widetilde{d}%
\left(  \left(  y,\eta\right)  ,\left(  x,0\right)  \right)  ^{Q}}\right)
d\eta dy
\end{align*}
since $d_{X}\left(  x,y\right)  \leq\widetilde{d}\left(  \left(
y,\eta\right)  ,\left(  x,0\right)  \right)  \leq\varepsilon$
\begin{align*}
&  \leq c\left\vert f\right\vert _{\alpha}\int_{\left\Vert \left(
y,\eta\right)  ^{-1}\ast\left(  x,0\right)  \right\Vert <\varepsilon}%
\frac{d\eta dy}{\widetilde{d}\left(  \left(  y,\eta\right)  ,\left(
x,0\right)  \right)  ^{Q-\alpha}}\\
&  =c\left\vert f\right\vert _{\alpha}\int_{\left\Vert u\right\Vert
<\varepsilon}\frac{du}{\left\Vert u\right\Vert ^{Q-\alpha}}\leq c\left\vert
f\right\vert _{\alpha}\varepsilon^{\alpha}\rightarrow0\text{ as }%
\varepsilon\rightarrow0.
\end{align*}

We have now to study
\begin{align*}
A_{1,2}^{\varepsilon}\left(  x\right)   &  =\int_{\left\Vert \left(
y,\eta\right)  ^{-1}\ast\left(  x,0\right)  \right\Vert <\varepsilon
}\widetilde{X}_{i}\left(  \omega_{\varepsilon}\widetilde{X}_{j}%
\widetilde{\Gamma}\right)  \left(  \left(  y,\eta\right)  ^{-1}\ast\left(
x,0\right)  \right)  d\eta dy\\
&  =\int_{\left\Vert u\right\Vert <\varepsilon}\widetilde{X}_{i}\left(
\omega_{\varepsilon}\widetilde{X}_{j}\widetilde{\Gamma}\right)  \left(
u\right)  du=A_{1,2}^{\varepsilon},
\end{align*}
that is, the term $A_{1,2}^{\varepsilon}$ is actually independent of $x$. We
are going to show that it is also independent of $\varepsilon$. Since%
\[
\widetilde{X}_{i}\left(  \omega_{\varepsilon}\widetilde{X}_{j}%
\widetilde{\Gamma}\right)  \left(  u\right)  =\omega\left(  D_{1/\varepsilon
}\left(  u\right)  \right)  \left(  \widetilde{X}_{i}\widetilde{X}%
_{j}\widetilde{\Gamma}\right)  \left(  u\right)  +\frac{1}{\varepsilon}\left(
\widetilde{X}_{i}\omega\right)  \left(  D_{1/\varepsilon}\left(  u\right)
\right)  \widetilde{X}_{j}\widetilde{\Gamma}\left(  u\right)  ,
\]
letting $u=D_{\varepsilon}\left(  v\right)  $ in the integral we get%
\begin{align*}
A_{1,2}^{\varepsilon}  &  =\int_{\frac{1}{2}<\left\Vert v\right\Vert
<1}\left[  \omega\left(  v\right)  \varepsilon^{-Q}\left(  \widetilde{X}%
_{i}\widetilde{X}_{j}\widetilde{\Gamma}\right)  \left(  v\right)  +\frac
{1}{\varepsilon}\left(  \widetilde{X}_{i}\omega\right)  \left(  v\right)
\varepsilon^{1-Q}\widetilde{X}_{j}\widetilde{\Gamma}\left(  v\right)  \right]
\varepsilon^{Q}dv\\
&  =\int_{\frac{1}{2}<\left\Vert v\right\Vert <1}\left[  \omega\widetilde{X}%
_{i}\widetilde{X}_{j}\widetilde{\Gamma}+\widetilde{X}_{i}\omega\cdot
\widetilde{X}_{j}\widetilde{\Gamma}\right]  \left(  v\right)  dv\\
&  =\int_{\frac{1}{2}<\left\Vert v\right\Vert <1}\widetilde{X}_{i}\left(
\omega\widetilde{X}_{j}\widetilde{\Gamma}\right)  \left(  v\right)
dv=A_{1,2},
\end{align*}
that is, the term $A_{1,2}^{\varepsilon}$ is actually independent of
$\varepsilon.$ Hence we have proved (\ref{repr form XX}) with $c_{ij}%
=A_{1,2}.$

Note that%
\begin{align*}
\left\vert c_{ij}\right\vert  &  \leq\int_{\frac{1}{2}<\left\Vert v\right\Vert
<1}\left\vert \widetilde{X}_{i}\omega\right\vert \left\vert \widetilde{X}%
_{j}\widetilde{\Gamma}\right\vert dv+\int_{\frac{1}{2}<\left\Vert v\right\Vert
<1}\left\vert \omega\right\vert \left\vert \widetilde{X}_{i}\widetilde{X}%
_{j}\widetilde{\Gamma}\right\vert dv\\
&  \leq c\left\{  \int_{\frac{1}{2}<\left\Vert u\right\Vert <1}\frac
{1}{\left\Vert u\right\Vert ^{Q-1}}dv+\int_{\frac{1}{2}<\left\Vert
u\right\Vert <1}\frac{1}{\left\Vert u\right\Vert ^{Q}}dv\right\}  =c^{\prime},
\end{align*}
where $c^{\prime}$ depends on the matrix $A$ only through the number $\nu.$ In
the last inequality, we have exploited the estimates (uniform w.r.t. the
matrix $A$)%
\[
\left\vert \widetilde{X}_{j}\widetilde{\Gamma}\left(  u\right)  \right\vert
\leq\frac{c}{\left\Vert u\right\Vert ^{Q-1}};\left\vert \widetilde{X}%
_{i}\widetilde{X}_{j}\widetilde{\Gamma}\left(  u\right)  \right\vert \leq
\frac{c}{\left\Vert u\right\Vert ^{Q}}%
\]
which are actually contained in the \emph{proof }of Theorem \ref{th.teoremone}%
, point (ii).

The value of the constant $A_{1,2}$ can be rewritten more transparently if we
apply the divergence theorem on the (smooth) ellipsoidal shell $\frac{1}%
{2}<\left\Vert u\right\Vert <1$, and exploit the fact that $\omega\left(
v\right)  =1$ for $\left\Vert v\right\Vert =1$ and $\omega\left(  v\right)
=0$ for $\left\Vert v\right\Vert =1/2$, which shows that the constant is also
independent from the cutoff function $\omega$. Writing%
\[
\widetilde{X}_{i}=\sum_{l=1}^{N}a_{il}\left(  w\right)  \partial_{w_{l}}%
\]
and recalling that, by the structure of homogeneous vector fields (see
\cite[\S 1]{BBB2}),%
\[
\widetilde{X}_{i}f\left(  w\right)  =\sum_{l=1}^{N}\partial_{w_{l}}\left(
a_{il}f\right)  \left(  w\right)  ,
\]
we have%

\[
A_{1,2}=\sum_{l=1}^{N}\int_{\left\Vert u\right\Vert =1}\left(  a_{il}%
\widetilde{X}_{j}\widetilde{\Gamma}\right)  \left(  v\right)  \nu_{l}%
d\sigma\left(  v\right)  =\int_{\left\Vert u\right\Vert =1}\left(
\widetilde{X}_{j}\widetilde{\Gamma}\right)  \left(  \widetilde{X}_{i}\cdot
\nu\right)  d\sigma\left(  v\right)
\]
where $\left(  \widetilde{X}_{i}\cdot\nu\right)  \ $is the scalar product
between $\widetilde{X}_{i}$ and outer normal to $\left\{  v:\left\Vert
v\right\Vert =1\right\}  $.
\end{proof}

Next, we need to rewrite the representation formula for second order
derivatives in terms of some singular integral living in the space
$\mathbb{R}^{n}$, involving the singular kernel
\begin{equation}
K_{A}\left(  x,y\right)  =X_{i}^{x}X_{j}^{x}\Gamma_{A}\left(  x;y\right)  ,
\label{sing kernel}%
\end{equation}
whose basic properties have been stated in Proposition
\ref{Thm prop sing kern}. To this aim we need to define, in an unusual way, a
\emph{smoothed version }of $K_{A}$.

Let $\psi_{\varepsilon,R}$ be a family of radial cutoff function in the group
$\mathbb{G}=\mathbb{R}^{N}$:%
\begin{equation}
\psi_{\varepsilon,R}\left(  u\right)  =\phi_{\varepsilon,R}\left(
d_{\widetilde{X}}\left(  u\right)  \right)  \label{cutoff psi}%
\end{equation}
where $\phi_{\varepsilon,R}$ is a compactly supported piecewise linear
function such that%
\begin{align*}
\phi_{\varepsilon,R}\left(  t\right)   &  =1\text{ for }2\varepsilon\leq t\leq
R\\
\phi_{\varepsilon,R}\left(  t\right)   &  =0\text{ for }t\leq\varepsilon\text{
and for }t\geq2R.
\end{align*}
(This time it is not necessary to build the cutoff function using a smooth
homogeneous norm, like in the proof of Theorem \ref{Thm repr formula 1},
because we will not need to differentiate $\psi_{\varepsilon,R}$). Let%
\begin{equation}
K_{\varepsilon,R}^{A}\left(  x,y\right)  =\int_{\mathbb{R}^{p}}\left(
\widetilde{X}_{i}\widetilde{X}_{j}\widetilde{\Gamma}_{A}\cdot\psi
_{\varepsilon,R}\right)  \left(  \left(  y,\eta\right)  ^{-1}\ast\left(
x,0\right)  \right)  d\eta\label{K troncato}%
\end{equation}
(note that $K_{\varepsilon,R}^{A}$ is not just $K_{A}$ times the cutoff
function, but is built saturating $\widetilde{X}_{i}\widetilde{X}%
_{j}\widetilde{\Gamma}_{A}$ times a cutoff function).

Note that%
\begin{align*}
d\left(  x,y\right)   &  >2R\Longrightarrow d_{\widetilde{X}}\left(  \left(
y,\eta\right)  ,\left(  x,0\right)  \right)  >2R\Longrightarrow\\
&  \Longrightarrow\psi_{\varepsilon,R}\left(  \left(  y,\eta\right)  ^{-1}%
\ast\left(  x,0\right)  \right)  =0\Longrightarrow K_{\varepsilon,R}%
^{A}\left(  x,y\right)  =0.
\end{align*}
Instead,
\[
d\left(  x,y\right)  <\varepsilon\text{ does \emph{not} imply }\psi
_{\varepsilon,R}\left(  \left(  y,\eta\right)  ^{-1}\ast\left(  x,0\right)
\right)  =0,
\]
so the kernel $K_{\varepsilon,R}^{A}\left(  x,y\right)  $ does not
\emph{vanish }near the diagonal. Nevertheless, it is absolutely integrable
w.r.t. $x$, because:%
\begin{align}
&  \int_{\mathbb{R}^{n}}\left\vert K_{\varepsilon,R}^{A}\left(  x,y\right)
\right\vert dy\leq\int_{\mathbb{R}^{n}}\int_{\mathbb{R}^{p}}\left\vert \left(
\widetilde{X}_{i}\widetilde{X}_{j}\widetilde{\Gamma}_{A}\cdot\psi
_{\varepsilon,R}\right)  \left(  \left(  y,\eta\right)  ^{-1}\ast\left(
x,0\right)  \right)  \right\vert d\eta dy\nonumber\\
&  =\int_{\mathbb{R}^{N}}\left\vert \left(  \widetilde{X}_{i}\widetilde{X}%
_{j}\widetilde{\Gamma}_{A}\cdot\psi_{\varepsilon,R}\right)  \left(  u\right)
\right\vert du\leq c\int_{\varepsilon<d_{\widetilde{X}}\left(  u\right)
<2R}\frac{du}{d_{\widetilde{X}}\left(  u\right)  ^{Q}}=c\log\left(  \frac
{R}{\varepsilon}\right)  . \label{K loc integr}%
\end{align}

We can now prove the desired representation formula:

\begin{theorem}
\label{Thm repr formula2}For every $u\in C_{0}^{\infty}\left(  \mathbb{R}%
^{n}\right)  $ the following representation formula holds, for $i,j=1,2,...,m$%
\begin{align*}
X_{i}X_{j}u\left(  x\right)   &  =-\lim_{\varepsilon\rightarrow0^{+}}%
\lim_{R\rightarrow+\infty}\int_{\mathbb{R}^{n}}K_{\varepsilon,R}^{A}\left(
x,y\right)  L_{A}u\left(  y\right)  dy+c_{ij}^{A}L_{A}u\left(  x\right) \\
&  \equiv-\lim_{\varepsilon\rightarrow0^{+}}\lim_{R\rightarrow+\infty
}T_{\varepsilon,R}^{A}\left(  L_{A}u\right)  \left(  x\right)  +c_{ij}%
^{A}L_{A}u\left(  x\right)  ,
\end{align*}
where the limit exists locally uniformly. More generally, for every $f\in
C_{0}^{\alpha}\left(  \mathbb{R}^{n}\right)  $ the limit%
\[
\lim_{\varepsilon\rightarrow0^{+}}\lim_{R\rightarrow+\infty}T_{\varepsilon
,R}^{A}\left(  f\right)  \left(  x\right)
\]
exists, locally uniformly. The constants $c_{ij}^{A}$ are those appearing in
Theorem \ref{Thm repr formula 1}.
\end{theorem}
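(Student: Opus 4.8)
The plan is to start from the representation formula \eqref{repr form XX} already proved in Theorem \ref{Thm repr formula 1}, which expresses $X_iX_ju(x)$ as a principal value integral over $\mathbb{R}^N$ of $f(y)\,\widetilde X_i\widetilde X_j\widetilde\Gamma_A$ against the $\eta$-variable, plus $c_{ij}^A f(x)$, with $f=L_Au\in C_0^\infty(\mathbb{R}^n)\subset C_0^\alpha(\mathbb{R}^n)$. The goal is then purely a bookkeeping exercise: to reorganize the $\eta$-integration so that what appears is the ``saturated'' truncated kernel $K_{\varepsilon,R}^A(x,y)$ defined in \eqref{K troncato}, rather than the cutoff $\omega_\varepsilon$ applied directly to $\widetilde X_i\widetilde X_j\widetilde\Gamma_A$ in the group. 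Concretely, I would first observe that, since $f$ has compact support, one may freely insert the $R$-truncation: for $R$ large enough (depending on the support of $u$ and on how far $d_{\widetilde X}((y,\eta),(x,0))$ can be from $d_X(x,y)$ --- but actually we only need $d_{\widetilde X}\geq d_X$ and the support bound gives a uniform $R_0$) the tail $\{d_{\widetilde X}>2R\}$ contributes nothing, so $\lim_{R\to\infty}\int_{\mathbb{R}^n}K_{\varepsilon,R}^A(x,y)f(y)\,dy = \int_{\mathbb{R}^n}\int_{\mathbb{R}^p}(\widetilde X_i\widetilde X_j\widetilde\Gamma_A\cdot\phi_\varepsilon)((y,\eta)^{-1}\ast(x,0))\,d\eta\,dy$ where $\phi_\varepsilon$ cuts off only near the origin. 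Here by Fubini (justified by the absolute integrability \eqref{K loc integr} of $K_{\varepsilon,R}^A$ against $f$, uniformly for the relevant range) we may integrate in $\eta$ and $y$ in either order.

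The heart of the matter is then comparing $\int_{\mathbb{R}^N}(\widetilde X_i\widetilde X_j\widetilde\Gamma_A\cdot\phi_\varepsilon)(u)f(\pi_n(\ldots))\,du$ with the principal-value expression in \eqref{repr form XX}. Writing $g_x(u):=f(y)$ with $y$ determined by $u=(y,\eta)^{-1}\ast(x,0)$ (so $y=\pi_n(\ldots)$ and $g_x(u)=f(x)$ at $u=0$), the difference between the $\phi_\varepsilon$-smoothed integral and the sharp-cutoff integral $\int_{d_{\widetilde X}\geq\varepsilon}$ splits, as in the proof of Theorem \ref{Thm repr formula 1}, into (a) a piece where $[g_x(u)-f(x)]$ multiplies $(\widetilde X_i\widetilde X_j\widetilde\Gamma_A)(\phi_\varepsilon-\mathbf{1}_{\{d_{\widetilde X}\geq\varepsilon\}})$, supported in $\{\varepsilon\leq d_{\widetilde X}\leq 2\varepsilon\}$, which is $O(|f|_\alpha\,\varepsilon^\alpha)$ by the size estimate $|\widetilde X_i\widetilde X_j\widetilde\Gamma_A|\leq c\,d_{\widetilde X}(u)^{-Q}$ and the Hölder bound $d_X(x,y)\leq d_{\widetilde X}\leq 2\varepsilon$; and (b) the ``constant'' piece $f(x)\int_{\varepsilon\leq d_{\widetilde X}\leq 2\varepsilon}\widetilde X_i\widetilde X_j\widetilde\Gamma_A\cdot(\phi_\varepsilon-1)\,du$, which vanishes identically by the spherical-shell vanishing property of $\widetilde X_i\widetilde X_j\widetilde\Gamma$ (Proposition \ref{Prop vanishing integral}), since $\phi_\varepsilon-1$ is radial. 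Hence, after taking $R\to\infty$ and then $\varepsilon\to0$, the smoothed expression $-\lim_\varepsilon\lim_R T_{\varepsilon,R}^A(f)(x)$ has the same limit as the principal-value integral in \eqref{repr form XX}, namely $X_iX_ju(x)-c_{ij}^A f(x)$; this gives the claimed formula, with the $c_{ij}^A$ being exactly those of Theorem \ref{Thm repr formula 1}. For a general $f\in C_0^\alpha(\mathbb{R}^n)$ the same estimates show the double limit exists locally uniformly (all bounds above depend only on $|f|_\alpha$, $\sup|f|$, the support of $f$, and structural constants), which gives the last assertion.

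One point requiring care is the order of the two limits: $R\to\infty$ must be taken \emph{before} $\varepsilon\to0$, as in the statement, and I would verify that for fixed $\varepsilon$ the inner limit in $R$ exists by the compact-support argument above (so it stabilizes for $R\geq R_0$), and that the resulting $\varepsilon$-family is uniformly Cauchy as $\varepsilon\to0$ by the Cauchy-type estimate already carried out for $A_2^\varepsilon$ in the proof of Theorem \ref{Thm repr formula 1}. A second technical point is the change of variables linking $(y,\eta)^{-1}\ast(x,0)$ and $(x,0)^{-1}\ast(y,\zeta)$ (Lemma \ref{Lemma diffeomorfismo}), which has unit Jacobian and is what lets us pass between the two natural ways of writing the kernel; I would invoke it exactly where Theorem \ref{Thm repr formula 1} did. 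The main obstacle, such as it is, is not any single hard estimate but rather the careful matching of the non-standard ``saturated'' truncation $K_{\varepsilon,R}^A$ --- which does not vanish near the diagonal in $\mathbb{R}^n$ even though its $\mathbb{R}^N$-parent does --- with the principal value on the group; once one recognizes that the saturation commutes with the $\eta$-integration and that the offending near-diagonal mass is killed by the radial vanishing property, the proof is a direct adaptation of the computation already done.
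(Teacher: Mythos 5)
Your overall strategy coincides with the paper's: start from Theorem \ref{Thm repr formula 1}, compare the sharp truncation $\int_{\widetilde{d}\geq\varepsilon}$ in the lifted space with the saturated kernel $K^{A}_{\varepsilon,R}$, and dispose of the near-diagonal discrepancy (supported in $\{\varepsilon\leq\widetilde{d}\leq2\varepsilon\}$) by combining the H\"older continuity of $f$ with the radial vanishing property of Proposition \ref{Prop vanishing integral}. That part of your argument is correct and is exactly the paper's term $S^{1}_{\varepsilon}f$.

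The gap is in your treatment of the limit $R\rightarrow+\infty$. You claim that, because $f$ is compactly supported, the tail $\{\widetilde{d}((y,\eta),(x,0))>2R\}$ ``contributes nothing'' for $R\geq R_{0}$, so that the inner limit stabilizes. This is false: the cutoff at $2R$ lives in the lifted space $\mathbb{R}^{N}$, and for $y$ in the (compact) support of $f$ the variable $\eta$ still ranges over all of $\mathbb{R}^{p}$ (recall $p\geq1$ by (H.3)), so $\widetilde{d}((y,\eta),(x,0))\rightarrow\infty$ as $|\eta|\rightarrow\infty$ and the region $\{\widetilde{d}>2R\}\cap\{y\in\operatorname{sprt}f\}$ is nonempty and carries nonzero mass of $\widetilde{X}_{i}\widetilde{X}_{j}\widetilde{\Gamma}_{A}$ for every $R$. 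The inequality $d_{X}\leq d_{\widetilde{X}}$ that you invoke goes the wrong way for this purpose: it bounds the projected distance by the lifted one, not conversely. The correct argument --- the paper's term $S^{2}_{R}f$ --- is a genuine quantitative estimate: writing $|\widetilde{X}_{i}\widetilde{X}_{j}\widetilde{\Gamma}_{A}|\leq c\,\widetilde{d}^{\,-Q}\leq (c/R)\,\widetilde{d}^{\,1-Q}$ on $\{\widetilde{d}>R\}$ and using $\int_{\mathbb{R}^{p}}\widetilde{d}((y,\eta),(x,0))^{1-Q}\,d\eta\leq c\,d(x,y)/\left\vert B(x,d(x,y))\right\vert$, one obtains $|S^{2}_{R}f(x)|\leq (c/R)\left\Vert f\right\Vert_{\infty}(r_{1}+r_{2})$, which tends to zero only locally uniformly as $R\rightarrow\infty$ and is not identically zero for large $R$. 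Your proof needs this estimate; without it the existence of the inner limit (and its local uniformity, which is exactly what the statement asserts) is unjustified.
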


\begin{proof}
We start writing, by (\ref{repr formula 0})%
\[
u\left(  x\right)  =-\int_{\mathbb{R}^{n}}\Gamma_{A}\left(  x;y\right)
L_{A}u\left(  y\right)  dy.
\]
Then, by Theorem \ref{Thm repr formula 1} we have:%
\begin{align*}
X_{i}X_{j}u\left(  x\right)   &  =-\lim_{\varepsilon\rightarrow0}%
\int_{\widetilde{d}\left(  \left(  y,\eta\right)  ,\left(  x,0\right)
\right)  \geq\varepsilon}L_{A}u\left(  y\right)  \widetilde{X}_{i}%
\widetilde{X}_{j}\widetilde{\Gamma}_{A}\left(  \left(  y,\eta\right)
^{-1}\ast\left(  x,0\right)  \right)  d\eta dy\\
&  +c_{ij}^{A}L_{A}u\left(  x\right)  .
\end{align*}
To simplify notation, from now on, throughout this proof, we will drop the
sub- or super-script $A$, simply writing $L,\widetilde{\Gamma},c_{ij}$ instead
of $L_{A},\widetilde{\Gamma}_{A},c_{ij}^{A}.$

Let us consider the operator:%
\[
Tf\left(  x\right)  =\lim_{\varepsilon\rightarrow0}\int_{\widetilde{d}\left(
\left(  y,\eta\right)  ,\left(  x,0\right)  \right)  \geq\varepsilon
}\widetilde{X}_{i}\widetilde{X}_{j}\widetilde{\Gamma}\left(  \left(
y,\eta\right)  ^{-1}\ast\left(  x,0\right)  \right)  f\left(  y\right)  d\eta
dy
\]
defined, as we have seen from the proof of the representation formula, for
$f\in C_{0}^{\alpha}\left(  \mathbb{R}^{n}\right)  $ for any $\alpha\in(0,1]$.
We have proved that this limit is uniform in $x$. We have to show that%
\[
Tf\left(  x\right)  =\lim_{\varepsilon\rightarrow0^{+}}\lim_{R\rightarrow
+\infty}T_{\varepsilon,R}\left(  f\right)  \left(  x\right)  .
\]

Let $\psi_{\varepsilon,R}$ be the cutoff function defined as in
(\ref{cutoff psi}), for $R>\varepsilon>0$, and let%
\begin{align*}
S_{\varepsilon}f\left(  x\right)   &  =\int_{\widetilde{d}\left(  \left(
y,\eta\right)  ,\left(  x,0\right)  \right)  \geq\varepsilon}\widetilde{X}%
_{i}\widetilde{X}_{j}\widetilde{\Gamma}\left(  \left(  y,\eta\right)
^{-1}\ast\left(  x,0\right)  \right)  f\left(  y\right)  d\eta dy\\
&  =\int_{\mathbb{R}^{N}}\left(  \widetilde{X}_{i}\widetilde{X}_{j}%
\widetilde{\Gamma}\cdot\psi_{\varepsilon,R}\right)  \left(  \left(
y,\eta\right)  ^{-1}\ast\left(  x,0\right)  \right)  f\left(  y\right)  d\eta
dy\\
&  +\int_{\widetilde{d}\left(  \left(  y,\eta\right)  ,\left(  x,0\right)
\right)  \geq\varepsilon}\left(  \widetilde{X}_{i}\widetilde{X}_{j}%
\widetilde{\Gamma}\cdot\left(  1-\psi_{\varepsilon,R}\right)  \right)  \left(
\left(  y,\eta\right)  ^{-1}\ast\left(  x,0\right)  \right)  f\left(
y\right)  d\eta dy
\end{align*}%
\begin{align*}
&  \equiv T_{\varepsilon,R}f\left(  x\right) \\
&  +\int_{\varepsilon\leq\widetilde{d}\left(  \left(  y,\eta\right)  ,\left(
x,0\right)  \right)  <2\varepsilon}\left(  \widetilde{X}_{i}\widetilde{X}%
_{j}\widetilde{\Gamma}\cdot\left(  1-\psi_{\varepsilon,R}\right)  \right)
\left(  \left(  y,\eta\right)  ^{-1}\ast\left(  x,0\right)  \right)  f\left(
y\right)  d\eta dy\\
&  +\int_{\widetilde{d}\left(  \left(  y,\eta\right)  ,\left(  x,0\right)
\right)  >R}\left(  \widetilde{X}_{i}\widetilde{X}_{j}\widetilde{\Gamma}%
\cdot\left(  1-\psi_{\varepsilon,R}\right)  \right)  \left(  \left(
y,\eta\right)  ^{-1}\ast\left(  x,0\right)  \right)  f\left(  y\right)  d\eta
dy\\
&  \equiv T_{\varepsilon,R}f\left(  x\right)  +S_{\varepsilon}^{1}f\left(
x\right)  +S_{R}^{2}f\left(  x\right)  .
\end{align*}
Now, we exploit the vanishing property in Proposition
\ref{Prop vanishing integral}:%
\[
\int_{r_{1}\leq d_{\widetilde{X}}\left(  u\right)  <r_{2}}\left(
\widetilde{X}_{i}\widetilde{X}_{j}\widetilde{\Gamma}\cdot\left(
1-\psi_{\varepsilon,R}\right)  \right)  \left(  u\right)  du=0.
\]
Therefore, for every $f\in C_{0}^{\alpha}\left(  \mathbb{R}^{n}\right)  $ we
can write:%
\begin{align*}
&  \left\vert S_{\varepsilon}^{1}f\left(  x\right)  \right\vert =\left\vert
\int_{\varepsilon\leq\widetilde{d}\left(  \left(  y,\eta\right)  ,\left(
x,0\right)  \right)  <2\varepsilon}\left(  \widetilde{X}_{i}\widetilde{X}%
_{j}\widetilde{\Gamma}\cdot\left(  1-\psi_{\varepsilon,R}\right)  \right)
\left(  \left(  y,\eta\right)  ^{-1}\ast\left(  x,0\right)  \right)  f\left(
y\right)  d\eta dy\right\vert \\
&  =\left\vert \int_{\varepsilon\leq\widetilde{d}\left(  \left(
y,\eta\right)  ,\left(  x,0\right)  \right)  <2\varepsilon}\left(
\widetilde{X}_{i}\widetilde{X}_{j}\widetilde{\Gamma}\cdot\left(
1-\psi_{\varepsilon,R}\right)  \right)  \left(  \left(  y,\eta\right)
^{-1}\ast\left(  x,0\right)  \right)  \left[  f\left(  y\right)  -f\left(
x\right)  \right]  d\eta dy\right\vert \\
&  \leq\int_{\varepsilon\leq\widetilde{d}\left(  \left(  y,\eta\right)
,\left(  x,0\right)  \right)  <2\varepsilon}\frac{c}{\widetilde{d}\left(
\left(  y,\eta\right)  ,\left(  x,0\right)  \right)  ^{Q}}\left\vert
f\right\vert _{\alpha}d\left(  x,y\right)  ^{\alpha}dyd\eta\\
&  \leq c\left\vert f\right\vert _{\alpha}\int_{\widetilde{d}\left(  \left(
y,\eta\right)  ,\left(  x,0\right)  \right)  <2\varepsilon}\frac
{1}{\widetilde{d}\left(  \left(  y,\eta\right)  ,\left(  x,0\right)  \right)
^{Q-\alpha}}dyd\eta\leq c\left\vert f\right\vert _{\alpha}\varepsilon^{\alpha
}\rightarrow0
\end{align*}
so that
\[
S_{\varepsilon}^{1}f\left(  x\right)  \rightarrow0\text{ uniformly in }x\text{
as }\varepsilon\rightarrow0.
\]
As to $S_{R}^{2}f$, we can write%
\begin{align*}
\left\vert S_{R}^{2}f\left(  x\right)  \right\vert  &  \leq\int_{\widetilde{d}%
\left(  \left(  y,\eta\right)  ,\left(  x,0\right)  \right)  >R}\frac
{c}{\widetilde{d}\left(  \left(  y,\eta\right)  ,\left(  x,0\right)  \right)
^{Q}}\left\vert f\left(  y\right)  \right\vert d\eta dy\\
&  \leq\frac{c}{R}\int_{\widetilde{d}\left(  \left(  y,\eta\right)  ,\left(
x,0\right)  \right)  >R}\frac{\left\vert f\left(  y\right)  \right\vert
}{\widetilde{d}\left(  \left(  y,\eta\right)  ,\left(  x,0\right)  \right)
^{Q-1}}d\eta dy\\
&  \leq\frac{c}{R}\int_{\mathbb{R}^{n}}\left\vert f\left(  y\right)
\right\vert \left(  \int_{\mathbb{R}^{p}}\frac{d\eta}{\widetilde{d}\left(
\left(  y,\eta\right)  ,\left(  x,0\right)  \right)  ^{Q-1}}\right)  dy\\
&  \leq\frac{c}{R}\int_{\mathbb{R}^{n}}\left\vert f\left(  y\right)
\right\vert \frac{d\left(  x,y\right)  }{\left\vert B\left(  x,d\left(
x,y\right)  \right)  \right\vert }dy.
\end{align*}
If $f$ is compactly supported in $B\left(  0,r_{1}\right)  $ and $x\in
B\left(  0,r_{2}\right)  $
\begin{align*}
\left\vert S_{R}^{2}f\left(  x\right)  \right\vert  &  \leq\frac{c}{R}%
\int_{B\left(  x,r_{1}+r_{2}\right)  }\left\vert f\left(  y\right)
\right\vert \frac{d\left(  x,y\right)  }{\left\vert B\left(  x,d\left(
x,y\right)  \right)  \right\vert }dy\\
&  \leq\frac{c}{R}\left\Vert f\right\Vert _{\infty}\int_{d\left(  x,y\right)
<r_{1}+r_{2}}\frac{d\left(  x,y\right)  }{\left\vert B\left(  x,d\left(
x,y\right)  \right)  \right\vert }dy\\
&  \leq\frac{c}{R}\left\Vert f\right\Vert _{\infty}\left(  r_{1}+r_{2}\right)
\end{align*}
hence $\left\vert S_{R}^{2}f\left(  x\right)  \right\vert \rightarrow0$
locally uniformly as $R\rightarrow\infty$. Hence:%
\[
S_{\varepsilon}f\left(  x\right)  =\lim_{R\rightarrow\infty}T_{\varepsilon
,R}f\left(  x\right)  +S_{\varepsilon}^{1}f\left(  x\right)
\]
and%
\[
Tf\left(  x\right)  =\lim_{\varepsilon\rightarrow0}S_{\varepsilon}f\left(
x\right)  =\lim_{\varepsilon\rightarrow0}\lim_{R\rightarrow\infty
}T_{\varepsilon,R}f\left(  x\right)  ,
\]
where the operator $Tf\ $\ is well defined for every H\"{o}lder continuous
compactly supported $f$, and is also equal to%
\[
\lim_{\varepsilon\rightarrow0}\lim_{R\rightarrow\infty}T_{\varepsilon
,R}\left(  f\right)  \left(  x\right)
\]
for the same class of functions $f$, where the limit exists locally uniformly
in $\mathbb{R}^{n}$.
\end{proof}

\subsection{$L^{p}$ estimates for the constant coefficient operator}

The aim of this section is to prove the following

\begin{theorem}
[$L^{p}$ estimates for $L_{A}$]\label{Thm Lp constant}Keeping the assumptions
stated at the beginning of section \ref{sec constant}, for every $p\in\left(
1,\infty\right)  $ there exists a constant $c>0$, depending on $A$ only
through the number $\nu$ in (\ref{ellipticity}), such that
\[
\left\Vert X_{i}X_{j}u\right\Vert _{L^{p}\left(  \mathbb{R}^{n}\right)  }\leq
c\left\Vert L_{A}u\right\Vert _{L^{p}\left(  \mathbb{R}^{n}\right)  }%
\]
for every $u\in C_{0}^{\infty}\left(  \mathbb{R}^{n}\right)  $.

Moreover, the operator $T$ defined as
\[
Tf\left(  x\right)  =\lim_{\varepsilon\rightarrow0^{+}}\lim_{R\rightarrow
+\infty}T_{\varepsilon,R}\left(  f\right)  \left(  x\right)
\]
for $f\in C_{0}^{\alpha}\left(  \mathbb{R}^{n}\right)  $, can be extended to a
linear continuous operator on $L^{p}\left(  \mathbb{R}^{n}\right)  $, with
norm bounded by the same constant $c$.
\end{theorem}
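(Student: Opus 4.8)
The plan is to establish the $L^p(\mathbb{R}^n)$-boundedness of the singular integral operator $T$ by viewing it as an operator of convolution type on the Carnot group $\mathbb{G}=\mathbb{R}^N$ that is ``saturated'' down to $\mathbb{R}^n$, and then to combine this with the representation formula of Theorem \ref{Thm repr formula2} to obtain the a-priori estimate for $X_iX_ju$. Since the kernel $K_A(x,y)=X_i^xX_j^x\Gamma_A(x;y)$ satisfies the standard estimates and the cancellation property recorded in Theorem \ref{Thm prop sing kern} (items (i), (ii), (iii)), the natural route is to invoke the theory of singular integrals in spaces of homogeneous type: $(\mathbb{R}^n,d_X,|\cdot|)$ is a space of homogeneous type by the global doubling property, and a kernel satisfying (i)--(iii) of Theorem \ref{Thm prop sing kern} defines, via the truncated operators $T_{\varepsilon,R}^A$, an operator bounded on $L^p$ for all $p\in(1,\infty)$ with norm controlled by the constants $\boldsymbol{A},\boldsymbol{B},\boldsymbol{C}$ (hence depending on $A$ only through $\nu$). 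This is a Calder\'on--Zygmund-type theorem in homogeneous spaces; alternatively, one can prove $L^2$-boundedness directly on $\mathbb{G}$ (where $\widetilde X_i\widetilde X_j\widetilde\Gamma_A$ is a left-invariant Calder\'on--Zygmund kernel of convolution type in the sense of Folland--Stein, so $L^2$-boundedness follows from the classical theory, e.g. Knapp--Stein / Koranyi--V\'agi) and then descend: the identity $K_{\varepsilon,R}^A(x,y)=\int_{\mathbb{R}^p}(\widetilde X_i\widetilde X_j\widetilde\Gamma_A\cdot\psi_{\varepsilon,R})((y,\eta)^{-1}\ast(x,0))\,d\eta$ exhibits $T_{\varepsilon,R}^A$ as a ``vertical average'' of the lifted convolution operator, and one checks, using $\pi_n(B_{\widetilde X}((x,\xi),r))=B_X(x,r)$ together with the Sanchez-type bounds \eqref{eq.SanchezI}--\eqref{eq.SanchezII}, that $L^p$-bounds pass to the quotient.

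More concretely, the steps I would carry out are: \textbf{(1)} Verify that the truncated kernels $K_{\varepsilon,R}^A(x,y)$ satisfy, uniformly in $\varepsilon,R$, the size bound $|K_{\varepsilon,R}^A(x,y)|\le \boldsymbol{A}/|B(x,d(x,y))|$ and the H\"older-in-$x$ and H\"older-in-$y$ regularity of Theorem \ref{Thm prop sing kern}(ii) — this follows since away from the region where the cutoff $\psi_{\varepsilon,R}$ is nonconstant one has $K_{\varepsilon,R}^A=K_A$, and in the transition annuli the same estimates on $\widetilde X_i\widetilde X_j\widetilde\Gamma_A$ are available. \textbf{(2)} Verify the uniform cancellation $\sup_x\int_{\mathbb{R}^n}|K_{\varepsilon,R}^A(x,y)\chi_{\{r<d(x,y)<R'\}}|\,dy$-type bounds and, crucially, that the ``L^2 piece'' is controlled: here I would deduce the $L^2$-boundedness of the full (convolution-type) operator on $\mathbb{G}$ from the cancellation Proposition \ref{Prop vanishing integral} plus Plancherel/Cotlar on $\mathbb{G}$, then transfer. \textbf{(3)} Invoke the abstract singular integral theorem in spaces of homogeneous type to conclude $\|T_{\varepsilon,R}^A f\|_{L^p}\le c_p\|f\|_{L^p}$ uniformly in $\varepsilon,R$, for $f\in C_0^\alpha$; since such $f$ are dense in $L^p$ this yields a bounded extension of $T=\lim_{\varepsilon\to 0}\lim_{R\to\infty}T_{\varepsilon,R}^A$ (the limit existing in $L^p$ by the uniform bound plus pointwise convergence on the dense class established in Theorem \ref{Thm repr formula2}). \textbf{(4)} Finally, for $u\in C_0^\infty(\mathbb{R}^n)$ apply Theorem \ref{Thm repr formula2} to get $X_iX_ju=-T(L_Au)+c_{ij}^A L_Au$ pointwise; since $L_Au\in C_0^\infty\subset C_0^\alpha\subset L^p$ and $|c_{ij}^A|\le c$, taking $L^p$-norms gives $\|X_iX_ju\|_{L^p}\le (c_p+c)\|L_Au\|_{L^p}$, with $c_p+c$ depending on $A$ only through $\nu$.

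The main obstacle, I expect, is the $L^2$-boundedness (equivalently, verifying the ``$T1$'' or cancellation condition quantitatively enough to apply the homogeneous-space CZ theorem with constants uniform over the ellipticity class). The geometric/size estimates are handed to us by Theorem \ref{Thm prop sing kern}, but those alone do not give $L^2$-boundedness — one genuinely needs a cancellation mechanism. The clean way is to exploit that on the group $\mathbb{G}$ the kernel $\widetilde X_i\widetilde X_j\widetilde\Gamma_A(u)$ is homogeneous of degree $-Q$, smooth off the origin, and has vanishing integral over every homogeneous annulus (Proposition \ref{Prop vanishing integral}); this is exactly the hypothesis of the Knapp--Stein theorem on homogeneous groups, giving $L^p(\mathbb{G})$-boundedness of the principal-value convolution operator with norm depending only on finitely many derivative bounds of the kernel on the unit sphere, hence uniformly in $A$ by Theorem \ref{th.teoremone}(II). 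Descending from $\mathbb{G}$ to $\mathbb{R}^n$ (the ``saturation'' step) then needs a careful Fubini argument controlling $\int_{\mathbb{R}^p}d\eta$ by comparing $|B_{\widetilde X}|$ and $|B_X|$ via the Sanchez estimates; this descent is the technically delicate point, though conceptually routine once the group-level boundedness and the ball comparisons are in hand. The interchange of limits $\varepsilon\to 0$, $R\to\infty$ and the passage to the bounded extension on $L^p$ are then soft, using the uniform bounds together with the convergence already proven in Theorem \ref{Thm repr formula2}.
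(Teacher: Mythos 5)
Your proposal follows essentially the same route as the paper: one verifies that the smoothed kernels $K_{\varepsilon,R}^{A}$ satisfy, uniformly in $\varepsilon,R$ and in $A\in\mathcal{M}_{\nu}$, the standard estimates and the cancellation property, invokes the abstract singular integral theorem in spaces of homogeneous type (Theorem \ref{Thm spazio omogeneo}, i.e. \cite[Thm.~4.1]{BCsing}) to get $\Vert T_{\varepsilon,R}^{A}f\Vert_{L^{p}}\leq c\Vert f\Vert_{L^{p}}$ with $c$ independent of $\varepsilon,R$, and then combines Fatou's lemma with the representation formula of Theorem \ref{Thm repr formula2} and the uniform bound on $c_{ij}^{A}$. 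Two remarks. First, your concern that a separate $L^{2}$-boundedness argument on the group $\mathbb{G}$ (Knapp--Stein, Cotlar, descent via saturation) is needed is superfluous in this framework: the abstract theorem already produces $L^{p}$-boundedness from the standard estimates (i)--(ii) in \emph{both} variables together with the cancellation (iii), the latter being precisely the required cancellation mechanism; the paper deliberately works with the kernel living on $\mathbb{R}^{n}$ rather than descending from $\mathbb{G}$, since this is what makes the subsequent sharp-maximal-function argument (Theorem \ref{Thm sharp}) applicable. Second, your claim in step (1) that $K_{\varepsilon,R}^{A}=K_{A}$ away from the region where $\psi_{\varepsilon,R}$ is nonconstant is not correct as stated: since $K_{\varepsilon,R}^{A}(x,y)$ is obtained by integrating $\widetilde{X}_{i}\widetilde{X}_{j}\widetilde{\Gamma}_{A}\cdot\psi_{\varepsilon,R}$ over all $\eta\in\mathbb{R}^{p}$, and $d_{\widetilde{X}}\left(  (y,\eta),(x,0)\right)  \rightarrow\infty$ as $|\eta|\rightarrow\infty$, the upper cutoff is always active for large $\eta$ no matter how $d_{X}(x,y)$ compares with $\varepsilon$ and $R$; one must therefore re-run the estimates of \cite[\S 8]{BBB1} with the cutoff inserted (as the paper does), rather than reduce to the unsmoothed kernel. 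This is a repair of detail, not of strategy, so the proof goes through.
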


To prove the above result, we will show the following:

\begin{theorem}
\label{Thm Lp uniform truncated}Under the same assumptions of the previous
theorem, for every $p\in\left(  1,\infty\right)  $ there exists a constant
$c>0$, depending on $A$ only through the number $\nu$ in (\ref{ellipticity}),
such that%
\[
\left\Vert T_{\varepsilon,R}f\right\Vert _{L^{p}\left(  \mathbb{R}^{n}\right)
}\leq c\left\Vert f\right\Vert _{L^{p}\left(  \mathbb{R}^{n}\right)  }%
\]
for every $f\in C_{0}^{\alpha}\left(  \mathbb{R}^{n}\right)  $ (and, by
density, every $f\in L^{p}\left(  \mathbb{R}^{n}\right)  $), every
$R>\varepsilon>0$ (and $c$ independent of $R,\varepsilon$).
\end{theorem}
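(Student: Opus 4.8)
The plan is to identify $T_{\varepsilon,R}$ as a (truncated) singular integral operator on the space of homogeneous type $(\mathbb{R}^n,d_X,|\cdot|)$ with kernel having the Calder\'on--Zygmund structure recorded in Theorem~\ref{Thm prop sing kern}, and then invoke the standard $L^p$-boundedness theorem for such operators, being careful that all constants are uniform in $\varepsilon,R$ and depend on $A$ only through $\nu$. First I would write
\[
T_{\varepsilon,R}f(x)=\int_{\mathbb{R}^n}K_{\varepsilon,R}^{A}(x,y)\,f(y)\,dy,
\]
with $K_{\varepsilon,R}^{A}$ as in (\ref{K troncato}), and observe that by (\ref{K loc integr}) this is an absolutely convergent integral for $f\in C_0^\alpha(\mathbb{R}^n)$, so $T_{\varepsilon,R}$ is a genuine integral operator (not merely a principal value). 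The key point is that, although $K_{\varepsilon,R}^{A}$ does \emph{not} vanish near the diagonal, it is dominated there by the genuine kernel $K_A$: from Theorem~\ref{th.teoremone}(II) one has $|\widetilde X_i\widetilde X_j\widetilde\Gamma_A(u)|\le c\,d_{\widetilde X}(u)^{-Q}$ uniformly in $A$, hence integrating in $\eta$ and using the Sanchez-type bound (\ref{eq.SanchezI}) together with (\ref{eq.misurapallehom}) gives
\[
|K_{\varepsilon,R}^{A}(x,y)|\le \frac{\boldsymbol{A}}{|B_X(x,d_X(x,y))|}
\]
for \emph{all} $x\neq y$ (not just for $d_X(x,y)>\varepsilon$), with $\boldsymbol{A}$ the constant of Theorem~\ref{Thm prop sing kern}; similarly the H\"older-in-$x$-and-$y$ estimate (ii) of that theorem passes to $K_{\varepsilon,R}^{A}$ because multiplying by the radial cutoff $\psi_{\varepsilon,R}$ and integrating the $\eta$-variable preserves the standard estimates up to a fixed constant. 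Thus the family $\{K_{\varepsilon,R}^{A}\}$ satisfies the standard estimates (i)--(ii) with constants independent of $\varepsilon,R$.

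Next I would establish the uniform-in-$\varepsilon,R$ cancellation property, i.e. a bound $\|T_{\varepsilon,R}f\|_{L^2}\le c\|f\|_{L^2}$, or equivalently that the $L^\infty\to BMO$ (or the $T1$-type) testing conditions hold uniformly. The natural route here is to exploit the group structure upstairs: the lifted kernel $u\mapsto (\widetilde X_i\widetilde X_j\widetilde\Gamma_A\cdot\psi_{\varepsilon,R})(u)$ is a truncated convolution kernel on the Carnot group $\mathbb G$, whose cancellation is exactly Proposition~\ref{Prop vanishing integral} (the integral over any spherical shell vanishes), and whose standard estimates are again uniform in $A$ via Theorem~\ref{th.teoremone}(II). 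Hence the convolution operator on $\mathbb G$ with this kernel is $L^p(\mathbb G)$-bounded uniformly in $\varepsilon,R,A$ by the classical theory of singular integrals on homogeneous groups (Folland--Stein / Coifman--Weiss). The operator $T_{\varepsilon,R}$ on $\mathbb{R}^n$ is then obtained from this group convolution by "saturating" the $\xi$-variable: $T_{\varepsilon,R}f(x)$ is, up to the change of variables of Lemma~\ref{Lemma diffeomorfismo}, the restriction to the slice $\xi=0$ of $(\widetilde X_i\widetilde X_j\widetilde\Gamma_A\cdot\psi_{\varepsilon,R})*F$ where $F(y,\eta)=f(y)$. I would make this precise by a Fubini/slicing argument, using the bounds (\ref{eq.SanchezI})--(\ref{eq.SanchezII}) relating $|B_{\widetilde X}|$ and $|B_X|$ to control the $L^p(\mathbb{R}^n)$ norm of the slice by the $L^p(\mathbb{R}^n)$ norm of $f$ — essentially the same descent-from-the-lifted-group mechanism already used in \cite{BBB2} to prove $L^p$ estimates for the sum of squares. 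This yields $\|T_{\varepsilon,R}f\|_{L^p(\mathbb{R}^n)}\le c\|f\|_{L^p(\mathbb{R}^n)}$ for $1<p<\infty$ with $c=c(p,\nu,X)$ independent of $\varepsilon,R$, and by density the estimate extends from $C_0^\alpha$ to all of $L^p$.

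The main obstacle I expect is precisely the descent step: carrying the uniform $L^p(\mathbb G)$ bound of the lifted convolution operator down to a uniform $L^p(\mathbb{R}^n)$ bound for $T_{\varepsilon,R}$ without losing the independence from $\varepsilon,R$. The subtlety is that $F(y,\eta)=f(y)$ is constant in $\eta$ and hence \emph{not} in $L^p(\mathbb G)$, so one cannot literally apply the group estimate; one must instead argue directly, recognizing $T_{\varepsilon,R}$ as a singular integral on $(\mathbb{R}^n,d_X,|\cdot|)$ in its own right and verifying the hypotheses of the $L^p$-boundedness theorem in spaces of homogeneous type (e.g. \cite[Thm.2.4]{BBB2} or the classical Coifman--Weiss criterion) with the $L^2$ bound supplied either by the slicing argument above or, alternatively, by a Cotlar--Stein / almost-orthogonality estimate on the truncations $T_{\varepsilon,R}-T_{\varepsilon',R'}$. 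A secondary technical point, easily handled, is checking that the piecewise-linear (merely Lipschitz, not smooth) cutoff $\psi_{\varepsilon,R}$ causes no trouble: since we never differentiate $K_{\varepsilon,R}^{A}$ in this proof, Lipschitz regularity of $\psi_{\varepsilon,R}$ is more than enough for all the kernel estimates above. Once Theorem~\ref{Thm Lp uniform truncated} is in hand, Theorem~\ref{Thm Lp constant} follows by letting $R\to\infty$ then $\varepsilon\to0$ in the representation formula of Theorem~\ref{Thm repr formula2}, using the locally uniform convergence there together with Fatou's lemma (or dominated convergence) to pass the uniform bound to the limit operator $T$, and then $\|X_iX_ju\|_{L^p}\le \|Tf\|_{L^p}+|c_{ij}^A|\,\|f\|_{L^p}\le c\|L_Au\|_{L^p}$ with $f=L_Au$.
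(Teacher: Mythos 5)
Your fallback route is the paper's route, and it is the correct one; the primary route you sketch first (lift to $\mathbb{G}$, bound the group convolution, descend to the slice $\xi=0$) is a dead end for exactly the reason you identify, namely that $F(y,\eta)=f(y)$ is not in $L^p(\mathbb{G})$, and there is no quantitative slicing argument that avoids re-proving the $\mathbb{R}^n$-level kernel estimates anyway. The paper therefore uses the lifting only at the level of the \emph{kernel} (to define $K_{\varepsilon,R}^{A}$ by saturation in $\eta$ and to import homogeneity, the vanishing property, and the uniform-in-$A$ derivative bounds), never at the level of the operator: $T_{\varepsilon,R}$ is treated directly as an integral operator on the space of homogeneous type $(\mathbb{R}^{n},d_{X},|\cdot|)$, and one invokes the abstract result stated as Theorem \ref{Thm spazio omogeneo} (from \cite{BCsing}), whose hypotheses are exactly the three things you list: local integrability of $K_{\varepsilon,R}^{A}(x,\cdot)$ and $K_{\varepsilon,R}^{A}(\cdot,x)$ (checked via (\ref{K loc integr}) and the measure-preserving change of variables of Lemma \ref{Lemma diffeomorfismo}); the standard estimates uniformly in $\varepsilon,R$ (an adaptation of \cite[\S 8]{BBB1}, using the third-order bounds of Theorem \ref{th.teoremone}(II) for the smooth part and the fact that the Lipschitz constant $\sim\varepsilon^{-1}$ of $\psi_{\varepsilon,R}$ is active only on the annulus $\varepsilon\leq d_{\widetilde{X}}\leq 2\varepsilon$, where $\varepsilon^{-1}\leq 2\,d_{\widetilde{X}}^{-1}$); and the cancellation property on annuli, which follows from Proposition \ref{Prop vanishing integral} exactly as you say. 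The one point where your write-up is genuinely uncertain --- where the $L^{2}$ or $T1$-type input comes from --- is resolved by the choice of abstract theorem: Theorem \ref{Thm spazio omogeneo} requires no separately supplied $L^{2}$ bound, since the annular cancellation condition (iii) is the substitute and the $L^{2}$ work is done once and for all in \cite{BCsing}. With that substitution your argument closes, and your concluding passage from Theorem \ref{Thm Lp uniform truncated} to Theorem \ref{Thm Lp constant} via Fatou and the representation formula coincides with the paper's.
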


Let us first show how Theorem \ref{Thm Lp constant} follows from Theorem
\ref{Thm Lp uniform truncated}.

By Fatou's Lemma we have, for every $u\in C_{0}^{\infty}\left(  \mathbb{R}%
^{n}\right)  $ and $1<p<\infty,$%
\begin{align*}
&  \int_{\mathbb{R}^{n}}\lim_{\varepsilon\rightarrow0}\lim_{R\rightarrow
\infty}\left\vert T_{\varepsilon,R}\left(  L_{A}u\right)  \left(  x\right)
\right\vert ^{p}dx\\
&  \leq\liminf_{\varepsilon\rightarrow0}\liminf_{R\rightarrow\infty}%
\int_{\mathbb{R}^{n}}\left\vert T_{\varepsilon,R}\left(  L_{A}u\right)
\left(  x\right)  \right\vert ^{p}dx\leq c\left\Vert L_{A}u\right\Vert
_{p}^{p},
\end{align*}
hence by the representation formula in Theorem \ref{Thm repr formula2}, and
keeping into account the uniform bound on the constants $c_{ij}^{A}$ in
Theorem \ref{Thm repr formula 1}, we conclude%
\[
\left\Vert X_{i}X_{j}u\right\Vert _{L^{p}\left(  \mathbb{R}^{n}\right)  }\leq
c\left\Vert L_{A}u\right\Vert _{L^{p}\left(  \mathbb{R}^{n}\right)  }%
\]
for every $p\in\left(  1,\infty\right)  $ and $u\in C_{0}^{\infty}\left(
\mathbb{R}^{n}\right)  $, with constant $c$ depending on $A$ only through the
number $\nu$ in (\ref{ellipticity}).

We can also conclude that, since $C_{0}^{\infty}\left(  \mathbb{R}^{n}\right)
$ is dense in $L^{p}\left(  \mathbb{R}^{n}\right)  $, the operator $T$, for
which we know that%
\[
Tf\left(  x\right)  =\lim_{\varepsilon\rightarrow0^{+}}\lim_{R\rightarrow
+\infty}T_{\varepsilon,R}\left(  f\right)  \left(  x\right)
\]
for every $f\in C_{0}^{\alpha}\left(  \mathbb{R}^{n}\right)  $ and every $x$,
can be extended to a linear continuous operator
\[
T:L^{p}\left(  \mathbb{R}^{n}\right)  \rightarrow L^{p}\left(  \mathbb{R}%
^{n}\right)  .
\]
So Theorem \ref{Thm Lp constant} follows.

In order to prove Theorem \ref{Thm Lp uniform truncated}, we will apply the
following abstract result in spaces of homogeneous type.

\begin{theorem}
\label{Thm spazio omogeneo}(See \cite[Thm. 4.1]{BCsing}). Let $\left(
X,d,\mu\right)  $ be a space of homogeneous type (see Definition
\ref{Def spazio omogeneo}), $\mu$ a regular measure. Let $K:X\times
X\setminus\left\{  x=y\right\}  \rightarrow\mathbb{R}$ be a kernel satisfying
the following conditions: there exist constants $\boldsymbol{A},\boldsymbol{B}%
,\boldsymbol{C}>0$ such that:

\begin{enumerate}
\item[\emph{(i)}] for every $x,y\in X$ \emph{(}$x\neq y$\emph{)} one has
\[
\left\vert K(x,y)\right\vert +\left\vert K(y,x)\right\vert \leq\frac
{\boldsymbol{A}}{\left\vert B(x,d(x,y))\right\vert };
\]

\item[\emph{(ii)}] for every $x,x_{0},y\in X$ such that $d(x_{0},y)\geq
M\,d(x_{0},x)>0$, it holds
\[
\left\vert K(x,y)-K(x_{0},y)\right\vert +\left\vert K(y,x)-K(y,x_{0}%
)\right\vert \leq\boldsymbol{B}\,\left(  \frac{d(x_{0},x)}{d(x_{0},y)}\right)
^{\beta}\cdot\frac{1}{\left\vert B(x_{0},d(x_{0},y))\right\vert }%
\]
for some $\beta>0$ and some constant $M>1$ such that if $d(x_{0},y)\geq
M\,d(x_{0},x)$, then $d\left(  x_{0},y\right)  $ and $d\left(  x,y\right)  $
are equivalent.

\item[\emph{(iii)}] for every $z\in X$ and $0<r<R<\infty$, one has
\[
\left\vert \int_{\{r<d(z,y)<R\}}K(z,y)d\mu\left(  y\right)  \right\vert
+\left\vert \int_{\{r<d(z,x)<R\}}K(x,z)d\mu\left(  x\right)  \right\vert
\leq\boldsymbol{C}.
\]
Let $K_{\varepsilon}$ (for $\varepsilon>0$) be a \textquotedblleft regularized
kernel\textquotedblright\ defined in such a way that the following properties holds:

\item[(a)] $K_{\varepsilon}\left(  x,\cdot\right)  $ and $K_{\varepsilon
}\left(  \cdot,x\right)  $ are locally integrable for every $x\in X$

\item[(b)] $K_{\varepsilon}$ satisfies the standard estimates (i), (ii), with
constant bounded by $c^{\prime}\left(  \boldsymbol{A}+\boldsymbol{B}\right)  $
and $c^{\prime}$ absolute constant (independent of $\varepsilon$).

\item[(c)] There exists an absolute constant $c^{\prime}$ such that for every
$z\in X$%
\[
\left\vert \int_{\{r<d(z,y)<R\}}K_{\varepsilon}(z,y)d\mu\left(  y\right)
\right\vert +\left\vert \int_{\{r<d(z,x)<R\}}K_{\varepsilon}(x,z)d\mu\left(
x\right)  \right\vert \leq c^{\prime}\boldsymbol{C}.
\]

For every $f\in C_{0}^{\alpha}\left(  X\right)  $, set
\[
T_{\varepsilon}f\left(  x\right)  =\int_{X}K_{\varepsilon}\left(  x,y\right)
f\left(  y\right)  d\mu\left(  y\right)  .
\]
Then, for every $p\in\left(  1,\infty\right)  $, $T_{\varepsilon}$ can be
extended to a linear continuous operator on $L^{p}\left(  X\right)  $, and%
\[
\left\Vert T_{\varepsilon}f\right\Vert _{L^{p}\left(  X\right)  }\leq
c\left\Vert f\right\Vert _{L^{p}\left(  X\right)  }%
\]
for every $f\in L^{p}\left(  X\right)  $, with $c>0$ independent of
$\varepsilon.$ Moreover, the constant $c$ in the last estimate depends on the
quantities involved in the assumptions as follows:%
\[
c\leq c^{\prime}\left(  \boldsymbol{A}+\boldsymbol{B}+\boldsymbol{C}\right)
,
\]
with $c^{\prime}$ \textquotedblleft absolute\textquotedblright\ constant.
\end{enumerate}
\end{theorem}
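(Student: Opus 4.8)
The plan is to argue in the standard way for Calder\'{o}n--Zygmund operators in a space of homogeneous type $(X,d,\mu)$ (Definition \ref{Def spazio omogeneo}), reducing everything to an $L^{2}$ bound. Since, by hypothesis (a), $K_{\varepsilon}(x,\cdot)$ and $K_{\varepsilon}(\cdot,x)$ are locally integrable, the operator $T_{\varepsilon}f(x)=\int_{X}K_{\varepsilon}(x,y)f(y)\,d\mu(y)$ is well defined and finite for every $f\in C_{0}^{\alpha}(X)$. The goal is to show that, uniformly in $\varepsilon$, $\|T_{\varepsilon}f\|_{L^{2}(X)}\le c'(\boldsymbol{A}+\boldsymbol{B}+\boldsymbol{C})\|f\|_{L^{2}(X)}$ and $T_{\varepsilon}$ is of weak type $(1,1)$ with the same bound on the constant; then Marcinkiewicz interpolation between these two endpoint estimates gives boundedness on $L^{p}(X)$ for $1<p\le2$, while the range $2\le p<\infty$ follows by duality, since all the hypotheses (i)--(iii) and (a)--(c) are symmetric under the exchange $(x,y)\mapsto(y,x)$, so the formal adjoint $T_{\varepsilon}^{\ast}$ has kernel $K_{\varepsilon}(y,x)$, which satisfies the very same assumptions with the very same constants. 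Finally, as $C_{0}^{\alpha}(X)$ is dense in $L^{p}(X)$ and all the bounds are independent of $\varepsilon$, $T_{\varepsilon}$ extends to a bounded operator on $L^{p}(X)$ with norm $\le c'(\boldsymbol{A}+\boldsymbol{B}+\boldsymbol{C})$.

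The heart of the matter is the uniform $L^{2}$ estimate, where the three kernel conditions combine. By (b), each $K_{\varepsilon}$ is a standard Calder\'{o}n--Zygmund kernel (size and H\"{o}lder-regularity) with constants $\le c'(\boldsymbol{A}+\boldsymbol{B})$; by (a) together with the cancellation condition (c) (the bound $\int_{r<d(z,\cdot)<R}K_{\varepsilon}=O(\boldsymbol{C})$ in both variables), $T_{\varepsilon}$ has the weak boundedness property, and the functions $T_{\varepsilon}1$ and $T_{\varepsilon}^{\ast}1$, which are well defined modulo constants, have $\mathrm{BMO}(X)$-seminorm $\le c'(\boldsymbol{A}+\boldsymbol{C})$. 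The $T(1)$ theorem in spaces of homogeneous type then yields $\|T_{\varepsilon}\|_{L^{2}\to L^{2}}\le c'(\boldsymbol{A}+\boldsymbol{B}+\boldsymbol{C})$, uniformly in $\varepsilon$. Equivalently, one may prove this $L^{2}$ bound directly, decomposing $K_{\varepsilon}=\sum_{j}K_{\varepsilon}^{j}$ into dyadic annular pieces in $d(x,y)$, correcting each $K_{\varepsilon}^{j}$ by a paraproduct-type term that absorbs the (nonzero) integral $\int K_{\varepsilon}^{j}(x,\cdot)\,d\mu$, and applying Cotlar--Stein almost-orthogonality, the off-diagonal decay coming from the H\"{o}lder estimate in (b); this is essentially the argument of \cite[Thm. 4.1]{BCsing}.

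For the weak type $(1,1)$ bound, given $f\in L^{1}(X)$ and $\lambda>0$ one performs a Calder\'{o}n--Zygmund decomposition at height $\lambda$ (available in any space of homogeneous type, via a Vitali-type covering and the doubling property of $\mu$): $f=g+b$, $b=\sum_{i}b_{i}$, with $\|g\|_{\infty}\lesssim\lambda$, $\|g\|_{1}\lesssim\|f\|_{1}$, each $b_{i}$ supported in a ball $B_{i}$ with $\int b_{i}\,d\mu=0$, and $\sum_{i}\mu(B_{i})\lesssim\|f\|_{1}/\lambda$. The good part is controlled by Chebyshev's inequality and the $L^{2}$ bound: $\mu(\{|T_{\varepsilon}g|>\lambda/2\})\lesssim\lambda^{-2}\|T_{\varepsilon}g\|_{2}^{2}\lesssim(\boldsymbol{A}+\boldsymbol{B}+\boldsymbol{C})^{2}\|f\|_{1}/\lambda$. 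For the bad part, on the complement of suitably dilated balls $B_{i}^{\ast}$ (whose union has measure $\lesssim\|f\|_{1}/\lambda$) one uses $\int b_{i}\,d\mu=0$ and the H\"{o}lder-regularity from (b) --- which implies the H\"{o}rmander integral condition $\int_{d(x,y)\ge c\,d(y,y_{i})}|K_{\varepsilon}(x,y)-K_{\varepsilon}(x,y_{i})|\,d\mu(x)\le c'(\boldsymbol{A}+\boldsymbol{B})$ --- to get $\int_{X\setminus\bigcup_{i}B_{i}^{\ast}}|T_{\varepsilon}b|\,d\mu\lesssim(\boldsymbol{A}+\boldsymbol{B})\|f\|_{1}$; altogether $\mu(\{|T_{\varepsilon}f|>\lambda\})\lesssim(\boldsymbol{A}+\boldsymbol{B}+\boldsymbol{C})\|f\|_{1}/\lambda$, uniformly in $\varepsilon$. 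The main obstacle is the uniform $L^{2}$ bound: because $K_{\varepsilon}$ is a genuinely variable, non-convolution kernel, $T_{\varepsilon}1$ need not vanish (it is only a function of bounded mean oscillation), so a naive almost-orthogonality argument fails and one must invoke --- or reprove --- the $T(1)$ machinery; once that is in place, uniformity in $\varepsilon$ is automatic, since (a), (b), (c) hold with $\varepsilon$-independent constants by assumption.
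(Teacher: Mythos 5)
The paper does not actually prove this theorem: it is quoted verbatim (in a mildly generalized form) from \cite[Thm.\ 4.1]{BCsing}, and the only ``proof'' in the paper is the observation, in the Remark that follows, that the argument of \cite{BCsing} uses nothing about the regularized kernels beyond properties (a)--(c). Your outline is the standard Calder\'on--Zygmund program on a space of homogeneous type and is essentially the route the cited reference takes: uniform $L^{2}$ boundedness of the $T_{\varepsilon}$ from the size, regularity and annular-cancellation conditions (via $T(1)$, or equivalently via a dyadic decomposition plus Cotlar--Stein, which is the mechanism in \cite{BCsing} following Christ's lectures), then weak $(1,1)$ by the Calder\'on--Zygmund decomposition and the H\"ormander condition coming from (ii)/(b), interpolation for $1<p\le 2$, and duality for $p\ge 2$ using the symmetry of all hypotheses in $(x,y)$. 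The one step you assert rather than carry out is the verification of the $T(1)$ hypotheses: that $|T_{\varepsilon}\chi_{2B}|\le c(\boldsymbol{A}+\boldsymbol{C})$ on $B$ (split $2B$ into $B(x,3r)$ minus an annulus, use (a) and (c) near the diagonal and (i) on the annulus together with doubling), that the oscillation of $T_{\varepsilon}\chi_{(2B)^{c}}$ on $B$ is $\le c\boldsymbol{B}$ (so the $\mathrm{BMO}$ seminorm of $T_{\varepsilon}1$ is $c(\boldsymbol{A}+\boldsymbol{B}+\boldsymbol{C})$, not $c(\boldsymbol{A}+\boldsymbol{C})$ as you wrote --- harmless for the final constant), and the weak boundedness property; these are routine but they are precisely the content that makes (iii)/(c) indispensable, so they should be recorded if one wanted a self-contained proof. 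Your identification of the uniform $L^{2}$ bound as the only genuinely nontrivial point, and of the fact that uniformity in $\varepsilon$ is automatic once (a)--(c) are assumed with $\varepsilon$-independent constants, is exactly right.
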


In the previous statement, \textquotedblleft absolute\textquotedblright%
\ constant means a constant depending on $\left(  X,d,\mu\right)  $ but not on
the kernel $K\left(  x,y\right)  $ (neither on the parameter $\varepsilon$).

\begin{remark}
The previous theorem has been stated and proved in \cite[Thm. 4.1]{BCsing}
defining $K_{\varepsilon}$ in a specific way (as a linear truncation of $K$
near the pole). Since we need to apply this result to smoothed kernels
$K_{\varepsilon}$ defined with a different procedure, in the present statement
we have pointed out explicitly which are the only properties of
$K_{\varepsilon}$ which are actually used in the proof \cite[Thm. 4.1]%
{BCsing}. The present statement is therefore slightly different (and more
general) than that in \cite{BCsing}. On the other hand, this forces us to
check that our smoothed kernels $K_{\varepsilon}$ actually satisfy properties (a)-(b)-(c).
\end{remark}

So, to prove Theorem \ref{Thm Lp uniform truncated} we are left to show that
the kernels $K_{\varepsilon,R}^{A}\left(  x,y\right)  $ defined in
(\ref{K troncato}) satisfy properties (a)-(b)-(c) in Theorem
\ref{Thm spazio omogeneo}.

As to property (a), we have already checked in (\ref{K loc integr}) the local
integrability of $K_{\varepsilon,R}^{A}\left(  x,\cdot\right)  $. To show the
analogous property for $K_{\varepsilon,R}^{A}\left(  \cdot,x\right)  $ we need
to apply Lemma \ref{Lemma diffeomorfismo}. We can write:%

\begin{align*}
\int_{\mathbb{R}^{n}}\left\vert K_{\varepsilon,R}^{A}\left(  y,x\right)
\right\vert dy  &  \leq\int_{\mathbb{R}^{n}}\int_{\mathbb{R}^{p}}\left\vert
\left(  \widetilde{X}_{i}\widetilde{X}_{j}\widetilde{\Gamma}_{A}\cdot
\psi_{\varepsilon,R}\right)  \left(  \left(  x,\eta\right)  ^{-1}\ast\left(
y,0\right)  \right)  \right\vert d\eta dy\\
&  =\int_{\mathbb{R}^{n}}\int_{\mathbb{R}^{p}}\left\vert \left(
\widetilde{X}_{i}\widetilde{X}_{j}\widetilde{\Gamma}_{A}\cdot\psi
_{\varepsilon,R}\right)  \left(  (x,0)^{-1}\ast(y,\zeta)\right)  \right\vert
d\zeta dy\\
&  =\int_{\mathbb{R}^{N}}\left\vert \left(  \widetilde{X}_{i}\widetilde{X}%
_{j}\widetilde{\Gamma}_{A}\cdot\psi_{\varepsilon,R}\right)  \left(  u\right)
\right\vert du\\
&  \leq c\int_{\varepsilon<d_{\widetilde{X}}\left(  u\right)  <2R}\frac
{du}{d_{\widetilde{X}}\left(  u\right)  ^{Q}}=c\log\left(  \frac
{R}{\varepsilon}\right)
\end{align*}
where in the first identity we have performed the change of variables
(\ref{eq.tousechangeofvariables}).

So we have checked property (a) for the kernels $K_{\varepsilon,R}^{A}\left(
x,y\right)  $ (note that this property is \emph{not }uniform w.r.t.
$\varepsilon$). The next proposition states that also property (b) is satisfied.

\begin{proposition}
\emph{(i).} There exists $c>0$ such that for every $R>\varepsilon>0$ and every
$x,y\in\mathbb{R}^{n},x\neq y,$%
\[
\left\vert K_{\varepsilon,R}^{A}\left(  x,y\right)  \right\vert +\left\vert
K_{\varepsilon,R}^{A}\left(  y,x\right)  \right\vert \leq\frac{c}{\left\vert
B\left(  x,d\left(  x,y\right)  \right)  \right\vert }.
\]

\emph{(ii).} There exists $c>0$ such that for every $R>\varepsilon>0$ and
every $x,x_{0},y\in\mathbb{R}^{n}$ such that $d\left(  x_{0},y\right)
\geq2d\left(  x_{0},y\right)  $,%
\begin{align*}
&  \left\vert K_{\varepsilon,R}^{A}\left(  x_{0},y\right)  -K_{\varepsilon
,R}^{A}\left(  x,y\right)  \right\vert +\left\vert K_{\varepsilon,R}%
^{A}\left(  y,x_{0}\right)  -K_{\varepsilon,R}^{A}\left(  y,x\right)
\right\vert \\
&  \leq c\frac{d\left(  x_{0},x\right)  }{d\left(  x_{0},y\right)  \left\vert
B\left(  x_{0},d\left(  x_{0},y\right)  \right)  \right\vert }.
\end{align*}

\emph{(iii). }There exists $c>0$ such that for every $r_{2}>r_{1}%
>0,x\in\mathbb{R}^{n},$%
\[
\left\vert \int_{r_{1}<d\left(  x,y\right)  <r_{2}}K_{\varepsilon,R}%
^{A}\left(  x,y\right)  dy\right\vert +\left\vert \int_{r_{1}<d\left(
x,y\right)  <r_{2}}K_{\varepsilon,R}^{A}\left(  y,x\right)  dy\right\vert \leq
c
\]
with $c$ independent of $\varepsilon$. The constants depend on the matrix $A$
only through the number $\nu$.
\end{proposition}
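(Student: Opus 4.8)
Throughout write $F:=\widetilde X_i\widetilde X_j\widetilde\Gamma_A$; by (the proof of) Theorem \ref{th.teoremone}(II), $F$ is smooth off the origin, $D_\lambda$-homogeneous of degree $-Q$, and satisfies $|F(u)|\le c\,d_{\widetilde X}(u)^{-Q}$ and $|\widetilde X_kF(u)|\le c\,d_{\widetilde X}(u)^{-Q-1}$, with $c$ depending on $A$ only through $\nu$. The geometric inputs used repeatedly are $d_X(x,y)\le d_{\widetilde X}((x,\xi),(y,\eta))$, the slice bound $|\{\eta:(y,\eta)\in B_{\widetilde X}((x,0),r)\}|\le c\,r^{Q}/|B_X(x,r)|$ (our (\ref{eq.SanchezI})), and the lower volume bound $|B_X(x,2^{k}r)|\ge c_1 2^{kn}|B_X(x,r)|$ from (\ref{bounds on balls}). \emph{Proof of (i).} Since $0\le\psi_{\varepsilon,R}\le1$, one has $|K_{\varepsilon,R}^A(x,y)|\le\int_{\mathbb R^p}|F|\big((y,\eta)^{-1}\ast(x,0)\big)\,d\eta$; splitting the $\eta$-integral over the dyadic annuli $2^{k}d_X(x,y)\le d_{\widetilde X}((y,\eta),(x,0))<2^{k+1}d_X(x,y)$ ($k\ge0$) and using $|F|\le c\,d_{\widetilde X}^{-Q}$ together with the slice bound, the $k$-th term is $\le c/|B_X(x,2^{k}d_X(x,y))|$, and summing against the lower volume bound gives $\le c/|B_X(x,d_X(x,y))|$. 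The term with $K_{\varepsilon,R}^A(y,x)$ is identical after the measure-preserving substitution of Lemma \ref{Lemma diffeomorfismo} (as in the verification of property (a)), using also $d_{\widetilde X}(v^{-1})=d_{\widetilde X}(v)$ and $\psi_{\varepsilon,R}\circ\iota=\psi_{\varepsilon,R}$.

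\emph{Proof of (iii).} Here the key point is that $\psi_{\varepsilon,R}$ is radial on $\mathbb G$, so Proposition \ref{Prop vanishing integral} gives $\int_{\varepsilon<d_{\widetilde X}(u)<2R}(F\,\psi_{\varepsilon,R})(u)\,du=0$, i.e. $\int_{\mathbb R^N}(F\psi_{\varepsilon,R})=0$. Since $d_X(x,y)\le d_{\widetilde X}((y,\eta),(x,0))$, the set $\{(y,\eta):d_X(x,y)\ge r\}$ lies inside $\{(y,\eta):d_{\widetilde X}((y,\eta),(x,0))\ge r\}$, so by the substitution $(y,\eta)\mapsto(y,\eta)^{-1}\ast(x,0)$,
\[
\int_{\{d_X(x,y)\ge r\}}K_{\varepsilon,R}^A(x,y)\,dy=-\int_{\{d_{\widetilde X}((y,\eta),(x,0))\ge r,\ d_X(x,y)<r\}}(F\psi_{\varepsilon,R})\big((y,\eta)^{-1}\ast(x,0)\big)\,d\eta\,dy .
\]
On the right-hand side $d_{\widetilde X}((y,\eta),(x,0))\ge r$; splitting dyadically in $d_{\widetilde X}$, bounding the $\eta$-slices over $\{y\in B_X(x,r)\}$ by (\ref{eq.SanchezI}) and $|F|$ by $c\,d_{\widetilde X}^{-Q}$, the $k$-th term is $\le c\,|B_X(x,r)|/|B_X(x,2^{k}r)|$, and the series sums to $c'<\infty$ uniformly in $r,\varepsilon,R,x$ by the lower volume bound. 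Thus $\big|\int_{\{d_X(x,y)\ge r\}}K_{\varepsilon,R}^A(x,y)\,dy\big|\le c'$ for every $r>0$, and (iii) follows from $\int_{r_1<d_X(x,y)<r_2}=\int_{d_X(x,y)\ge r_1}-\int_{d_X(x,y)\ge r_2}$; the $K_{\varepsilon,R}^A(y,x)$ term is as above.

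\emph{Proof of (ii) --- the main point.} The plan is to differentiate $K_{\varepsilon,R}^A$ in its first entry along a horizontal curve lying \emph{in $\mathbb R^n$}. Given $x,x_0,y$ with $d_X(x_0,y)\ge2d_X(x_0,x)$, choose a sub-unit curve $\gamma:[0,1]\to\mathbb R^n$, $\gamma(0)=x$, $\gamma(1)=x_0$, $\gamma'=\sum_ka_k(t)X_k(\gamma)$, $\int_0^1|a_k(t)|\,dt\le c\,d_X(x_0,x)$, whose image lies in $B_X(x_0,\tfrac32 d_X(x_0,x))$; then $d_X(\gamma(t),y)$ and $|B_X(\gamma(t),\rho)|$ are comparable to $s:=d_X(x_0,y)$ and $|B_X(x_0,\rho)|$ uniformly in $t$, and the curve stays away from the diagonal. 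Note $K_{\varepsilon,R}^A$ is Lipschitz off the diagonal (the cutoff $\psi_{\varepsilon,R}$ vanishes near the pole, so $F\psi_{\varepsilon,R}$ is globally Lipschitz), hence $t\mapsto K_{\varepsilon,R}^A(\gamma(t),y)$ is absolutely continuous and $K_{\varepsilon,R}^A(x_0,y)-K_{\varepsilon,R}^A(x,y)=\int_0^1\sum_ka_k(t)\,(X_k^xK_{\varepsilon,R}^A)(\gamma(t),y)\,dt$, where --- exactly as in the representation formulas of Theorem \ref{th.teoremone}(I) --- $X_k^xK_{\varepsilon,R}^A(z,y)=\int_{\mathbb R^p}\big(\widetilde X_k(F\psi_{\varepsilon,R})\big)\big((y,\eta)^{-1}\ast(z,0)\big)\,d\eta$ with $\widetilde X_k(F\psi_{\varepsilon,R})=(\widetilde X_kF)\psi_{\varepsilon,R}+F\,\widetilde X_k\psi_{\varepsilon,R}$. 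The contribution of $(\widetilde X_kF)\psi_{\varepsilon,R}$ is bounded just as in (i), now with $|\widetilde X_kF|\le c\,d_{\widetilde X}^{-Q-1}$, by $c\,d_X(z,y)^{-1}/|B_X(z,d_X(z,y))|$; evaluated at $z=\gamma(t)$ and integrated against $|a_k(t)|$ this produces exactly $c\,d_X(x_0,x)\,s^{-1}/|B_X(x_0,s)|$, the desired bound.

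The genuinely new point --- the only place where more than the already known estimates for $K_A$ (Theorem \ref{Thm prop sing kern}) is required --- is the term $F\,\widetilde X_k\psi_{\varepsilon,R}$, which is \emph{not} uniformly bounded in $\varepsilon$. Since $\psi_{\varepsilon,R}$ is piecewise linear in $d_{\widetilde X}$, the factor $\widetilde X_k\psi_{\varepsilon,R}$ is supported in $\{\varepsilon<d_{\widetilde X}<2\varepsilon\}\cup\{R<d_{\widetilde X}<2R\}$ and bounded there by $c/\varepsilon$, resp. $c/R$; with $|F|\le c\,d_{\widetilde X}^{-Q}$ and the slice bound (\ref{eq.SanchezI}) this contributes to $|X_k^xK_{\varepsilon,R}^A(z,y)|$ at most $\frac{c}{\varepsilon|B_X(z,\varepsilon)|}\mathbf{1}_{\{d_X(z,y)<2\varepsilon\}}+\frac{c}{R|B_X(z,R)|}\mathbf{1}_{\{d_X(z,y)<2R\}}$, the indicators coming from $d_X(z,y)\le d_{\widetilde X}$. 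Evaluating at $z=\gamma(t)$ and using $d_X(\gamma(t),y)\approx s$, the indicators force $s\lesssim\varepsilon$ (resp. $s\lesssim R$); in that regime the doubling property yields $s\,|B_X(x_0,s)|\le c\,\varepsilon\,|B_X(x_0,\varepsilon)|$ (resp. $\le c\,R\,|B_X(x_0,R)|$), so these pieces are again $\le c\,d_X(x_0,x)\,s^{-1}/|B_X(x_0,s)|$ after integrating against $|a_k(t)|$. The symmetric estimate for $|K_{\varepsilon,R}^A(y,x_0)-K_{\varepsilon,R}^A(y,x)|$ follows by the same scheme, the dependence on the second entry being reduced to the same form by Lemma \ref{Lemma diffeomorfismo} and, when a derivative is taken, by the representation formulas of Theorem \ref{th.teoremone}(I) (which involve the inversion $\iota$ of $\mathbb G$); radiality of $\psi_{\varepsilon,R}$, so that $\psi_{\varepsilon,R}\circ\iota=\psi_{\varepsilon,R}$ and $d_{\widetilde X}(v^{-1})=d_{\widetilde X}(v)$, makes all the bounds symmetric. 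At every step the constants are uniform in $\varepsilon,R$ and depend on $A$ only through $\nu$: the only features of the cutoff that enter are $0\le\psi_{\varepsilon,R}\le1$, its radiality, and the scales $\varepsilon,R$ of its derivative. The main obstacle, as indicated, is precisely the control of this $\widetilde X_k\psi_{\varepsilon,R}$ term --- the reason the smoothed kernel is built by saturating $\widetilde X_i\widetilde X_j\widetilde\Gamma_A$ against a \emph{radial} cutoff rather than multiplying $K_A$ by one.
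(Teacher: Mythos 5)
Your argument for (i) and (iii) is correct and is exactly the intended ``easy adaptation'' of \cite[\S 8]{BBB1}: dyadic slicing in $d_{\widetilde X}$ combined with (\ref{eq.SanchezI}) and (\ref{bounds on balls}) for the size estimates, and the vanishing property of Proposition \ref{Prop vanishing integral} (together with the unimodular changes of variables) for the cancellation. For (ii) you have also correctly isolated the one genuinely new ingredient with respect to the unsmoothed kernel of Theorem \ref{Thm prop sing kern}, namely the term $F\,\widetilde X_k\psi_{\varepsilon,R}$, and your quantitative treatment of it (support in $\{\varepsilon<d_{\widetilde X}<2\varepsilon\}\cup\{R<d_{\widetilde X}<2R\}$, size $c/\varepsilon$ resp. $c/R$, the indicator forcing $d_X(z,y)\lesssim\varepsilon$ resp. $\lesssim R$, and doubling to recover $d_X(x_0,x)\,s^{-1}/|B_X(x_0,s)|$) is correct, as is the use of the uniform third-order bounds of Theorem \ref{th.teoremone}(II) for the term $(\widetilde X_kF)\psi_{\varepsilon,R}$.

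The one step that does not stand as written is the differentiation of $\psi_{\varepsilon,R}$ itself. By construction (\ref{cutoff psi}), $\psi_{\varepsilon,R}=\phi_{\varepsilon,R}\circ d_{\widetilde X}$ with $\phi_{\varepsilon,R}$ only piecewise linear and, more seriously, with $d_{\widetilde X}$ a Carnot--Carath\'eodory distance, which is in general only H\"older continuous with respect to the Euclidean metric; the paper points this out explicitly right after (\ref{cutoff psi}) (``we will not need to differentiate $\psi_{\varepsilon,R}$''). Consequently your claim that $F\psi_{\varepsilon,R}$ is ``globally Lipschitz'' is not literally true, the pointwise identity $X_k^x K^A_{\varepsilon,R}(z,y)=\int_{\mathbb R^p}\widetilde X_k(F\psi_{\varepsilon,R})\,d\eta$ needs justification (the divergence-in-$\eta$ argument behind Theorem \ref{th.teoremone}(I) integrates by parts along non-horizontal directions, where $\psi_{\varepsilon,R}$ is only H\"older), and the absolute continuity of $t\mapsto K^A_{\varepsilon,R}(\gamma(t),y)$ is not immediate because $t\mapsto(\gamma(t),0)$ is not a horizontal curve in $\mathbb G$. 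This is a repairable defect rather than a wrong idea: it suffices to define $\psi_{\varepsilon,R}$ as a smooth profile composed with the smooth homogeneous norm $\left\Vert\cdot\right\Vert$ of (\ref{hom norm}) (as is done in the proof of Theorem \ref{Thm repr formula 1}); by the equivalence (\ref{equivalence}) all your support localizations and size bounds survive with adjusted constants, radiality is preserved so Proposition \ref{Prop vanishing integral} still applies, and every estimate you wrote goes through verbatim. Alternatively one can avoid derivatives of the cutoff altogether by splitting the difference as $\int[F(u_x)-F(u_{x_0})]\psi(u_{x_0})+\int F(u_x)[\psi(u_x)-\psi(u_{x_0})]$ and estimating the second piece by the same support/size considerations you already use; either way the quantitative content of your proof is the correct one.
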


\begin{proof}
The proof of this Proposition is an easy adaptation of the proof of the
analogous properties of $K\left(  x,y\right)  $ given in \cite[\S 8]{BBB1}. As
in \cite[\S 8]{BBB1}, the proof of (ii) exploits the upper bound on
\emph{third order derivatives} of $\Gamma^{A}$ of the kind $X_{i}^{x}X_{j}%
^{x}X_{k}^{x}\Gamma^{A}\left(  x,y\right)  $ and $X_{i}^{x}X_{j}^{x}X_{k}%
^{y}\Gamma^{A}\left(  x,y\right)  $. Here we implicitly exploit the generality
of the estimates in Theorem \ref{th.teoremone}, (ii). To prove (iii) for
$K_{\varepsilon,R}^{A}$ we also need to use Proposition
\ref{Prop vanishing integral}, to assert that%
\[
\int_{r_{1}<d_{\widetilde{X}}\left(  \left(  y,\eta\right)  ,\left(
x,0\right)  \right)  <r_{2}}\left(  \widetilde{X}_{i}\widetilde{X}%
_{j}\widetilde{\Gamma}\cdot\psi_{\varepsilon,R}\right)  \left(  \left(
y,\eta\right)  ^{-1}\ast\left(  x,0\right)  \right)  d\eta dy=0
\]
for every $r_{2}>r_{1}>0$.
\end{proof}

Summarizing, since the kernels $K_{\varepsilon,R}^{A}$ satisfy assumptions
(a), (b), (c) in Theorem \ref{Thm spazio omogeneo}, we can infer that Theorem
\ref{Thm Lp uniform truncated} holds. Then also Theorem \ref{Thm Lp constant}
is completely proved.

\subsection{Estimates on the mean oscillation of $X_{i}X_{j}u$ in terms of
$L_{A}u$}

Throughout this section we will show how, combining the representation formula
in Theorem \ref{Thm repr formula2}, the $L^{p}$-continuity result in Theorem
\ref{Thm Lp constant}, and some real analysis in spaces of homogeneous type,
we can prove an estimate on the mean oscillation of $X_{i}X_{j}u$ over balls,
in terms of $L_{A}u$, which will be the key point to extend to operators with
variable coefficients $a_{ij}\left(  x\right)  $ the $L^{p}$ estimates proved.
The result is the following:

\begin{theorem}
\label{Thm Krylov main step}Under the assumptions (H.1)-(H.4) stated in \S 1,
for every $p\in\left(  1,\infty\right)  $ there exists $c>0$ such that for
every $u\in C_{0}^{\infty}\left(  \mathbb{R}^{n}\right)  $, $r>0$,
$x_{0},\overline{x}\in\mathbb{R}^{n}$ with $x_{0}\in B\left(  \overline
{x},r\right)  $, we have, for $i,j=1,...,m$ and every $k\geq2$:%
\begin{align}
&  \frac{1}{\left\vert B_{r}\left(  \overline{x}\right)  \right\vert }%
\int_{B_{r}\left(  \overline{x}\right)  }\left\vert X_{i}X_{j}u\left(
x\right)  -\left(  X_{i}X_{j}u\right)  _{B_{r}}\right\vert dx\label{Krylov}\\
&  \leq c\left\{  \frac{1}{k}\sum_{h,l=1}^{m}\mathcal{M}\left(  X_{h}%
X_{l}u\right)  \left(  x_{0}\right)  +k^{\frac{q}{p}}\left(  \frac
{1}{\left\vert B_{kr}\left(  \overline{x}\right)  \right\vert }\int%
_{B_{kr}\left(  \overline{x}\right)  }\left\vert L_{A}u\left(  x\right)
\right\vert ^{p}dx\right)  ^{1/p}\right\}  .\nonumber
\end{align}
The constant $c$ depends on the matrix $A$ only through the number $\nu$ in
(\ref{ellipticity}).
\end{theorem}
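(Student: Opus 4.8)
The plan is to derive \eqref{Krylov} — I mean (\ref{Krylov}) — from the representation formula of Theorem \ref{Thm repr formula2}, the $L^{p}$-boundedness of the operator $T$ (Theorem \ref{Thm Lp constant}), and the kernel estimates of Theorem \ref{Thm prop sing kern}, by a classical near/far splitting of $f:=L_{A}u$. Since $u\in C_{0}^{\infty}(\mathbb{R}^{n})$ we have $f\in C_{0}^{\infty}(\mathbb{R}^{n})\subset C_{0}^{\alpha}(\mathbb{R}^{n})$, so Theorem \ref{Thm repr formula2} gives $X_{i}X_{j}u=-T(f)+c_{ij}^{A}\,f$ on $\mathbb{R}^{n}$. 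Fix $\overline{x},r,k\geq2$ and $x_{0}\in B_{r}(\overline{x})$, and split $f=f_{1}+f_{2}$ with $f_{1}=f\,\chi_{B_{kr}(\overline{x})}$ and $f_{2}=f-f_{1}$ (so $f_{2}$ is bounded, compactly supported, and vanishes on $B_{kr}(\overline{x})\supseteq B_{r}(\overline{x})$). Both $f_{1},f_{2}\in L^{p}(\mathbb{R}^{n})$, and since the $L^{p}$-extension of $T$ is linear and agrees with the original $T$ on $C_{0}^{\alpha}$, we may write $T(f)=T(f_{1})+T(f_{2})$; moreover $c_{ij}^{A}f=c_{ij}^{A}f_{1}$ on $B_{r}(\overline{x})$ because $f_{2}\equiv0$ there. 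Hence on $B_{r}(\overline{x})$ one has $X_{i}X_{j}u=\bigl(-T(f_{1})+c_{ij}^{A}f_{1}\bigr)-T(f_{2})$, and it suffices to bound the mean oscillation over $B_{r}(\overline{x})$ of each of the two terms.

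For the near term $g:=-T(f_{1})+c_{ij}^{A}f_{1}$, I bound its oscillation over $B_{r}(\overline{x})$ by $\tfrac{2}{|B_{r}(\overline{x})|}\int_{B_{r}(\overline{x})}|g|$, apply H\"older's inequality on $B_{r}(\overline{x})$, and then use $\Vert T(f_{1})\Vert_{L^{p}(\mathbb{R}^{n})}\leq c\Vert f_{1}\Vert_{L^{p}(\mathbb{R}^{n})}$ from Theorem \ref{Thm Lp constant}, the uniform bound $|c_{ij}^{A}|\leq c$, and $\Vert f_{1}\Vert_{L^{p}(\mathbb{R}^{n})}=\Vert L_{A}u\Vert_{L^{p}(B_{kr}(\overline{x}))}$. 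This yields a bound by $c\,|B_{r}(\overline{x})|^{-1/p}\Vert L_{A}u\Vert_{L^{p}(B_{kr}(\overline{x}))}=c\bigl(|B_{kr}(\overline{x})|/|B_{r}(\overline{x})|\bigr)^{1/p}\bigl(\tfrac{1}{|B_{kr}(\overline{x})|}\int_{B_{kr}(\overline{x})}|L_{A}u|^{p}\bigr)^{1/p}$, and the volume bound (\ref{bounds on balls}) gives $|B_{kr}(\overline{x})|/|B_{r}(\overline{x})|\leq c_{2}k^{q}$, producing precisely the factor $k^{q/p}$ and the second term on the right of (\ref{Krylov}). All constants here depend on $A$ only through $\nu$, since this is true of $\Vert T\Vert_{L^{p}\to L^{p}}$ and of $|c_{ij}^{A}|$.

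For the far term $-T(f_{2})$, I first observe that, $f_{2}$ being bounded with compact support and vanishing on a $d_{X}$-neighbourhood of $B_{r}(\overline{x})$, a routine limiting argument (let $\varepsilon\to0$, $R\to+\infty$ in $T_{\varepsilon,R}^{A}$, using that $K_{\varepsilon,R}^{A}\to K_{A}$ off the diagonal with an integrable majorant, together with $L^{p}$-density) identifies $T(f_{2})(x)=\int_{\mathbb{R}^{n}\setminus B_{kr}(\overline{x})}K_{A}(x,y)f(y)\,dy$ for every $x\in B_{r}(\overline{x})$, the integral being absolutely convergent, with $K_{A}(x,y)=X_{i}^{x}X_{j}^{x}\Gamma_{A}(x;y)$ as in Theorem \ref{Thm prop sing kern}. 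Estimating the oscillation of $-T(f_{2})$ over $B_{r}(\overline{x})$ by $\tfrac{2}{|B_{r}(\overline{x})|}\int_{B_{r}(\overline{x})}|T(f_{2})(x)-T(f_{2})(\overline{x})|\,dx$, I note that for $x\in B_{r}(\overline{x})$ and $y\notin B_{kr}(\overline{x})$ one has $d_{X}(\overline{x},y)\geq kr\geq2r>2\,d_{X}(\overline{x},x)$, so the H\"older-type estimate (ii) of Theorem \ref{Thm prop sing kern} applies with base point $\overline{x}$ and gives $|K_{A}(x,y)-K_{A}(\overline{x},y)|\leq\boldsymbol{B}\,\tfrac{r}{d_{X}(\overline{x},y)}\cdot\tfrac{1}{|B(\overline{x},d_{X}(\overline{x},y))|}$. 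Decomposing $\mathbb{R}^{n}\setminus B_{kr}(\overline{x})$ into the dyadic shells $B_{2^{h+1}kr}(\overline{x})\setminus B_{2^{h}kr}(\overline{x})$, $h\geq0$, and using the doubling property (\ref{doubling}), the pointwise bound $|L_{A}u|\leq\nu^{-1}\sum_{h,l=1}^{m}|X_{h}X_{l}u|$ (from (\ref{ellipticity})), and the fact that $x_{0}\in B_{r}(\overline{x})\subset B_{2^{h+1}kr}(\overline{x})$ (so that $\tfrac{1}{|B_{2^{h+1}kr}(\overline{x})|}\int_{B_{2^{h+1}kr}(\overline{x})}|X_{h}X_{l}u(y)|\,dy\leq\mathcal{M}(X_{h}X_{l}u)(x_{0})$ by definition of the uncentered maximal function), one checks that the $h$-th shell contributes at most $c\,2^{-h}\cdot\tfrac{1}{k}\sum_{h,l=1}^{m}\mathcal{M}(X_{h}X_{l}u)(x_{0})$; summing the convergent series $\sum_{h\geq0}2^{-h}$ (whose value is an absolute constant, so the final constant is independent of $k$ and $r$) yields the first term on the right of (\ref{Krylov}). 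The constant depends on $A$ only through $\nu$ (via $\boldsymbol{B}$ and the factor $\nu^{-1}$), the remaining dependence being on $\{X_{1},\dots,X_{m}\}$ through the doubling constant.

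I expect the main technical obstacle to lie in the far-term estimate: on the one hand, rigorously justifying the identification of $T(f_{2})$ — a priori defined only as an $L^{p}$-density extension, while $f_{2}$ is merely bounded with compact support — with the absolutely convergent integral $\int_{B_{kr}(\overline{x})^{c}}K_{A}(x,\cdot)\,f$ on $B_{r}(\overline{x})$; and on the other hand, the bookkeeping in the dyadic sum that produces exactly the gain $1/k$ (rather than $O(1)$) with a constant independent of both $r$ and $k$. The near-term estimate and the grouping of the $c_{ij}^{A}f$ contribution with $T(f_{1})$ are, by contrast, routine once the $L^{p}$-bound for $T$ is in hand.
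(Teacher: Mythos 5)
Your argument is correct and is essentially the paper's proof: the paper first establishes the near/far splitting at scale $kr$ (with $L^{p}$-boundedness plus H\"older and the volume growth (\ref{bounds on balls}) for the near part, and the kernel regularity estimate plus dyadic shells, doubling and the uncentered maximal function for the far part) as the abstract Theorem \ref{Thm sharp} in a general space of homogeneous type, and then applies it to $T^{A}$ and $f=L_{A}u$, handling the $c_{ij}^{A}L_{A}u$ term and the reduction $\mathcal{M}(L_{A}u)\leq c\sum_{h,l}\mathcal{M}(X_{h}X_{l}u)$ exactly as you do. You have simply inlined that abstract lemma; the identification of $T(f_{2})$ with the absolutely convergent integral off the support, which you flag as the delicate point, is likewise taken as a property of the kernel representation in the paper, so your treatment is at the same level of rigor.
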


Here $\mathcal{M}$ is the Hardy-Littlewood maximal operator which has been
defined in (\ref{maximal HL}).

To get the above theorem we will exploit an abstract result which we think can
be of independent interest, and therefore we state in the general context of
spaces of homogeneous type.

\bigskip

Let $T$ be a singular integral operator which we already know to be bounded on
$L^{p}\left(  X\right)  $ on some space of homogeneous type $\left(
X,d,\mu\right)  $ and some (or all) $p\in\left(  1,\infty\right)  $. Just to
simplify notation, in the integrals we will always write $dx$ instead of
$d\mu\left(  x\right)  $. The operator $T$ has kernel $K\left(  x,y\right)  $,
which means that%
\[
Tf\left(  x\right)  =\int_{X}K\left(  x,y\right)  f\left(  y\right)  dy
\]
at least when $f$ is compactly supported and $x$ does not belong to
$\operatorname{sprt}f.$ We assume that the kernel $K$ satisfies the mean value
inequality%
\begin{equation}
\left\vert K\left(  x_{0},y\right)  -K\left(  x,y\right)  \right\vert \leq
C\frac{d\left(  x_{0},x\right)  }{d\left(  x_{0},y\right)  B\left(
x;y\right)  }\text{ if }d\left(  x_{0},y\right)  \geq Md\left(  x_{0}%
,x\right)  \label{mean value ineq}%
\end{equation}
where
\[
B\left(  x;y\right)  =\mu\left(  B\left(  x,d\left(  x,y\right)  \right)
\right)
\]
and $M>1$ is such that the condition $d\left(  x_{0},y\right)  \geq Md\left(
x_{0},x\right)  $ implies the equivalence of $d\left(  x_{0},y\right)  $ and
$d\left(  x,y\right)  $ (if $d$ is a distance, like in our case, $M=2$ is a
good choice).

Let $q>1$ be an exponent such that%
\begin{equation}
\left\vert B\left(  x,kr\right)  \right\vert \leq ck^{q}\left\vert B\left(
x,r\right)  \right\vert \label{growth balls}%
\end{equation}
for every $k\geq1,r>0,x\in X.$ Such exponent exists in every space of
homogeneous type; in our concrete application, by (\ref{bounds on balls}),
this exponent will be exactly the number that we have called $q$, i.e. the
homogeneous dimension of $\mathbb{R}^{n}$.

Under the above assumptions we prove the following:

\begin{theorem}
\label{Thm sharp}There exists $c>0$ such that for every $f\in L^{p}\left(
X\right)  $, $x_{0}\in X$, ball $B_{r}=B\left(  \overline{x},r\right)  \ni
x_{0}$ (for some $r>0$, $\overline{x}\in X$), $k\geq M$, we have:%
\begin{align*}
&  \frac{1}{\left\vert B_{r}\left(  \overline{x}\right)  \right\vert }%
\int_{B_{r}}\left\vert Tf\left(  x\right)  -\left(  Tf\right)  _{B_{r}\left(
\overline{x}\right)  }\right\vert dx\\
&  \leq c\left\{  \frac{1}{k}\mathcal{M}f\left(  x_{0}\right)  +k^{\frac{q}%
{p}}\left(  \frac{1}{\left\vert B_{kr}\left(  \overline{x}\right)  \right\vert
}\int_{B_{kr}\left(  \overline{x}\right)  }\left\vert f\left(  x\right)
\right\vert ^{p}dx\right)  ^{1/p}\right\}  .
\end{align*}

\end{theorem}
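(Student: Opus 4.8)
The statement is a standard-looking "Fefferman–Stein / sharp maximal function" estimate for a Calderón–Zygmund operator, localized to a ball, and the proof will follow the classical route: split $f$ into a near part and a far part relative to the enlarged ball $B_{kr}$, estimate the contribution of the near part by the $L^p$ boundedness of $T$ together with Hölder's inequality and the volume growth \eqref{growth balls}, and estimate the oscillation of $T$ applied to the far part by the Hörmander (mean value) kernel inequality \eqref{mean value ineq}, comparing to the Hardy–Littlewood maximal function $\mathcal{M}f(x_0)$. The only mild subtlety is that one must measure the oscillation against $(Tf)_{B_r}$, so instead of working with $|Tf(x) - (Tf)_{B_r}|$ directly it is convenient to use the elementary fact that for any constant $\lambda$,
\[
\frac{1}{|B_r|}\int_{B_r}\bigl|Tf(x)-(Tf)_{B_r}\bigr|\,dx \le \frac{2}{|B_r|}\int_{B_r}\bigl|Tf(x)-\lambda\bigr|\,dx,
\]
so it suffices to produce one good constant $\lambda$ and bound $\frac{1}{|B_r|}\int_{B_r}|Tf-\lambda|\,dx$.

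\textbf{Step 1 (splitting).} Write $f = f_1 + f_2$ with $f_1 = f\,\chi_{B_{kr}(\overline x)}$ and $f_2 = f\,\chi_{X\setminus B_{kr}(\overline x)}$, where $k\ge M$. For the near part, by Jensen, the $L^p\to L^p$ boundedness of $T$, and the doubling/growth bound $|B_{kr}|\le c\,k^q|B_r|$ from \eqref{growth balls}, we get
\[
\frac{1}{|B_r|}\int_{B_r}|Tf_1(x)|\,dx
\le \left(\frac{1}{|B_r|}\int_{B_r}|Tf_1|^p\right)^{1/p}
\le \left(\frac{c}{|B_r|}\int_X |f_1|^p\right)^{1/p}
\le c\,k^{q/p}\left(\frac{1}{|B_{kr}|}\int_{B_{kr}}|f|^p\right)^{1/p},
\]
which is exactly the second term in the claimed bound (and this also shows $Tf_1\in L^1_{loc}$, so the average $(Tf_1)_{B_r}$ makes sense; absorb it via the observation above applied to $\lambda=(Tf_1)_{B_r}$, costing only a factor $2$).

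\textbf{Step 2 (far part).} Choose $\lambda = Tf_2(x_0) = \int_X K(x_0,y) f_2(y)\,dy$; this integral converges absolutely because $y\mapsto K(x_0,y)$ is controlled by $1/|B(x_0,d(x_0,y))|$ off the diagonal and $f_2$ is supported where $d(x_0,y)\gtrsim kr \gg 0$. For $x\in B_r$ and $y$ in the support of $f_2$ we have $d(x_0,y)\ge (k-C_d)r \ge M\,d(x_0,x)$ (after possibly enlarging the admissible $k$ by an absolute factor, since $x,x_0\in B_r$ forces $d(x_0,x)\lesssim r$), so \eqref{mean value ineq} applies:
\[
|Tf_2(x)-Tf_2(x_0)|
\le \int_{X\setminus B_{kr}} |K(x,y)-K(x_0,y)|\,|f(y)|\,dy
\le C\int_{d(x_0,y)\gtrsim kr} \frac{d(x_0,x)}{d(x_0,y)}\cdot\frac{|f(y)|}{|B(x_0,d(x_0,y))|}\,dy.
\]
Now decompose the region $\{d(x_0,y)\gtrsim kr\}$ into dyadic annuli $\{2^{j}kr \le d(x_0,y) < 2^{j+1}kr\}$, $j\ge 0$; on each annulus $d(x_0,x)/d(x_0,y)\lesssim r/(2^j kr) = 2^{-j}/k$, and $\frac{1}{|B(x_0,2^j kr)|}\int_{B(x_0,2^{j+1}kr)}|f|\le c\,\mathcal{M}f(x_0)$ by doubling. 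Summing the geometric series $\sum_j 2^{-j}$ gives
\[
\frac{1}{|B_r|}\int_{B_r}|Tf_2(x)-\lambda|\,dx \le \frac{c}{k}\,\mathcal{M}f(x_0),
\]
the first term in the claim.

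\textbf{The main obstacle.} There is no deep difficulty; the only points that require care are bookkeeping ones. First, one must justify that $Tf$ is well defined and locally integrable for general $f\in L^p(X)$ (not just Hölder continuous $f$), so that all the averages in the statement make sense — this follows from the $L^p$ boundedness of $T$ already granted in the hypotheses plus the absolute convergence of the far-part integral, but it should be stated. Second, the constant $M$ (here $=2$) governing the kernel inequality interacts with the quasitriangle constant $C_d$ when passing from "$x,x_0\in B_r$" to "$d(x_0,y)\ge M d(x_0,x)$ for $y$ outside $B_{kr}$"; one simply requires $k$ to exceed an absolute threshold and absorbs the discrepancy, but this is the step where the hypothesis $k\ge M$ (rather than $k\ge 1$) is genuinely used. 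Once Theorem~\ref{Thm sharp} is in hand, Theorem~\ref{Thm Krylov main step} follows by applying it with $T$ the operator of Theorem~\ref{Thm Lp constant}, $f=L_A u$, and summing over $i,j$ — noting $Tf = -X_iX_ju + c_{ij}^A L_A u$ by the representation formula, and that the $c_{ij}^A L_A u$ contribution to the oscillation is itself controlled by an average of $|L_A u|$, hence by the second term on the right of \eqref{Krylov}.
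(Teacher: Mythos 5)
Your proposal is correct and follows essentially the same route as the paper's proof: the splitting $f=f_1+f_2$ relative to $B_{kr}(\overline{x})$, the bound on the near part via H\"older, the $L^p$ continuity of $T$ and the growth estimate (\ref{growth balls}), and the bound on the far part via the comparison constant $Tf_2$ evaluated at a point of $B_r(\overline{x})$, the mean value inequality (\ref{mean value ineq}), dyadic annuli and doubling, yielding $\frac{c}{k}\mathcal{M}f(x_0)$. The only cosmetic difference is your choice of the comparison point $x_0$ itself rather than an auxiliary $x^{\ast}\in B_r(\overline{x})$ where $Tf_2$ is finite, and your (welcome) explicit remark on the interaction between $k$, $M$ and the quasitriangle constant, which the paper also treats as an inessential adjustment of constants.
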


\begin{proof}
To simplify notation, this proof is written assuming $d$ a distance (and not
just a quasidistance), in particular $M=2$. The proof for a general
quasidistance has some inessential changes.

For fixed $r>0$, $k\geq2\left(  =M\right)  $ and $f\in L^{p}\left(  X\right)
$, let%
\[
f=f\chi_{B_{kr}\left(  \overline{x}\right)  }+f\chi_{\left(  B_{kr}\left(
\overline{x}\right)  \right)  ^{c}}\equiv f_{1}+f_{2},
\]
then for any $c\in\mathbb{R}$ we have%
\begin{align*}
&  \frac{1}{\left\vert B_{r}\left(  \overline{x}\right)  \right\vert }%
\int_{B_{r}\left(  \overline{x}\right)  }\left\vert Tf\left(  x\right)
-\left(  Tf\right)  _{B_{r}\left(  \overline{x}\right)  }\right\vert
dx\leq2\frac{1}{\left\vert B_{r}\left(  \overline{x}\right)  \right\vert }%
\int_{B_{r}\left(  \overline{x}\right)  }\left\vert Tf\left(  x\right)
-c\right\vert dx\\
&  \leq2\frac{1}{\left\vert B_{r}\left(  \overline{x}\right)  \right\vert
}\int_{B_{r}\left(  \overline{x}\right)  }\left\vert Tf_{1}\left(  x\right)
\right\vert dx+2\frac{1}{\left\vert B_{r}\left(  \overline{x}\right)
\right\vert }\int_{B_{r}\left(  \overline{x}\right)  }\left\vert Tf_{2}\left(
x\right)  -c\right\vert dx\equiv I+II.
\end{align*}
By H\"{o}lder inequality and $L^{p}$ continuity of $T$ we have%
\begin{align*}
I  &  \leq2\left(  \frac{1}{\left\vert B_{r}\left(  \overline{x}\right)
\right\vert }\int_{B_{r}\left(  \overline{x}\right)  }\left\vert Tf_{1}\left(
x\right)  \right\vert ^{p}dx\right)  ^{1/p}\leq2C\left(  \frac{1}{\left\vert
B_{r}\left(  \overline{x}\right)  \right\vert }\int_{X}\left\vert f_{1}\left(
x\right)  \right\vert ^{p}dx\right)  ^{1/p}\\
&  =2C\left(  \frac{1}{\left\vert B_{r}\left(  \overline{x}\right)
\right\vert }\int_{B_{kr}\left(  \overline{x}\right)  }\left\vert f\left(
x\right)  \right\vert ^{p}dx\right)  ^{1/p}\leq ck^{\frac{q}{p}}\left(
\frac{1}{\left\vert B_{kr}\left(  \overline{x}\right)  \right\vert }%
\int_{B_{kr}\left(  \overline{x}\right)  }\left\vert f\left(  x\right)
\right\vert ^{p}dx\right)  ^{1/p},
\end{align*}
where we have exploited (\ref{growth balls}).

Next, since, in the integral $II$, $x\in B_{r}\left(  \overline{x}\right)  $
and $\operatorname{sprt}f_{2}\subset\left(  B_{kr}\left(  \overline{x}\right)
\right)  ^{c}$,%
\[
Tf_{2}\left(  x\right)  =\int K\left(  x,y\right)  f_{2}\left(  y\right)
dy=\int_{\left(  B_{kr}\left(  \overline{x}\right)  \right)  ^{c}}K\left(
x,y\right)  f\left(  y\right)  dy\text{.}%
\]
Then, in $II$, pick $c=Tf_{2}\left(  x^{\ast}\right)  =\int_{\left(
B_{kr}\left(  \overline{x}\right)  \right)  ^{c}}K\left(  x^{\ast},y\right)
f\left(  y\right)  dy$ for some $x^{\ast}\in B_{r}\left(  \overline{x}\right)
$ such that $Tf_{2}\left(  x^{\ast}\right)  $ is finite ($Tf_{2}$ is an
$L^{p}$ function). Then, by the mean value inequality (\ref{mean value ineq})
for $K$,%
\begin{align}
II  &  =2\frac{1}{\left\vert B_{r}\left(  \overline{x}\right)  \right\vert
}\int_{B_{r}\left(  \overline{x}\right)  }\left\vert \int_{\left(
B_{kr}\left(  \overline{x}\right)  \right)  ^{c}}\left[  K\left(  x,y\right)
-K\left(  x^{\ast},y\right)  \right]  f\left(  y\right)  dy\right\vert
dx\nonumber\\
&  \leq c\frac{1}{\left\vert B_{r}\left(  \overline{x}\right)  \right\vert
}\int_{B_{r}\left(  \overline{x}\right)  }\int_{\left(  B_{kr}\left(
\overline{x}\right)  \right)  ^{c}}\frac{d\left(  x,x^{\ast}\right)
}{d\left(  x,y\right)  \left\vert B\left(  x;y\right)  \right\vert }\left\vert
f\left(  y\right)  \right\vert dydx\nonumber\\
&  \leq cr\frac{1}{\left\vert B_{r}\left(  \overline{x}\right)  \right\vert
}\int_{B_{r}\left(  \overline{x}\right)  }\left(  \int_{\left(  B_{kr}\left(
\overline{x}\right)  \right)  ^{c}}\frac{\left\vert f\left(  y\right)
\right\vert }{d\left(  x,y\right)  \left\vert B\left(  x;y\right)  \right\vert
}dy\right)  dx. \label{M1}%
\end{align}
Note that under our assumptions on $\overline{x},x,y$ the distances $d\left(
\overline{x},y\right)  $ and $d\left(  x,y\right)  $ are equivalent. Since
$x_{0}\in B_{r}\left(  \overline{x}\right)  $, also $d\left(  x_{0},y\right)
$ and $d\left(  x,y\right)  $ are equivalent. Moreover,%
\[
\left(  B_{kr}\left(  \overline{x}\right)  \right)  ^{c}\subset\left(
B_{\left(  k-1\right)  r}\left(  x_{0}\right)  \right)  ^{c},
\]
so that%
\begin{align}
&  \int_{\left(  B_{kr}\left(  \overline{x}\right)  \right)  ^{c}}%
\frac{\left\vert f\left(  y\right)  \right\vert }{d\left(  x,y\right)
\left\vert B\left(  x;y\right)  \right\vert }dy\leq c\int_{\left(  B_{\left(
k-1\right)  r}\left(  x_{0}\right)  \right)  ^{c}}\frac{\left\vert f\left(
y\right)  \right\vert }{d\left(  x_{0},y\right)  \left\vert B\left(
x_{0};y\right)  \right\vert }dy\nonumber\\
&  \leq c\sum_{h=0}^{\infty}\int_{2^{h}\left(  k-1\right)  r\leq d\left(
x_{0},y\right)  <2^{h+1}\left(  k-1\right)  r}\frac{\left\vert f\left(
y\right)  \right\vert }{d\left(  x_{0},y\right)  \left\vert B\left(
x_{0};y\right)  \right\vert }dy\nonumber\\
&  \leq c\sum_{h=0}^{\infty}\frac{1}{2^{h}\left(  k-1\right)  r\left\vert
B_{2^{h}\left(  k-1\right)  r}\left(  x_{0}\right)  \right\vert }%
\int_{B_{2^{h+1}\left(  k-1\right)  r}\left(  x_{0}\right)  }\left\vert
f\left(  y\right)  \right\vert dy\nonumber\\
&  \leq c_{1}\sum_{h=0}^{\infty}\frac{1}{2^{h}\left(  k-1\right)  r}\frac
{1}{\left\vert B_{2^{h+1}\left(  k-1\right)  r}\left(  x_{0}\right)
\right\vert }\int_{B_{2^{h+1}\left(  k-1\right)  r}\left(  x_{0}\right)
}\left\vert f\left(  y\right)  \right\vert dy\nonumber\\
&  \leq\frac{c_{1}}{\left(  k-1\right)  r}\mathcal{M}f\left(  x_{0}\right)  .
\label{M2}%
\end{align}
where in the up to last inequality we have exploited the doubling property$.$

By (\ref{M1})-(\ref{M2}) we conclude%
\[
II\leq cr\frac{1}{\left\vert B_{r}\left(  \overline{x}\right)  \right\vert
}\int_{B_{r}\left(  \overline{x}\right)  }\frac{c_{1}}{\left(  k-1\right)
r}\mathcal{M}f\left(  x_{0}\right)  dx=\frac{c}{k-1}\mathcal{M}f\left(
x_{0}\right)  \leq\frac{c^{\prime}}{k}\mathcal{M}f\left(  x_{0}\right)
\]
since for, $k\geq2$, $k$ and $k-1$ are equivalent. So we are done.
\end{proof}

\bigskip

\begin{proof}
[Proof of Theorem \ref{Thm Krylov main step}]By Theorem
\ref{Thm repr formula2} we know that for every $u\in C_{0}^{\infty}\left(
\mathbb{R}^{n}\right)  $ we can write:
\begin{align}
X_{i}X_{j}u\left(  x\right)   &  =-\lim_{\varepsilon\rightarrow0^{+}}%
\lim_{R\rightarrow+\infty}\int_{\mathbb{R}^{n}}K_{\varepsilon,R}^{A}\left(
x,y\right)  L_{A}u\left(  y\right)  dy+c_{ij}^{A}L_{A}u\left(  x\right)
\nonumber\\
&  \equiv-\lim_{\varepsilon\rightarrow0^{+}}\lim_{R\rightarrow+\infty
}T_{\varepsilon,R}^{A}\left(  L_{A}u\right)  \left(  x\right)  +c_{ij}%
^{A}L_{A}u\left(  x\right)  , \label{repr1}%
\end{align}
where the operator
\[
T^{A}f\equiv\lim_{\varepsilon\rightarrow0^{+}}\lim_{R\rightarrow+\infty
}T_{\varepsilon,R}^{A}\left(  f\right)
\]
with kernel%
\[
K_{A}\left(  x;y\right)  =X_{i}^{x}X_{j}^{x}\Gamma_{A}\left(  x;y\right)
\]
satisfies the assumptions of Theorem \ref{Thm sharp} by Theorems
\ref{Thm Lp constant} and \ref{Thm prop sing kern}. Therefore by Theorem
\ref{Thm sharp} we have, with the same notation%
\begin{align}
&  \frac{1}{\left\vert B_{r}\left(  \overline{x}\right)  \right\vert }%
\int_{B_{r}}\left\vert T^{A}\left(  L_{A}u\right)  \left(  x\right)
-T^{A}\left(  L_{A}u\right)  _{B_{r}\left(  \overline{x}\right)  }\right\vert
dx\nonumber\\
&  \leq c\left\{  \frac{1}{k}\mathcal{M}\left(  L_{A}u\right)  \left(
x_{0}\right)  +k^{\frac{q}{p}}\left(  \frac{1}{\left\vert B_{kr}\left(
\overline{x}\right)  \right\vert }\int_{B_{kr}\left(  \overline{x}\right)
}\left\vert L_{A}u\left(  x\right)  \right\vert ^{p}dx\right)  ^{1/p}\right\}
. \label{K1}%
\end{align}
Also, if $c_{ij}^{A}$ are the constants appearing in (\ref{repr1}),
\begin{align}
&  \frac{1}{\left\vert B_{r}\left(  \overline{x}\right)  \right\vert }%
\int_{B_{r}\left(  \overline{x}\right)  }\left\vert \left(  c_{ij}^{A}%
L_{A}u\right)  \left(  x\right)  -\left(  c_{ij}^{A}L_{A}u\right)  _{B_{r}%
}\right\vert dx\nonumber\\
&  \leq2\frac{1}{\left\vert B_{r}\left(  \overline{x}\right)  \right\vert
}\int_{B_{r}\left(  \overline{x}\right)  }\left\vert \left(  c_{ij}^{A}%
L_{A}u\right)  \left(  x\right)  \right\vert dx\nonumber\\
&  \leq c\frac{1}{\left\vert B_{r}\left(  \overline{x}\right)  \right\vert
}\int_{B_{r}\left(  \overline{x}\right)  }\left\vert L_{A}u\left(  x\right)
\right\vert dx\leq c\left(  \frac{1}{\left\vert B_{r}\left(  \overline
{x}\right)  \right\vert }\int_{B_{r}\left(  \overline{x}\right)  }\left\vert
L_{A}u\left(  x\right)  \right\vert ^{p}dx\right)  ^{1/p}\nonumber\\
&  \leq ck^{\frac{q}{p}}\left(  \frac{1}{\left\vert B_{kr}\left(  \overline
{x}\right)  \right\vert }\int_{B_{kr\left(  \overline{x}\right)  }}\left\vert
L_{A}u\left(  x\right)  \right\vert ^{p}dx\right)  ^{1/p} \label{molt part}%
\end{align}
where in the last inequality we have exploited (\ref{growth balls}) and, by
Theorem \ref{Thm repr formula2}, $c$ depends on $A$ only through $\nu.$

Finally, since, for every ball $B_{r}\left(  \overline{x}\right)  \ni x_{0}$
\[
\frac{1}{\left\vert B_{r}\left(  \overline{x}\right)  \right\vert }\int%
_{B_{r}\left(  \overline{x}\right)  }\left\vert L_{A}u\left(  x\right)
\right\vert dx\leq c\sum_{h,l=1}^{m}\frac{1}{\left\vert B_{r}\left(
\overline{x}\right)  \right\vert }\int_{B_{r}\left(  \overline{x}\right)
}\left\vert X_{h}X_{l}u\left(  x\right)  \right\vert dx,
\]
we have%
\begin{equation}
\mathcal{M}\left(  L_{A}u\right)  \left(  x_{0}\right)  \leq c\sum_{h,l=1}%
^{m}\mathcal{M}\left(  X_{h}X_{l}u\right)  \left(  x_{0}\right)  \label{K2}%
\end{equation}
with $c=c\left(  \nu\right)  $. Combining (\ref{repr1}) with (\ref{K1}),
(\ref{molt part}) and (\ref{K2}), we get (\ref{Krylov}).
\end{proof}

\section{$L^{p}$ estimates for operators \newline with $VMO$ coefficients}

\label{sec:estimatesVMO}

In this section we come at last to consider operators (\ref{operators}), i.e.%
\[
Lu=\sum_{i,j=1}^{m}a_{ij}\left(  x\right)  X_{i}X_{j}u
\]
with \emph{variable }coefficients $a_{ij}\left(  x\right)  $, satisfying
assumptions (H.1)-(H.5).

\subsection{Estimates on the mean oscillation of $X_{i}X_{j}u$ \newline in
terms of $Lu$}

The first step is the proof of a control on the mean oscillation of
$X_{i}X_{j}u$ for a smooth function $u$ with small support, in terms of $Lu$.
Here we combine Theorem \ref{Thm Krylov main step} with the $VMO$ assumption
on the coefficients $a_{ij}$, following as close as possible Krylov' technique
in \cite{K}.

\begin{theorem}
\label{Thm 2}Let $p,\alpha,\beta\in(1,\infty)$ with $\alpha^{-1}+\beta^{-1}%
=1$. There exists $c>0$, depending on $\left\{  X_{1},...,X_{m}\right\}
,p,\alpha,\nu$, such that for every $R,r>0$, $x^{\ast},x_{0},\overline{x}%
\in\mathbb{R}^{n}$ with $x_{0}\in B_{r}\left(  \overline{x}\right)  $ and
$u\in C_{0}^{\infty}(B_{R}\left(  x^{\ast}\right)  )$, and every $k\geq M$
(with $M$ the number in Theorem \ref{Thm sharp})%
\begin{align*}
&  \frac{1}{\left\vert B_{r}\left(  \overline{x}\right)  \right\vert }%
\int_{B_{r}\left(  \overline{x}\right)  }\left\vert X_{i}X_{j}u\left(
x\right)  -\left(  X_{i}X_{j}u\right)  _{B_{r}}\right\vert dx\\
&  \leqslant\frac{c}{k}\sum_{h,l=1}^{m}\mathcal{M}(X_{h}X_{l}u)\left(
x_{0}\right)  +ck^{q/p}\left(  \mathcal{M}\left(  |Lu|^{p}\right)  \left(
x_{0}\right)  \right)  ^{1/p}\ \\
&  +ck^{q/p}\left(  a_{R}^{\sharp}\right)  ^{1/p\beta}\sum_{h,l=1}^{m}\left(
\mathcal{M}(\left\vert X_{h}X_{l}u\right\vert ^{p\alpha})\left(  x_{0}\right)
\right)  ^{1/p\alpha}%
\end{align*}
for $i,j=1,2,...,m$. Recall that $a_{R}^{\sharp}$ has been defined in
(\ref{mod VMO coeff}).
\end{theorem}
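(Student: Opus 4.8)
The plan is to freeze the operator by writing $Lu$ as a perturbation of a constant-coefficient operator $L_{A}$ and then invoke Theorem \ref{Thm Krylov main step}. Concretely, since $u \in C_0^\infty(B_R(x^*))$, I would choose the constant matrix $A = (a_{ij,R})$, the average of the coefficients over the ball $B_R(x^*)$ (or over $B_{kr}(\overline{x})$ intersected with the relevant region — a minor point to pin down), so that $L_A u = Lu + \sum_{i,j}(a_{ij,R} - a_{ij}(x))\,X_i X_j u$. Applying Theorem \ref{Thm Krylov main step} with this $A$ to the function $u$ gives
\[
\frac{1}{|B_r(\overline{x})|}\int_{B_r(\overline{x})}\bigl|X_iX_ju - (X_iX_ju)_{B_r}\bigr|\,dx
\leq \frac{c}{k}\sum_{h,l}\mathcal{M}(X_hX_lu)(x_0) + ck^{q/p}\Bigl(\frac{1}{|B_{kr}(\overline{x})|}\int_{B_{kr}(\overline{x})}|L_Au|^p\,dx\Bigr)^{1/p},
\]
and the main work is to estimate the $L^p$ average of $L_A u$ over $B_{kr}(\overline{x})$ by the quantities in the claimed bound. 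Note the constant $c$ from Theorem \ref{Thm Krylov main step} depends on $A$ only through $\nu$, which is crucial since $A$ here varies with the ball.

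The key step is therefore to bound
\[
\Bigl(\frac{1}{|B_{kr}(\overline{x})|}\int_{B_{kr}(\overline{x})}\Bigl|\sum_{i,j}(a_{ij,R}-a_{ij}(x))X_iX_ju(x)\Bigr|^p dx\Bigr)^{1/p}.
\]
Here I would apply Hölder's inequality with exponents $\alpha$ and $\beta$ (so $\alpha^{-1}+\beta^{-1}=1$) inside the integral, after raising to the $p$-th power: for each fixed $i,j$,
\[
\frac{1}{|B_{kr}|}\int_{B_{kr}}|a_{ij,R}-a_{ij}|^p |X_iX_ju|^p\,dx
\leq \Bigl(\frac{1}{|B_{kr}|}\int_{B_{kr}}|a_{ij,R}-a_{ij}|^{p\beta}\Bigr)^{1/\beta}\Bigl(\frac{1}{|B_{kr}|}\int_{B_{kr}}|X_iX_ju|^{p\alpha}\Bigr)^{1/\alpha}.
\]
The second factor is dominated by $\bigl(\mathcal{M}(|X_iX_ju|^{p\alpha})(x_0)\bigr)^{1/\alpha}$ since $x_0 \in B_r(\overline{x}) \subset B_{kr}(\overline{x})$ (using that $\mathcal M$ is the uncentered maximal function over all balls containing $x_0$). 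For the first factor, the oscillation integral of the coefficients: since $|a_{ij,R}-a_{ij}| \le 2\|a_{ij}\|_\infty \le 2\nu^{-1}$, I can write $|a_{ij,R}-a_{ij}|^{p\beta} \le (2\nu^{-1})^{p\beta-1}|a_{ij,R}-a_{ij}|$, and then — provided $B_{kr}(\overline{x}) \subset B_{CR}(x^*)$ for a controlled constant $C$, which holds because $u$ is supported in $B_R(x^*)$ and the nonzero contribution forces geometric constraints (or one simply restricts attention to $kr \lesssim R$, the only relevant regime) — the average oscillation $\frac{1}{|B_{kr}|}\int_{B_{kr}}|a_{ij}-a_{ij,R}|\,dx$ is bounded, up to the doubling constant and up to comparing the average $a_{ij,R}$ over $B_R$ with the average $(a_{ij})_{B_{kr}}$, by $c\,\eta_{a_{ij}}(CR) \le c\,a_{CR}^\sharp \le c\,a_R^\sharp$ by monotonicity of the $VMO$ modulus. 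Taking $p\beta$-th roots gives the factor $(a_R^\sharp)^{1/p\beta}$.

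Assembling: the contribution of the perturbation to $k^{q/p}\bigl(\tfrac{1}{|B_{kr}|}\int_{B_{kr}}|L_Au|^p\bigr)^{1/p}$ splits as $k^{q/p}$ times $\bigl(\tfrac{1}{|B_{kr}|}\int|Lu|^p\bigr)^{1/p}$ — bounded by $k^{q/p}(\mathcal{M}(|Lu|^p)(x_0))^{1/p}$ — plus $k^{q/p}(a_R^\sharp)^{1/p\beta}\sum_{h,l}(\mathcal{M}(|X_hX_lu|^{p\alpha})(x_0))^{1/p\alpha}$, which are exactly the last two terms of the claim; the first term $\tfrac{c}{k}\sum\mathcal{M}(X_hX_lu)(x_0)$ comes directly from Theorem \ref{Thm Krylov main step}. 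The main obstacle I anticipate is the bookkeeping that lets one replace $a_{ij,R}$ (an average over $B_R(x^*)$) with the local average $(a_{ij})_{B_{kr}(\overline{x})}$ inside the oscillation estimate and simultaneously guarantee $B_{kr}(\overline{x})$ sits inside a ball of radius comparable to $R$: one must use that $u$ and hence $L_A u$ are supported in $B_R(x^*)$, so the integral over $B_{kr}(\overline{x})$ only sees the part of $B_{kr}(\overline{x})$ meeting $B_R(x^*)$, and on that intersection triangle-type inequalities for $d_X$ together with the doubling property control the ratio of ball volumes and the passage between the two averages (a standard $BMO$-telescoping argument using \eqref{bounds on balls} and \eqref{doubling}). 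Everything else is Hölder plus the elementary bound $|a_{ij,R}-a_{ij}|\le 2\nu^{-1}$ and monotonicity of $\eta_{a_{ij}}$.
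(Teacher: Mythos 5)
Your overall strategy is the paper's: apply Theorem \ref{Thm Krylov main step} with a frozen constant matrix $A$, split $L_Au=Lu+(L_A-L)u$, use H\"older with exponents $\alpha,\beta$, reduce $|\overline a_{ij}-a_{ij}|^{p\beta}$ to the first power via the $L^\infty$ bound $2\nu^{-1}$, and invoke the $VMO$ modulus. But there is a genuine gap in how you choose the frozen matrix. You fix $A$ as the average of the coefficients over $B_R(x^*)$ in all cases and propose to reconcile it with the local average $(a_{ij})_{B_{kr}(\overline x)}$ by a ``standard $BMO$-telescoping argument.'' In the regime $kr\ll R$ (which is unavoidable: in the subsequent proof of Theorem \ref{Thm local small Lp} one takes the supremum over all balls $B_r(\overline x)\ni x_0$, so $r$ is arbitrarily small) this reconciliation is not free: comparing $(a_{ij})_{B_{kr}(\overline x)}$ with $(a_{ij})_{B_R(x^*)}$ in one step costs a volume ratio $|B_R(x^*)|/|B_{kr}(\overline x)|$, and telescoping through dyadic scales costs a factor of order $\log(R/(kr))$; both are unbounded as $r\to0$. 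Either loss destroys the estimate, since the whole point downstream is to make $ck^{q/p}(a_R^\sharp)^{1/p\beta}$ smaller than $1/2$ by shrinking $R$. A secondary error: your claimed inequality $a_{CR}^\sharp\le a_R^\sharp$ for $C>1$ has the monotonicity backwards, since $\eta_f(r)$ is a supremum over $\rho\le r$ and hence nondecreasing in $r$.

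The fix, which is what the paper does, is to let the frozen matrix depend on the ball: take $\overline a_{ij}=(a_{ij})_{B_R(x^*)}$ when $kr\ge R$ (then the integration set $B_{kr}(\overline x)\cap B_R(x^*)$ sits inside $B_R(x^*)$, so the oscillation integral is at most $c\,|B_R(x^*)|\,a_R^\sharp\le c\,|B_{kr}(\overline x)|\,a_R^\sharp$ by doubling, the two balls intersecting), and $\overline a_{ij}=(a_{ij})_{B_{kr}(\overline x)}$ when $kr\le R$ (then the oscillation integral is at most $c\,|B_{kr}(\overline x)|\,\eta_{a_{ij}}(kr)\le c\,|B_{kr}(\overline x)|\,a_R^\sharp$ precisely because $\eta$ is nondecreasing). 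No comparison of averages at different scales is ever needed. This adaptive choice is legitimate exactly because the constant in Theorem \ref{Thm Krylov main step} depends on $A$ only through $\nu$ --- a point you correctly flagged but did not exploit. The rest of your argument (H\"older, the reduction $|\overline a_{ij}-a_{ij}|^{p\beta}\le(2\nu^{-1})^{p\beta-1}|\overline a_{ij}-a_{ij}|$, and the domination of the averages over $B_{kr}(\overline x)$ by maximal functions at $x_0$) matches the paper.
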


\begin{proof}
We can assume that
\[
B_{r}\left(  \overline{x}\right)  \cap B_{R}\left(  x^{\ast}\right)
\neq\varnothing
\]
because otherwise%
\[
\int_{B_{r}\left(  \overline{x}\right)  }\left\vert X_{i}X_{j}u\left(
x\right)  -\left(  X_{i}X_{j}u\right)  _{B_{r}}\right\vert dx=0,
\]
and there is nothing to prove.

By Theorem \ref{Thm Krylov main step} we can write, for every constant matrix
$A$ satisfying (H.4) and every $k$ large enough:%
\begin{align}
&  \frac{1}{\left\vert B_{r}\left(  \overline{x}\right)  \right\vert }%
\int_{B_{r}\left(  \overline{x}\right)  }|X_{i}X_{j}u\left(  x\right)
-(X_{i}X_{j}u)_{B_{r}}|dx\nonumber\\
&  \leqslant\frac{c}{k}\sum_{i,j=1}^{q}\mathcal{M}(X_{i}X_{j}u)\left(
x_{0}\right)  +ck^{q/p}\left(  \frac{1}{|B_{kr}\left(  \overline{x}\right)
|}\int_{B_{kr}\left(  \overline{x}\right)  }|L_{A}u\left(  x\right)
|^{p}dx\right)  ^{1/p}. \label{proof 2}%
\end{align}
To bound the right hand side, we can write%
\[
\left\Vert L_{A}u\right\Vert _{L^{p}\left(  B_{kr}\left(  \overline{x}\right)
\right)  }\leqslant\left\Vert Lu\right\Vert _{L^{p}\left(  B_{kr}\left(
\overline{x}\right)  \right)  }+\left\Vert L_{A}u-Lu\right\Vert _{L^{p}\left(
B_{kr}\left(  \overline{x}\right)  \right)  }%
\]
and exploit the fact that, since $x_{0}\in B_{r}\left(  \overline{x}\right)
$
\[
\left(  \frac{1}{|B_{kr}\left(  \overline{x}\right)  |}\int_{B_{kr}\left(
\overline{x}\right)  }|Lu\left(  x\right)  |^{p}dx\right)  ^{1/p}%
\leqslant\left(  \mathcal{M}(|Lu|^{p})\left(  x_{0}\right)  \right)  ^{1/p}%
\]
so that%
\begin{align}
&  \frac{1}{\left\vert B_{r}\left(  \overline{x}\right)  \right\vert }%
\int_{B_{r}\left(  \overline{x}\right)  }|X_{i}X_{j}u\left(  x\right)
-(X_{i}X_{j}u)_{B_{r}}|dx\leqslant\frac{c}{k}\sum_{i,j=1}^{q}\mathcal{M}%
(X_{i}X_{j}u)\left(  x_{0}\right) \nonumber\\
&  +ck^{q/p}\left\{  \left(  \mathcal{M}(|Lu|^{p})\left(  x_{0}\right)
\right)  ^{1/p}+\frac{1}{|B_{kr}\left(  \overline{x}\right)  |^{1/p}%
}\left\Vert L_{A}u-Lu\right\Vert _{L^{p}\left(  B_{kr}\left(  \overline
{x}\right)  \right)  }\right\}  . \label{proof 3}%
\end{align}

Next, let us write, denoting by $\left(  \overline{a}_{ij}\right)  $ the
entries of the constant matrix $A$,%
\begin{align}
&  \int_{B_{kr}\left(  \overline{x}\right)  }|L_{A}u(x)-Lu(x)|^{p}%
dx\nonumber\\
&  \leqslant c\sum_{i,j=1}^{m}\left(  \int_{B_{kr}\left(  \overline{x}\right)
\cap B_{R}\left(  x^{\ast}\right)  }|\overline{a}_{ij}-a_{ij}(x)|^{p\beta
}dx\right)  ^{1/\beta}\left(  \int_{B_{kr}\left(  \overline{x}\right)  \cap
B_{R}\left(  x^{\ast}\right)  }|X_{i}X_{j}u|^{p\alpha}dx\right)  ^{1/\alpha
}\ . \label{proof 4}%
\end{align}
Since the coefficients $\overline{a}_{ij},a_{ij}$ are bounded by $\nu^{-1}$ we
have%
\begin{equation}
\int_{B_{kr}\left(  \overline{x}\right)  \cap B_{R}\left(  x^{\ast}\right)
}|\overline{a}_{ij}-a_{ij}(x)|^{p\beta}dx\leqslant\left(  2\nu^{-1}\right)
^{\beta p-1}\int_{B_{kr}\left(  \overline{x}\right)  \cap B_{R}\left(
x^{\ast}\right)  }\left\vert a_{ij}\left(  x\right)  -\overline{a}%
_{ij}\right\vert dx. \label{proof 5}%
\end{equation}
Recalling that all our estimates hold for every constant matrix $A$ in the
fixed ellipticity class (\ref{ellipticity}), we now choose a particular
constant matrix $\left\{  \overline{a}_{ij}\right\}  $, depending on $r,k,R$:%
\[
\overline{a}_{ij}=\left\{
\begin{array}
[c]{ll}%
(a_{ij})_{B_{R}\left(  x^{\ast}\right)  } & \text{if }kr\geq R\\
(a_{ij})_{B_{kr}\left(  \overline{x}\right)  } & \text{if }kr\leqslant R
\end{array}
\right.
\]
and with this definition we easily get%
\[
\int_{B_{kr}\left(  \overline{x}\right)  \cap B_{R}\left(  x^{\ast}\right)
}\left\vert a_{ij}\left(  x\right)  -\overline{a}_{ij}\right\vert
dx\leqslant\left\{
\begin{array}
[c]{ll}%
c\left\vert B_{R}\left(  x^{\ast}\right)  \right\vert a_{R}^{\sharp} &
\text{if }kr\geq R\\
c\left\vert B_{kr}\left(  \overline{x}\right)  \right\vert a_{R}^{\sharp} &
\text{if }kr\leqslant R
\end{array}
\right.
\]
Next, since $B_{r}\left(  \overline{x}\right)  $ and $B_{R}\left(  x^{\ast
}\right)  \ $intersect, in the case $kr\geq R$ we can write, by the doubling
condition,%
\[
\left\vert B_{R}\left(  x^{\ast}\right)  \right\vert \leq\left\vert
B_{kr}\left(  x^{\ast}\right)  \right\vert \leq c\left\vert B_{kr}\left(
\overline{x}\right)  \right\vert
\]
so that in any case%
\[
\int_{B_{kr}\left(  \overline{x}\right)  \cap B_{R}\left(  x^{\ast}\right)
}\left\vert a_{ij}\left(  x\right)  -\overline{a}_{ij}\right\vert dx\leqslant
c\left\vert B_{kr}\left(  \overline{x}\right)  \right\vert a_{R}^{\sharp}%
\]
and by (\ref{proof 3}), (\ref{proof 4}), (\ref{proof 5}), we have%
\begin{align}
&  \frac{1}{\left\vert B_{r}\left(  \overline{x}\right)  \right\vert }%
\int_{B_{r}\left(  \overline{x}\right)  }\left\vert X_{i}X_{j}u\left(
x\right)  -\left(  X_{i}X_{j}u\right)  _{B_{r}}\right\vert dx\nonumber\\
&  \leqslant\frac{c}{k}\sum_{i,j=1}^{q}\mathcal{M}(X_{i}X_{j}u)\left(
x_{0}\right)  +ck^{q/p}\left(  \mathcal{M}(|Lu|^{p})\left(  x_{0}\right)
\right)  ^{1/p}\nonumber\\
&  +ck^{q/p}\frac{1}{|B_{kr}\left(  \overline{x}\right)  |^{1/p}}\sum
_{i,j=1}^{m}\left(  \left\vert B_{kr}\left(  \overline{x}\right)  \right\vert
a_{R}^{\sharp}\right)  ^{1/p\beta}\left(  \int_{B_{kr}\left(  \overline
{x}\right)  \cap B_{R}\left(  x^{\ast}\right)  }|X_{i}X_{j}u|^{p\alpha
}dx\right)  ^{1/p\alpha}. \label{proof 7}%
\end{align}
Note that the last line is bounded by%
\begin{align*}
&  ck^{q/p}\left(  a_{R}^{\sharp}\right)  ^{1/p\beta}\sum_{i,j=1}^{m}\left(
\frac{1}{|B_{kr}\left(  \overline{x}\right)  |}\int_{B_{kr}\left(
\overline{x}\right)  }|X_{i}X_{j}u|^{p\alpha}dx\right)  ^{1/p\alpha}\\
&  \leqslant ck^{q/p}\left(  a_{R}^{\sharp}\right)  ^{1/p\beta}\sum
_{i,j=1}^{m}\left(  \mathcal{M}(\left\vert X_{i}X_{j}u\right\vert ^{p\alpha
})\left(  x_{0}\right)  \right)  ^{1/p\alpha}%
\end{align*}
with $c$ also depending on $\nu.$ In the last inequality we have exploited the
fact that $x_{0}\in B_{kr}\left(  \overline{x}\right)  $. Inserting this bound
into (\ref{proof 7}), we finally get the statement of Theorem \ref{Thm 2}.
\end{proof}

\subsection{Local $L^{p}$ estimates}

\begin{theorem}
\label{Thm local small Lp}For every $p\in\left(  1,\infty\right)  $ there
exist $R,c>0$ such that for every ball $B_{R}\left(  x^{\ast}\right)  $ and
$u\in C_{0}^{\infty}\left(  B_{R}\left(  x^{\ast}\right)  \right)  $
\begin{equation}
\sum_{h,l=1}^{m}\left\Vert X_{h}X_{l}u\right\Vert _{L^{p}\left(  B_{R}\left(
x^{\ast}\right)  \right)  }\leq c\left\Vert Lu\right\Vert _{L^{p}\left(
B_{R}\left(  x^{\ast}\right)  \right)  }. \label{final local}%
\end{equation}
The constant $c$ in (\ref{final local}) depends on the numbers $p,\nu,$ the
vector fields $\left\{  X_{1},...,X_{m}\right\}  $ and the function $a_{\cdot
}^{\sharp}$, but does does not depend on $R$ or $x^{\ast}.$
\end{theorem}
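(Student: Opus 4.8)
The plan is to combine the pointwise mean-oscillation estimate of Theorem \ref{Thm 2} with the Fefferman--Stein inequality (Theorem \ref{Thm Fefferman Stein}) and the Hardy--Littlewood maximal inequality (Theorem \ref{Thm Maximal}), absorbing the ``bad'' terms into the left-hand side by choosing the parameters $k$ and $R$ suitably small. First I would fix $p\in(1,\infty)$ and choose auxiliary exponents $\alpha,\beta\in(1,\infty)$ with $\alpha^{-1}+\beta^{-1}=1$ and $\alpha$ close enough to $1$ that $p\alpha<\infty$ is still an admissible exponent (any $\alpha>1$ works, since we only need $p\alpha\in(1,\infty)$); this is needed so that the maximal operator $\mathcal{M}$ is bounded on $L^{p/(p\alpha)\cdot(\dots)}$ — more precisely so that $f\mapsto(\mathcal{M}(|f|^{p\alpha}))^{1/(p\alpha)}$ is bounded on $L^p$, which requires $p/(p\alpha)=1/\alpha$, i.e. we apply Theorem \ref{Thm Maximal} with exponent $\alpha>1$ to the function $|X_hX_lu|^{p\alpha}\in L^{1/\alpha\cdot p\alpha}=L^p$... wait, the clean statement: $\|(\mathcal{M}(|g|^{s}))^{1/s}\|_{L^p}=\|\mathcal{M}(|g|^s)\|_{L^{p/s}}^{1/s}\le c\||g|^s\|_{L^{p/s}}^{1/s}=c\|g\|_{L^p}$ provided $p/s>1$; with $s=p\alpha$ this needs $1/\alpha>1$, which fails. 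So instead one applies $\mathcal{M}$ with exponent $p$ itself to $|Lu|^p$ (giving $L^1\to$ weak, not enough) — the correct reading is that $(\mathcal{M}(|Lu|^p))^{1/p}$ has $\|\cdot\|_{L^p}=\|\mathcal{M}(|Lu|^p)\|_{L^1}^{1/p}$, which is NOT controlled; rather one must take $p$ in Theorem \ref{Thm 2} to be a \emph{smaller} exponent $p_0<p$ and apply $\mathcal{M}$-boundedness on $L^{p/p_0}$. I would therefore run Theorem \ref{Thm 2} with a fixed exponent $p_0\in(1,p)$ and with $\alpha$ chosen so that $p_0\alpha<p$ as well; then all three maximal-function terms, raised appropriately, are controlled in $L^p$.

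\textbf{Main steps.} Concretely: (1) Apply Theorem \ref{Thm 2} at the exponent $p_0$; taking the supremum over balls $B_r(\overline x)\ni x_0$ on the left gives a pointwise bound for the sharp maximal function $(X_iX_ju)^{\#}(x_0)$ by
$\tfrac{c}{k}\sum_{h,l}\mathcal{M}(X_hX_lu)(x_0)+ck^{q/p_0}(\mathcal{M}(|Lu|^{p_0})(x_0))^{1/p_0}+ck^{q/p_0}(a_R^{\sharp})^{1/(p_0\beta)}\sum_{h,l}(\mathcal{M}(|X_hX_lu|^{p_0\alpha})(x_0))^{1/(p_0\alpha)}$.
(2) Since $u\in C_0^\infty(B_R(x^\ast))$, each $X_iX_ju$ is in $L^\infty$ with bounded support, so Theorem \ref{Thm Fefferman Stein} applies: $\|X_iX_ju\|_{L^p(\mathbb R^n)}\le C_p\|(X_iX_ju)^{\#}\|_{L^p(\mathbb R^n)}$. (3) Take $L^p$ norms of the pointwise bound from (1) and invoke Theorem \ref{Thm Maximal}: for the first term we get $\tfrac{c}{k}\sum_{h,l}\|X_hX_lu\|_{L^p}$; for the second, $ck^{q/p_0}\|\mathcal{M}(|Lu|^{p_0})\|_{L^{p/p_0}}^{1/p_0}\le ck^{q/p_0}\|Lu\|_{L^p}$ (uses $p/p_0>1$); for the third, $ck^{q/p_0}(a_R^{\sharp})^{1/(p_0\beta)}\sum_{h,l}\|X_hX_lu\|_{L^p}$ (uses $p/(p_0\alpha)>1$). (4) Sum over $i,j$ and set $S:=\sum_{h,l}\|X_hX_lu\|_{L^p(\mathbb R^n)}$, which is finite since $u\in C_0^\infty$. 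We obtain
$S\le c\bigl(\tfrac{1}{k}+k^{q/p_0}(a_R^{\sharp})^{1/(p_0\beta)}\bigr)S+ck^{q/p_0}\|Lu\|_{L^p}$.
(5) Now first fix $k$ large enough that $c/k\le 1/4$; then, with $k$ fixed, use that $a_R^{\sharp}\to 0$ as $R\to 0^+$ (the $VMO$ hypothesis (H.5)) to pick $R>0$ small enough that $ck^{q/p_0}(a_R^{\sharp})^{1/(p_0\beta)}\le 1/4$. For this $R$ and any smaller radius, we can absorb $\tfrac12 S$ into the left side, yielding $S\le c'\|Lu\|_{L^p(\mathbb R^n)}=c'\|Lu\|_{L^p(B_R(x^\ast))}$ (the last equality because $\operatorname{sprt}u\subset B_R(x^\ast)$). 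The constant $c'$ depends only on $p,\nu$, the vector fields, and the function $a_{\cdot}^{\sharp}$ (through the threshold $R$), and crucially is independent of $x^\ast$ because $a_R^{\sharp}$ is defined by a supremum over all centers; this gives (\ref{final local}).

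\textbf{Main obstacle.} The delicate point is the absorption argument in step (5): one must order the choices correctly — $k$ first (to kill the $c/k$ term by pure size), then $R$ (to kill the $VMO$ term, which carries the dangerous factor $k^{q/p_0}$ that blows up as $k\to\infty$). If one tried to choose $R$ first this would fail. A secondary technical care is the choice of the auxiliary exponents: one needs a single $p_0<p$ and $\alpha>1$ with $p_0\alpha<p$ so that both $\mathcal{M}$-applications land on $L^{p/p_0}$ and $L^{p/(p_0\alpha)}$ with exponents $>1$; such a choice always exists (e.g. $p_0=(1+p)/2$ if $p>1$ is not too large, or more robustly $p_0$ close to $1$ and $\alpha$ close to $1$). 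One must also confirm that the finiteness of $S$ — needed to legitimately move a term to the left — is guaranteed, which it is since $u\in C_0^\infty(\mathbb R^n)$ makes every $X_iX_ju$ smooth with compact support, hence in every $L^p$.
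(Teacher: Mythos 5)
Your proposal is correct and follows essentially the same route as the paper's own proof: apply Theorem \ref{Thm 2} at an auxiliary exponent $p_{1}<p$ with $\alpha p_{1}<p$ (the paper calls it $p_{1}$, you call it $p_{0}$), pass to the sharp maximal function, use Fefferman--Stein and the $L^{p/p_{1}}$-, $L^{p/(p_{1}\alpha)}$-boundedness of $\mathcal{M}$, and then absorb by choosing $k$ large first and $R$ small second. Your identification of the need for the smaller exponent and of the correct order of the two parameter choices matches the paper exactly.
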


\begin{proof}
Let $u\in C_{0}^{\infty}(B_{R}\left(  x^{\ast}\right)  )$, with $R$ to be
chosen later, then for every $x_{0}$, applying Theorem \ref{Thm 2} and taking
the supremum over all the balls $B_{r}\left(  \overline{x}\right)  $
containing $x_{0}$ we get, for every $k\geq M$,
\begin{align}
\left(  X_{i}X_{j}u\right)  ^{\#}\left(  x_{0}\right)   &  \leqslant\frac
{c}{k}\sum_{h,l=1}^{m}\mathcal{M}(X_{h}X_{l}u)\left(  x_{0}\right)
+ck^{q/p}\left(  \mathcal{M}\left(  |Lu|^{p}\right)  \left(  x_{0}\right)
\right)  ^{1/p}\ \nonumber\\
&  +ck^{q/p}\left(  a_{R}^{\sharp}\right)  ^{1/p\beta}\sum_{h,l=1}^{m}\left(
\mathcal{M}(\left\vert X_{h}X_{l}u\right\vert ^{p\alpha})\left(  x_{0}\right)
\right)  ^{1/p\alpha}. \label{Lp1}%
\end{align}
Note that this inequality holds for every $p,\alpha,\beta\in(1,\infty)$ with
$\alpha^{-1}+\beta^{-1}=1$, with the constants $c$ in (\ref{Lp1}) also
depending on these three numbers. For a fixed $p\in(1,\infty)$, let us choose
three numbers $p_{1},\alpha,\beta\in(1,\infty)$ with $\alpha^{-1}+\beta
^{-1}=1$ and $\alpha p_{1}<p$. Let us rewrite (\ref{Lp1}) with $p$ replaced by
$p_{1}\ $and then take $L^{p}\left(  \mathbb{R}^{n}\right)  $ norms of both
sides. We get:%
\begin{align*}
&  \left\Vert \left(  X_{i}X_{j}u\right)  ^{\#}\right\Vert _{L^{p}\left(
\mathbb{R}^{n}\right)  }\leq\frac{c}{k}\sum_{h,l=1}^{m}\left\Vert
\mathcal{M}(X_{h}X_{l}u)\right\Vert _{L^{p}\left(  \mathbb{R}^{n}\right)  }\\
&  +ck^{q/p_{1}}\left(  \int_{\mathbb{R}^{n}}\left\vert \mathcal{M}\left(
|Lu|^{p_{1}}\right)  \left(  y\right)  \right\vert ^{p/p_{1}}dy\right)
^{1/p}\\
&  +ck^{q/p_{1}}\left(  a_{R}^{\sharp}\right)  ^{1/p_{1}\beta}\sum_{h,l=1}%
^{m}\left(  \int_{\mathbb{R}^{n}}\left\vert \mathcal{M}(\left\vert X_{h}%
X_{l}u\right\vert ^{p_{1}\alpha})\left(  y\right)  \right\vert ^{p/p_{1}%
\alpha}dy\right)  ^{1/p}\\
&  =\frac{c}{k}\sum_{h,l=1}^{m}\left\Vert \mathcal{M}(X_{h}X_{l}u)\right\Vert
_{L^{p}\left(  \mathbb{R}^{n}\right)  }+ck^{q/p_{1}}\left\Vert \mathcal{M}%
\left(  |Lu|^{p_{1}}\right)  \right\Vert _{L^{p/p_{1}}\left(  \mathbb{R}%
^{n}\right)  }^{1/p_{1}}\\
&  +ck^{q/p_{1}}\left(  a_{R}^{\sharp}\right)  ^{1/p_{1}\beta}\sum_{h,l=1}%
^{m}\left\Vert \mathcal{M}(\left\vert X_{h}X_{l}u\right\vert ^{p_{1}\alpha
}\right\Vert _{L^{p/p_{1}\alpha}\left(  \mathbb{R}^{n}\right)  }%
^{1/p_{1}\alpha}.
\end{align*}
Since $p/p_{1}$ and $p/\left(  p_{1}\alpha\right)  $ belong to $\left(
1,\infty\right)  $, by the $L^{p}$ bound on the maximal operator $\mathcal{M}%
$, Theorem \ref{Thm Maximal}, we get, also recalling that $u\in C_{0}^{\infty
}\left(  B_{R}\left(  x^{\ast}\right)  \right)  ,$%
\begin{align*}
\left\Vert \left(  X_{i}X_{j}u\right)  ^{\#}\right\Vert _{L^{p}\left(
\mathbb{R}^{n}\right)  }  &  \leq\frac{c}{k}\sum_{h,l=1}^{m}\left\Vert
X_{h}X_{l}u\right\Vert _{L^{p}\left(  \mathbb{R}^{n}\right)  }+ck^{q/p_{1}%
}\left\Vert |Lu|^{p_{1}}\right\Vert _{L^{p/p_{1}}\left(  \mathbb{R}%
^{n}\right)  }^{1/p_{1}}\\
&  +ck^{q/p_{1}}\left(  a_{R}^{\sharp}\right)  ^{1/p_{1}\beta}\sum_{h,l=1}%
^{m}\left\Vert \left\vert X_{h}X_{l}u\right\vert ^{p_{1}\alpha}\right\Vert
_{L^{p/p_{1}\alpha}\left(  \mathbb{R}^{n}\right)  }^{1/p_{1}\alpha}\\
&  =\frac{c}{k}\sum_{h,l=1}^{m}\left\Vert X_{h}X_{l}u\right\Vert
_{L^{p}\left(  B_{R}\left(  x^{\ast}\right)  \right)  }+ck^{q/p_{1}}\left\Vert
Lu\right\Vert _{L^{p}\left(  B_{R}\left(  x^{\ast}\right)  \right)  }\\
&  +ck^{q/p_{1}}\left(  a_{R}^{\sharp}\right)  ^{1/p_{1}\beta}\sum_{h,l=1}%
^{m}\left\Vert X_{h}X_{l}u\right\Vert _{L^{p}\left(  B_{R}\left(  x^{\ast
}\right)  \right)  }.
\end{align*}
Note that, since $u\in C_{0}^{\infty}\left(  \mathbb{R}^{n}\right)  $, we are
entitled to apply the Fefferman-Stein inequality (Theorem
\ref{Thm Fefferman Stein}) and write:%
\begin{align*}
&  \left\Vert X_{i}X_{j}u\right\Vert _{L^{p}\left(  B_{R}\left(  x^{\ast
}\right)  \right)  }\leq c\left\{  \frac{1}{k}\sum_{h,l=1}^{m}\left\Vert
X_{h}X_{l}u\right\Vert _{L^{p}\left(  B_{R}\left(  x^{\ast}\right)  \right)
}+\right. \\
&  \left.  +k^{q/p_{1}}\left[  \left\Vert Lu\right\Vert _{L^{p}\left(
B_{R}\left(  x^{\ast}\right)  \right)  }+\left(  a_{R}^{\sharp}\right)
^{1/p_{1}\beta}\sum_{h,l=1}^{m}\left\Vert X_{h}X_{l}u\right\Vert
_{L^{p}\left(  B_{R}\left(  x^{\ast}\right)  \right)  }\right]  \right\}  .
\end{align*}
Recall that by now $p,p_{1},\alpha,\beta$, and therefore the constant $c$, are
fixed, and the above inequality holds for every $k$ large enough. Choosing now
$k$ large enough we can write:%
\begin{align}
&  \sum_{i,j=1}^{m}\left\Vert X_{i}X_{j}u\right\Vert _{L^{p}\left(
B_{R}\left(  x^{\ast}\right)  \right)  }\label{Lp2}\\
&  \leq ck^{q/p_{1}}\left\{  \left\Vert Lu\right\Vert _{L^{p}\left(
B_{R}\left(  x^{\ast}\right)  \right)  }+\left(  a_{R}^{\sharp}\right)
^{1/p_{1}\beta}\sum_{h,l=1}^{m}\left\Vert X_{h}X_{l}u\right\Vert
_{L^{p}\left(  B_{R}\left(  x^{\ast}\right)  \right)  }\right\}  .\nonumber
\end{align}
Recall that in Theorem \ref{Thm 2} the constant $c$ does not depend on $R$,
hence the same is true in (\ref{Lp2}). Since by assumptions (H.5) the function
$a_{R}^{\sharp}$ vanishes as $R\rightarrow0^{+}$, we can finally choose $R$
small enough so that $ck^{q/p_{1}}\left(  a_{R}^{\sharp}\right)
^{1/p_{1}\beta}<1/2$ and conclude that for that small $R$ we have%
\[
\sum_{h,l=1}^{m}\left\Vert X_{h}X_{l}u\right\Vert _{L^{p}\left(  B_{R}\left(
x^{\ast}\right)  \right)  }\leq c\left\Vert Lu\right\Vert _{L^{p}\left(
B_{R}\left(  x^{\ast}\right)  \right)  }.
\]

\end{proof}

\subsection{Global $L^{p}$ estimates}

The next step consists in deriving the following version of global a priori estimates:

\begin{theorem}
\label{Thm stima globale ver1}For every $p\in\left(  1,\infty\right)  $ there
exists $c>0$, depending on numbers $p,\nu,$ the vector fields $\left\{
X_{1},...,X_{m}\right\}  $ and the function $R\mapsto a_{R}^{\sharp}$, such
that for every $u\in C_{0}^{\infty}\left(  \mathbb{R}^{n}\right)  $ we have%
\[
\left\Vert u\right\Vert _{W_{X}^{2,p}\left(  \mathbb{R}^{n}\right)  }\leq
c\left\{  \left\Vert Lu\right\Vert _{L^{p}\left(  \mathbb{R}^{n}\right)
}+\left\Vert u\right\Vert _{W_{X}^{1,p}\left(  \mathbb{R}^{n}\right)
}\right\}  .
\]

\end{theorem}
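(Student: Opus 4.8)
The strategy is to upgrade the local estimate of Theorem \ref{Thm local small Lp} to a global one by means of the covering of $\mathbb{R}^{n}$ from Proposition \ref{Prop overlapping} together with the family of uniform cutoff functions from Lemma \ref{Lemma cutoff}. Fix $p\in(1,\infty)$ and let $R>0$ be the (small) radius provided by Theorem \ref{Thm local small Lp}, and let $H>1$ be the constant of Lemma \ref{Lemma cutoff}. Let $\mathcal{B}=\{B_{R}(x_{\alpha})\}_{\alpha\in A}$ be a covering of $\mathbb{R}^{n}$ as in Proposition \ref{Prop overlapping}, so that the dilated family $\{B_{HR}(x_{\alpha})\}_{\alpha}$ has bounded overlapping with overlapping constant $N$ depending only on $H$ and the doubling constant. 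For each $\alpha$ pick the cutoff $\phi^{x_{\alpha}}\in C_{0}^{\infty}(B_{HR}(x_{\alpha}))$ with $\phi^{x_{\alpha}}\geq c_{1}$ on $B_{R}(x_{\alpha})$ and with all $X$-derivatives up to order $2$ bounded by $c_{2}$, uniformly in $\alpha$.

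Given $u\in C_{0}^{\infty}(\mathbb{R}^{n})$, the function $v_{\alpha}:=u\,\phi^{x_{\alpha}}$ lies in $C_{0}^{\infty}(B_{HR}(x_{\alpha}))$, so Theorem \ref{Thm local small Lp} applies to it at the scale $HR$ (after replacing $R$ by $HR$, or simply noting that the constant in Theorem \ref{Thm local small Lp} is independent of the radius once it is small enough — one may shrink $R$ from the start so that $HR$ is still admissible). This gives $\sum_{h,l}\|X_hX_l v_\alpha\|_{L^p(B_{HR}(x_\alpha))}\le c\|Lv_\alpha\|_{L^p(B_{HR}(x_\alpha))}$. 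Now expand $L(u\phi^{x_\alpha})$ by the Leibniz rule: it equals $\phi^{x_\alpha}Lu$ plus terms involving $a_{ij}(X_i u)(X_j\phi^{x_\alpha})$, $a_{ij}(X_j u)(X_i\phi^{x_\alpha})$ and $a_{ij}u(X_iX_j\phi^{x_\alpha})$; using the ellipticity bound $|a_{ij}|\le\nu^{-1}$ and the uniform cutoff bounds, we get
\[
\|L v_\alpha\|_{L^p(B_{HR}(x_\alpha))}\le c\bigl\{\|Lu\|_{L^p(B_{HR}(x_\alpha))}+\|u\|_{W^{1,p}_X(B_{HR}(x_\alpha))}\bigr\}.
\]
On the other hand, on $B_{R}(x_\alpha)$ we have $\phi^{x_\alpha}\ge c_1$, and again by Leibniz $X_iX_j(u\phi^{x_\alpha})=\phi^{x_\alpha}X_iX_j u+(\text{lower order in }u)$, so $|X_iX_j u|\le c_1^{-1}|X_iX_j v_\alpha|+c(|Xu|+|u|)$ pointwise on $B_{R}(x_\alpha)$. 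Combining, $\sum_{h,l}\|X_hX_l u\|_{L^p(B_{R}(x_\alpha))}\le c\{\|Lu\|_{L^p(B_{HR}(x_\alpha))}+\|u\|_{W^{1,p}_X(B_{HR}(x_\alpha))}\}$.

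Finally, raise this inequality to the $p$-th power, sum over $\alpha\in A$, and use on the left that $\{B_R(x_\alpha)\}$ covers $\mathbb{R}^n$ (so $\sum_\alpha\|\cdot\|_{L^p(B_R(x_\alpha))}^p\ge\|\cdot\|_{L^p(\mathbb{R}^n)}^p$) and on the right that $\{B_{HR}(x_\alpha)\}$ has bounded overlapping (so $\sum_\alpha\|g\|_{L^p(B_{HR}(x_\alpha))}^p\le N\|g\|_{L^p(\mathbb{R}^n)}^p$ for any $g\in L^p$). This yields
\[
\sum_{h,l=1}^m\|X_hX_l u\|_{L^p(\mathbb{R}^n)}\le c\bigl\{\|Lu\|_{L^p(\mathbb{R}^n)}+\|u\|_{W^{1,p}_X(\mathbb{R}^n)}\bigr\},
\]
and adding $\|u\|_{W^{1,p}_X(\mathbb{R}^n)}$ to both sides gives the claimed bound $\|u\|_{W^{2,p}_X(\mathbb{R}^n)}\le c\{\|Lu\|_{L^p(\mathbb{R}^n)}+\|u\|_{W^{1,p}_X(\mathbb{R}^n)}\}$.

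The main technical point to watch is the passage from the $\ell^1$-type sum $\sum_{h,l}\|X_hX_l u\|_{L^p(B_R(x_\alpha))}$ to something summable in $\alpha$: one must work with the $p$-th powers and use the elementary inequality $(\sum_{h,l}t_{hl})^p\le C_{m,p}\sum_{h,l}t_{hl}^p$ before summing over $\alpha$, and likewise handle the finite sum over $i,j$ on the left-hand side. Apart from this bookkeeping, the only inputs are the Leibniz rule, the uniform control of the cutoffs (Lemma \ref{Lemma cutoff}), the covering with bounded overlapping (Proposition \ref{Prop overlapping}), the uniform-radius local estimate (Theorem \ref{Thm local small Lp}), and the ellipticity bound $\|a_{ij}\|_\infty\le\nu^{-1}$; no genuinely new difficulty arises. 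The first-order term $\|u\|_{W^{1,p}_X(\mathbb{R}^n)}$ on the right is not yet absorbed at this stage — that will require the interpolation inequality for first-order derivatives, to be invoked in the next step.
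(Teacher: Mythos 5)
Your proof is correct and follows essentially the same route as the paper's: localize with the uniform cutoffs of Lemma \ref{Lemma cutoff}, apply the small-radius estimate of Theorem \ref{Thm local small Lp} to $u\phi^{x_{\alpha}}$ (with $R$ shrunk from the start so that $HR$ is admissible), expand $L\left(u\phi^{x_{\alpha}}\right)$ by the Leibniz rule, and sum over the bounded-overlapping covering of Proposition \ref{Prop overlapping}. Your observation that one must sum the $p$-th powers of the local norms (rather than the norms themselves) to exploit bounded overlapping is the right way to carry out the final summation, and is in fact treated more carefully in your write-up than in the paper's own proof, which sums the $L^{p}$ norms directly over $\alpha$.
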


\begin{proof}
Let $H>1$ be as in Lemma \ref{Lemma cutoff} and fix $R>0$ small enough so that
Theorem \ref{Thm local small Lp} is applicable to balls $B_{HR}$. We can then
build a family of cutoff functions $\left\{  \phi^{x}\left(  \cdot\right)
\right\}  _{x\in\mathbb{R}^{n}}$ as in Lemma \ref{Lemma cutoff}. Next, by
Proposition \ref{Prop overlapping} we can cover $\mathbb{R}^{n}$ with a family
$\left\{  B\left(  x_{\alpha},R\right)  \right\}  _{\alpha\in A}$ of balls
such that the family of dilated balls $\left\{  B\left(  x_{\alpha},HR\right)
\right\}  _{\alpha\in A}$ has the bounded overlapping property. Let us
consider the family of cutoff functions adapted to these balls; letting let
$\phi_{\alpha}=\phi^{x_{\alpha}}$ we have:%
\begin{align*}
\phi_{\alpha}  &  \in C_{0}^{\infty}\left(  B_{HR}\left(  x_{\alpha}\right)
\right) \\
\phi_{\alpha}\left(  y\right)   &  \geq c_{1}\text{ in }B_{R}\left(
x_{\alpha}\right)
\end{align*}%
\[
\sup_{y}\left\vert \phi_{\alpha}\left(  y\right)  \right\vert +\sum_{h=1}%
^{m}\sup_{y}\left\vert X_{h}\phi_{\alpha}\left(  y\right)  \right\vert
+\sum_{h,l=1}^{m}\sup_{y}\left\vert X_{h}X_{l}\phi_{\alpha}\left(  y\right)
\right\vert \leq c_{2}%
\]
with $c_{1},c_{2}$ independent of $\alpha$ (while $R$ is by now fixed).

Then we can write, for every $\alpha\in A$,%
\begin{align*}
&  \sum_{h,l=1}^{m}\left\Vert X_{h}X_{l}u\right\Vert _{L^{p}\left(
B_{R}\left(  x_{\alpha}\right)  \right)  }\leq\sum_{h,l=1}^{m}\frac{1}{c_{1}%
}\left\Vert \phi_{\alpha}X_{h}X_{l}u\right\Vert _{L^{p}\left(  B_{R}\left(
x_{\alpha}\right)  \right)  }\\
&  \leq c\sum_{h,l=1}^{m}\left\{  \left\Vert X_{h}X_{l}\left(  u\phi_{\alpha
}\right)  \right\Vert _{L^{p}\left(  B_{HR}\left(  x_{\alpha}\right)  \right)
}+\left\Vert X_{h}uX_{l}\phi_{\alpha}\right\Vert _{L^{p}\left(  B_{HR}\left(
x_{\alpha}\right)  \right)  }\right. \\
&  \left.  +\left\Vert uX_{h}X_{l}\phi_{\alpha}\right\Vert _{L^{p}\left(
B_{HR}\left(  x_{\alpha}\right)  \right)  }\right\} \\
&  \leq c\left\{  \sum_{h,l=1}^{m}\left\Vert X_{h}X_{l}\left(  u\phi_{\alpha
}\right)  \right\Vert _{L^{p}\left(  B_{HR}\left(  x_{\alpha}\right)  \right)
}+\sum_{h=1}^{m}\left\Vert X_{h}u\right\Vert _{L^{p}\left(  B_{HR}\left(
x_{\alpha}\right)  \right)  }+\left\Vert u\right\Vert _{L^{p}\left(
B_{HR}\left(  x_{\alpha}\right)  \right)  }\right\}
\end{align*}
by Theorem \ref{Thm local small Lp}
\begin{align*}
&  \leq c\left\{  \left\Vert L\left(  u\phi_{\alpha}\right)  \right\Vert
_{L^{p}\left(  B_{HR}\left(  x_{\alpha}\right)  \right)  }+\sum_{h=1}%
^{m}\left\Vert X_{h}u\right\Vert _{L^{p}\left(  B_{HR}\left(  x_{\alpha
}\right)  \right)  }+\left\Vert u\right\Vert _{L^{p}\left(  B_{HR}\left(
x_{\alpha}\right)  \right)  }\right\} \\
&  \leq c\left\{  \left\Vert Lu\right\Vert _{L^{p}\left(  B_{HR}\left(
x_{\alpha}\right)  \right)  }+\sum_{h=1}^{m}\left\Vert X_{h}u\right\Vert
_{L^{p}\left(  B_{HR}\left(  x_{\alpha}\right)  \right)  }+\left\Vert
u\right\Vert _{L^{p}\left(  B_{HR}\left(  x_{\alpha}\right)  \right)
}\right\}
\end{align*}

Next, we sum up over $\alpha\in A$, finding, since $\left\{  B\left(
x_{\alpha},R\right)  \right\}  _{\alpha\in A}$ are a covering of
$\mathbb{R}^{n}$,
\begin{align*}
&  \sum_{h,l=1}^{m}\left\Vert X_{h}X_{l}u\right\Vert _{L^{p}\left(
\mathbb{R}^{n}\right)  }\leq\sum_{\alpha\in A}\sum_{h,l=1}^{m}\left\Vert
X_{h}X_{l}u\right\Vert _{L^{p}\left(  B_{R}\left(  x_{\alpha}\right)  \right)
}\\
&  \leq c\sum_{\alpha\in A}\left\{  \left\Vert Lu\right\Vert _{L^{p}\left(
B_{HR}\left(  x_{\alpha}\right)  \right)  }+\sum_{h=1}^{m}\left\Vert
X_{h}u\right\Vert _{L^{p}\left(  B_{HR}\left(  x_{\alpha}\right)  \right)
}+\left\Vert u\right\Vert _{L^{p}\left(  B_{HR}\left(  x_{\alpha}\right)
\right)  }\right\}
\end{align*}
applying Proposition \ref{Prop overlapping}
\[
\leq c\left\{  \left\Vert Lu\right\Vert _{L^{p}\left(  \mathbb{R}^{n}\right)
}+\sum_{h=1}^{m}\left\Vert X_{h}u\right\Vert _{L^{p}\left(  \mathbb{R}%
^{n}\right)  }+\left\Vert u\right\Vert _{L^{p}\left(  \mathbb{R}^{n}\right)
}\right\}  .
\]
Adding $\sum_{h=1}^{m}\left\Vert X_{h}u\right\Vert _{L^{p}\left(
\mathbb{R}^{n}\right)  }+\left\Vert u\right\Vert _{L^{p}\left(  \mathbb{R}%
^{n}\right)  }$ to both sides of the last inequality we get the conclusion.
\end{proof}

\begin{corollary}
\label{Thm stima globale ver2}For every $p\in\left(  1,\infty\right)  $ there
exists $c>0$, depending on numbers $p,\nu$, the vector fields $\left\{
X_{1},...,X_{m}\right\}  $ and the function $R\mapsto a_{R}^{\sharp}$, such
that for every $u\in C^{\infty}\left(  \mathbb{R}^{n}\right)  $ and for every
$R>1$ we have%
\[
\left\Vert u\right\Vert _{W_{X}^{2,p}\left(  B_{R}^{\ast}\left(  0\right)
\right)  }\leq c\left\{  \left\Vert Lu\right\Vert _{L^{p}\left(  B_{2R}^{\ast
}\left(  0\right)  \right)  }+\left\Vert u\right\Vert _{W_{X}^{1,p}\left(
B_{2R}^{\ast}\left(  0\right)  \right)  }\right\}  ,
\]
with $c$ independent of $R$. Here
\[
B_{R}^{\ast}\left(  0\right)  =\left\{  x\in\mathbb{R}^{n}:\left\Vert
x\right\Vert \leq R\right\}
\]
and, by an abuse of notation, for $x\in\mathbb{R}^{n}$ we let $\left\Vert
x\right\Vert =\left\Vert \left(  x,0\right)  \right\Vert $ with $\left\Vert
\left(  x,\xi\right)  \right\Vert $ the usual homogeneous norm in
$\mathbb{R}^{N}$.
\end{corollary}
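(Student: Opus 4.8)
The plan is to deduce this localized estimate from the global one of Theorem~\ref{Thm stima globale ver1} by a cutoff argument, exploiting the fact that, although translations adapted to the $X_i$'s are unavailable, the dilations $\delta_\lambda$ centered at $0$ are. First I would fix, once and for all, a smooth $\varphi\colon[0,\infty)\to[0,1]$ with $\varphi\equiv1$ on $[0,3/2]$ and $\varphi\equiv0$ on $[2,\infty)$, and, for $R>1$, set
\[
\chi_R(x)=\varphi\bigl(\|(x,0)\|/R\bigr),
\]
where $\|\cdot\|$ is the smooth homogeneous norm on $\mathbb{R}^N$ from (\ref{hom norm}). Since the homogeneous norm is continuous, proper, and smooth off the origin, $\chi_R$ is smooth on $\mathbb{R}^n$ (it is identically $1$ near $0$), $\chi_R\in C_0^\infty\bigl(B_{2R}^{\ast}(0)\bigr)$, $0\le\chi_R\le1$, and $\chi_R\equiv1$ on $\{x:\|(x,0)\|\le 3R/2\}$, which is a neighborhood of $B_R^{\ast}(0)$.

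The heart of the matter is that the $X$-derivatives of $\chi_R$ up to second order are bounded by a constant \emph{independent of $R>1$}. I would argue by homogeneity: writing $g(x):=\|(x,0)\|$, the function $g$ is $\delta_\lambda$-homogeneous of degree $1$ and smooth on $\mathbb{R}^n\setminus\{0\}$, so, since each $X_i$ is $\delta_\lambda$-homogeneous of degree $1$, the functions $X_ig$ and $X_iX_jg$ are $\delta_\lambda$-homogeneous of degrees $0$ and $-1$, respectively; evaluating on the compact set $\{g=1\}$ gives $|X_ig|\le C_1$ and $|X_iX_jg|\le C_2/g$ on $\mathbb{R}^n\setminus\{0\}$, with $C_1,C_2$ depending only on $\{X_1,\dots,X_m\}$. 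Differentiating $\chi_R=\varphi(g/R)$ by the chain rule and noting that $\varphi'$ and $\varphi''$ vanish unless $3R/2\le g\le2R$, one gets $|X_i\chi_R|\le c/R$ and $|X_iX_j\chi_R|\le c/R^2$; hence, for $R>1$,
\[
\sup_{\mathbb{R}^n}|\chi_R|+\sum_{i=1}^m\sup_{\mathbb{R}^n}|X_i\chi_R|+\sum_{i,j=1}^m\sup_{\mathbb{R}^n}|X_iX_j\chi_R|\le c_0,
\]
with $c_0$ depending only on $\varphi$ and the vector fields.

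With this in hand, given $u\in C^\infty(\mathbb{R}^n)$ and $R>1$, I would set $v:=u\,\chi_R\in C_0^\infty(\mathbb{R}^n)$ and apply Theorem~\ref{Thm stima globale ver1} to $v$. Since $v$ agrees with $u$, together with all its $X$-derivatives, on a neighborhood of $B_R^{\ast}(0)$, the left-hand side of that estimate dominates $\|u\|_{W_X^{2,p}(B_R^{\ast}(0))}$. On the right-hand side, the Leibniz rule gives $X_hv=(X_hu)\chi_R+u\,X_h\chi_R$ and
\[
Lv=(Lu)\chi_R+\sum_{i,j=1}^m a_{ij}\bigl[(X_iu)(X_j\chi_R)+(X_ju)(X_i\chi_R)+u\,(X_iX_j\chi_R)\bigr];
\]
using $|a_{ij}|\le\nu^{-1}$, the bound $c_0$, and $\operatorname{supp}\chi_R\subseteq B_{2R}^{\ast}(0)$, one estimates $\|Lv\|_{L^p(\mathbb{R}^n)}\le\|Lu\|_{L^p(B_{2R}^{\ast}(0))}+c\|u\|_{W_X^{1,p}(B_{2R}^{\ast}(0))}$ and $\|v\|_{W_X^{1,p}(\mathbb{R}^n)}\le c\|u\|_{W_X^{1,p}(B_{2R}^{\ast}(0))}$, with $c$ independent of $R$; combining these yields the claim. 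The only non-routine step is the $R$-uniform cutoff construction — precisely where the homogeneity of the vector fields is used — while everything else is bookkeeping with the product rule.
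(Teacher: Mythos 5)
Your proof is correct and follows essentially the same route as the paper: a cutoff supported in $B_{2R}^{\ast}(0)$, equal to $1$ on $B_{R}^{\ast}(0)$, with $X$-derivatives of order $\le 2$ bounded uniformly in $R>1$ thanks to the $\delta_\lambda$-homogeneity, followed by an application of Theorem~\ref{Thm stima globale ver1} to $u\chi_R$ and the Leibniz rule. The paper writes the cutoff as $\phi_R=\phi\circ\delta_{1/R}$ and reads off the bounds $|X_i\phi_R|\le c/R$, $|X_iX_j\phi_R|\le c/R^2$ directly from the degree-$1$ homogeneity of the $X_i$, which is the same computation as your argument via the homogeneity of $g(x)=\|(x,0)\|$.
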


\begin{proof}
Let $\phi\in C_{0}^{\infty}\left(  B_{2}^{\ast}\left(  0\right)  \right)
,\phi=1$ in $B_{1}^{\ast}\left(  0\right)  $ and let%
\[
\phi_{R}\left(  x\right)  =\phi\left(  \delta_{1/R}\left(  x\right)  \right)
.
\]
Then:%
\[
\phi_{R}\in C_{0}^{\infty}\left(  B_{2R}^{\ast}\left(  0\right)  \right)
,\phi_{R}=1\text{ in }B_{R}^{\ast}\left(  0\right)  ,
\]%
\[
\left\vert \phi_{R}\left(  x\right)  \right\vert +R\left\vert \left(
X_{i}\phi_{R}\right)  \left(  x\right)  \right\vert +R^{2}\left\vert \left(
X_{i}X_{j}\phi_{R}\right)  \left(  x\right)  \right\vert \leq c\text{ for
every }x\in\mathbb{R}^{N},R>0.
\]
Let $u\in C^{\infty}\left(  \mathbb{R}^{n}\right)  $ and for a fixed $R>0$ let
us apply Theorem \ref{Thm stima globale ver1} to the function $u\phi_{R}\in
C_{0}^{\infty}\left(  \mathbb{R}^{n}\right)  $. Then:%
\begin{align*}
\left\Vert u\right\Vert _{W_{X}^{2,p}\left(  B_{R}^{\ast}\left(  0\right)
\right)  }  &  \leq\left\Vert u\phi_{R}\right\Vert _{W_{X}^{2,p}\left(
\mathbb{R}^{n}\right)  }\leq c\left\{  \left\Vert L\left(  u\phi_{R}\right)
\right\Vert _{L^{p}\left(  \mathbb{R}^{n}\right)  }+\left\Vert u\phi
_{R}\right\Vert _{W_{X}^{1,p}\left(  \mathbb{R}^{n}\right)  }\right\} \\
&  \leq c\left\{  \left\Vert Lu\right\Vert _{L^{p}\left(  B_{2R}^{\ast}\left(
0\right)  \right)  }+\left\Vert u\right\Vert _{W_{X}^{1,p}\left(  B_{2R}%
^{\ast}\left(  0\right)  \right)  }\right\}  .
\end{align*}

\end{proof}

To conclude the proof of our main result, we also need an interpolation
inequality for intermediate norms which holds for \emph{homogeneous }vector fields:

\begin{theorem}
\label{Thm interpolation}(See \cite[Prop.2.9]{BBB2}). For every $p\in\left(
1,\infty\right)  $ there exists $c_{p}$ such that for every $u\in W_{X}%
^{2,p}\left(  \mathbb{R}^{n}\right)  $ we have%
\[
\left\Vert X_{i}u\right\Vert _{L^{p}\left(  \mathbb{R}^{n}\right)  }%
\leq\varepsilon\left\Vert X_{i}^{2}u\right\Vert _{L^{p}\left(  \mathbb{R}%
^{n}\right)  }+\frac{c_{p}}{\varepsilon}\left\Vert u\right\Vert _{L^{p}\left(
\mathbb{R}^{n}\right)  }%
\]
for $i=1,2,...,m$ and every $\varepsilon>0$.
\end{theorem}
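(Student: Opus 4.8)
The plan is to reduce the statement to a one‑dimensional interpolation inequality along the integral curves of each vector field $X_i$. First I would recall the classical Landau–Kolmogorov type inequality on the half‑line (or the whole line): for $g\in W^{2,p}(\mathbb R)$ and every $\varepsilon>0$,
\[
\|g'\|_{L^p(\mathbb R)}\le \varepsilon\|g''\|_{L^p(\mathbb R)}+\frac{c_p}{\varepsilon}\|g\|_{L^p(\mathbb R)},
\]
which follows from the standard argument of writing $g'(t)$ via a second‑order Taylor expansion on an interval of length $\sim\varepsilon$ and optimizing. This is the genuinely "hard" analytic input, but it is entirely classical and one‑dimensional.

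Next I would transport this to the vector field $X_i$. Since $X_i$ is a smooth vector field on $\mathbb R^n$ (and, being $1$‑homogeneous with respect to the dilations, it is complete: its flow $\Phi_i^t$ is defined for all $t\in\mathbb R$), one can use the flow of $X_i$ as a change of variables. More precisely, foliating $\mathbb R^n$ by the integral curves of $X_i$ and using a transversal hypersurface together with the coarea / Fubini argument along this foliation, one writes, for $u\in C_0^\infty(\mathbb R^n)$,
\[
\|X_i u\|_{L^p(\mathbb R^n)}^p=\int_{\Sigma}\!\int_{\mathbb R}|(X_i u)(\Phi_i^t(y))|^p\,J(t,y)\,dt\,d\mu(y),
\]
where $\Sigma$ is a transversal and $J$ is the Jacobian factor of the flow box. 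Setting $g(t):=u(\Phi_i^t(y))$ one has $g'(t)=(X_i u)(\Phi_i^t(y))$ and $g''(t)=(X_i^2 u)(\Phi_i^t(y))$, so the $1$‑D inequality applies fiberwise. The only point requiring care is that the Jacobian $J(t,y)$ is not constant; however, on the support of $u$ (a fixed compact set, or, for the general $W_X^{2,p}$ statement, after a density/exhaustion argument) $J$ is bounded above and below by positive constants, so the fiberwise estimates integrate up with a controlled constant. Then Minkowski's integral inequality in $L^p$ recombines the fibered estimates into the global one.

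Alternatively — and this is probably the cleanest route given what is already available in the excerpt — I would deduce the inequality from the $W_X^{2,p}$ a‑priori estimate itself is circular, so instead I would invoke the result exactly as the paper does: cite \cite[Prop.\ 2.9]{BBB2}, where this interpolation inequality is proved in the homogeneous H\"ormander setting, and simply extend from $C_0^\infty(\mathbb R^n)$ to all of $W_X^{2,p}(\mathbb R^n)$ by the approximation Theorem \ref{Thm approx}(a) together with a standard exhaustion of $\mathbb R^n$ by relatively compact open sets, using that the constant $c_p$ does not depend on the set. The main obstacle, if one insists on a self‑contained proof, is handling the non‑constancy of the flow‑box Jacobian uniformly; this is where the homogeneity of the $X_i$ (hence completeness of their flows and polynomial control on $J$) is essential, and it is precisely the reason the inequality holds globally on $\mathbb R^n$ rather than only locally.
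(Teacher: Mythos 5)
The paper offers no proof of this statement: it is imported verbatim from \cite[Prop.~2.9]{BBB2}, so your fallback route (cite that proposition and, if its statement there is only for smooth compactly supported functions, extend to all of $W_{X}^{2,p}(\mathbb{R}^{n})$ via Theorem \ref{Thm approx} and an exhaustion) is exactly what the paper does, and is adequate.

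Concerning your self-contained sketch, the one step that is actually wrong as written is the claim that the flow-box Jacobian $J(t,y)$ is \emph{bounded below} by a positive constant on the support of $u$. This fails precisely for the paper's motivating examples: for $X_{2}=x_{1}^{k}\,\partial_{x_{2}}$ the vector field vanishes on $\{x_{1}=0\}$, the integral curves through that set are points, and there is no transversal foliation (nor a lower bound on $J$) near the zero set of $X_{i}$ --- which will in general meet $\operatorname{sprt}u$. The argument can nevertheless be repaired, and in a way that removes the transversal and the Jacobian altogether: $\delta_{\lambda}$-homogeneity of degree $1$ forces the coefficient of $\partial_{x_{k}}$ in $X_{i}$ to be a polynomial in the variables of strictly smaller weight, so $X_{i}$ is complete \emph{and divergence-free} (this structural fact is already used in the paper, in the proof of Theorem \ref{Thm repr formula 1}, in the form $X_{i}f=\sum_{l}\partial_{x_{l}}(a_{il}f)$). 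Hence the flow $\Phi_{i}^{t}$ preserves Lebesgue measure, and one simply writes
\[
\left\Vert X_{i}u\right\Vert _{L^{p}}^{p}=\frac{1}{\varepsilon}\int_{0}^{\varepsilon}\int_{\mathbb{R}^{n}}\left\vert X_{i}u\left(  \Phi_{i}^{t}(x)\right)  \right\vert ^{p}dx\,dt=\frac{1}{\varepsilon}\int_{\mathbb{R}^{n}}\int_{0}^{\varepsilon}\left\vert g_{x}^{\prime}(t)\right\vert ^{p}dt\,dx,\qquad g_{x}(t):=u\left(  \Phi_{i}^{t}(x)\right)  ,
\]
applies the one-dimensional interpolation inequality on the interval $[0,\varepsilon]$ fiberwise, and undoes the change of variables; no lower bound on any Jacobian is needed. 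With that correction your sketch is a legitimate alternative to the citation.
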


We can now finally conclude the:

\bigskip

\begin{proof}
[Proof of Theorem \ref{Thm main}]Let $u\in W_{X}^{2,p}\left(  \mathbb{R}%
^{n}\right)  $ and, for a fixed $R,$ take a sequence $\left\{  u_{i}\right\}
_{i=1}^{\infty}\subset C_{0}^{\infty}\left(  \mathbb{R}^{n}\right)  $ such
that on the compact set $B_{2R}^{\ast}\left(  0\right)  $ we have
$u_{i}\rightarrow u$ in $W_{X}^{2,p}$-norm. This is possible in view of
Theorem \ref{Thm approx}. By Corollary \ref{Thm stima globale ver2} we can
write%
\[
\left\Vert u_{i}\right\Vert _{W_{X}^{2,p}\left(  B_{R}^{\ast}\left(  0\right)
\right)  }\leq c\left\{  \left\Vert Lu_{i}\right\Vert _{L^{p}\left(
B_{2R}^{\ast}\left(  0\right)  \right)  }+\left\Vert u_{i}\right\Vert
_{W_{X}^{1,p}\left(  B_{2R}^{\ast}\left(  0\right)  \right)  }\right\}
\]
and, passing to the limit as $i\rightarrow+\infty$, we conclude%
\[
\left\Vert u\right\Vert _{W_{X}^{2,p}\left(  B_{R}^{\ast}\left(  0\right)
\right)  }\leq c\left\{  \left\Vert Lu\right\Vert _{L^{p}\left(  B_{2R}^{\ast
}\left(  0\right)  \right)  }+\left\Vert u\right\Vert _{W_{X}^{1,p}\left(
B_{2R}^{\ast}\left(  0\right)  \right)  }\right\}
\]
for every $u\in W_{X}^{2,p}\left(  \mathbb{R}^{n}\right)  $, with $c$
independent of $R$. Passing to the limit for $R\rightarrow+\infty$, we get%
\[
\left\Vert u\right\Vert _{W_{X}^{2,p}\left(  \mathbb{R}^{n}\right)  }\leq
c\left\{  \left\Vert Lu\right\Vert _{L^{p}\left(  \mathbb{R}^{n}\right)
}+\left\Vert u\right\Vert _{W_{X}^{1,p}\left(  \mathbb{R}^{n}\right)
}\right\}
\]
for every $u\in W_{X}^{2,p}\left(  \mathbb{R}^{n}\right)  $. Finally, we apply
the interpolation inequaltiy in Theorem \ref{Thm interpolation} with
$\varepsilon$ small enough, and conclude%
\[
\left\Vert u\right\Vert _{W_{X}^{2,p}\left(  \mathbb{R}^{n}\right)  }\leq
c\left\{  \left\Vert Lu\right\Vert _{L^{p}\left(  \mathbb{R}^{n}\right)
}+\left\Vert u\right\Vert _{L^{p}\left(  \mathbb{R}^{n}\right)  }\right\}  ,
\]
that is our desired global estimate.
\end{proof}

\section{Higher order estimates\label{sec higher order}}

Taking into account all the machinery developed so far to prove Theorem
\ref{Thm main}, we now turn to establish \emph{global higher-order a-priori
estimates} for the solutions of the equation $Lu=f$ (where $L$ is as in
\eqref{operators}), under the assumption that the (variable) coefficients
$a_{ij}$ of $L$ have some extra regularity. Together with the regularization
result that we will prove in section 6, this will imply our second main
result, that is Theorem \ref{Thm main 2}.

Throughout the section we will use the notation fixed in Definition
\ref{Def Sobolev}; multiindices will be denoted by letters $I,J,J^{\prime}.$

\begin{theorem}
\label{thm:mainHO}Let the assumptions \emph{(H.1)\thinspace-\thinspace(H.4)}
(stated in section 1) be in force, and suppose that there exists some positive
integer $k\ $such that
\[
\Vert a\Vert=\sum_{i,j=1}^{m}\Vert a_{ij}\Vert_{W_{X}^{k,\infty}%
(\mathbb{R}^{n})}<\infty.
\]
Then, for every $p\in(1,\infty)$ there exists a constant $c>0$, only depending
on the numbers $k,p,\nu$ \emph{(}see \eqref{ellipticity}\emph{)}, the vector
fields $\{X_{1},...,X_{m}\}$ and the number $\Vert a\Vert$, such that, for
every $u\in W_{X}^{k+2,p}(\mathbb{R}^{n})$, we have
\begin{equation}
\Vert u\Vert_{W_{X}^{k+2,p}(\mathbb{R}^{n})}\leq c\left\{  \Vert
Lu\Vert_{W_{X}^{k,p}(\mathbb{R}^{n})}+\Vert u\Vert_{L^{p}(\mathbb{R}^{n}%
)}\right\}  . \label{eq:HOEstimate}%
\end{equation}

\end{theorem}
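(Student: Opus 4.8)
The plan is to argue by induction on $k\ge 1$, the ``order-zero'' estimate $\|u\|_{W_X^{2,p}(\mathbb{R}^n)}\le c\{\|Lu\|_{L^p(\mathbb{R}^n)}+\|u\|_{L^p(\mathbb{R}^n)}\}$ being supplied by Theorem \ref{Thm main}; this is legitimate because $a_{ij}\in W_X^{k,\infty}(\mathbb{R}^n)\subset W_X^{1,\infty}(\mathbb{R}^n)\subset VMO_X(\mathbb{R}^n)$ by Proposition \ref{Prop VMO W1inf}, so assumption (H.5) is in force. Since every multiindex $I$ with $|I|=k+2$ splits as $I=(i_1,i_2,J)$ with $|J|=k$, it is enough to bound $\sum_{|J|=k}\sum_{h,l=1}^{m}\|X_hX_lX_Ju\|_{L^p(\mathbb{R}^n)}$: all the intermediate norms $\|X_Iu\|_{L^p}$, $1\le|I|\le k+1$, are then recovered by a standard iteration of the interpolation inequality of Theorem \ref{Thm interpolation}, which yields $\|X_Iu\|_{L^p}\le\varepsilon\|u\|_{W_X^{k+2,p}}+c_\varepsilon\|u\|_{L^p}$, and the surplus $\varepsilon$-terms are reabsorbed at the end since $u\in W_X^{k+2,p}(\mathbb{R}^n)$.

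The heart of the matter is a higher-order a-priori estimate for the \emph{constant}-coefficient operator $L_A$, namely
\[
\sum_{|J|=k}\sum_{h,l=1}^{m}\|X_hX_lX_Ju\|_{L^p(\mathbb{R}^n)}\le c\,\|L_Au\|_{W_X^{k,p}(\mathbb{R}^n)}\qquad\text{for }u\in C_0^\infty(\mathbb{R}^n),
\]
with $c$ depending on $A$ only through $\nu$ in \eqref{ellipticity}. To obtain it I would differentiate the representation formula of Theorem \ref{Thm repr formula2}, $X_hX_lu=-T^A(L_Au)+c_{hl}^AL_Au$, by $X_J$. The kernel $X_{j_1}^x\cdots X_{j_k}^xX_h^xX_l^x\Gamma_A(x;y)$ is $\delta_\lambda$-homogeneous of degree $-q-k<-q$, hence \emph{too singular} to be a Calder\'on--Zygmund kernel, so $k$ of the $x$-derivatives must be transferred onto the $y$-variable. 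This I would do on the lifted group $\mathbb{G}$, using the representation formulas of Theorem \ref{th.teoremone}(I) together with the relation between left- and right-invariant vector fields and the divergence form of the $\widetilde X_i$'s (as in \cite[\S 1]{BBB2}), followed by integrations by parts; the outcome is an expression of $X_hX_lX_Ju$ as a finite sum of singular integral operators whose kernels satisfy the standard estimates and the cancellation of Theorem \ref{Thm prop sing kern} --- with bounds uniform in $A$, thanks to Theorem \ref{th.teoremone}(II), which controls $X$-derivatives of $\Gamma_A$ of \emph{every} order --- acting on $X_{J'}(L_Au)$ with $|J'|\le k$, plus finitely many ``boundary'' terms $(\text{const})\cdot X_{J'}(L_Au)$ with uniformly bounded constants. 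Applying Theorem \ref{Thm Lp constant} (equivalently, rerunning its proof via the abstract Theorem \ref{Thm spazio omogeneo}) to each singular integral gives the displayed estimate.

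The passage to variable $W_X^{k,\infty}$ coefficients then follows the scheme of Sections 3 and 4. Applying the abstract sharp-maximal-function result Theorem \ref{Thm sharp} to each Calder\'on--Zygmund operator produced above yields a higher-order analogue of Theorem \ref{Thm Krylov main step}, controlling the mean oscillation over balls of $X_hX_lX_Ju$ in terms of $\mathcal{M}$ applied to $X$-derivatives of $u$ and to $X_{J'}(L_Au)$, $|J'|\le k$. Writing $L_Au=Lu-(L-L_A)u$ and expanding, via the Leibniz rule, $X_{J'}\bigl((L-L_A)u\bigr)=-\sum_{i,j}\sum_{J'=J_1J_2}(X_{J_1}(a_{ij}-\overline{a}_{ij}))(X_{J_2}X_iX_ju)$, the term with $J_1=\varnothing$ carries the factor $a_{ij}-\overline{a}_{ij}$, which is small in mean oscillation, and is absorbed through $a_{R}^{\sharp}$ and a H\"older splitting exactly as in the proof of Theorem \ref{Thm 2}; every remaining term has $|J_1|\ge1$, hence involves at most $k+1$ $X$-derivatives of $u$ with $L^\infty$-coefficients bounded by $\|a\|$, and is controlled by the interpolation inequality. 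Freezing the coefficients as in Theorem \ref{Thm 2}, taking $L^p$-norms, invoking the maximal-function bound of Theorem \ref{Thm Maximal} and the Fefferman--Stein inequality of Theorem \ref{Thm Fefferman Stein}, one gets local estimates on small balls as in Theorem \ref{Thm local small Lp}; globalization is achieved by the cutoff functions of Lemma \ref{Lemma cutoff} (whose construction furnishes uniform bounds on $X$-derivatives of any prescribed order) and the bounded-overlapping covering of Proposition \ref{Prop overlapping}, after which a final use of Theorem \ref{Thm approx} (to pass from $C_0^\infty$ to $W_X^{k+2,p}$) and of the interpolation inequality concludes, as in Section 4.

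I expect the main obstacle to be precisely the derivative-transfer step of the second paragraph: recasting the supersingular kernel $X_J^xX_h^xX_l^x\Gamma_A(x;y)$ as a composition of a Calder\'on--Zygmund kernel with genuine $X$-derivatives of $L_Au$, carried out on $\mathbb{G}$ through left/right-invariant manipulations and integrations by parts, keeping careful track both of the lower-order remainders generated and of the new ``principal value'' constants, and ensuring that every quantitative bound stays uniform as $A$ ranges over the ellipticity class \eqref{ellipticity}. Once this ingredient is in place, the rest is a lengthy but routine adaptation of the arguments of Sections 2 through 4.
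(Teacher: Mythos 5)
Your proposal follows essentially the same route as the paper's proof: the derivative-transfer step you single out as the main obstacle is carried out there by integration by parts on the lifted Carnot group (via \cite[Prop.~8.32]{BBbook}), which replaces $\widetilde{\Gamma}_A$ by new kernels $\widetilde{\Gamma}_{A,J}$ still homogeneous of degree $2-Q$, so that $X_iX_jX_Iu$ is represented by genuine Calder\'on--Zygmund operators acting on $X_JL_Au$ with bounds uniform in $A$. The subsequent steps --- Leibniz expansion of $X_J(L_A-L)u$ with the VMO term isolated, sharp maximal function, Fefferman--Stein, localization by cutoffs and bounded-overlap covering, and the final iteration/interpolation down to Theorem \ref{Thm main} --- match the paper's argument.
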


\begin{proof}
In order to prove \eqref{eq:HOEstimate} we essentially go through the whole
argument used for the proof of Theorem \ref{Thm main}, but we replace the
representation formula \eqref{repr form XX} with an analogous one for
\emph{higher-orders derivatives}.

To ease the readability, throughout what follows we assume that:

\begin{itemize}
\item[i)] $I=(i_{1},\ldots,i_{k})\ $is a fixed multiindex of length $k$;

\item[ii)] $1\leq i,j\leq m$ are fixed indices;

\item[iii)] $1<p<\infty$ is a fixed exponent.
\end{itemize}

Furthermore, if $A$ is any \emph{constant} $m\times m$ matrix satisfying
\eqref{ellipticity}, we inherit (and adopt) the associated notation in Theorem
\ref{Thm lifting}. \vspace*{0.1cm}

We then consider the following steps. \medskip

\noindent\textbf{Step 1 (Representation formula for }$X_{i}X_{j}X_{I}%
u$\textbf{).} Let $A$ be a fixed $m\times m$ matrix \emph{with constant
entries}, and satisfying \eqref{ellipticity}. Given any $u\in C_{0}^{\infty
}(\mathbb{R}^{n})$, we first prove a representation formula for $X_{i}%
X_{j}X_{I}u$ in terms of $X_{J}L_{A}u$ (for finitely many $J$ of lenght $k$),
which is a \emph{higher-or\-der version} of \eqref{repr form XX}. To this end,
we fix $1\leq l\leq m$ and we notice that, by arguing as in the proof of
Theorem \ref{Thm repr formula 1}, we can write
\begin{align*}
X_{l}u(x)  &  =-\int_{\mathbb{R}^{n}}\left(  \int_{\mathbb{R}^{p}%
}\widetilde{X}_{l}\widetilde{\Gamma}_{A}((y,\eta)^{-1}\ast(x,0))d\eta\right)
L_{A}u(y)\,dy\\
&  (\text{setting $v=(y,\eta)$ and applying Fubini's Theorem})\\
&  =-\int_{\mathbb{R}^{N}}\widetilde{X}_{l}\widetilde{\Gamma}_{A}(v^{-1}%
\ast(x,0))\widetilde{L}_{A}u(v)\,dv
\end{align*}
From this, since $\widetilde{\Gamma}_{A},\,\widetilde{X}_{l}$ and
$\widetilde{L}_{A}u$ are defined on the \emph{Carnot group} $\mathbb{G}%
=(\mathbb{R}^{N},\ast)$, we can proceed exactly as in the proof of
\cite[Proposition 8.32]{BBbook}, obtaining
\begin{equation}%
\begin{split}
X_{l}u(x)  &  =-\sum_{k=1}^{m}\int_{\mathbb{R}^{N}}\widetilde{\Gamma}%
_{A,k}(v^{-1}\ast(x,0))\widetilde{X}_{k}\widetilde{L}_{A}u(v)\,dv\\
&  =-\sum_{k=1}^{m}\int_{\mathbb{R}^{n}}{X}_{k}{L}_{A}u(y)\left(
\int_{\mathbb{R}^{p}}\widetilde{\Gamma}_{A,k}((y,\eta)^{-1}\ast(x,0))\,d\eta
\right)  dy,
\end{split}
\label{eq:reprXluByPart}%
\end{equation}
where the $\widetilde{\Gamma}_{A,k}$'s are homogeneous functions of degree
$2-Q$, smooth outside the origin. We explicitly stress that, \emph{differently
from the case considered} in the cited \cite[Proposition 8.32]{BBbook}, here
\emph{we do not have} $f=\widetilde{L}_{A}u\in C_{0}^{\infty}(\mathbb{R}%
^{N});$ however, we can still apply \cite[Proposition 3.47]{BBbook} in order
to obtain \eqref{eq:reprXluByPart}. Indeed, since the functions
$\widetilde{\Gamma}_{A,k}$ are homogeneous of degree $2-Q$, by proceeding
exactly as in the proof of \cite[Theorem 2.1\,-\,(ii)]{BBlift} we see that
\[
\int_{\mathbb{R}^{N}}\left\vert \widetilde{\Gamma}_{A,k}(v^{-1}\ast
(x,0))\widetilde{X}_{k}\widetilde{L}_{A}u(v)\right\vert \,dv<\infty.
\]
With \eqref{eq:reprXluByPart} at hand, by a classical iteration argument we
then get
\begin{equation}%
\begin{split}
&  X_{I}u=X_{i_{1}}\cdots X_{i_{k}}u\\
&  \qquad=-\sum_{\left\vert J\right\vert =k}\int_{\mathbb{R}^{n}}X_{J}%
L_{A}u(y)\left(  \int_{\mathbb{R}^{p}}\widetilde{\Gamma}_{A,J}((y,\eta
)^{-1}\ast(x,0))\,d\eta\right)  dy,
\end{split}
\label{eq:ReprXalfau}%
\end{equation}
where the functions $\widetilde{\Gamma}_{A,J}$ are homogeneous of degree
$2-Q$, and smooth outside the origin. Starting from \eqref{eq:ReprXalfau}
(which is the analog of \eqref{repr formula 0}), and proceeding exactly as in
the proof of Theorem \ref{Thm repr formula 1}, we finally obtain%
\begin{equation}%
\begin{split}
&  X_{i}X_{j}X_{I}u\left(  x\right) \\
&  =\sum_{\left\vert J\right\vert =k}\Big\{-\lim_{\varepsilon\rightarrow0}%
\int_{\widetilde{d}((y,\eta),(x,0))\geq\varepsilon}\!\!\!\widetilde{X}%
_{i}\widetilde{X}_{j}\widetilde{\Gamma}_{A,J}(y,\eta)^{-1}\ast(x,0))X_{J}%
L_{A}u(y)\,dy\,d\eta\\
&  \qquad+c_{ij}^{A,J}X_{J}L_{A}u(x)\Big\}\\
&  =\sum_{\left\vert J\right\vert =k}\Big\{-\lim_{\varepsilon\rightarrow0}%
\lim_{R\rightarrow\infty}\int_{\mathbb{R}^{n}}K_{J,\varepsilon,R}%
^{A,i,j}(x,y)X_{J}L_{A}u(y)\,dy+c_{ij}^{A,J}X_{J}L_{A}u(x)\Big\},
\end{split}
\label{eq:ReprXiXjXalfau}%
\end{equation}
where the kernels $K_{J,\varepsilon,R}^{A,i,j}$ are defined by
\[
K_{J,\varepsilon,R}^{A,i,j}(x,y)=\int_{\mathbb{R}^{p}}\left(  \widetilde{X}%
_{i}\widetilde{X}_{j}\widetilde{\Gamma}_{A,J}\cdot\psi_{\varepsilon,R}\right)
((y,\eta)^{-1}\ast(x,0))\,d\eta,
\]
while the constants $c_{ij}^{A,J}$ are explicitly given by
\[
c_{ij}^{A,J}=\int_{\left\{  u\in\mathbb{R}^{N}:\left\Vert u\right\Vert
=1\right\}  }\left(  \widetilde{X}_{j}\widetilde{\Gamma}_{A,J}\right)  \left(
\widetilde{X}_{i}\cdot\nu\right)  d\sigma\left(  u\right)
\]
We explicitly stress that the key facts allowing to obtain
\eqref{eq:ReprXiXjXalfau} are the following: on the one hand, the functions
$\widetilde{\Gamma}_{A,J}$ are homogeneous of degree $2-Q$ and smooth outside
the origin; hence, the second-order derivatives
\[
\widetilde{X}_{i}\widetilde{X}_{j}\widetilde{\Gamma}_{A,J}%
\]
(which are clearly homogeneous of degree $-Q$ and smooth outside the origin)
satisfy the \emph{cancellation property} in Proposition
\ref{Prop vanishing integral} (see \cite[Corollary 6.31]{BBbook}).

On the other hand, by the construction of the kernels $\widetilde{\Gamma
}_{A,J}$ performed in \cite[proof of Proposition 8.32]{BBbook}, we see that
any $X$-derivative of the maps $\widetilde{\Gamma}_{A,J}$ can be estimated in
terms of the $X$-derivatives of $\widetilde{\Gamma}_{A}$, and thus we have
\emph{uniform estimates w.r.t.\thinspace the matrix $A$}. Thus, we see that

\begin{itemize}
\item[i)] the kernels $K_{J,\varepsilon,R}^{A,i,j}$ satisfy \emph{the very
same properties} of the analogous kernels $K_{\varepsilon,R}^{A}$ appearing in
the representation formula \eqref{repr form XX};

\item[ii)] the constants $c_{ij}^{A,J}$ satisfies \emph{uniform estimates
w.r.t.\thinspace$A$}.
\end{itemize}

\noindent\textbf{Step 2): Estimates for }$X_{i}X_{j}X_{I}u$\textbf{.} With
\eqref{eq:ReprXalfau} at hand, we now turn to e\-sta\-blish \emph{$L^{p}$ and
mean-oscillation estimates} for $X_{i}X_{j}X_{I}u$, which are the analog of
the estimates for $X_{i}X_{j}u$ proved in Theorems \ref{Thm Lp constant}%
\thinspace-\thinspace\ \ref{Thm Krylov main step}, respectively.

To begin with, taking into account (\ref{eq:ReprXalfau}) (and following the
scheme of the proof Theorem \ref{Thm main}), we arbitrarily fix a $k$-tuple
$J=(j_{1},\ldots,j_{k})$, and we introduce the following operator
\[
T_{\varepsilon,R}^{A,J}(f)=-\int_{\mathbb{R}^{n}}K_{J,\varepsilon,R}%
^{A,i,j}(x,y)f(y)\,dy,
\]
which is well-defined for all $f\in C_{0}^{\infty}(\mathbb{R}^{n})$. Since we
have already observed that the kernel $K_{J,\varepsilon,R}^{A,i,j}$ satisfies
the same properties of $K_{\varepsilon,R}^{A}$ (hence, in particular, it
satisfies properties (a)\thinspace-\thinspace(c) in Theorem
\ref{Thm spazio omogeneo}), we derive that $T_{\varepsilon,R}^{A,J}$ can be
\emph{con\-ti\-nu\-ously extended to $L^{p}(\mathbb{R}^{n})$}, and there
exists $c>0$ (independent of $\varepsilon,R$) such that%
\[
\Vert T_{\varepsilon,R}^{A,J}(f)\Vert_{L^{p}(\mathbb{R}^{n})}\leq c\Vert
f\Vert_{L^{p}(\mathbb{R}^{n})}.
\]
This, together with the representation formula \eqref{eq:reprXluByPart} and
the uniform bounds of the constants $c_{ij}^{A,J}$ w.r.t.\thinspace$A$ (see
\textbf{Step 1)} above), allows us to repeat the argument in the proof of
Theorem \ref{Thm Lp constant}, thus obtaining the following facts:

\begin{itemize}
\item[a)] the operator $T^{A,J}$ defined as
\[
T^{A,J}f(x)=-\lim_{\varepsilon\rightarrow0^{+}}\lim_{R\rightarrow+\infty
}T_{\varepsilon,R}^{A,J}(f)(x)\qquad(f\in C_{0}^{\alpha}\left(  \mathbb{R}%
^{n}\right)  )
\]
can be extended to a linear continuous operator on $L^{p}(\mathbb{R}^{n})$,
with operator norm bounded by the same constant $c$;

\item[b)] there exists a constant $c>0$, depending on $A$ only through the
number $\nu$ in \eqref{ellipticity}, such that, for every $u\in C_{0}^{\infty
}\left(  \mathbb{R}^{n}\right)  $, we have
\begin{equation}
\left\Vert X_{i}X_{j}X_{I}u\right\Vert _{L^{p}\left(  \mathbb{R}^{n}\right)
}\leq c\sum_{\left\vert J\right\vert =k}\left\Vert X_{J}L_{A}u\right\Vert
_{L^{p}\left(  \mathbb{R}^{n}\right)  }. \label{eq:LpestimConstantHO}%
\end{equation}

\end{itemize}

We explicitly notice that the above \eqref{eq:LpestimConstantHO} is nothing
but the \emph{higher-order} analog of the $L^{p}$-estimate for $L_{A}$
contained in Theorem \ref{Thm Lp constant}.

On the other hand, owing to the above a), we can also apply Theorem
\ref{Thm sharp} and proceed exactly as in the proof of Theorem
\ref{Thm Krylov main step}; this, together with the representation formula
\eqref{eq:reprXluByPart}, gives the following mean-oscillation estimate (which
is the higher-order analog of \eqref{Krylov} in Theorem
\ref{Thm Krylov main step})
\begin{equation}%
\begin{split}
&  \frac{1}{|B_{r}(\overline{x})|}\int_{B_{r}(\overline{x})}|X_{i}X_{j}%
X_{I}u(x)-(X_{i}X_{j}X_{I}u)_{B_{r}}|dx\\
&  \qquad\leq c\sum_{\left\vert J\right\vert =k}\Big\{\frac{1}{\kappa
}\mathcal{M}(X_{J}L_{A}u)(x_{0})\\
&  \qquad\qquad+\kappa^{\frac{q}{p}}\Big(\frac{1}{|B_{kr}(\overline{x})|}%
\int_{B_{\kappa r}(\overline{x})}|X_{J}L_{A}u(x)|^{p}dx\Big)^{1/p}\Big\}\\
&  \qquad\leq c\Big\{\frac{1}{\kappa}\sum_{\left\vert J^{\prime}\right\vert
=k+2}\mathcal{M}(X_{J^{\prime}}u)(x_{0})\\
&  \qquad\qquad+\kappa^{\frac{q}{p}}\sum_{\left\vert J\right\vert
=k}\Big(\frac{1}{|B_{\kappa r}(\overline{x})|}\int_{B_{\kappa r}(\overline
{x})}|X_{J}L_{A}u(x)|^{p}dx\Big)^{1/p}\Big\},
\end{split}
\label{eq:KrylovHO}%
\end{equation}
holding true for every $u\in C_{0}^{\infty}\left(  \mathbb{R}^{n}\right)  $,
$r>0$, $x_{0},\overline{x}\in\mathbb{R}^{n}$ with $x_{0}\in B\left(
\overline{x},r\right)  $. Here, $\kappa\geq2$ is arbitrarily chosen, and $q$
is as in \eqref{eq.defq}. \medskip

\noindent\textbf{Step 3): Local estimates for }$X_{i}X_{j}X_{I}u$\textbf{ in
terms of }$Lu$\textbf{.} Now we have proved the estimates
\eqref{eq:LpestimConstantHO}\thinspace-\thinspace\eqref{eq:KrylovHO} for the
\emph{constant-coefficient operator $L_{A}$} (where $A$ is an arbitrary matrix
satisfying \eqref{ellipticity}), and following the scheme of the proof of
Theorem \ref{Thm main}, we turn our attention to the
\emph{variable-coefficient operator}
\[
L=\sum_{h,l=1}^{m}a_{hl}(x)X_{h}X_{l}%
\]
(with $a_{hl}\in W_{X}^{k,\infty}(\mathbb{R}^{n})$, for some $k\geq1$).
According to section \ref{sec:estimatesVMO}, the first step in this direction
is to use \eqref{eq:KrylovHO} to derive \emph{mean-oscillation estimates} for
$X_{i}X_{j}X_{[\alpha]}u$ in terms of $Lu$ (see Theorem \ref{Thm 2}); to this
end, it suffices to proceed exactly as in the proof of Theorem \ref{Thm 2},
taking into account the following argument. \vspace*{0.1cm}

First of all, if $A=(\bar{a}_{ij})$ is any (constant) $m\times m$ matrix
satisfying \eqref{ellipticity}, we can clearly write $L_{A}u=Lu+(L_{A}-L)u$;
hence, for every multiindex $J\ $with $\left\vert J\right\vert =k$ we have
\begin{equation}%
\begin{split}
&  X_{J}L_{A}u=X_{J}(Lu)+\sum_{h,l=1}^{m}X_{J}\left[  (\overline{a}%
_{hl}-a_{hl}(x))X_{h}X_{l}u\right] \\
&  \qquad=\sum_{h,l=1}^{m}(\overline{a}_{hl}-a_{hl}(x))X_{J}X_{h}X_{l}u\\
&  \qquad\qquad-\sum_{s=1}^{k}\mathcal{P}_{s}(X_{1},\ldots,X_{m})(a_{hl}%
)\cdot\mathcal{Q}_{k-s}(X_{1},\ldots,X_{m})(X_{h}X_{l}u),
\end{split}
\label{eq:XbetaLAuProduct}%
\end{equation}
where $\mathcal{P}_{s}(t_{1},\ldots,t_{m}),\,\mathcal{Q}_{k-s}(t_{1}%
,\ldots,t_{m})$ (with $1\leq s\leq k$) are suitable \emph{polynomials in the
non-commuting variables $t_{1},\ldots,t_{m}$}, homogeneous of degree $s$ and
$k-s$, respectively, and whose coefficients are \emph{absolute constants}.

By combining \eqref{eq:KrylovHO}\thinspace-\thinspace
\eqref{eq:XbetaLAuProduct}, and by proceeding exactly as in the proof of
The\-orem \ref{Thm 2}, we then obtain the following mean-oscillation estimate%
\begin{align*}
&  \frac{1}{\left\vert B_{r}\left(  \overline{x}\right)  \right\vert }%
\int_{B_{r}\left(  \overline{x}\right)  }\left\vert X_{i}X_{j}X_{I}u\left(
x\right)  -\left(  X_{i}X_{j}X_{I}u\right)  _{B_{r}}\right\vert dx\\
&  \leq c\left\{  \frac{1}{\kappa}\sum_{\left\vert J^{\prime}\right\vert
=k+2}\mathcal{M}\left(  X_{J^{\prime}}u\right)  \left(  x_{0}\right)  \right.
\\
&  +\left.  \kappa^{\frac{q}{p}}\sum_{\left\vert J\right\vert =k}\left(
\frac{1}{\left\vert B_{\kappa r}\left(  \overline{x}\right)  \right\vert }%
\int_{B_{\kappa r}\left(  \overline{x}\right)  }\left\vert X_{J}%
L_{A}u\right\vert ^{p}dx\right)  ^{1/p}\right\}
\end{align*}

(\text{using \eqref{eq:XbetaLAuProduct}, and since $a_{ij}\in W^{k,\infty
}(\mathbb{R}^{n})\subseteq VMO(\mathbb{R}^{n})$ by Proposition }%
\ref{Prop VMO W1inf})%
\begin{equation}%
\begin{split}
&  \leq\frac{c}{\kappa}\sum_{\left\vert J^{\prime}\right\vert =k+2}%
\mathcal{M}(X_{J^{\prime}}u)\left(  x_{0}\right)  +c\kappa^{q/p}%
\sum_{\left\vert J\right\vert =k}\left(  \mathcal{M}\left(  |X_{J}%
Lu|^{p}\right)  \left(  x_{0}\right)  \right)  ^{1/p}\ \\
&  \qquad+c\kappa^{q/p}(a_{R}^{\sharp})^{1/p\tau}\sum_{\left\vert J^{\prime
}\right\vert =k+2}\left(  \mathcal{M}(|X_{J^{\prime}}u|^{p\alpha}%
)(x_{0})\right)  ^{1/p\sigma}\\
&  \qquad\qquad+c\kappa^{q/p}\sum_{\left\vert J^{\prime}\right\vert
=k+1}\left(  \mathcal{M}(|X_{J^{\prime}}u|^{p})(x_{0})\right)  ^{1/p},
\end{split}
\label{eq:LpNormLAuProduct}%
\end{equation}
holding true for every choice of

$\sigma,\tau\in(1,\infty)$ with $\sigma^{-1}+\tau^{-1}=1$; \vspace*{0.05cm}

\thinspace$R,r>0$; \vspace*{0.05cm}

\thinspace$x^{\ast},x_{0},\overline{x}\in\mathbb{R}^{n}$ with $x_{0}\in
B_{r}(\overline{x})$; \vspace*{0.05cm}

$u\in C_{0}^{\infty}(B_{R}\left(  x^{\ast}\right)  )$; \vspace*{0.05cm}

$\kappa\geq M$ (with $M$ the number in Theorem \ref{Thm sharp}).
\vspace*{0.1cm}

\noindent Here, the constant $c > 0$ depends on $p,\sigma$ and on the number
$\|a\|$. \medskip

With estimate \eqref{eq:LpNormLAuProduct} at hand, we can now proceed exactly
as in the proof of Theorem \ref{Thm local small Lp}: choosing $\kappa$ large
enough and $R$ small enough (recall that $W_{X}^{k,\infty}(\mathbb{R}%
^{n})\subseteq VMO(\mathbb{R}^{n})$ by Proposition \ref{Prop VMO W1inf}), we
finally obtain%
\begin{equation}
\Vert X_{i}X_{j}X_{I}u\Vert_{L^{p}(B_{R}(x^{\ast}))}\leq c\left\{  \Vert
Lu\Vert_{W_{X}^{k,p}(B_{R}(x^{\ast}))}+\Vert u\Vert_{W_{X}^{k+1,p}%
(B_{R}(x^{\ast}))}\right\}  , \label{LocalHO}%
\end{equation}
and this estimate holds for every $u\in C_{0}^{\infty}\left(  B_{R}\left(
x^{\ast}\right)  \right)  $, with $R$ small enough. \medskip

\noindent\textbf{Step 4): End of the proof.} Now we have established the
\emph{local higher-order estimate} \eqref{eq:LocalHO}, we can easily complete
the proof of the theorem.

Proceding like in the proof of Theorem \ref{Thm stima globale ver1}, from
(\ref{eq:LocalHO}) we deduce%
\begin{equation}
\Vert u\Vert_{W_{X}^{k+2,p}\left(  \mathbb{R}^{n}\right)  }\leq c\left\{
\Vert Lu\Vert_{W_{X}^{k,p}\left(  \mathbb{R}^{n}\right)  }+\Vert u\Vert
_{W_{X}^{k+1,p}\left(  \mathbb{R}^{n}\right)  }\right\}  \label{global 2}%
\end{equation}
for every $u\in C_{0}^{\infty}\left(  \mathbb{R}^{n}\right)  $. Next,
proceding like in the proof of Corollary \ref{Thm stima globale ver2}, from
(\ref{global 2}) we deduce
\begin{equation}
\Vert u\Vert_{W_{X}^{k+2,p}(B_{R}(x^{\ast}))}\leq c\left\{  \Vert
Lu\Vert_{W_{X}^{k,p}(B_{2R}(x^{\ast}))}+\Vert u\Vert_{W_{X}^{k+1,p}%
(B_{2R}(x^{\ast}))}\right\}  \label{global 3}%
\end{equation}
for every $u\in C^{\infty}\left(  \mathbb{R}^{n}\right)  $ and for every
$R>0$, with $c$ independent of $R$. Then, by the local approximation result in
Sobolev spaces $W_{X}^{k,p}$ (Theorem \ref{Thm approx}), we can infer that
(\ref{global 3}) still holds for every $u\in W_{X}^{k+2,p}\left(
\mathbb{R}^{n}\right)  $ and then, letting $R\rightarrow+\infty$, we get%
\begin{equation}
\Vert u\Vert_{W_{X}^{k+2,p}\left(  \mathbb{R}^{n}\right)  }\leq c\left\{
\Vert Lu\Vert_{W_{X}^{k,p}\left(  \mathbb{R}^{n}\right)  }+\Vert u\Vert
_{W_{X}^{k+1,p}\left(  \mathbb{R}^{n}\right)  }\right\}  \label{global 4}%
\end{equation}
for every $u\in W_{X}^{k+2,p}\left(  \mathbb{R}^{n}\right)  $. Iteration then
yields
\begin{align*}
\Vert u\Vert_{W_{X}^{k+2,p}(\mathbb{R}^{n})}  &  \leq c\big\{\Vert
Lu\Vert_{W_{X}^{k,p}(\mathbb{R}^{n})}+\Vert u\Vert_{W_{X}^{k+1,p}%
(\mathbb{R}^{n})}\big\}\\
&  \leq c\big\{\Vert Lu\Vert_{W_{X}^{k,p}(\mathbb{R}^{n})}+\Vert u\Vert
_{W_{X}^{k,p}(\mathbb{R}^{n})}\big\}\\
&  \vdots\\
&  \leq c\big\{\Vert Lu\Vert_{W_{X}^{k,p}(\mathbb{R}^{n})}+\Vert u\Vert
_{W_{X}^{2,p}(\mathbb{R}^{n})}\big\}\\
&  \leq c\big\{\Vert Lu\Vert_{W_{X}^{k,p}(\mathbb{R}^{n})}+\Vert u\Vert
_{L^{p}(\mathbb{R}^{n})}\big\}
\end{align*}
for every $u\in W_{X}^{k+2,p}\left(  \mathbb{R}^{n}\right)  $, where in the
last inequality we applied Theorem \ref{Thm main}. This ends the proof of
Theorem \ref{thm:mainHO}.
\end{proof}

\section{Regularization of solutions}

In this section we want to prove the following result which, together with
Theorem \ref{thm:mainHO}, will imply the full statement of Theorem
\ref{Thm main 2}.

\begin{theorem}
\label{Thm regularity}Under assumptions (H.1)-(H.4) stated in section 1,
assume that for, some $p\in\left(  1,\infty\right)  $ and positive integer
$k$, $a_{ij}\in W_{X}^{k,\infty}\left(  \mathbb{R}^{n}\right)  $, $u\in
W_{X}^{2,p}\left(  \mathbb{R}^{n}\right)  $, and $Lu\in W_{X}^{k,p}\left(
\mathbb{R}^{n}\right)  $. Then $u\in W_{X}^{k+2,p}\left(  \mathbb{R}%
^{n}\right)  $.
\end{theorem}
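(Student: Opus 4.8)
The plan is to argue by induction on $k$ and, at each step, to promote the local regularity of $u$ by mollification, using the higher-order a-priori estimate of Theorem \ref{thm:mainHO}. Note that Theorem \ref{thm:mainHO} presupposes that the function it is applied to already belongs to $W_X^{k+2,p}$, so it cannot be applied to $u$ directly; instead I would apply it to smooth approximants of $u$ and, more importantly, to their \emph{differences}, deducing that these approximants form a Cauchy sequence in $W_X^{k+2,p}$. For the inductive reduction: if $k=1$ the hypothesis already gives $u\in W_X^{2,p}=W_X^{k+1,p}$; if $k\geq 2$, then $Lu\in W_X^{k,p}\subseteq W_X^{k-1,p}$ and $a_{ij}\in W_X^{k,\infty}\subseteq W_X^{k-1,\infty}$, so the inductive hypothesis (the statement with $k-1$ in place of $k$) yields $u\in W_X^{k+1,p}(\mathbb{R}^n)$. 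Hence in every step I may assume $u\in W_X^{k+1,p}(\mathbb{R}^n)$ and the goal is to prove $u\in W_X^{k+2,p}(\mathbb{R}^n)$.

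\textbf{Localization.} Fix a ball $B=B_X(x^{\ast},R)$ and a cutoff $\phi\in C_0^\infty$ with $\phi\equiv 1$ on $B$ and support in a fixed compact set $K$; put $w=\phi u$, which is compactly supported and lies in $W_X^{k+1,p}$. Expanding by Leibniz, $Lw=\phi\,Lu+\sum_{i,j}a_{ij}\bigl((X_i\phi)X_ju+(X_j\phi)X_iu+(X_iX_j\phi)u\bigr)$. Since $Lu\in W_X^{k,p}$, since $a_{ij}$ and every $X$-derivative of $\phi$ belong to $W_X^{k,\infty}$, and since $X_j u\in W_X^{k,p}$ (because $u\in W_X^{k+1,p}$), the Leibniz rule in the $W_X$-scale shows that $Lw\in W_X^{k,p}$ and is compactly supported.

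\textbf{Mollification and upgrading.} Let $J_\varepsilon$ be a standard Euclidean mollifier and $w_\varepsilon=w\ast J_\varepsilon\in C_0^\infty(\mathbb{R}^n)$, with supports in a fixed compact set for small $\varepsilon$. The key ingredient here is a family of Friedrichs-type commutator lemmas: on the compact set $K$ the coefficients of the $X_i$ (smooth polynomials) and the $a_{ij}\in W_X^{k,\infty}$ are bounded together with the relevant number of $X$-derivatives (the latter are also Euclidean-Hölder, being $d_X$-Lipschitz), and this suffices to prove, for compactly supported $v\in W_X^{m,p}$, that $X_I(v\ast J_\varepsilon)\to X_I v$ in $L^p$ for $|I|\le m$ and, iterating, that $P(v\ast J_\varepsilon)-(Pv)\ast J_\varepsilon\to 0$ in the corresponding $W_X$-norm for differential operators $P$ of order up to $m$ with such coefficients — these are the lemmas used in \cite[Chap.~12]{BBbook}. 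Applying them with $v=w$ and $P=L$ gives $w_\varepsilon\to w$ in $W_X^{k+1,p}$ and $Lw_\varepsilon\to Lw$ in $W_X^{k,p}$; in particular $\{w_\varepsilon\}$ is Cauchy in $W_X^{k+1,p}$ and $\{Lw_\varepsilon\}$ is Cauchy in $W_X^{k,p}$. Applying Theorem \ref{thm:mainHO} to $w_\varepsilon-w_{\varepsilon'}\in C_0^\infty$ we get $\|w_\varepsilon-w_{\varepsilon'}\|_{W_X^{k+2,p}}\leq c\bigl\{\|L(w_\varepsilon-w_{\varepsilon'})\|_{W_X^{k,p}}+\|w_\varepsilon-w_{\varepsilon'}\|_{L^p}\bigr\}$, whose right-hand side tends to $0$; so $\{w_\varepsilon\}$ is Cauchy in $W_X^{k+2,p}$, its limit must agree with $w$, and therefore $w=\phi u\in W_X^{k+2,p}$, i.e. $u\in W_X^{k+2,p}(B)$. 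Since $B$ is arbitrary, $u\in W_X^{k+2,p}_{\mathrm{loc}}(\mathbb{R}^n)$. To globalize, cover $\mathbb{R}^n$ by balls $\{B_X(x_\alpha,R)\}$ whose $H$-dilates have bounded overlapping (Proposition \ref{Prop overlapping}), take the uniform cutoffs $\phi_\alpha$ of Lemma \ref{Lemma cutoff}, apply Theorem \ref{thm:mainHO} to each $\phi_\alpha u$ (now known to be in $W_X^{k+2,p}$), bound $\|L(\phi_\alpha u)\|_{W_X^{k,p}}$ by $c\{\|Lu\|_{W_X^{k,p}(B_X(x_\alpha,HR))}+\|u\|_{W_X^{k+1,p}(B_X(x_\alpha,HR))}\}$ with $c$ independent of $\alpha$, raise to the power $p$, sum over $\alpha$, and use the bounded overlapping — exactly as in Step 4 of the proof of Theorem \ref{thm:mainHO} — to conclude $\|u\|_{W_X^{k+2,p}(\mathbb{R}^n)}<\infty$. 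This closes the induction.

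\textbf{Expected main obstacle.} The delicate point is precisely the mollification step. Since the $a_{ij}$ are merely $W_X^{k,\infty}$ (not smooth) and the $X_i$, though smooth, have unbounded polynomial coefficients, the commutator convergence $L(w\ast J_\varepsilon)-(Lw)\ast J_\varepsilon\to 0$ in $W_X^{k,p}$ must be handled with care: one exploits the compact support of $w$ (so all coefficients are bounded, with the needed number of derivatives, on a fixed compact set) and an iteration of the elementary Friedrichs lemma, and one must verify that no $(k+2)$-th order $X$-derivative of $w$ — which is not a priori available — is ever invoked, only the information $Lw\in W_X^{k,p}$ coming from the equation. The rest of the argument is a routine transcription of the scheme already used for Theorems \ref{Thm main} and \ref{thm:mainHO}.
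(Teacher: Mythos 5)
Your overall architecture (induction on $k$; localize to $w=\phi u\in W_X^{k+1,p}$ with $Lw\in W_X^{k,p}$ compactly supported; upgrade $w$ to $W_X^{k+2,p}$; then globalize via the a-priori estimate of Theorem \ref{thm:mainHO}) matches the paper's, and the inductive reduction, the Leibniz computation for $Lw$, and the final covering/bounded-overlap globalization are all sound. The gap is in the mollification step, exactly at the point you yourself flag as delicate. The Friedrichs-type commutator lemmas you invoke, applied to $v=w\in W_X^{k+1,p}$ and to the second-order operator $P=L$, yield $L(w\ast J_\varepsilon)\to Lw$ only in $W_X^{k-1,p}$, not in $W_X^{k,p}$: the commutator $L(w\ast J_\varepsilon)-(Lw)\ast J_\varepsilon$ gains one derivative, so it is small from $W_X^{1,p}$ into $L^p$, and hence from $W_X^{k+1,p}$ into $W_X^{k-1,p}$. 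To get smallness in $W_X^{k,p}$ one must take $k$ further $X$-derivatives, and $X_I\bigl(L(w\ast J_\varepsilon)\bigr)$ with $|I|=k$ contains the term $\sum_{i,j}a_{ij}X_IX_iX_j(w\ast J_\varepsilon)$, i.e.\ $k+2$ derivatives landing on the mollification. Showing that this converges forces you to commute $X_I$ past $L$, which produces $L\bigl(X_I(w\ast J_\varepsilon)\bigr)$ (not controlled, since $X_Iw$ is a priori only in $W_X^{1,p}$) together with bracket terms $[X_I,X_iX_j](w\ast J_\varepsilon)$ which, in the Hörmander setting, carry homogeneity up to order $k+2$ and are likewise not dominated by $\|w\|_{W_X^{k+1,p}}$. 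The missing derivative can only be extracted from the equation $Lw\in W_X^{k,p}$, and doing so is precisely the content of the local regularity theorem; it is not a corollary of iterated Friedrichs lemmas applied to $w$ alone. Consequently the assertion that $\{Lw_\varepsilon\}$ is Cauchy in $W_X^{k,p}$ is unsupported, and with it the Cauchy bound in $W_X^{k+2,p}$.

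The paper closes this point by not reproving local regularity at all: it quotes the known local regularization result for nonvariational operators built on general H\"ormander vector fields (\cite[Thm. 12.2 (a)]{BBbook}, restated as Theorem \ref{Thm local regularity}) to get $u\in W_X^{k+2,p}\left(  B_{2R}\left(  0\right)  \right)$ for every $R$, and then applies the global a-priori estimate \eqref{eq:HOEstimate} to $u\phi_{R}$ with constants independent of $R$, letting $R\rightarrow+\infty$. If you replace your mollification paragraph with that citation (or with a genuine proof of the local statement, which is a substantial separate argument), the remainder of your proof goes through.
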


To prove the above result we will apply the known \emph{local }regularity
result which holds for nonvariational operators built on a general system of
H\"{o}rmander vector fields (not necessarily homogeneous):

\begin{theorem}
[{See \cite[Thm. 12.2 (a)]{BBbook}}]\label{Thm local regularity}Let
$X_{1},..,X_{m}$ be a system of H\"{o}rmander vector fields in a bounded
domain $\Omega\subset\mathbb{R}^{n}$, let $\left\{  a_{ij}\left(  x\right)
\right\}  _{i,j=1}^{m}$ satisfy our assumption (H.4) stated in section 1 and
let $L$ be as in (\ref{operators}). For every domain $\Omega^{\prime}%
\Subset\Omega$, $p\in\left(  1,\infty\right)  $, nonnegative integer $k$, if
$a_{ij}\in W_{X}^{k,\infty}\left(  \Omega\right)  $, $u\in W_{X}^{2,p}\left(
\Omega\right)  $, and $Lu\in W_{X}^{k,p}\left(  \Omega\right)  $, then $u\in
W_{X}^{k+2,p}\left(  \Omega^{\prime}\right)  $.
\end{theorem}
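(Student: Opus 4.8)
The plan is to deduce the \emph{global} regularity $u\in W_X^{k+2,p}(\mathbb{R}^n)$ from the known \emph{local} regularity (Theorem \ref{Thm local regularity}) together with the global higher-order a-priori estimate of Theorem \ref{thm:mainHO}, via a cutoff-and-covering argument, bootstrapping one $X$-derivative at a time.

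\textbf{Step 1 (Local regularity).} Since the $X_i$ are homogeneous H\"ormander vector fields, the rank condition at $0$ in (H.2) propagates by $\delta_\lambda$-homogeneity to every point of $\mathbb{R}^n$; hence in every bounded domain $\Omega\subset\mathbb{R}^n$ the family $\{X_1,\dots,X_m\}$ is a system of H\"ormander vector fields as in Theorem \ref{Thm local regularity}. Applying that theorem on each pair $B_X(x,2r)\Subset B_X(x,3r)$ --- with $a_{ij}\in W_X^{k,\infty}(\mathbb{R}^n)\subseteq W_X^{k,\infty}(B_X(x,3r))$, $u\in W_X^{2,p}(\mathbb{R}^n)\subseteq W_X^{2,p}(B_X(x,3r))$, and $Lu\in W_X^{k,p}(\mathbb{R}^n)\subseteq W_X^{k,p}(B_X(x,3r))$ --- I obtain $u\in W_X^{k+2,p}(B_X(x,2r))$ for every $x\in\mathbb{R}^n$ and $r>0$; equivalently, $X_Ju\in L^p_{\mathrm{loc}}(\mathbb{R}^n)$ for every multiindex with $|I|\le k+2$.

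\textbf{Step 2 (Bootstrap to global integrability).} I claim that $u\in W_X^{j,p}(\mathbb{R}^n)$ for every $j$ with $2\le j\le k+2$, arguing by induction on $j$; the case $j=2$ is the hypothesis. For the step $j\to j+1$ (with $2\le j\le k+1$), fix $R>0$ small enough for Theorem \ref{thm:mainHO} to apply with its parameter equal to $j-1\le k$, take a covering $\{B_R(x_\alpha)\}_\alpha$ of $\mathbb{R}^n$ with $\{B_{HR}(x_\alpha)\}_\alpha$ of bounded overlapping (Proposition \ref{Prop overlapping}), and the associated cutoff functions $\phi_\alpha=\phi^{x_\alpha}$ of Lemma \ref{Lemma cutoff}, whose $X$-derivatives up to order $k$ are bounded by a constant $c_2=c_2(k,R)$ independent of $\alpha$ (this follows from the integral construction over $\mathbb{R}^p$ exactly as for orders $\le 2$). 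By Step 1, $u\in W_X^{j+1,p}(B_{2HR}(x_\alpha))$, so Theorem \ref{Thm approx}(b) gives $u\phi_\alpha\in W_{0,X}^{j+1,p}(B_{2HR}(x_\alpha))$; approximating $u\phi_\alpha$ in $W_X^{j+1,p}$-norm by $w_i\in C_0^\infty(\mathbb{R}^n)$, applying \eqref{eq:HOEstimate} (with parameter $j-1$) to each $w_i$ and letting $i\to\infty$ --- note $Lw_i\to L(u\phi_\alpha)$ in $W_X^{j-1,p}$ since $a_{hl}\in W_X^{j-1,\infty}$ --- I get
\[
\|u\phi_\alpha\|_{W_X^{j+1,p}(\mathbb{R}^n)}\le c\big\{\|L(u\phi_\alpha)\|_{W_X^{j-1,p}(\mathbb{R}^n)}+\|u\phi_\alpha\|_{L^p(\mathbb{R}^n)}\big\}.
\]
Expanding $L(u\phi_\alpha)=\phi_\alpha Lu+\sum_{h,l}a_{hl}\big(X_h\phi_\alpha\,X_lu+X_hu\,X_l\phi_\alpha+u\,X_hX_l\phi_\alpha\big)$ and using the uniform bounds on $a_{hl}$ and on the $X$-derivatives of $\phi_\alpha$, the right-hand side is bounded by $c\big\{\|Lu\|_{W_X^{j-1,p}(B_{HR}(x_\alpha))}+\|u\|_{W_X^{j,p}(B_{HR}(x_\alpha))}\big\}$. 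Conversely, since $\phi_\alpha\ge c_1$ on $B_R(x_\alpha)$, for $|J'|=j+1$ one has pointwise on $B_R(x_\alpha)$ the bound $|X_{J'}u|\le c_1^{-1}|\phi_\alpha X_{J'}u|\le c_1^{-1}\big(|X_{J'}(u\phi_\alpha)|+c\sum_{|J''|\le j}|X_{J''}u|\big)$, whence
\[
\sum_{|J'|=j+1}\|X_{J'}u\|_{L^p(B_R(x_\alpha))}\le c\big\{\|Lu\|_{W_X^{j-1,p}(B_{HR}(x_\alpha))}+\|u\|_{W_X^{j,p}(B_{HR}(x_\alpha))}\big\}.
\]
Raising to the power $p$, summing over $\alpha$, and using that $\{B_R(x_\alpha)\}$ covers $\mathbb{R}^n$ while $\{B_{HR}(x_\alpha)\}$ has bounded overlapping, I conclude
\[
\sum_{|J'|=j+1}\|X_{J'}u\|_{L^p(\mathbb{R}^n)}\le c\big\{\|Lu\|_{W_X^{j-1,p}(\mathbb{R}^n)}+\|u\|_{W_X^{j,p}(\mathbb{R}^n)}\big\},
\]
which is finite by the inductive hypothesis $u\in W_X^{j,p}(\mathbb{R}^n)$ and by $Lu\in W_X^{k,p}(\mathbb{R}^n)\subseteq W_X^{j-1,p}(\mathbb{R}^n)$. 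Hence $u\in W_X^{j+1,p}(\mathbb{R}^n)$, and after $k$ such steps $u\in W_X^{k+2,p}(\mathbb{R}^n)$, as desired.

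\textbf{Main obstacle.} The genuinely delicate point is why a bootstrap (rather than one application of the cutoff argument) is needed: the commutator $L(u\phi_\alpha)-\phi_\alpha Lu$ carries one $X$-derivative of $u$ \emph{more} than the order controlled globally at the current step, so summing over the covering demands a finite global norm $\|u\|_{W_X^{j,p}(\mathbb{R}^n)}$ with $j$ one unit larger than $2$, available only from the previous induction step. Besides this, the only technicalities are the uniform-in-center bounds for $X$-derivatives of $\phi_\alpha$ up to order $k$ (immediate from Lemma \ref{Lemma cutoff}'s construction) and the propagation of the rank condition to all of $\mathbb{R}^n$ (standard for homogeneous H\"ormander systems); both are routine. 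Once Theorem \ref{Thm regularity} is established, combining it with Theorem \ref{thm:mainHO} yields Theorem \ref{Thm main 2}.
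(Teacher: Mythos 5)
Your argument does not prove the statement in question. The statement is Theorem \ref{Thm local regularity}, the local $W_{X}^{k+2,p}$ regularity theorem for a \emph{general} (not necessarily homogeneous) system of H\"{o}rmander vector fields on a bounded domain, which the paper does not prove at all but imports from \cite[Thm. 12.2 (a)]{BBbook}. Your Step 1 begins precisely by applying this theorem on the balls $B_{X}(x,3r)$, so as a proof of the statement the argument is circular: it assumes the conclusion as a black box and uses it, together with Theorem \ref{thm:mainHO}, to derive a different result. Moreover, the statement is out of reach of the global machinery you deploy: it concerns arbitrary H\"{o}rmander vector fields on a bounded $\Omega$, with no dilations, no global doubling, no global fundamental solution, hence none of the structure (H.1)-(H.3) on which Theorem \ref{thm:mainHO}, Theorem \ref{Thm stima globale ver1}, Lemma \ref{Lemma cutoff} and Proposition \ref{Prop overlapping} rest. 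A proof would require the local theory for nonvariational operators built on general H\"{o}rmander fields (Rothschild-Stein lifting and approximation, local representation formulas and singular integral estimates with $VMO$ or $W_{X}^{k,\infty}$ coefficients), which is exactly what \cite[Chap. 12]{BBbook} supplies and what this paper deliberately does not reproduce.

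What you have actually written is a proof of Theorem \ref{Thm regularity} (global regularization), and as such it is close in spirit to the paper's Section 6: there too one combines Theorem \ref{Thm local regularity} with the global a-priori estimate \eqref{eq:HOEstimate}, but the paper uses a single cutoff $\phi_{R}$ equal to $1$ on $B_{R}(0)$ and supported in $B_{2R}(0)$, applies \eqref{eq:HOEstimate} once per induction step on $k$, and lets $R\rightarrow+\infty$; your small-ball covering with bounded overlap and a one-derivative-at-a-time bootstrap is a workable variant, modulo checking that the cutoffs of Lemma \ref{Lemma cutoff} admit bounds, uniform in the center, on $X$-derivatives up to order $j+1$ (the integral construction does give this, but it is stated in the paper only up to order $2$). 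None of this, however, addresses the statement you were asked to prove.
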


The quoted result in \cite[Thm. 12.2 (a)]{BBbook} actually contains also local
a priori estimates in $W_{X}^{k+2,p}\left(  \Omega^{\prime}\right)  $, but we
will not apply those local estimates, but the \emph{global} a priori estimates
proved in Theorem \ref{thm:mainHO}.

\bigskip

\begin{proof}
[Proof of Theorem \ref{Thm regularity}]We proceed by induction on $k.$

For $k=1$, assume $a_{ij}\in W_{X}^{1,\infty}\left(  \mathbb{R}^{n}\right)  $,
$u\in W_{X}^{2,p}\left(  \mathbb{R}^{n}\right)  $, and $Lu\in W_{X}%
^{1,p}\left(  \mathbb{R}^{n}\right)  $. For any $R>1$, let $\phi_{R}\in
C_{0}^{\infty}\left(  B_{2R}\left(  0\right)  \right)  ,0\leq\phi_{R}%
\leq1,\phi_{R}=1$ in $B_{R}\left(  0\right)  ,$ with the derivatives of
$\phi_{R}$, up to order $3$, bounded uniformly w.r.t. $R$. By Theorem
\ref{Thm local regularity}, $u\in W_{X}^{3,p}\left(  B_{2R}\left(  0\right)
\right)  $, then $u\phi\in W_{X}^{3,p}\left(  \mathbb{R}^{n}\right)  $ and we
can apply (\ref{eq:HOEstimate}) for $k=1$ getting, for some constant $c>0$
independent of $R,$%
\begin{align*}
\left\Vert u\right\Vert _{W_{X}^{3,p}\left(  B_{R}\left(  0\right)  \right)
}  &  \leq c\left\{  \left\Vert L\left(  u\phi\right)  \right\Vert
_{W_{X}^{1,p}\left(  \mathbb{R}^{n}\right)  }+\left\Vert u\phi\right\Vert
_{L^{p}\left(  \mathbb{R}^{n}\right)  }\right\} \\
&  \leq c\left\{  \left\Vert Lu\right\Vert _{W_{X}^{1,p}\left(  \mathbb{R}%
^{n}\right)  }+\left\Vert u\right\Vert _{W_{X}^{2,p}\left(  \mathbb{R}%
^{n}\right)  }\right\}  .
\end{align*}
Passing to the limit as $R\rightarrow+\infty$, we get%
\[
\left\Vert u\right\Vert _{W_{X}^{3,p}\left(  \mathbb{R}^{n}\right)  }\leq
c\left\{  \left\Vert Lu\right\Vert _{W_{X}^{1,p}\left(  \mathbb{R}^{n}\right)
}+\left\Vert u\right\Vert _{W_{X}^{2,p}\left(  \mathbb{R}^{n}\right)
}\right\}  ,
\]
where the right-hand side is finite by our assumptions, hence $u\in
W_{X}^{3,p}\left(  \mathbb{R}^{n}\right)  $.

Assume we have proved the assertion up to $k-1$ and let us prove it for $k$.
So, assume $a_{ij}\in W_{X}^{k,\infty}\left(  \mathbb{R}^{n}\right)  $, $u\in
W_{X}^{2,p}\left(  \mathbb{R}^{n}\right)  $, and $Lu\in W_{X}^{k,p}\left(
\mathbb{R}^{n}\right)  $. Then by inductive assumption $u\in W_{X}%
^{k+1,p}\left(  \mathbb{R}^{n}\right)  .$ For any $R>1$, let $\phi_{R}\in
C_{0}^{\infty}\left(  B_{2R}\left(  0\right)  \right)  ,0\leq\phi_{R}%
\leq1,\phi_{R}=1$ in $B_{R}\left(  0\right)  ,$ with the derivatives of
$\phi_{R}$, up to order $k+1$, bounded uniformly w.r.t. $R$. By Theorem
\ref{Thm local regularity}, $u\in W_{X}^{k+2,p}\left(  B_{2R}\left(  0\right)
\right)  $. Then $u\phi\in W_{X}^{k+2,p}\left(  \mathbb{R}^{n}\right)  $ and
we can apply (\ref{eq:HOEstimate}) getting, for some constant $c>0$
independent of $R,$%
\begin{align*}
\left\Vert u\right\Vert _{W_{X}^{k+2,p}\left(  B_{R}\left(  0\right)  \right)
}  &  \leq c\left\{  \left\Vert L\left(  u\phi\right)  \right\Vert
_{W_{X}^{k,p}\left(  \mathbb{R}^{n}\right)  }+\left\Vert u\phi\right\Vert
_{L^{p}\left(  \mathbb{R}^{n}\right)  }\right\} \\
&  \leq c\left\{  \left\Vert Lu\right\Vert _{W_{X}^{k,p}\left(  \mathbb{R}%
^{n}\right)  }+\left\Vert u\right\Vert _{W_{X}^{k+1,p}\left(  \mathbb{R}%
^{n}\right)  }\right\}  .
\end{align*}
Passing to the limit as $R\rightarrow+\infty$, we get%
\[
\left\Vert u\right\Vert _{W_{X}^{k+2,p}\left(  \mathbb{R}^{n}\right)  }\leq
c\left\{  \left\Vert Lu\right\Vert _{W_{X}^{k,p}\left(  \mathbb{R}^{n}\right)
}+\left\Vert u\right\Vert _{W_{X}^{k+1,p}\left(  \mathbb{R}^{n}\right)
}\right\}  ,
\]
where the right-hand side is finite by our assumptions, hence $u\in
W_{X}^{k+2,p}\left(  \mathbb{R}^{n}\right)  $.

This completes the proof of Theorem \ref{Thm regularity}, and then of Theorem
\ref{Thm main 2}.
\end{proof}

\section{Appendix. Uniform estimates on $\Gamma^{A}$}

The aim of this Appendix is to complete the justification of the statements of
Theorems \ref{Thm lifting} and \ref{th.teoremone}. Actually, these theorems
have been proved in \cite[Thms 3.2 and 4.4]{BBlift} and \cite[Thm.1.3]{BBB1}
when $A$ is the identity matrix, while in this paper we have stated them for a
general symmetric positive definite matrix $A$, pretending that the constants
involved in the statements depend on $A$ only through the \textquotedblleft
ellipticity constant\textquotedblright\ $\nu$.

\bigskip

\begin{proof}
[Proof of Theorem \ref{Thm lifting}]Assume that $X=\{X_{1},\ldots,X_{m}\}$
satisfies (H.1)-(H.3). Let $\mathfrak{a}=\mathrm{Lie}(X)$, so that
$N=\mathrm{dim}(\mathfrak{a})>n$. Then, by \cite[Thm 1.1]{BBlift}, point (1)
of Theorem \ref{Thm lifting} holds. Here we will keep the same notation of
that statement.

For any $\nu\geq1$, let us denote by $\mathcal{M}_{\nu}$ the set of the
$m\times m$ symmetric \emph{constant }matrices $A$ satisfying the
\emph{ellipticity condition} (\ref{ellipticity}). For every fixed
$A=(a_{i,j})_{i,j=1}^{m}\in\mathcal{M}_{\nu}$ let%
\begin{equation}
L_{A}=\sum_{i,j=1}^{m}a_{i,j}X_{i}X_{j}. \label{eq.defiOpLLA}%
\end{equation}
Since $A$ is symmetric and positive definite, it admits a \emph{unique}
symmetric and positive definite square root, say $S$. As a con\-se\-quen\-ce,
writing $S=(s_{i,j})_{i,j=1}^{m}$, we have
\[
L_{A}=\sum_{j=1}^{m}Y_{j}^{2},\text{ \ \ where $Y_{j}=\sum_{i=1}^{m}%
s_{i,j}X_{i}$}.
\]
On the other hand, since $S$ is non-singular, it is not difficult to recognize
that the family $Y=\{Y_{1},\ldots,Y_{m}\}\subseteq\mathcal{X}(\mathbb{R}^{n})$
satisfies assumptions (H.1)-(H.3). In particular, since the $s_{i,j}$'s are
constant, one has
\begin{equation}
\mathrm{Lie}(Y)=\mathfrak{a}. \label{eq.LieYa}%
\end{equation}
Gathering these facts, we can apply point (1) of Theorem \ref{Thm lifting}
also to the family $Y$: there exist a homogeneous Carnot group $\mathbb{F}$
and a system
\[
\widetilde{Y}=\{\widetilde{Y}_{1},\ldots,\widetilde{Y}_{m}\}
\]
of Lie-generators for the Lie algebra $\mathrm{Lie}(\mathbb{F})$ such that,
for every $i=1,\ldots,m$, the vector field $\widetilde{Y}_{i}$ is a lifting of
$Y_{i}$ in the sense of (\ref{lifting}). \vspace*{0.05cm}The key observation
is that, in view of \cite[Remark 2.3]{BiB1}, the construction of $\mathbb{F}$
does not really depend on $Y$, but only on the Lie algebra $\mathrm{Lie}(Y)$
and on the choice of an \emph{adapted basis}. As a consequence, using
(\ref{eq.LieYa}) and choosing the \emph{same adapted basis} used for the
construction of $\mathbb{G}$, we obtain%
\[
\mathbb{F}=\mathbb{G}\qquad\text{and}\qquad\widetilde{Y}_{j}=\sum_{i=1}%
^{m}s_{i,j}\widetilde{X}_{i}.
\]
Summing up, the couple $\left(  \mathbb{G},\widetilde{X}\right)  $ associated
with $X$ provides a `lifting pair' for the family $Y=\{Y_{1},\ldots,Y_{m}\}$
(in the sense of point (1) of Theorem \ref{Thm lifting}) \emph{which is
independent of the fixed matrix $A\in\mathcal{M}_{\nu}$.} This means in
particular that%
\[
\sum_{j=1}^{m}\widetilde{Y}_{j}^{2}=\sum_{i,j=1}^{m}a_{i,j}\widetilde{X}%
_{i}\widetilde{X}_{j}.
\]
Therefore, if $\widetilde{\Gamma}_{A}$ is the homogeneous fundamental solution
vanishing at infinity of the operator $\sum_{j=1}^{m}\widetilde{Y}_{j}^{2}$ on
the group $(\mathbb{G},\widetilde{X})$, the fundamental solution of $L_{A}$ is
given, according to \cite[Thm. 1]{BBlift}, by (\ref{sec.one:mainThm_defGamma}%
), and points (2) and (3) of Theorem \ref{Thm lifting} hold.
\end{proof}

\bigskip

In order to prove the uniform estimates on the derivatives of $\Gamma_{A}$
contained in Theorem \ref{th.teoremone}, we will need the following known fact:

\begin{theorem}
[Uniform estimates on $\widetilde{\Gamma}_{A}$](See \cite[Thm.6.18]{BBbook})
Let $\mathbb{G}$ be a Carnot group in $\mathbb{R}^{N}$ with generators
$\widetilde{X}_{1},...,\widetilde{X}_{m}$, let $A$ be an $m\times m$ symmetric
constant matrix satisfying (\ref{ellipticity}) and let $\Gamma_{A,\mathbb{G}}$
be the homogeneous fundamental solution, vanishing at infinity, of
$\widetilde{L}_{A}=\sum_{i,j=1}^{m}a_{i,j}\widetilde{X}_{i}\widetilde{X}_{j}$.
Then for every positive integer $r$, there exists a constant $c$, only
depending on the group, the ellipticity constant $\nu$ and the integer $r$,
such that:%
\[
\sup_{z\in\mathbb{R}^{N},\left\Vert z\right\Vert =1}\sup_{\left\vert
\alpha\right\vert \leq r}\left\vert \frac{\partial^{\alpha}\Gamma
_{A,\mathbb{G}}}{\partial z^{\alpha}}\left(  z\right)  \right\vert \leq
c\left(  r,\mathbb{G},\nu\right)  .
\]
(Here we use the notation $\Gamma_{A,\mathbb{G}}$ as in
(\ref{sec.one:mainThm_defGamma2})).
\end{theorem}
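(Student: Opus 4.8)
The plan is to reduce the statement to the case of a single sub-Laplacian — which is exactly \cite[Thm.6.18]{BBbook} — and then to check that the constant there can be taken uniform as $A$ ranges over the ellipticity class. First I would rewrite $\widetilde{L}_A$ as a sub-Laplacian on $\mathbb{G}$: setting $S=A^{1/2}$ (the unique symmetric positive definite square root of $A$) and $\widetilde{Y}_j=\sum_{i=1}^m s_{ij}\widetilde{X}_i$, exactly as in the proof of Theorem \ref{Thm lifting} one has $\widetilde{L}_A=\sum_{j=1}^m\widetilde{Y}_j^2$ with $\{\widetilde{Y}_1,\dots,\widetilde{Y}_m\}$ a system of Lie-generators of $\mathrm{Lie}(\mathbb{G})$, so that $\Gamma_{A,\mathbb{G}}$ is the $D_\lambda$-homogeneous fundamental solution of $\widetilde{L}_A$ vanishing at infinity and \cite[Thm.6.18]{BBbook} applies for each fixed $A$. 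The key elementary observation is that, as $A$ runs over the set $\mathcal{M}_\nu$ of symmetric matrices satisfying \eqref{ellipticity} (a compact set), $S$ runs over a compact set of symmetric positive definite matrices with eigenvalues in $[\nu^{1/2},\nu^{-1/2}]$; hence $S$ and $S^{-1}$ are bounded in terms of $\nu$ alone, the CC-distances $d_{\widetilde{Y}}$ are all comparable to $d_{\widetilde{X}}$ with constants depending only on $\mathbb{G}$ and $\nu$, and the systems $\widetilde{Y}$ form a \emph{compact family of H\"{o}rmander systems} on $\mathbb{G}$ (same step as $\mathbb{G}$, structure constants polynomial in the entries of $S$). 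Since $\partial^\alpha\Gamma_{A,\mathbb{G}}$ is $D_\lambda$-homogeneous of a negative degree depending on $\alpha$, and $\|\cdot\|$ is equivalent to $d_{\widetilde{X}}$ by \eqref{equivalence}, it then suffices to bound each $\partial^\alpha\Gamma_{A,\mathbb{G}}$ with $|\alpha|\le r$ on the fixed compact annulus $\{1\le d_{\widetilde{X}}(z)\le 2\}$, uniformly in $A\in\mathcal{M}_\nu$.

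For the uniform $C^0$ control I would compare Dirichlet forms. On the common form domain (the completion of $C_0^\infty(\mathbb{R}^N)$ in the $\widetilde{X}$-Sobolev norm, which does not depend on $A$ since the norms are equivalent) one has $\mathcal{E}_A(u,u)=\int_{\mathbb{G}}\langle A\,\widetilde{\nabla}u,\widetilde{\nabla}u\rangle\ge\nu\,\mathcal{E}_I(u,u)$, where $\widetilde{\nabla}u=(\widetilde{X}_1u,\dots,\widetilde{X}_mu)$; since $Q>q>2$ both forms are transient, so by the standard monotonicity of Green operators the associated Green operators satisfy $G_A\le\nu^{-1}G_I$, and letting the source tend to $\delta_0$ gives $\Gamma_{A,\mathbb{G}}(z)\le\nu^{-1}\Gamma_{I,\mathbb{G}}(z)$ for all $z\neq0$. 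In particular $\sup_{\{1/2\le d_{\widetilde{X}}(z)\le 3\}}\Gamma_{A,\mathbb{G}}\le C(\mathbb{G},\nu)$, uniformly in $A$. (Alternatively one may invoke the classical two-sided Gaussian bounds for heat kernels of sub-Laplacians on $\mathbb{G}$, which are uniform over the compact family $\{\widetilde{Y}\}$, and integrate in time.)

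Next I would run a subelliptic bootstrap. Away from the origin $\Gamma_{A,\mathbb{G}}$ solves $\widetilde{L}_A\Gamma_{A,\mathbb{G}}=0$; since $\{\widetilde{Y}_j\}$ varies in a compact family of H\"{o}rmander systems on $\mathbb{G}$, the subelliptic a-priori estimates for $\widetilde{L}_A$ hold with constants independent of $A\in\mathcal{M}_\nu$ (the subelliptic gain is governed by the step of $\mathbb{G}$, which is fixed). Thus from the uniform $C^0$ bound on the larger annulus one obtains uniform bounds for $\Gamma_{A,\mathbb{G}}$ in every $\widetilde{X}$-Sobolev space $W^{s,2}_{\widetilde{X}}(\{1\le d_{\widetilde{X}}(z)\le2\})$, and then, by the subelliptic Sobolev embedding with $s$ large enough relative to $Q$ and $r$, uniform bounds in $C^r$ of that annulus. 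Rescaling by $D_\lambda$-homogeneity yields the assertion.

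The part requiring the most care is precisely the uniformity in $A$: every ingredient is classical for a single sub-Laplacian, and what must be checked is that the constants of \cite[Thm.6.18]{BBbook} — which a priori depend on the whole system $\widetilde{Y}$ — collapse to a dependence on $\nu$ once one notices that $A\mapsto A^{1/2}$ carries the compact set $\mathcal{M}_\nu$ into a compact family of sub-Laplacians on $\mathbb{G}$, to which the form comparison and the uniform subelliptic estimates then apply. An equivalent way to package this would be to prove that $A\mapsto\Gamma_{A,\mathbb{G}}$ is continuous from $\mathcal{M}_\nu$ into $C^\infty_{\mathrm{loc}}(\mathbb{R}^N\setminus\{0\})$ — which follows from the same uniform subelliptic estimates together with the uniqueness of the $D_\lambda$-homogeneous fundamental solution vanishing at infinity — and then use the compactness of $\mathcal{M}_\nu$ and of the unit sphere $\{\|z\|=1\}$.
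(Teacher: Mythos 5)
First, a point of reference: the paper does not prove this statement at all --- it is imported verbatim as a known fact from \cite[Thm.\,6.18]{BBbook}, so there is no in-paper proof to compare yours against. That said, the skeleton of your argument is the standard one and matches what the paper does elsewhere in its Appendix: writing $\widetilde{L}_A=\sum_j\widetilde{Y}_j^2$ with $\widetilde{Y}_j=\sum_i s_{ij}\widetilde{X}_i$, $S=A^{1/2}$, observing that $A\mapsto S$ carries $\mathcal{M}_\nu$ into a compact family of generating systems of the first layer of $\mathrm{Lie}(\mathbb{G})$, and then aiming at a uniform $C^0$ bound on an annulus followed by a bootstrap and a $D_\lambda$-homogeneity rescaling. (Note that no change of variables reduces the family to a single sub-Laplacian, since a linear change of the generators extends to a group automorphism only in special cases; this is why the uniformity is a genuine theorem.)

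The genuine gap is in your $C^0$ step. From $\mathcal{E}_A(u,u)\geq\nu\,\mathcal{E}_I(u,u)$ you may conclude $\langle G_Af,f\rangle\leq\nu^{-1}\langle G_If,f\rangle$, i.e.\ domination of the Green operators \emph{as quadratic forms}; this does \emph{not} yield the pointwise kernel bound $\Gamma_{A,\mathbb{G}}(z)\leq\nu^{-1}\Gamma_{I,\mathbb{G}}(z)$. ``Letting the source tend to $\delta_0$'' evaluates the bilinear form at two distinct arguments $\delta_0,\delta_z$, and an inequality between quadratic forms does not polarize into an inequality between off-diagonal kernel values; if you try to mollify by testing against $\chi_{B(0,\varepsilon)}+\chi_{B(z,\varepsilon)}$, the diagonal contributions (of order $\varepsilon^{Q+2}$) swamp the off-diagonal one (of order $\varepsilon^{2Q}\Gamma(z)$) as $\varepsilon\to0$ because $Q>2$. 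Pointwise domination of kernels requires positivity-preserving domination of the semigroups, which is a much stronger property than form comparability --- this is exactly why, already for uniformly elliptic divergence-form operators in $\mathbb{R}^n$, comparability of Green functions is a theorem needing Moser/Harnack input rather than a two-line consequence of ellipticity. Your parenthetical fallback (uniform two-sided Gaussian bounds for the heat kernels of the compact family $\{\sum_j\widetilde{Y}_j^2\}$, integrated in time) is the correct and classical fix, but the uniformity in $A$ of those Gaussian bounds, and likewise of the ``subelliptic a priori estimates with constants independent of $A$'' invoked in your bootstrap, is precisely the content of the result being proved; as written you are citing it rather than establishing it. Either develop the uniform Harnack/Gaussian estimates for the compact family explicitly, or --- as the paper does --- simply quote \cite[Thm.\,6.18]{BBbook}.
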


By homogeneity, the sup on the unit sphere $\left\{  \left\Vert z\right\Vert
=1\right\}  $ can be replaced by the sup on any compact set $K$ excluding the
origin, with the constant $c$ also depending on $K.$

This bound implies that, for every fixed homogeneous norm $\left\Vert
\cdot\right\Vert $ in $\mathbb{R}^{N}$,%
\begin{equation}
\left\vert \widetilde{X}_{i_{1}}\cdots\widetilde{X}_{i_{s}}\Gamma
_{A,\mathbb{G}}\left(  z\right)  \right\vert \leq\frac{c}{\left\Vert
z\right\Vert ^{Q-2+s}} \label{uniform bound Gamma}%
\end{equation}
with $c$ depending on the matrix $A$ only through the number $\nu$.

\bigskip

\begin{proof}
[Proof of Theorem \ref{th.teoremone}]Point (I) of Theorem \ref{th.teoremone}
is contained in \cite[Thm.1.3]{BBB1}, once we know formula
(\ref{sec.one:mainThm_defGamma}).

In order to prove point (II) of Theorem \ref{th.teoremone}, assuring the
uniformity of the constant $C_{r}$ as the matrix $A$ ranges in the ellipticity
class $\mathcal{M}_{\nu}$, we need to revise the proof of point (II) contained
in \cite{BBB1}, exploiting at a crucial point the uniform bound
(\ref{uniform bound Gamma}). Let us distinguish three cases, according to kind
of $x$- or $y$-derivatives acting on $\Gamma_{A}$.

\textsc{Case I:} $Z_{1}\cdots Z_{r}=X_{i_{1}}^{x}\cdots X_{i_{r}}^{x}$. Using
the integral re\-pre\-sen\-ta\-tion formula in point (I) of Theorem
\ref{th.teoremone} for the $X$-derivatives of $\Gamma_{A}(\cdot;y)$, we can
write, for $x\neq y$,%
\begin{equation}
Z_{1}\cdots Z_{r}\Gamma_{A}(x;y)=\int_{\mathbb{R}^{p}}\left(  \widetilde{X}%
_{i_{1}}\cdots\widetilde{X}_{i_{r}}\Gamma_{A,\mathbb{G}}\right)  \left(
(y,0)^{-1}\ast(x,\eta)\right)  d\eta. \label{eq.reprWiGammaCaseI}%
\end{equation}
Hence, by the uniform bound (\ref{uniform bound Gamma}), we have%
\[
\left\vert Z_{1}\cdots Z_{r}\Gamma_{A}(x;y)\right\vert \leq c\left(
r,\nu,\mathbb{G}\right)  \,\int_{\mathbb{R}^{p}}d_{\widetilde{X}}%
^{2-Q-r}\left(  0,(y,0)^{-1}\ast(x,\eta)\right)  d\eta.
\]
Then we exploit a result in \cite[Cor.\thinspace4.11]{BBB1}: there exists a
constant $C>0$ such that, for every $x\neq y\in\mathbb{R}^{n}$, one has
\begin{equation}
\int_{\mathbb{R}^{p}}d_{\widetilde{X}}^{2-Q-r}\left(  0,(y,0)^{-1}\ast
(x,\eta)\right)  d\eta\leq C\,\frac{d_{X}^{2-r}(x,y)}{\left\vert B_{X}%
(y,d_{X}(x,y))\right\vert }. \label{eq.estimIntdXhat}%
\end{equation}
Combining the last two inequalities we get%
\[
\left\vert Z_{1}\cdots Z_{r}\Gamma_{A}(x;y)\right\vert \leq C\,\frac
{d_{X}^{2-r}(x,y)}{\left\vert B_{X}(y,d_{X}(x,y))\right\vert }\leq C^{\prime
}\,\frac{d_{X}^{2-r}(x,y)}{\left\vert B_{X}(x,d_{X}(x,y))\right\vert }%
\]
where in the last inequality we have exploited the equivalence between
$\left\vert B_{X}(y,d_{X}(x,y))\right\vert $ and $\left\vert B_{X}%
(x,d_{X}(x,y))\right\vert $, which follows by the doubling property.

\textsc{Case II:} $Z_{1}\cdots Z_{r}=X_{j_{1}}^{y}\cdots X_{j_{r}}^{y}$. We
start with the integral re\-pre\-sen\-ta\-tion formula in point (I) of Theorem
\ref{th.teoremone} for the $X$-derivatives of $\Gamma_{A}(x;\cdot)$%
\[
X_{i_{1}}^{y}\cdots X_{i_{s}}^{y}\left(  \Gamma_{A}(x;\cdot)\right)
(y)=\int_{\mathbb{R}^{p}}\left(  \widetilde{X}_{i_{1}}\cdots\widetilde{X}%
_{i_{s}}\Gamma_{A,\mathbb{G}}\right)  \left(  (x,0)^{-1}\ast(y,\eta)\right)
\,d\eta
\]
and then we can argue exactly as in Case I, getting the same bound.

\textsc{Case III:} $Z_{1}\cdots Z_{r}=X_{j_{1}}^{y}\cdots X_{j_{k}}%
^{y}\,X_{i_{1}}^{x}\cdots X_{i_{h}}^{x}$ (with $h+k=r$). This is a bit more
delicate. We start from the integral representation formula in point (I) of
Theorem \ref{th.teoremone} for the mixed $X$-de\-ri\-va\-ti\-ves of
$\Gamma_{A}$, writing%
\begin{align}
&  X_{j_{1}}^{x}\cdots X_{j_{t}}^{x}X_{i_{1}}^{y}\cdots X_{i_{s}}^{y}%
\Gamma_{A}(x;y)\nonumber\\
&  =\int_{\mathbb{R}^{p}}\left(  \widetilde{X}_{j_{1}}\cdots\widetilde{X}%
_{j_{t}}\left(  \left(  \widetilde{X}_{i_{1}}\cdots\widetilde{X}_{i_{s}}%
\Gamma_{A,\mathbb{G}}\right)  \circ\iota\right)  \right)  \left(
(y,0)^{-1}\ast(x,\eta)\right)  d\eta\, \label{repr Case 3}%
\end{align}
where $\iota$ denotes the inversion map of the Lie group $\mathbb{G}$. First
of all, by repeatedly using \cite[Lem.\thinspace5.3]{BiB1}, we can write
\begin{equation}
\widetilde{X}_{j_{1}}\cdots\widetilde{X}_{j_{t}}\left(  \left(  \widetilde{X}%
_{i_{1}}\cdots\widetilde{X}_{i_{s}}\Gamma_{A,\mathbb{G}}\right)  \circ
\iota\right)  =\left(  Z_{j_{1}}\cdots Z_{j_{t}}\,\widetilde{X}_{i_{1}}%
\cdots\widetilde{X}_{i_{s}}\,\widetilde{\Gamma}_{A}\right)  \circ\iota,
\label{eq.derKernelLemma}%
\end{equation}
where $Z_{j_{1}},\ldots,Z_{j_{t}}$ are suitable smooth vector fields
$D_{\lambda}$-homogeneous of degree $1$ but not necessarily left-invariant.
Since all the vector fields in (\ref{eq.derKernelLemma}) are $D_{\lambda}%
$-ho\-mo\-ge\-neous of degree $1$, we are entitled to apply \cite[Lemma
3.4]{BiB2}, obtaining
\begin{equation}
Z_{j_{1}}\cdots Z_{j_{t}}\,\widetilde{X}_{i_{1}}\cdots\widetilde{X}_{i_{s}%
}\,\Gamma_{A,\mathbb{G}}=\!\!\!\sum_{r\,\leqslant\,|\alpha|\,\leqslant
\,r\sigma_{n}}\!\!\!\!\gamma_{\alpha}(z)\,\mathcal{Q}_{\alpha}(\widetilde{X}%
_{1},\ldots,\widetilde{X}_{m})\widetilde{\Gamma}_{A}.
\label{eq.derKernelLemmaBis}%
\end{equation}
Here, $\mathcal{Q}_{\alpha}(t_{1},\ldots,t_{m})$ is a homogeneous polynomial
of (Euclidean) degree $|\alpha|$ in the non-commuting variables $t_{1}%
,\ldots,t_{m}$, and $\gamma_{\alpha}$ is a $D_{\lambda}$-homogeneous
polynomial of degree $|\alpha|-r$. We now observe that, on account of
(\ref{uniform bound Gamma}), we have
\[
\left\vert \mathcal{Q}_{\alpha}(\widetilde{X}_{1},\ldots,\widetilde{X}%
_{m})\Gamma_{A,\mathbb{G}}(z)\right\vert \leq c_{\nu,r}\,d_{\widetilde{X}%
}^{2-Q-|\alpha|}(0,z)\qquad(z\neq0),
\]
where the constant $c_{\nu,r}$ can be chosen independently of $\alpha$.
Moreover, since $\gamma_{\alpha}$ is (smooth and) $D_{\lambda}$-homogeneous of
degree $|\alpha|-r$, we get
\begin{equation}
\left\vert \gamma_{\alpha}(z)\right\vert \leq c_{r}\,d_{\widetilde{X}%
}^{|\alpha|-r}(0,z)\qquad(z\neq0), \label{eq.estimgammaalphadX}%
\end{equation}
where $c=c_{r}>0$ is another constant which can be chosen independently of
$\alpha$. Gathering all these facts, we obtain the following estimate%
\[
\left\vert \widetilde{X}_{j_{1}}\cdots\widetilde{X}_{j_{t}}\left(  \left(
\widetilde{X}_{i_{1}}\cdots\widetilde{X}_{i_{s}}\Gamma_{A,\mathbb{G}}\right)
\circ\iota\right)  \right\vert \leq c_{\nu,r}\,d_{\widetilde{X}}%
^{2-Q-r}\left(  0,z^{-1}\right)  =c_{\nu,r}\,d_{\widetilde{X}}^{2-Q-r}\left(
0,z\right)  .
\]
By (\ref{repr Case 3}) this implies%
\[
\left\vert X_{j_{1}}^{x}\cdots X_{j_{t}}^{x}X_{i_{1}}^{y}\cdots X_{i_{s}}%
^{y}\Gamma_{A}(x;y)\right\vert \leq c_{\nu,r}\int_{\mathbb{R}^{p}%
}d_{\widetilde{X}}^{2-Q-r}\left(  0,(y,0)^{-1}\ast(x,\eta)\right)  d\eta,
\]
which by (\ref{eq.estimIntdXhat}) implies the desired uniform bound. So we are done.
\end{proof}

\bigskip

\noindent\textbf{Acknowledgements. }{The Authors are members of the research
group \textquotedblleft Grup\-po Na\-zio\-nale per l'Analisi Matematica, la
Probabilit\`{a} e le loro Applicazioni\textquotedblright\ of the Italian
\textquotedblleft Istituto Na\-zio\-na\-le di Alta
Matematica\textquotedblright. The first Author is partially supported by the
PRIN 2022 project 2022R537CS \emph{$NO^{3}$ - Nodal Optimization, NOnlinear
elliptic equations, NOnlocal geometric problems, with a focus on regularity},
funded by the European Union - Next Generation EU}; the second Author is
partially supported by the PRIN 2022 project \emph{Partial differential
equations and related geometric-functional inequalities}, financially
supported by the EU, in the framework of the \textquotedblleft Next Generation
EU initiative\textquotedblright.

\bigskip

\noindent\textbf{Address. }S. Biagi, M. Bramanti:

\noindent Dipartimento di Matematica. Politecnico di Milano. \newline\noindent
Via Bonardi 9. 20133 Milano. Italy.

\noindent e-mail: stefano.biagi@polimi.it; marco.bramanti@polimi.it

\bigskip

\end{document}